\theoremstyle{plain}
\newtheorem{thm}{Theorem}[section]
\newtheorem{prop}[thm]{Proposition}
\newtheorem{lemma}[thm]{Lemma}
\newtheorem{cor}[thm]{Corollary}
\theoremstyle{definition}
\newtheorem{defn}[thm]{Definition}
\newtheorem{ex}[thm]{Example}
\theoremstyle{remark}
\newtheorem{rmk}[thm]{Remark}
\newcommand{\vol}{\ensuremath{\mathsf{vol}}}
\newcommand{\hol}{\ensuremath{\mathrm{Hol}_g (M)}}
\newcommand{\real}{\ensuremath{\mathrm{Re}}}
\newcommand{\imag}{\ensuremath{\mathrm{Im}}}
\newcommand{\G}{\ensuremath{\mathrm{G}_2}}
\newcommand{\SP}{\ensuremath{\mathrm{Spin}(7)}}
\newcommand{\SUth}{\ensuremath{\mathrm{SU}(3)}}
\newcommand{\SOs}{\ensuremath{\mathrm{SO}(7)}}
\newcommand{\Gs}{\ensuremath{\mathrm{G}_2}{-structure}}
\newcommand{\R}{\ensuremath{\mathbb R}}
\newcommand{\C}{\ensuremath{\mathbb C}}
\newcommand{\PR}{\ensuremath{\mathbb P}}
\newcommand{\ph}{\ensuremath{\varphi}}
\newcommand{\ps}{\ensuremath{\psi}}
\newcommand{\wtws}{\ensuremath{\Omega^2_7}}
\newcommand{\wtwf}{\ensuremath{\Omega^2_{14}}}
\newcommand{\wtho}{\ensuremath{\Omega^3_1}}
\newcommand{\wths}{\ensuremath{\Omega^3_7}}
\newcommand{\wtht}{\ensuremath{\Omega^3_{27}}}
\newcommand{\wfot}{\ensuremath{\Omega^4_{27}}}
\newcommand{\st}{\ensuremath{\ast}}
\newcommand{\hk}{\mathbin{\! \hbox{\vrule height0.3pt width5pt 
depth 0.2pt \vrule height5pt width0.4pt depth 0.2pt}}}
\newcommand{\ddr}{\ensuremath{\frac{\del}{\del\:\!\! r}}}
\newcommand{\nab}[1]{\ensuremath{\nabla_{\! \! #1 \,}}}
\newcommand{\del}{\ensuremath{\partial}}
\newcommand{\dc}{\ensuremath{d_{{}_C}}}
\newcommand{\ds}{\ensuremath{d_{{}_{\Sigma}}}}
\newcommand{\dsc}{\ensuremath{d^*_{\!{}_C}}}
\newcommand{\dss}{\ensuremath{d^*_{\!{}_{\Sigma}}}}
\newcommand{\nabc}{\ensuremath{\nabla_{\!\!{}_C}}}
\newcommand{\nabs}{\ensuremath{\nabla_{\!\!{}_{\Sigma}}}}
\newcommand{\stc}{\ensuremath{\st_{{}_C}}}
\newcommand{\sts}{\ensuremath{\st_{{}_{\Sigma}}}}
\newcommand{\stsi}{\ensuremath{\st_{{}_{\Sigma_i}}}}
\newcommand{\lapc}{\ensuremath{\Delta_{{}_C}}}
\newcommand{\laps}{\ensuremath{\Delta_{{}_{\Sigma}}}}
\newcommand{\gc}{\ensuremath{g_{{}_C}}}
\newcommand{\gs}{\ensuremath{g_{{}_{\Sigma}}}}
\newcommand{\volc}{\ensuremath{\mathsf{vol}_{{}_C}}}
\newcommand{\vols}{\ensuremath{\mathsf{vol}_{{}_{\Sigma}}}}
\newcommand{\volsi}{\ensuremath{\mathsf{vol}_{{}_{\{r\} \times \Sigma_i}}}}
\newcommand{\z}{\ensuremath{\mathbf 0}}
\newcommand{\phc}{\ensuremath{\ph_{{}_{\!C}}}}
\newcommand{\psc}{\ensuremath{\ps_{{}_{\!C}}}}
\newcommand{\gci}{\ensuremath{g_{{}_{C_i}}}}
\newcommand{\phci}{\ensuremath{\ph_{{}_{\!{C_i}}}}}
\newcommand{\psci}{\ensuremath{\ps_{{}_{\!{C_i}}}}}
\newcommand{\stci}{\ensuremath{\st_{{}_{\!C_i}}}}
\newcommand{\nabci}{\ensuremath{\nabla_{\!\!{}_{C_i}}}}
\newcommand{\lapci}{\ensuremath{\Delta_{{}_{C_i}}}}
\newcommand{\dsci}{\ensuremath{d^*_{\!{}_{C_i}}}}
\newcommand{\gm}{\ensuremath{g_{{}_M}}}
\newcommand{\phm}{\ensuremath{\ph_{{}_{\!M}}}}
\newcommand{\psm}{\ensuremath{\ps_{{}_{\!M}}}}
\newcommand{\stm}{\ensuremath{\st_{{}_{\!M}}}}
\newcommand{\nabm}{\ensuremath{\nabla_{\!\!{}_{M}}}}
\newcommand{\volm}{\ensuremath{\mathsf{vol}_{{}_{M}}}}
\newcommand{\lapm}{\ensuremath{\Delta_{{}_M}}}
\newcommand{\dsm}{\ensuremath{d^*_{\!{}_M}}}
\newcommand{\gn}{\ensuremath{g_{{}_N}}}
\newcommand{\phn}{\ensuremath{\ph_{{}_{\!N}}}}
\newcommand{\psn}{\ensuremath{\ps_{{}_{\!N}}}}
\newcommand{\nabn}{\ensuremath{\nabla_{\!\!{}_{N}}}}
\newcommand{\voln}{\ensuremath{\mathsf{vol}_{{}_{N}}}}
\newcommand{\dsn}{\ensuremath{d^*_{\!{}_N}}}
\newcommand{\dshn}{\ensuremath{d^*_{\!{}_{h^*(N)}}}}
\newcommand{\gni}{\ensuremath{g_{{}_{N_i}}}}
\newcommand{\phni}{\ensuremath{\ph_{{}_{\!{N_i}}}}}
\newcommand{\psni}{\ensuremath{\ps_{{}_{\!{N_i}}}}}
\newcommand{\gnis}{\ensuremath{g_{{}_{N_{i,s}}}}}
\newcommand{\phnis}{\ensuremath{\ph_{{}_{\!{N_{i,s}}}}}}
\newcommand{\psnis}{\ensuremath{\ps_{{}_{\!{N_{i,s}}}}}}
\newcommand{\gp}{\ensuremath{g_{{}_{\!\ph}}}}
\newcommand{\stp}{\ensuremath{\st_{{}_{\!\ph}}}}
\newcommand{\Fp}{\ensuremath{F_{{}_{\!\ph}}}}
\newcommand{\Gp}{\ensuremath{G_{{}_{\!\ph}}}}
\newcommand{\Jp}{\ensuremath{J_{{}_{\!\ph}}}}
\newcommand{\Gpm}{\ensuremath{G_{\!\phm}}\!}
\newcommand{\Jpm}{\ensuremath{J_{\!\phm}}\!}
\newcommand{\Gpci}{\ensuremath{G_{\!\phci}}\!}
\newcommand{\Jpci}{\ensuremath{J_{\!\phci}}\!}
\newcommand{\g}{\ensuremath{\gamma}}
\newcommand{\e}{\ensuremath{\varepsilon}}
\newcommand{\bl}{\ensuremath{\boldsymbol{\lambda}}}
\newcommand{\im}{\ensuremath{\operatorname{im}}}
\newcommand{\spi}{\ensuremath{{\, \slash \! \! \! \! \mathcal S}}}
\numberwithin{equation}{section}
\numberwithin{table}{section}
\numberwithin{figure}{section}
\begin{document}

\title{Desingularization of $\G$~manifolds\\ with isolated conical singularities}

\author{Spiro Karigiannis\footnote{The author's research has been supported by a Marie Curie Fellowship of the European Commission under contract number MIF1-CT-2006-039113. The contents of this work reflect only the author's views and not the views of the European Commission.} \\
{\it Mathematical Institute, University of Oxford} \\ \tt{karigiannis@maths.ox.ac.uk} \\
{\it Department of Pure Mathematics, University of Waterloo} \\ \tt{karigiannis@math.uwaterloo.ca} }

\maketitle

\begin{abstract}
We present a method to desingularize a compact $\G$ manifold $M$ with isolated conical singularities by cutting out a neighbourhood of each singular point $x_i$ and glueing in an asymptotically conical $\G$~manifold $N_i$. Controlling the error on the overlap glueing region enables us to use a result of Joyce to conclude that the resulting compact smooth $7$-manifold $\tilde M$ admits a torsion-free $\G$ structure, with full $\G$~holonomy.
  
There are topological obstructions for this procedure to work, which  arise from the degree $3$ and degree $4$ cohomology of the asymptotically conical $\G$~manifolds $N_i$ which are glued in at each conical singularity. When a certain necessary topological condition on the manifold $M$ with isolated conical singularities is satisfied, we can introduce correction terms to the glueing procedure to ensure that it still works. In the case of degree $4$ obstructions, these correction terms are trivial to construct, but in the case of degree $3$ obstructions we need to solve an elliptic equation on a non-compact manifold. For this we use the Lockhart--McOwen theory of weighted Sobolev spaces on manifolds with ends. This theory is also used to obtain a good asymptotic expansion of the $\G$~structure on an asymptotically conical $\G$~manifold $N$ under an appropriate gauge-fixing condition, which is required to make the glueing procedure work.
\end{abstract}

\section{Introduction} \label{introsec}

This is the first of two papers in which we study conical singularities for $\G$~manifolds. The present paper deals with the \emph{desingularization} of $\G$~manifolds with conical singularities by glueing. The next paper~\cite{KCSmoduli} is about the \emph{deformation theory} (moduli spaces) of such manifolds, and of the closely related aymptotically conical $\G$~manifolds. We use the same notation and occasionally mention some results from~\cite{KCSmoduli}. However, this paper is completely self-contained. All the results we use from other sources are carefully stated.

The main theorem we prove in this paper is the following. The notation and terminology used in this theorem will all be defined in Sections~\ref{firstmainsec} and~\ref{desingsec}.

\noindent {\bf Main Theorem.} (Theorem~\ref{mainthm})
\emph{
Let $M$ be a compact $\G$~manifold with isolated conical singularities, with singularities $x_1, \ldots, x_n$, cones $C_1, \ldots, C_n$, and rates $\mu_1, \ldots, \mu_n$, respectively. Suppose that we have asymptotically conical $\G$~manifolds $N_1, \ldots, N_n$, with the same cones $C_1, \ldots, C_n$, and rates $\nu_1, \ldots, \nu_n$, respectively, with each $\nu_i \leq -3$. If the topological conditions~\eqref{obsconditionseq1} and~\eqref{obsconditionseq2} of Theorem~\ref{obsthm} are satisfied, then there exists a one-parameter family $\tilde M_s$ of \emph{smooth, compact $\G$~manifolds}, for $0 < s < \kappa$, (with holonomy exactly equal to $\G$), which desingularize $M$.}

The study of manifolds with isolated conical singularities (ICS) or of asymptotically conical (AC) manifolds in the context of special holonomy and calibrated geometry was initiated by Joyce in his series of papers~\cite{JSL1, JSL2, JSL3, JSL4, JSL5} about special Lagrangian submanifolds with ICS. Marshall~\cite{M} and Pacini~\cite{Pacini} studied AC special Lagrangian submanifolds of $\C^m$, and Lotay~\cite{Lth, L1, L2} studied coassociative AC and ICS submanifolds of $\R^7$. The results in the present paper most closely relate to the work of Chan~\cite{YMC1, YMC2, YMC3} on AC and ICS Calabi--Yau $3$-folds.

For those readers who are familiar with the papers~\cite{YMC1, YMC3}, the main differences between the $\G$~case of the present paper and the Calabi--Yau $3$-fold case studied by Chan are the following. There are two sources of topological obstructions, coming from the $3$-form $\ph$ and the $4$-form $\ps$ of a $\G$~structure, compared to just the one source coming from $\Omega$ in the Calabi--Yau case. The solution to $3$-form obstructions is very similar to the approach of Chan~\cite{YMC3}, except that we can use the full $d + d^*$ operator rather than having to restrict to a subcomplex, but we need to also obtain excluded ranges for the orders of homogeneity for harmonic $2$-forms, in addition to the $0$-forms and $1$-forms which are needed in the Calabi--Yau $3$-fold case. The solution to $4$-form obstructions cannot be obtained in the same way, for technical reasons, but there turns out to be a much easier way to deal with those. Also, Chan assumes that the asymptotically conical manifolds used in the glueing already admit a good asymptotic expansion near infinity (and he shows that this does hold in the examples he discusses.) We prove that under a natural gauge-fixing condition, a good asymptotic expansion which allows the glueing construction to succeed always exists.

We now discuss the organization of this paper. Section~\ref{reviewsec} is a review of the main facts about $\G$~manifolds which we require. More thorough discussions of $\G$~structures can be found in Bryant~\cite{Br1} and Joyce~\cite{J4}.

Section~\ref{firstmainsec} is about $\G$~manifolds modelled on cones. In Section~\ref{g2conessec} we treat $\G$~cones, and in Section~\ref{conesformssec} we present several results about differential forms on $\G$~cones which we will need later. Sections~\ref{CSsec} and~\ref{ACsec} discuss, respectively, $\G$~manifolds with isolated conical singularities (ICS), and asymptotically conical $\G$~manifolds (AC), including the known explicit examples in the AC case.

In Section~\ref{desingsec} we present the details of the desingularization procedure as three steps. Step one in Section~\ref{manifoldconstructionsec} is concerned with constructing a compact smooth manifold $\tilde M_s$ for all $s$ sufficiently small. Then in step two in Section~\ref{formsconstructionsec} we construct a family $\ph_s$ of closed $\G$~structures on $\tilde M_s$ with small torsion. It is at this stage where we require two analytic results, which are described here, but whose proof is postponed until later in the paper. The first result involves the existence of a good asymptotic expansion near infinity for the $\G$~structure of an asymptotically conical $\G$~manifold. The second result is concerned with topological obstructions to the glueing procedure that can arise, and their resolution. Finally in step three of the desingularization procedure in Section~\ref{torsionfreesec} we obtain estimates to show that the torsion of $\ph_s$ is small enough to invoke a theorem of Joyce that gives the existence of a torsion-free $\G$~structure $\tilde \ph_s$ on $\tilde M_s$ for $s$ sufficiently small.

In Section~\ref{analsec}, we briefly review and summarize the relevant results which we will need from the Lockhart--McOwen theory of weighted Sobolev spaces on non-compact manifolds with ends. Section~\ref{lockhartCSsec} is about manifolds with isolated conical singularities, while Section~\ref{lockhartACsec} is about asymptotically conical manifolds.

In Section~\ref{obstructionsec} we discuss how to overcome the topological obstructions to our glueing procedure of Section~\ref{formsconstructionsec} when a certain necessary condition is satisfied, by explaining how to explicity construct `correction forms' to be able to carry out the construction in the obstructed case. In Section~\ref{obssubsection} we deal with the case of $3$-form obstructions, and in Section~\ref{obssubsection2} we handle the $4$-form obstructions.

Finally, in Section~\ref{ACgaugefixsec} we prove that a good asymptotic expansion exists for an asymptotically conical $\G$~manifold which satisfies a natural gauge-fixing condition.

{\bf Conventions.} There are two sign conventions in $\G$~geometry. The convention we choose to use is the one used in Bryant--Salamon~\cite{BS} and in Harvey--Lawson~\cite{HL}, but differs from the convention used in Bryant~\cite{Br1} or Joyce~\cite{J4}. A detailed discussion of sign conventions and orientations in $\G$~geometry can be found in the author's note~\cite{K2}.

The letter $C$ is used in two ways: to denote a \emph{cone}, or to denote a \emph{positive constant}. The use will always be clear from the context. When we are estimating various quantities, the value of $C > 0$ will change from step to step in the calculations, but we will always use the same letter $C$ to denote it, to avoid the proliferation of too much notation. We use $f = O(r)$ to mean that $f \leq C r$ for some $C > 0$ as $r \to 0$ (when we are working near a singularity), or as $r \to \infty$ (when we are working on an asymptotic end at infinity.)

One fact we will use repeatedly is that for any form $\omega$, the exterior derivative $d\omega$ is obtained by the skew-symmetrization of the covariant derivative $\nab{} \omega$, so there is some $C > 0$ such that
\begin{equation} \label{desteq}
|d \omega| \leq C | \nab{} \omega| , \qquad \qquad |\nab{} d \omega| \leq C | \nab{}^2 \omega|.
\end{equation}
Also, we have
\begin{equation*}
|d^*\omega| = |\st d \st \omega| = |d \st \omega| \leq C | \nab{}  \st \omega| = C | \st \nab{} \omega| = C | \nab{} \omega|
\end{equation*}
using the fact that $d^* = \pm \st d \st$, and that $\st$ is an isometry and commutes with $\nab{}$. Thus we also have
\begin{equation} \label{dstesteq}
|d^* \omega| \leq C | \nab{} \omega|.
\end{equation}

{\bf Acknowledgements.} The author is extremely grateful to Dominic Joyce for his tireless patience in explaining the technical details about weighted Banach spaces and glueing constructions, for his many useful suggestions, and for providing the initial overall direction for this project.  Useful discussions were also had with Mark Haskins, Tommaso Pacini, and Yinan Song. The author also thanks the European Commission for its support through the Marie Curie Programme.

\subsection{Review of \G~manifolds} \label{reviewsec}

Recall that a $\G$~structure on a connected smooth $7$-manifold $M$ is described by a smooth $3$-form $\ph$ satisfying a certain ``non-degeneracy'' condition. When such a structure exists, there is an open subbundle $\Omega^3_+$ of the bundle $\Omega^3$ of $3$-forms consisting of non-degenerate $3$-forms, also called \emph{positive} or \emph{stable} $3$-forms.  A $\G$~structure $\ph$ determines a Riemannian metric $g_{\ph}$ and an orientation $\vol_{\ph}$ in a non-linear way. Thus $\ph$ determines a Hodge star operator $\st_{\ph}$, and $\ps = \st_{\ph} \ph$ is the dual $4$-form which is an element of the bundle $\Omega^4_+$ of positive $4$-forms.

\begin{defn} \label{g2manifolddefn}
A $\G$~manifold is a connected manifold with a $\G$~structure $(M, \ph)$ such that $\ph$ is \emph{parallel} with respect to the Levi-Civita connection $\nab{}$ determined by $g_{\ph}$. That is, $\nab{g_{\ph}}\ph = 0$. Such a $\G$~structure is also called \emph{torsion-free}. In this case the Riemannian
holonomy $\hol$ of $(M, g_{\ph})$ is contained in the group $\G \subseteq \SOs$. If the fundamental group $\pi_1(M)$ is finite, then the holonomy of $g_{\ph}$ is exactly $\G$. (See Joyce~\cite{J4} for more details.)
\end{defn}
\begin{rmk} \label{g2manifoldrmk}
A $\G$~manifold is always \emph{Ricci-flat}, and by Fern\'andez--Gray~\cite{FG}, a $\G$~structure $\ph$ is torsion-free if and only if it is both closed and coclosed: $d\ph = 0$ and $d \ps = 0$.
\end{rmk}

The space $\Omega^3$ of $3$-forms on a manifold with $\G$~structure $\ph$ decomposes as
\begin{equation} \label{lambda3eq}
\Omega^3 = \wtho \oplus \wths \oplus \wtht 
\end{equation}
into irreducible $\G$~representations. Similarly we have a decomposition of the space $\Omega^2$ as
\begin{equation} \label{lambda2eq}
\Omega^2 = \wtws \oplus \wtwf,
\end{equation}
as well as isomorphic splittings of $\Lambda^4$ and $\Lambda^5$ given by the Hodge star of the above decompositions: $\Lambda^k_l = \st_{\ph} (\Lambda^{7-k}_l)$. When the $\G$~structure is torsion-free, these decompositions are preserved by the Hodge Laplacian $\Delta = d d^* + d^* d$. The essential aspect of this fact that we will need is the following. Suppose $f$ is any function and $\omega$ is any $1$-form on a $\G$~manifold $M$. Then
\begin{align} \label{temp27eq1}
\Delta (f \ph) & = (\Delta f) \ph, & \Delta (f \ps) & = (\Delta f) \ps, & \\ \label{temp27eq2} \Delta (\omega \wedge \ph) & = (\Delta \omega) \wedge \ph, & \Delta (\omega \wedge \ps) & = (\Delta \omega) \wedge \ps.
\end{align}
The identities in~\eqref{temp27eq1} can be proved using just the fact that $\ph$ and $\ps$ are parallel, while the identities in~\eqref{temp27eq2} also require the fact that $\G$~manifolds have vanishing Ricci curvature.

Let $\Theta : \Lambda^3_+ \to \Lambda^4_+$ be the non-linear map which associates to any $\G$~structure $\ph$, the dual $4$-form $\ps = \Theta(\ph) = \st_{\ph} \ph$ with respect to the metric $g_{\ph}$ associated to $\ph$. One result which will be crucial is the following. This is Proposition 10.3.5 in Joyce~\cite{J4}, adapted to suit our present purposes.

\begin{lemma} \label{quadlemma}
Suppose that $\ph$ is a $\G$~structure with induced metric $\gp$, and dual $4$-form $\ps = \st_{\ph}  \ph$. Let $\xi$ be a $3$-form which has sufficiently small pointwise norm with respect to $\gp$, so that $\ph + \xi$ is still non-degenerate. Then we have
\begin{equation} \label{quadeq}
\Theta(\ph + \xi) = \ps + \stp \left( \frac{4}{3} \pi_1 (\xi) + \pi_7 (\xi) - \pi_{27} (\xi) \right) + \Fp(\xi),
\end{equation}
where $\pi_k$ is the projection onto the subspace $\Omega^3_k$ with respect to the $\G$~structure $\ph$. The nonlinear function $\Fp : \Omega^3 \to \Omega^4$ satisfies
\begin{equation} \label{quadeq2}
\Fp(0) = 0, \qquad \qquad | \Fp(\xi) | \leq C |\xi|^2, \qquad \qquad |\nab{} \Fp (\xi)| \leq C \left( |\xi|^2 |\nab{} \ph| + |\xi| |\nab{} \xi| \right),
\end{equation}
for some $C > 0$, where the norms and the covariant derivatives are taken with respect to $\gp$.
\end{lemma}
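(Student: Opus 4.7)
The plan is to view the lemma as a first-order Taylor expansion of the pointwise algebraic map $\Theta : \Lambda^3_+ \to \Lambda^4_+$, with the linear term identified by $\G$-representation theory and the remainder controlled by standard Taylor estimates. Since $\gp$ depends rationally on $\ph$ through a $\detr$-type construction, and $\stp$ is then an algebraic function of $\ph$, the map $\Theta$ is real-analytic on the open subbundle $\Lambda^3_+$. Setting $L_\ph := D\Theta|_\ph$ for the fiberwise linearization and defining $\Fp(\xi) := \Theta(\ph + \xi) - \ps - L_\ph(\xi)$, the task splits into computing $L_\ph$ and bounding $\Fp$.

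For $L_\ph$: the map $\Theta$ is $\G$-equivariant at each point (since both $\gp$ and $\stp$ are built from $\ph$ by $\G$-equivariant constructions), so $L_\ph : \Omega^3 \to \Omega^4$ is $\G$-equivariant with respect to the pointwise action of $\G \subset \SOs$ stabilizing $\ph$. Under the decompositions $\Omega^3 = \wtho \oplus \wths \oplus \wtht$ and $\Omega^4 = \wfoo \oplus \wfos \oplus \wfot$ into pairwise non-isomorphic irreducible $\G$-representations, $\stp$ provides an isomorphism $\Omega^3_k \cong \Omega^4_k$ for each $k \in \{1,7,27\}$, so by Schur's lemma $L_\ph$ acts on each $\Omega^3_k$ by a scalar multiple of $\stp$. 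Thus $L_\ph(\xi) = \stp\bigl(\lambda_1 \pi_1(\xi) + \lambda_7 \pi_7(\xi) + \lambda_{27} \pi_{27}(\xi)\bigr)$ for universal constants $\lambda_k$. The value $\lambda_1 = 4/3$ follows from the homogeneity $\Theta(c\ph) = c^{4/3}\ps$, which reflects the scaling $\gp \mapsto c^{2/3}\gp$ and hence the scaling of $\stp$ on $3$-forms. The remaining values $\lambda_7 = 1$ and $\lambda_{27} = -1$ are obtained by computing $\frac{d}{dt}\Theta(\ph + t\xi)|_{t=0}$ for one explicit sample variation in each of $\wths$ and $\wtht$ using the flat model on $\R^7$; this reproduces Joyce~\cite{J4}, Proposition 10.3.5.

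For $\Fp$: analyticity of $\Theta$ together with Taylor's integral remainder immediately gives $\Fp(\xi) = \int_0^1 (1-t)\, D^2\Theta|_{\ph + t\xi}(\xi, \xi)\, dt$, so uniform boundedness of $D^2\Theta$ on a fiberwise neighborhood of $\ph$ produces $|\Fp(\xi)| \leq C|\xi|^2$ for $\xi$ of sufficiently small pointwise norm. For the covariant derivative bound, because $\Theta$ is pointwise algebraic the chain rule gives $\nab{} \Theta(\ph + \xi) = D\Theta|_{\ph+\xi}(\nab{}\ph + \nab{}\xi)$, and similarly $\nab{} L_\ph(\xi) = D^2\Theta|_\ph(\nab{}\ph, \xi) + D\Theta|_\ph(\nab{}\xi)$; subtracting these and grouping terms produces
\begin{equation*}
\nab{} \Fp(\xi) = \bigl[D\Theta|_{\ph+\xi} - D\Theta|_\ph - D^2\Theta|_\ph(\cdot, \xi)\bigr](\nab{}\ph) + \bigl[D\Theta|_{\ph+\xi} - D\Theta|_\ph\bigr](\nab{}\xi).
\end{equation*}
The first bracket is $O(|\xi|^2)$ as the second-order Taylor remainder of $\eta \mapsto D\Theta|_\eta$ at $\eta = \ph$, yielding the $|\xi|^2|\nab{}\ph|$ contribution, while the second bracket is $O(|\xi|)$, yielding the $|\xi||\nab{}\xi|$ contribution. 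I expect the main obstacle to be the explicit flat-space evaluation of $\lambda_7$ and $\lambda_{27}$; once those two coefficients are pinned down, the equivariance argument and the Taylor remainder estimates fit together essentially automatically.
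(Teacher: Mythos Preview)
Your proposal is correct and is essentially the standard argument. Note, however, that the paper does not actually prove this lemma: it simply cites it as Proposition~10.3.5 in Joyce~\cite{J4}, adapted to the present setting. Your sketch---Schur's lemma on the $\G$-irreducible pieces to reduce $L_\ph$ to three scalars, the homogeneity $\Theta(c\ph)=c^{4/3}\ps$ for $\lambda_1$, a flat-model check for $\lambda_7$ and $\lambda_{27}$, and Taylor remainder estimates for $\Fp$ and $\nab{}\Fp$---is exactly how one would supply the details, and it matches Joyce's treatment.
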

\begin{rmk} \label{quadrmk}
It is clear that the same sort of result holds for the map $\Theta^{-1} : \Omega^4_+ \to \Omega^3_+$.
That is, if $\eta$ is a $4$-form with sufficiently small pointwise norm, then
\begin{equation} \label{quadeq3}
\Theta^{-1}(\ps + \eta) = \ph + \stp \left( \frac{3}{4} \pi_1 (\eta) + \pi_7 (\eta) - \pi_{27} (\eta) \right) + \Gp(\eta),
\end{equation}
where $\pi_k$ is the projection onto $\Omega^4_k$ with respect to $\ph$, and $\Gp : \Omega^4 \to \Omega^3$ satisfies
\begin{equation} \label{quadeq4}
\Gp(0) = 0, \qquad \qquad | \Gp(\eta) | \leq C |\eta|^2, \qquad \qquad |\nab{} \Gp (\eta)| \leq C \left( |\eta|^2 |\nab{} \ph| + |\eta| |\nab{} \eta| \right),
\end{equation}
for some $C > 0$.
\end{rmk}

It will be convenient to define the linear operator $J: \Omega^4 \to \Omega^3$ given by
\begin{equation} \label{Jdefneq}
\Jp(\eta) =  \stp \left( \frac{3}{4} \pi_1 (\eta) + \pi_7 (\eta) - \pi_{27} (\eta) \right).
\end{equation}
Then equation~\eqref{quadeq3} becomes $\Theta^{-1}(\ps + \eta) = \ph + \Jp(\eta) + \Gp(\eta)$. Given explicit formulas for $\pi_1$ and $\pi_7$, as can be found for example in~\cite{K1}, it is not difficult to verify that
\begin{equation} \label{Jesteq}
|\Jp(\eta)| \leq C |\eta|,  \qquad \qquad |\nab{} \Jp (\eta)| \leq C \left( |\eta| |\nab{} \ph| + |\nab{} \eta| \right),
\end{equation}
which we will need in Section~\ref{torsionfreesec} to estimate the torsion of the $\G$~structure we construct by glueing in Section~\ref{formsconstructionsec}.

\section{$\G$~manifolds modeled on cones} \label{firstmainsec}

In this section we discuss $\G$~manifolds which are modeled on cones. Specifically these are: $\G$~cones, $\G$~manifolds with isolated conical singularities (ICS), and asymptotically conical $\G$~manifolds (AC).

\subsection{$\G$~cones} \label{g2conessec}

Let $\Sigma^6$ be a compact, connected, smooth $6$-manifold.  An \SUth~structure on $\Sigma$ is described by a Riemannian metric $\gs$, an almost complex structure $J$ which is
orthogonal with respect to $\gs$, the associated K\"ahler form $\omega(u,v) = \gs(Ju,v)$ which is real and of type $(1,1)$ with respect to $J$, and a non-vanishing complex $(3,0)$-form $\Omega$. The
two forms are related by the normalization condition
\begin{equation} \label{su3compatibilityeq}
\vols = \frac{1}{6} \omega^3 = \frac{i}{8} \Omega \wedge \bar \Omega
= \frac{1}{4} \real(\Omega) \wedge \imag(\Omega).
\end{equation}
Such a structure exists whenever $\Sigma$ admits an almost complex structure and $c_1(\Sigma) = 0$, so that the canonical bundle $\Lambda^{3,0}$ of $\Sigma$ is topologically trivial and a non-vanishing section $\Omega$ exists. Then we can always scale $\Omega$ by a non-zero complex valued function to ensure
that~\eqref{su3compatibilityeq} holds.
\begin{defn} \label{weaksu3defn}
The manifold $\Sigma^6$ with $\SUth$ structure $(\omega, \Omega)$ is called \emph{strictly nearly K\"ahler}, or a \emph{weak \SUth manifold}, if the following equations are satisfied:
\begin{equation} \label{weaksu3eq}
\ds \omega = - 3 \, \real(\Omega), \qquad \qquad \ds \imag(\Omega) = 4 \, 
\frac{\omega^2}{2}.
\end{equation}
\end{defn}

\begin{rmk} \label{weaksu3rmk}
All strictly nearly K\"ahler $6$-manifolds can be shown to be \emph{Einstein}, with \emph{positive} scalar
curvature. It then follows from the Weitzenb\"ock formula that in the compact case, the first Betti number vanishes: $b_1 (\Sigma) = 0$. Some references for strictly nearly K\"ahler manifolds are B\"ar~\cite{Bar}, Reyes-Carri\'on--Salamon~\cite{CaS}, and Gray~\cite{Gr1, Gr2}.
\end{rmk}

\begin{defn} \label{g2conedefn}
Let $\Sigma^6$ be strictly nearly K\"ahler. Then there exists a torsion-free $\G$~structure $(\phc, \psc, \gc)$ on $C' = (0, \infty) \times \Sigma$ with holonomy exactly equal to $\G$. This structure is defined by
\begin{align} \label{phceq}
\phc & = r^3 \real(\Omega) - r^2 dr \wedge \omega, \\ \label{psceq} \psc & = - r^3 dr \wedge \imag(\Omega) - r^ 4\frac{\omega^2}{2}, \\ \label{gceq} \gc & = dr^2 + r^2 \gs,
\end{align}
where $r$ is the coordinate on $(0, \infty)$. We say that $C = C' \cup \{\z\}$ is a $\G$ \emph{cone}.
\end{defn}
The space $C = C' \cup \{\z\}$ is a $\G$~\emph{cone}, $C'$ is the \emph{smooth part} of $C$, the point $\z$ is the \emph{singular point} of the cone, and $\Sigma$ is called the \emph{link} of the cone. Notice that $\z = \lim_{r \to 0^+} (r, \sigma)$ for any point $\sigma$ in $\Sigma$. Now $\det(\gc) = r^{12} \det(\gs)$, and we choose the orientation on $C'$ so that
\begin{equation} \label{volceq}
\volc = r^{6} dr \wedge \vols
\end{equation}
is the volume form on $C'$.

\begin{rmk} \label{coneholonomyrmk}
It is known that for a Riemannian cone $C = \Sigma \times(0, \infty)$ with holonomy contained in $\G$ that the holonomy is either trivial, in which case $\Sigma$ is the standard round sphere $S^6$ and $C$ is the Euclidean $\R^7$, or else the holonomy is exactly equal to $\G$, in which case the link $\Sigma$ is strictly nearly K\"ahler, but not equal to $S^6$. See B\"ar~\cite{Bar} for more details. We reiterate that for us, a $\G$~cone will always have holonomy exactly $\G$, so we exclude the case where the link is $S^6$.
\end{rmk}

For any $t > 0$, we have a \emph{dilation} map $\mathbf{t} : C \to C$ defined by
\begin{equation} \label{dilationeq}
\mathbf{t} (\z) = \z, \qquad \qquad \mathbf{t} (r, \sigma) = (t r, \sigma).
\end{equation}
If we pull back $\phc$, $\psc$, $\gc$, and $\volc$ with respect to $\mathbf{t}$, we see that
\begin{equation} \label{dilationequiveq}
\begin{aligned}
\mathbf{t}^* (\phc) & = t^3 \, \phc, \qquad & \mathbf{t}^* (\psc) & = t^4 \, \psc, & \\ \mathbf{t}^* (\gc) & = t^2 \, \gc, \qquad & \mathbf{t}^* (\volc) & = t^7 \, \volc, &
\end{aligned}
\end{equation}
and we say that the conical $\G$~structure is \emph{dilation-equivariant}. Similarly any contravariant tensor $\omega$ of degree $k$ such that $\mathbf{t}^* (\omega) = t^k \, \omega$ is called dilation-equivariant. A useful property of dilations which we will use frequently is the following. Since $\gci (r , \sigma) = t^{-2} \gci (tr, \sigma)$ for vector fields, we see that
\begin{equation} \label{scalemetriceq}
|\mathbf{t}^*(\omega)(r, \sigma)|_{\gci(r, \sigma)} = t^k | \omega(tr, \sigma) |_{\gci(tr, \sigma)}
\end{equation}
whenever $\omega$ is a contravariant tensor of degree $k$.

\begin{prop} \label{coneformsexactprop}
The forms $\phc$ and $\psc$ on a $\G$~cone $C$ are \emph{exact}, and hence the cohomology classes $[\phc]$ and $[\psc]$ are trivial in $H^3(C, \R) \cong H^3(\Sigma, \R)$ and $H^4(C, \R) \cong H^4(\Sigma, \R)$, respectively.
\end{prop}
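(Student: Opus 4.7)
My plan is to write down explicit primitives for $\phc$ and $\psc$ directly using the nearly K\"ahler structure equations~\eqref{weaksu3eq} for the link $\Sigma$, and then appeal to the homotopy equivalence between $C'$ and $\Sigma$ for the cohomological statement. The key observation is that any differential form on $\Sigma$, pulled back to $C' = (0,\infty) \times \Sigma$ via the projection, satisfies $d_{{}_C} = d_{{}_\Sigma}$ when acting on it. So if $\alpha$ is a form on $\Sigma$ and $f(r)$ is a smooth function on $(0,\infty)$, then
\begin{equation*}
d\bigl(f(r)\,\alpha\bigr) \;=\; f'(r)\,dr \wedge \alpha \;+\; f(r)\,\ds \alpha
\end{equation*}
on $C'$. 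The task reduces to choosing the right coefficient function in each case.

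For $\phc$, I would try the ansatz $\alpha_3 = c_1 r^3\,\omega$ for a $2$-form on $\Sigma$. Using $\ds \omega = -3\,\real(\Omega)$ from~\eqref{weaksu3eq}, I compute
\begin{equation*}
d\bigl(c_1 r^3\,\omega\bigr) \;=\; 3 c_1 r^2\, dr \wedge \omega \;-\; 3 c_1 r^3\,\real(\Omega),
\end{equation*}
so setting $c_1 = -\tfrac{1}{3}$ yields precisely $\phc = r^3\,\real(\Omega) - r^2\,dr\wedge\omega$ as in~\eqref{phceq}. Hence $\phc = d\bigl(-\tfrac{1}{3}\,r^3\,\omega\bigr)$.

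For $\psc$, I would similarly try $\alpha_4 = c_2 r^4\,\imag(\Omega)$, a $3$-form on $C'$. Using $\ds \imag(\Omega) = 4 \,\tfrac{\omega^2}{2} = 2\omega^2$, I compute
\begin{equation*}
d\bigl(c_2 r^4\,\imag(\Omega)\bigr) \;=\; 4 c_2 r^3\, dr\wedge\imag(\Omega) \;+\; 2 c_2 r^4\,\omega^2,
\end{equation*}
and setting $c_2 = -\tfrac{1}{4}$ gives exactly $\psc = -r^3\,dr\wedge\imag(\Omega) - r^4\tfrac{\omega^2}{2}$ as in~\eqref{psceq}. Thus $\psc = d\bigl(-\tfrac{1}{4}\,r^4\,\imag(\Omega)\bigr)$.

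Both primitives are globally defined smooth forms on $C'$, so $[\phc] = 0$ and $[\psc] = 0$ in the de Rham cohomology of $C'$. To finish, I note that the inclusion $\{1\}\times\Sigma \hookrightarrow C' = (0,\infty)\times\Sigma$ is a deformation retract, so it induces isomorphisms $H^k(C',\R) \cong H^k(\Sigma,\R)$ in every degree; combined with the interpretation of $H^*(C,\R)$ as the cohomology of the smooth part $C'$, this yields the stated triviality in $H^3 \cong H^3(\Sigma,\R)$ and $H^4 \cong H^4(\Sigma,\R)$. There is no real obstacle here—the whole content is recognizing that the two nearly K\"ahler structure equations are precisely what is needed so that the obvious one-parameter ansatz produces the required primitive in closed form.
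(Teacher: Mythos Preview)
Your proof is correct and is essentially the same as the paper's: both compute the explicit primitives $-\tfrac{1}{3}r^3\omega$ and $-\tfrac{1}{4}r^4\imag(\Omega)$ using the nearly K\"ahler equations~\eqref{weaksu3eq}. You include a bit more detail on the deformation retract giving $H^k(C',\R)\cong H^k(\Sigma,\R)$, which the paper leaves implicit.
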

\begin{proof}
By using the relations~\eqref{weaksu3eq} defining a strictly nearly K\"ahler structure on the link $\Sigma$, it is easy to check that~\eqref{phceq} and~\eqref{psceq} can be written as
\begin{equation*}
\phc = d \left( - \frac{r^3}{3} \omega \right), \qquad \qquad \psc = d \left( - \frac{r^4}{4} \imag (\Omega) \right),
\end{equation*}
which is exactly what we needed to show.
\end{proof}

There are three known compact strictly nearly K\"ahler manifolds (other than the round $S^6$), and hence three known $\G$ cones, which we now discuss. They are all obtained by taking the biinvariant metric on a compact Lie group $G$ and descending this to the normal metric on $G/H$ for an appropriate Lie subgroup $H$. In particular, all these examples are homogeneous spaces. See B\"ar~\cite{Bar} for more details.

\begin{ex} \label{cp3ex}
{\bf The complex projective space: $\C \PR^3$.}
We can view $\C \PR^3$ (diffeomorphically) as the homogeneous space $\mathrm{Sp}(2) / (\mathrm{Sp}(1) \times \mathrm{U}(1))$. There is a natural non-integrable complex structure on this space, different from the standard one, and the normal metric is not the Fubini--Study metric, so this $\C \PR^3$ is \emph{not} K\"ahler. This space is also the \emph{twistor space} of the standard round $S^4$, which is the unit sphere subbundle of the bundle $\Lambda^2_-(S^4)$ of anti-self dual $2$-forms on $S^4$.
\end{ex}

\begin{ex} \label{flagex}
{\bf The complex flag manifold: $F_{1,2} = \SUth/T^2$.} This is the homogeneous space $\mathrm{SU}(3)/ T^2$ where $T^2 = \mathrm{S} ( \mathrm{U}(1) \times \mathrm{U}(1) \times \mathrm{U}(1) ) \subset \mathrm{SU}(3)$ is the maximal torus. This is the space of pairs $(V_1, V_2)$ where $V_1$ and $V_2$ are complex linear subspaces of $\C^3$ with $V_1 \subset V_2$.  This space is also the \emph{twistor space} of the standard Fubini--Study $\C \PR^2$, which is the unit sphere subbundle of the bundle $\Lambda^2_-(\C \PR^2)$ of anti-self dual $2$-forms on $\C \PR^2$.
\end{ex}

\begin{ex} \label{s3s3ex}
{\bf The product of two $3$-spheres: $S^3 \times S^3$.} We view $S^3$ as the Lie group of unit quaternions, which is isomorphic to $\mathrm{SU}(2)$. Then let $S^3 \times S^3 = (S^3 \times S^3 \times S^3) / S^3$ where we embed $S^3$ into $S^3 \times S^3 \times S^3$ as the diagonal subgroup.  This space is also the unit sphere subbundle of the \emph{spinor bundle} $\spi (S^3)$ of the standard round $S^3$.
\end{ex}

\begin{rmk} \label{NKcohomologyrmk}
We note for later use that Examples~\ref{cp3ex} and~\ref{flagex} both have $H^3(\Sigma, \R) = 0$ and $H^4 (\Sigma, \R) \neq 0$, and that Example~\ref{s3s3ex} has $H^3(\Sigma, \R) \neq 0$ and $H^4 (\Sigma, \R) = 0$.
\end{rmk}

\begin{rmk} \label{onlyhomormk}
In Butruille~\cite{But} it is proved that the above examples are the only \emph{homogeneous} strictly nearly K\"ahler compact manifolds. It is expected that there should exist many non-homogeneous examples, but as far as the author is aware, none have been found yet.
\end{rmk}

\subsection{Differential forms on $\G$~cones} \label{conesformssec}

We will denote by $\sts$, $\nabs$, $\ds$, $\dss$, and $\laps$ the Hodge star, Levi-Civita connection, exterior derivative, coderivative, and Hodge Laplacian of $\Sigma$. Similarly $\stc$, $\nabc$, $\dc$, $\dsc$, and $\lapc$ will denote the analogous operators for the smooth part $C'$ of the cone $C$. We will often abuse notation and say ``form on the cone $C$'' when we really mean ``form on the smooth part $C'$ of $C$.''  The following proposition is a simple exercise. 
\begin{prop} \label{coneformulasprop}
Let $\omega$ be a smooth $k$-form on $C$. Then we can write
\begin{equation} \label{coneformeq}
\omega = dr \wedge \alpha + \beta
\end{equation}
where $\alpha$ is a $(k-1)$-form and $\beta$ is a $k$-form on $\Sigma$, both depending on $r$ as a parameter. We use $'$ to denote differentiation with respect to the parameter $r$. The following formulas hold:
\begin{align} \label{conedeq}
\dc (dr \wedge \alpha + \beta) & \, = \, dr \wedge \left( \beta' - \ds \alpha \right) + (\ds \beta), \\ \label{conestareq} \stc (dr \wedge \alpha + \beta) & \, = \, dr \wedge \left( (-1)^k r^{6 - 2k} \sts \beta \right)
+ \left( r^{8 - 2k} \sts \alpha \right), \\ \label{conedleq} \dsc (dr \wedge \alpha + \beta) & \, = \, dr \wedge \left( - \frac{1}{r^2} \dss \alpha \right) + \left( - \frac{(8 - 2k)}{r} \alpha - \alpha' + \frac{1}{r^2} \dss
\beta \right),
\end{align}
and finally
\begin{equation} \label{conelaplacianeq}
\begin{aligned}
\lapc (dr \wedge \alpha + \beta) & \, = \, dr \wedge \left( \frac{1}{r^2} \laps \alpha + \frac{(8 - 2k)}{r^2} \alpha - \frac{(8 - 2k)}{r} \alpha' - \alpha'' - \frac{2}{r^3} \dss \beta \right) \\ & \qquad + \left( \frac{1}{r^2} \laps \beta - \frac{(6 - 2k)}{r} \beta' - \beta'' - \frac{2}{r} \ds \alpha \right).
\end{aligned}
\end{equation}
\end{prop}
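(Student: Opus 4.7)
The plan is to work in an adapted local coframe: pick a local $g_\Sigma$-orthonormal coframe $\{e^1,\ldots,e^6\}$ on $\Sigma$, which yields the $\gc$-orthonormal coframe $\{dr, re^1, \ldots, re^6\}$ on $C'$. With respect to this coframe, once a form $\omega$ is decomposed as $\omega = dr \wedge \alpha + \beta$, each formula reduces to an elementary algebraic computation in which one only has to keep track of powers of $r$ and of a few signs.

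For~\eqref{conedeq} I would simply observe that, treating $r$ as a parameter, the exterior derivative on $C'$ splits as $\dc = dr \wedge \partial_r + \ds$ (applied componentwise to the coefficients). Applying this to $dr \wedge \alpha + \beta$ and using $dr \wedge dr = 0$ and the sign flip $\ds (dr \wedge \alpha) = - dr \wedge \ds \alpha$ gives the stated formula immediately.

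For~\eqref{conestareq} I would rewrite $\beta$ and $\alpha$ in the orthonormal frame $\{r e^i\}$: a $k$-form $\beta$ on $\Sigma$, viewed on $C'$, has coefficient $r^{-k}$ relative to wedges of $r e^{i_j}$, and $dr\wedge\alpha$ similarly has coefficient $r^{-(k-1)}$. The cone Hodge star acts in the standard way on the orthonormal basis, producing either a factor of $dr$ or removing one. Collecting the powers of $r$ gives $r^{6-2k}$ for the purely tangential part and $r^{8-2k}$ for the part containing $dr$; the sign $(-1)^k$ in front of the first term comes from reordering $dr$ past the $k$-form $\beta$ when one identifies the cone volume form $r^6 dr \wedge \vols$. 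From~\eqref{conedleq} I would then obtain $\dsc$ by the identity $\dsc = \pm \stc \dc \stc$, applying~\eqref{conedeq} and~\eqref{conestareq} in succession. The bookkeeping of $r$-powers under differentiation of factors like $r^{6-2k}$ produces precisely the coefficient $-(8-2k)/r$ of $\alpha$ and the cross-term $-\alpha'$, while the $\dss$ contributions arise from $\ds$ acting on $\sts \alpha$ or $\sts \beta$ and then being pulled back through $\sts$.

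Finally,~\eqref{conelaplacianeq} follows by brute force from $\lapc = \dc \dsc + \dsc \dc$. I would apply the already-established formulas for $\dc$ and $\dsc$ to the decomposition~\eqref{coneformeq} once to land in the form $dr \wedge (\cdots) + (\cdots)$, then apply them a second time and collect terms, using $\laps = \dss \ds + \ds \dss$ to recognize the Laplacian on $\Sigma$. The only real obstacle is arithmetic: keeping track of the shift in degree each time a $dr$ is wedged or removed, so that the coefficients $(8-2k)$ versus $(6-2k)$ come out correctly for the $dr$-component and the tangential component respectively. No conceptual difficulty is expected — the proposition is a direct exercise in the calculus of warped product metrics of the form $dr^2 + r^2 \gs$, and the entire proof is mechanical once the orthonormal coframe $\{dr, re^i\}$ is fixed.
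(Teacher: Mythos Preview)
Your proposal is correct and is precisely the standard computation one would carry out; the paper itself offers no proof, merely remarking that the proposition ``is a simple exercise.'' Your outline via the $\gc$-orthonormal coframe $\{dr, re^1,\ldots,re^6\}$, the splitting $\dc = dr\wedge\partial_r + \ds$, and the derivation of $\dsc$ from $\stc\dc\stc$ is exactly how one fills in that exercise.
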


Suppose now that $\alpha$ is a $(k-1)$-form on $\Sigma$ and $\beta$ is a $k$-form on $\Sigma$. Then from~\eqref{gceq} it is easy to see that
\begin{equation} \label{formsconemetriceq2}
|dr \wedge \alpha|_{\gc} = r^{-(k-1)} \, |\alpha|_{\gs} \qquad \text{and} \qquad |\beta|_{\gc} = r^{-k} \, |\beta|_{\gs},
\end{equation}
from which it follows that
\begin{equation} \label{formsconemetriceq}
|r^{k-1}dr \wedge \alpha + r^k \beta|^2_{\gc} \, = \, |\alpha|^2_{\gs} + |\beta|^2_{\gs}.
\end{equation}
For this reason, we will always write a $k$-form on the cone $C'$ in the form $\omega = r^{k-1}dr \wedge \alpha + r^k \beta$ for some $\alpha$ and $\beta$, which are forms on $\Sigma$ possibly depending on the parameter $r$. Note that if $\alpha$ and $\beta$ were independent of $r$, then $\omega$ would be dilation-equivariant, as defined above.

The next lemma is an important result about closed differential forms on a cone $C$ with certain growth rates near $0$ or $\infty$.
\begin{lemma} \label{exactformslemma}
Let $\omega$ be a smooth \emph{closed} $k$-form on $C' = (0, \infty) \times
\Sigma$. Suppose that either
\begin{align*}
\text{i) \, } | \omega |_{\gc} & = O(r^{\lambda}) \text{ on } (0, \e) \times \Sigma, {\text \, for \, \, } \lambda > -k, \text{ or } \\ \text{ii) \, } | \omega |_{\gc} & = O(r^{\lambda}) \text{ on } (R, \infty) \times \Sigma, {\text \, for \, \, } \lambda < -k.
\end{align*}
for some small $\e$ or some large $R$. Then for each case respectively we have that
\begin{align*}
\text{i) \, } \omega & = d\Omega \text{ for some $(k-1)$-form } \Omega \text{ on } (0, \e) \times \Sigma, \text{ or } \\ \text{ii) \, } \omega & = d\Omega \text{ for some $(k-1)$-form } \Omega \text{ on } (R, \infty) \times \Sigma,
\end{align*}
where in each case, on its domain of definition, $\Omega$ satisfies $|\Omega|_{\gc} = O(r^{\lambda + 1})$, as long as $\lambda \neq -1$. Furthermore, now suppose that $0 < \lambda < 1$, or $\lambda < -1$. If we also know that $|\nabc^j \omega|_{\gc} = O(r^{\lambda - j})$, then for
this $\Omega$ we have $|\nabc^j \Omega|_{\gc} = O(r^{\lambda + 1 - j})$ for all $j \geq 0$. 
\end{lemma}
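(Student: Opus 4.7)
The plan is to build $\Omega$ explicitly by radial integration and then read off the norm estimates from Proposition~\ref{coneformulasprop} together with~\eqref{formsconemetriceq2}.  Using~\eqref{coneformeq} write $\omega = dr \wedge \alpha + \beta$ with $\alpha$ a $(k-1)$-form and $\beta$ a $k$-form on $\Sigma$, both depending on $r$.  Formula~\eqref{conedeq} shows that $d\omega = 0$ is equivalent to $\beta' = \ds\alpha$ and $\ds\beta = 0$.  I look for a primitive $\Omega = \eta(r)$ with no $dr$-component, so that $d\Omega = dr \wedge \eta' + \ds\eta$; this reduces to $\eta' = \alpha$ and $\ds\eta = \beta$.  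The first equation is solved by
\begin{equation*}
\eta(r,\sigma) := \int_0^r \alpha(s,\sigma)\, ds \ \text{in case (i)}, \qquad \eta(r,\sigma) := -\int_r^\infty \alpha(s,\sigma)\, ds \ \text{in case (ii)}.
\end{equation*}
Then $\ds\eta = \beta$ follows automatically: $(\ds\eta)' = \ds\alpha = \beta'$ forces $\ds\eta - \beta$ to be $r$-independent, and both sides vanish at the relevant endpoint of $r$.

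For the $C^0$ estimate, the relations~\eqref{formsconemetriceq2} convert $|\omega|_{\gc} = O(r^\lambda)$ into $|\alpha|_{\gs} = O(r^{\lambda+k-1})$ and $|\beta|_{\gs} = O(r^{\lambda+k})$.  In case (i) the exponent $\lambda+k-1 > -1$ guarantees convergence at $0$; in case (ii), $\lambda+k-1 < -1$ guarantees convergence at $\infty$.  A direct bound on the integral then yields $|\eta|_{\gs} = O(r^{\lambda+k})$ provided $\lambda + k \neq 0$ (the stated condition $\lambda \neq -1$), and~\eqref{formsconemetriceq2} again produces $|\Omega|_{\gc} = r^{-(k-1)}|\eta|_{\gs} = O(r^{\lambda+1})$, as claimed.

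For the higher-derivative statement I would argue by scaling, using the dilation equivariance~\eqref{dilationequiveq} and the pointwise identity~\eqref{scalemetriceq} (which holds for covariant tensors of degree $k$ with exactly the same exponent, as a direct check on $dr$ and on $\Sigma$-forms shows).  Set $\omega_t := t^{-(k+\lambda)}\mathbf{t}^*\omega$ and similarly $\Omega_t := t^{-(k+\lambda)}\mathbf{t}^*\Omega$; since radial integration commutes with $\mathbf{t}^*$ up to the factor $t$ coming from $d(tr) = t\,dr$, the rescaled $\Omega_t$ is the primitive of $\omega_t$ built by the same recipe.  The hypothesis $|\nabc^j\omega|_{\gc} = O(r^{\lambda-j})$ becomes a \emph{uniform-in-$t$} bound $|\nabc^j\omega_t|_{\gc} = O(r^{\lambda-j})$, and applying the $C^0$ part just proved to $\omega_t$ gives $|\Omega_t|_{\gc} = O(r^{\lambda+1})$ uniformly on a fixed annulus $A = \{1 \le r \le 2\}$; the same integral formula, differentiated in $r$ and in $\Sigma$-directions, upgrades this to $|\nabc^j\Omega_t|_{\gc} = O(r^{\lambda+1-j})$ uniformly on $A$.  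Undoing the rescaling translates into $|\nabc^j\Omega|_{\gc} = O(r^{\lambda+1-j})$ on the original cone.  The two intervals $0 < \lambda < 1$ and $\lambda < -1$ are exactly those on which every such integral converges uniformly at the relevant endpoint without producing a borderline logarithmic term or an obstruction from the opposite endpoint.

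The main obstacle I expect is making this scaling/integration step rigorous.  The $C^0$ computation is elementary, but a direct induction on $j$ for $\nabc^j\eta$ is awkward because each application of $\nabc$ mixes $\partial_r$, $\nabs$, and cone Christoffel symbols of size $1/r$; the rescaling device sidesteps this by reducing the non-compact estimate to a uniform estimate on a fixed compact annulus, and the stated $\lambda$-ranges are precisely the ones for which this uniform estimate is available.
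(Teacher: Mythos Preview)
Your construction of $\Omega$ by radial integration and the $C^0$ estimate are essentially identical to the paper's proof: the paper writes $\omega = dr\wedge\alpha + \beta$, sets $\Omega(r,\sigma) = \int_0^r\alpha(t,\sigma)\,dt$ (respectively $\int_\infty^r$), and checks $d\Omega = \omega$ using $\ds\alpha = \beta'$ together with $\lim\beta = 0$ at the relevant endpoint. One small slip: the condition you need for $|\eta|_{\gs} = O(r^{\lambda+k})$ is $\lambda + k \neq 0$, which is \emph{not} the same as the paper's stated condition $\lambda \neq -1$; fortunately $\lambda + k \neq 0$ is already forced by the hypothesis $\lambda > -k$ or $\lambda < -k$, so this does not affect your argument.

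For the derivative estimates the paper simply declares the result ``a simple exercise'' (direct differentiation under the integral sign, keeping track of the $O(1/r)$ Christoffel terms in $\nabc$). Your dilation argument is a genuinely different packaging: pulling back by $\mathbf{t}$ turns the decay hypothesis into uniform-in-$t$ $C^j$ bounds on $\omega_t$, and since radial integration commutes with $\mathbf{t}^*$ up to a power of $t$, the primitive $\Omega_t$ inherits uniform bounds on a fixed annulus where the cone Christoffels are harmless; rescaling back gives the claim. This is correct and arguably more conceptual than the brute-force bookkeeping the paper has in mind, since it isolates exactly why the argument works (the cone connection is dilation-invariant). The trade-off is that you still have to differentiate the integral once on the annulus, so the saving is organisational rather than absolute; and one must check, as you implicitly do, that the uniform decay of $\alpha_t$ on the whole ray (not just on the annulus) persists so that the integral from the endpoint remains uniformly controlled.
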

\begin{proof}
Write $\omega(r, \sigma) = dr \wedge \alpha (r, \sigma) + \beta (r, \sigma)$, where $\alpha$ and $\beta$ are a $(k-1)$-form and a $k$-form on $\Sigma$, respectively. Since $d\omega = 0$, by equation~\eqref{conedeq} we have
\begin{equation} \label{exacttempeq}
\ds \beta = 0, \qquad \qquad \ds \alpha = \beta'.
\end{equation}
For case i) let us define
\begin{equation*}
\Omega(r, \sigma) = \int_0^r  \alpha (t, \sigma)dt.
\end{equation*}
This integral converges because $|\Omega(r, \sigma)|_{\gs} \leq \int_0^r |\alpha(t, \sigma)|_{\gs} dt \leq C \int_0^r t^{k-1 + \lambda} dt < \infty$, where we have used~\eqref{formsconemetriceq2} and the facts that $|\alpha(r,\sigma)|_{\gc(r, \sigma)} \leq C r^{\lambda}$ and $\lambda + k > 0$ by hypothesis. Note that these hypotheses also show that
\begin{equation} \label{exacttempeq1b}
\lim_{r \to 0} \beta(r, \sigma) \, = \, 0.
\end{equation}
It is clear (since we assume $\lambda \neq -1$) that $|\Omega|_{\gc} = O(r^{\lambda + 1})$, and so in particular also
\begin{equation} \label{exacttempeq2}
|d\Omega|_{\gc} = O(r^{\lambda}).
\end{equation}
Now we compute that
\begin{align*}
d\Omega (r, \sigma) & = dr \wedge \alpha (r, \sigma) + \int_{r_0}^r \ds \alpha (t, \sigma) dt \\ & = dr \wedge \alpha (r, \sigma) + \int_{r_0}^r \beta' (t, \sigma) dt = dr \wedge \alpha (r, \sigma) + \beta (r, \sigma) - \lim_{r \to 0} \beta(r, \sigma) \\ & = \omega(r, \sigma)
\end{align*}
using~\eqref{exacttempeq} and~\eqref{exacttempeq1b}. The fact that $|\nabc^j \Omega|_{\gc} = O(r^{\lambda + 1- j})$ if we know that $|\nabc^j \omega|_{\gc} = O(r^{\lambda - j})$ is a simple exercise. The constraints on $\lambda$ are sufficient to ensure no logarithmic terms arise from having to integrate $t^{-1}$. The proof of case ii) is analogous, defining in this case $\Omega(r, \sigma) = \int_{\infty}^r  \alpha (t, \sigma)dt$ and using the fact that $\lambda + k < 0$ to ensure that the integral converges and that $\lim_{r \to \infty} \beta(r, \sigma) = 0$.
\end{proof}

\begin{defn} \label{homoforms}
We say that a smooth $k$ form $\omega$ on $C'$ is \emph{homogeneous of order} $\lambda$ if
\begin{equation} \label{homoformseq1}
\omega = r^{\lambda} \left( r^{k-1}dr \wedge \alpha + r^k \beta \right)
\end{equation}
where $\alpha$ and $\beta$ are forms on $\Sigma$, independent of $r$. Then
we see that
\begin{equation*}
| \mathbf{t}^* (\omega) (r, \sigma)|_{\gc(tr, \sigma)} = | t^{\lambda + k} \omega (r, \sigma) |_{\gc(tr, \sigma)} =  t^{\lambda + k} t^{-k} | \omega (r, \sigma) |_{\gc(r, \sigma)},
\end{equation*}
which we can write more concisely as
\begin{equation} \label{homoformseq2}
\mathbf{t}^* |\omega|_{\gc} = t^{\lambda} |\omega|_{\gc},
\end{equation}
so the function $|\omega|_{\gc}$ on $C'$ is homogeneous of order $\lambda$ in the variable $r$ in the usual sense.
\end{defn}
\begin{rmk} \label{homoformsfirstrmk}
It is easy to see that a homogeneous $k$-form $\omega$ of order $\lambda$ is dilation-equivariant ($\mathbf{t}^*(\omega) = t^k \omega)$ if $\lambda = 0$. Similarly it is \emph{dilation-invariant} ($\mathbf{t}^*(\omega) = \omega)$ if $\lambda = - k$.
\end{rmk}
\begin{rmk} \label{homoformsstarrmk}
It follows directly from~\eqref{conestareq} that if $\omega$ is homogeneous of order $\lambda$, then $\stc \omega$ is also homogenous of the same order $\lambda$.
\end{rmk}

Using equations~\eqref{conedeq},~\eqref{conedleq}, and~\eqref{conelaplacianeq}, for a homogenous $k$-form $\omega = r^{\lambda} \left( r^{k-1}dr \wedge \alpha + r^k \beta \right)$ of order $\lambda$, we find that:
\begin{align} \label{homodeq}
\dc \omega = \, & \, r^{\lambda + k - 1} dr \wedge ( (\lambda + k) \beta - \ds \alpha) + r^{\lambda + k} \ds \beta, \\ \label{homodleq} \dsc \omega = \, & \, r^{\lambda + k - 3} dr \wedge ( -\dss \alpha) + r^{\lambda + k - 2} ( -(\lambda - k + 7) \alpha + \dss \beta ),
\end{align}
\begin{equation} \label{homolapeq}
\begin{aligned} 
\lapc \omega & \, = \, r^{\lambda + k - 3} dr \wedge \left( \laps \alpha  - (\lambda + k - 2)(\lambda - k + 7) \alpha - 2 \dss \beta \right) \\ & \qquad {} + r^{\lambda + k - 2} \left( \laps \beta  - (\lambda + k)(\lambda - k + 5) \beta - 2 \ds \alpha \right).
\end{aligned}
\end{equation}

In Section~\ref{obssubsection},  we will need to consider the possible order $\lambda$ of a homogeneous $k$-form $\omega_k$ on a cone $C$ which is in the kernel of $\lapc$, or of a mixed degree form $\omega = \sum_{k=0}^7 \omega_k$ which is in the kernel of $\dc + \dsc$. The fact that we will need to use repeatedly in the rest of this section is that the Laplacian $\laps$ on the link $\Sigma$ has non-negative eigenvalues, so whenever we have an expression of the form $\laps \g = \mu \g$ for some $\mu < 0$ and $\g$ a form on $\Sigma$, we must have $\g = 0$.

\begin{prop} \label{homolapexcludeprop}
Let $\omega$ be a homogeneous $k$-form of order $\lambda$ which is harmonic on the cone: $\lapc \omega = 0$. Then we have:
\begin{align} \label{k07lapexcludeeq}
& \text{For } k = 0, 7, \qquad \qquad \omega = 0 \, \, \text{if } \, \lambda \in (-5,0), & \\ \label{k16lapexcludeeq} & \text{For } k = 1, 6, \qquad \qquad \omega = 0 \, \, \text{if } \, \lambda \in (-4,-1), & \\ \label{k25lapexcludeeq} & \text{For } k = 2, 5, \qquad \qquad \omega = 0 \, \, \text{if } \, \lambda \in (-3,-2). &
\end{align}
This gives an excluded range of orders of homogeneity for harmonic forms on the cone $C$.
\end{prop}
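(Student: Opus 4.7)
My plan is to substitute the homogeneous ansatz~\eqref{homoformseq1} into $\lapc \omega = 0$ using the explicit Laplacian formula~\eqref{homolapeq}, which reduces the problem to a system on the link $\Sigma$ for the forms $\alpha$ and $\beta$. By Remark~\ref{homoformsstarrmk}, the operator $\stc$ preserves the order of homogeneity and intertwines degree $k$ with degree $7 - k$, and since it commutes with $\lapc$, the cases $k = 5, 6, 7$ reduce to $k = 2, 1, 0$ respectively, so it suffices to treat $k \in \{0, 1, 2\}$. For $k = 0$ the ansatz collapses to $\omega = r^\lambda \beta$ with $\beta$ a function on $\Sigma$, and~\eqref{homolapeq} reduces to $\laps \beta = \lambda(\lambda + 5) \beta$; since $\laps$ has nonnegative spectrum on functions, a nonzero $\beta$ requires $\lambda(\lambda + 5) \geq 0$, which fails precisely on the open interval $(-5, 0)$.

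For $k \geq 1$, equation~\eqref{homolapeq} gives the coupled system
\begin{align*}
(\laps - c_1) \alpha & = 2 \dss \beta, \\
(\laps - c_2) \beta & = 2 \ds \alpha,
\end{align*}
with $c_1 = (\lambda + k - 2)(\lambda - k + 7)$ and $c_2 = (\lambda + k)(\lambda - k + 5)$. I would attack this via Hodge decomposition on $\Sigma$, splitting $\alpha$ and $\beta$ into their harmonic, exact, and coexact parts and further decomposing the nonharmonic pieces into joint $\mu$-eigenspaces of $\laps$. Matching types in the $\mu = 0$ sector forces $c_1 \alpha^h = 0 = c_2 \beta^h$, while for $\mu > 0$ the coupling survives only on the coexact part of $\alpha$ and the exact part of $\beta$; using $\ds \dss = \laps$ on exact eigenforms (and dually $\dss \ds = \laps$ on coexact ones), one finds that a nonzero solution at eigenvalue $\mu > 0$ forces the characteristic equation
\begin{equation*}
\mu^2 - (c_1 + c_2 + 4) \mu + c_1 c_2 = 0.
\end{equation*}

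The crux is that this quadratic factors cleanly, with roots
\begin{equation*}
\mu_1 = (\lambda + k - 2)(\lambda - k + 5), \qquad \mu_2 = (\lambda + k)(\lambda - k + 7),
\end{equation*}
as one verifies directly via Vieta's formulas by expanding $\mu_1 + \mu_2$ and $\mu_1 \mu_2$. For each excluded range in the statement one then checks by inspection that $c_1$ and $c_2$ are both nonzero (killing any harmonic contribution) and that $\mu_1$ and $\mu_2$ are both strictly negative, each being the product of one negative and one positive linear factor. Since $\laps$ has nonnegative spectrum on $\Sigma$, no $\mu > 0$ solution can occur, forcing $\alpha = \beta = 0$ and hence $\omega = 0$. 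I expect the main obstacle to be the type-matching bookkeeping in the Hodge decomposition step, ensuring that the coupling really does reduce to precisely this factored quadratic; once $\mu_1$ and $\mu_2$ are identified, the case-by-case verification for each $(k, \lambda)$ pair is a short arithmetic check.
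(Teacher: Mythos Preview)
Your approach is correct and takes a genuinely different, more structural route than the paper. The paper proceeds case-by-case: for $k \geq 1$ it applies $\ds$ and $\dss$ to the coupled equations, discovers by hand a particular linear combination (for $k=1$, the form $\ds\alpha_1 + (\lambda+4)\beta_1$) that satisfies an eigenvalue equation with negative eigenvalue, and then back-substitutes to decouple $\alpha$ and $\beta$. Your Hodge-decomposition argument on $\Sigma$ is cleaner and uniform in $k$: it isolates the only genuinely coupled sector (coexact $\alpha$ with exact $\beta$) and reduces everything to the factored quadratic in $\mu$, whose roots $\mu_1 = (\lambda+k-2)(\lambda-k+5)$ and $\mu_2 = (\lambda+k)(\lambda-k+7)$ you correctly identify via Vieta. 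This also explains \emph{why} the paper's ad hoc combinations work: they are precisely the eigenvectors of the $2\times 2$ system at these $\mu$-values.

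One small point you glossed over: besides the harmonic sector and the coupled sector, there remain the \emph{decoupled non-harmonic} pieces, namely the exact part of $\alpha$ and the coexact part of $\beta$. Projecting the equations onto these subspaces gives $(\laps - c_1)\alpha^{\mathrm{ex}} = 0$ and $(\laps - c_2)\beta^{\mathrm{coex}} = 0$, and since these pieces lie in the strictly positive spectrum of $\laps$, you need $c_1 < 0$ and $c_2 < 0$ (not merely $c_1, c_2 \neq 0$) to force them to vanish. This does hold on each stated interval, by the same ``one negative factor, one positive factor'' check you apply to $\mu_1, \mu_2$, but it should be stated explicitly alongside the other verifications. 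With that addition your argument is complete.
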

\begin{proof}
Since the star operator $\stc$ commutes with the Laplacian $\lapc$ and preserves the order of homogeneity by Remark~\ref{homoformsstarrmk}, it suffices to check the cases $k=0,1,2$. Let $k=0$ in equation~\eqref{homolapeq}, and noting that $\alpha_0 = 0$ and $\dss \beta_0 = 0$, we see that $\lapc \omega = 0$ implies
\begin{equation*}
\laps \beta_0 = \lambda(\lambda + 5) \beta_0,
\end{equation*}
and the result now follows for $\lambda \in (-5,0)$. We need to work a little harder for $k=1$ and $k=2$. 

For $k=1$, we get the following system of equations:
\begin{equation} \label{k16lapexcludetempeq}
\laps \alpha_1 = (\lambda-1) (\lambda+6) \alpha_1 + 2 \dss \beta_1, \qquad \qquad \laps \beta_1 = (\lambda+1) (\lambda+4) \beta_1 + 2 \ds \alpha_1.
\end{equation}
Now suppose that $\lambda \in (-4,-1)$. If we take $\ds$ of the second equation, since $\laps$ commutes with $\ds$, we get $\laps (\ds \beta_1) = (\lambda + 1) (\lambda + 4) \ds \beta_1$ and hence $\ds \beta_1 = 0$. Similarly taking $\dss$ of the first equation gives $\dss \alpha_1 = 0$. Also, taking $\ds$ of the first equation gives
\begin{equation*}
\laps (\ds \alpha_1) = (\lambda-1)(\lambda+6) \ds \alpha_1 + 2 \laps \beta_1 = (\lambda^2 + 5 \lambda - 2) \ds \alpha_1 + (2 \lambda^2 + 10\lambda + 8) \beta_1.
\end{equation*}
Now an easy calculation reveals that
\begin{equation*}
\laps ( \ds \alpha_1 + (\lambda + 4) \beta_1  ) = (\lambda + 1)(\lambda + 6) ( \ds \alpha_1 + (\lambda + 4) \beta_1),
\end{equation*}
and hence $\ds \alpha_1 + (\lambda + 4) \beta_1 = 0$ for $\lambda \in (-4, -1)$. This relation can now be substituted into each equation in~\eqref{k16lapexcludetempeq} to yield:
\begin{equation*}
\laps \alpha_1 = (\lambda-1) (\lambda+4) \alpha_1, \qquad \qquad \laps \beta_1 = (\lambda-1) (\lambda+4) \beta_1,
\end{equation*}
from which it follows that $\alpha_1 = 0$ and $\beta_1 = 0$.

The proof for $k=2$ is analogous, but we present it for completeness. We get the following system of equations:
\begin{equation} \label{k25lapexcludetempeq}
\laps \alpha_2 = \lambda (\lambda+5) \alpha_2 + 2 \dss \beta_2, \qquad \qquad \laps \beta_2 = (\lambda+2) (\lambda+3) \beta_2 + 2 \ds \alpha_2.
\end{equation}
Suppose that $\lambda \in (-3,-2)$. We get $\laps (\ds \beta_2) = (\lambda + 2) (\lambda + 3) \ds \beta_2$, so $\ds \beta_2 = 0$. Similarly taking $\dss$ of the first equation gives $\dss \alpha_2 = 0$. We also have
\begin{equation*}
\laps (\ds \alpha_2) = \lambda(\lambda+5) \ds \alpha_2 + 2 \laps \beta_2 = (\lambda^2 + 5 \lambda + 4) \ds \alpha_2 + (2 \lambda^2 + 10\lambda + 12) \beta_2.
\end{equation*}
This time one can check that
\begin{equation*}
\laps ( \ds \alpha_2 + (\lambda + 3) \beta_2  ) = (\lambda + 2)(\lambda + 5) ( \ds \alpha_2 + (\lambda + 3) \beta_2),
\end{equation*}
and hence $\ds \alpha_2 + (\lambda + 3) \beta_2 = 0$ for $\lambda \in (-3, -2)$. This relation can now be substituted into each equation in~\eqref{k25lapexcludetempeq} to yield:
\begin{equation*}
\laps \alpha_2 = \lambda (\lambda+3) \alpha_2, \qquad \qquad \laps \beta_2 = \lambda (\lambda+3) \beta_2,
\end{equation*}
from which it follows that $\alpha_2 = 0$ and $\beta_2 = 0$.

The reason we have gone through all three cases carefully is the following. The reader may now be tempted to try the same trick to get an excluded range of orders for homogeneous harmonic $3$-forms. It does not take long to verify that this procedure breaks down for $k=3$. See Corollary~\ref{homodiracexcludecor}, however, for what we can say about homogeneous $3$-forms.
\end{proof}

To study the operator $\dc + \dsc$ on a cone, we begin with the following lemma.
\begin{lemma} \label{homofirstlemma}
Let $\omega = \sum_{k=0}^7 \omega_k$, where $\omega_k = r^{\lambda + k - 1} dr \wedge \alpha_k
+ r^{\lambda + k} \beta_k$ is a homogeneous $k$-form of order $\lambda$. Then if $\omega$ is in the kernel of $\dc + \dsc$, the following equations hold:
\begin{align} \label{diracclosedeq1}
(\lambda + k - 1) \beta_{k - 1} & \, = \, \ds \alpha_{k-1} + \dss \alpha_{k+1}, \\ \label{diracclosedeq2} (\lambda - k + 6) \alpha_{k+1} & \, = \, \ds \beta_{k-1} + \dss \beta_{k+1},
\end{align}
for all $k= 0, \ldots, 7$.
\end{lemma}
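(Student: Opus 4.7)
The plan is to interpret the equation $(\dc + \dsc)\omega = 0$ by decomposing it into homogeneous pieces of each degree and separating the $dr$-component from the tangential component on $\Sigma$. Everything reduces to plugging the formulas from Proposition~\ref{coneformulasprop} (or rather their homogeneous versions~\eqref{homodeq} and~\eqref{homodleq}) into the sum and matching coefficients.

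First I would record the type-shifting behaviour. For $\omega_k = r^{\lambda+k-1}dr\wedge\alpha_k + r^{\lambda+k}\beta_k$, equation~\eqref{homodeq} shows that $\dc\omega_k$ is a $(k+1)$-form of homogeneity order $\lambda-1$, and equation~\eqref{homodleq} shows that $\dsc\omega_k$ is a $(k-1)$-form of the same order $\lambda-1$. Consequently, the component of $(\dc+\dsc)\omega$ of pure degree $k$ is exactly
\begin{equation*}
\dc\omega_{k-1} + \dsc\omega_{k+1},
\end{equation*}
with the convention that $\omega_{-1}=\omega_8=0$. Since $\omega\in\ker(\dc+\dsc)$, this must vanish for each $k=0,\ldots,7$.

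Next I would substitute. Using~\eqref{homodeq} with $k$ replaced by $k-1$ gives
\begin{equation*}
\dc\omega_{k-1} = r^{\lambda+k-2}\,dr\wedge\bigl((\lambda+k-1)\beta_{k-1}-\ds\alpha_{k-1}\bigr) + r^{\lambda+k-1}\,\ds\beta_{k-1},
\end{equation*}
and using~\eqref{homodleq} with $k$ replaced by $k+1$ gives
\begin{equation*}
\dsc\omega_{k+1} = r^{\lambda+k-2}\,dr\wedge(-\dss\alpha_{k+1}) + r^{\lambda+k-1}\bigl(-(\lambda-k+6)\alpha_{k+1} + \dss\beta_{k+1}\bigr).
\end{equation*}
Both pieces are manifestly homogeneous of order $\lambda-1$ and live in degree $k$, so the sum equals zero precisely when the $dr$-part and the purely tangential part each vanish. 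Setting the $dr$-coefficient to zero yields~\eqref{diracclosedeq1}, and setting the tangential coefficient to zero yields~\eqref{diracclosedeq2}.

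There is no real obstacle here; the result is essentially a bookkeeping exercise in index juggling. The only thing one needs to be careful about is the shift in the exponent of $r$ (since $\dc$ and $\dsc$ each lower the order by one while changing the degree by $\pm 1$), and the convention that $\omega_{-1}$ and $\omega_8$ are zero so that the boundary cases $k=0$ and $k=7$ of~\eqref{diracclosedeq1}--\eqref{diracclosedeq2} carry no information beyond what is already encoded in the other relations.
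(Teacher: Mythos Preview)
Your proof is correct and takes essentially the same approach as the paper: both apply the homogeneous formulas~\eqref{homodeq} and~\eqref{homodleq} and separate the $dr$-component from the tangential component. Your organization is slightly cleaner in that you isolate the degree-$k$ part of $(\dc+\dsc)\omega$ as $\dc\omega_{k-1}+\dsc\omega_{k+1}$ from the outset, whereas the paper writes out the full sum over $k$ and then reindexes to group terms by degree; but the substance of the computation is identical.
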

\begin{proof}
By~\eqref{homodeq} and~\eqref{homodleq}, we can compute that
\begin{align*}
(\dc + \dsc) \omega \, = \, & \sum_{k=0}^7 \left( r^{\lambda + k - 1} dr \wedge ( (\lambda + k) \beta_k - \ds \alpha_k) + r^{\lambda + k} \ds \beta_k  \right) \\ \, & \, {} + \sum_{k=0}^7 \left( r^{\lambda + k - 3} dr \wedge ( -\dss \alpha_k) + r^{\lambda + k - 2} ( -(\lambda - k + 7) \alpha_k + \dss \beta_k)) \right) \\ 
\, = \, & \sum_{l=1}^8 \left( r^{\lambda + l - 2} dr \wedge ( (\lambda + l - 1) \beta_{l-1} - \ds \alpha_{l-1}) + r^{\lambda + l - 1} \ds \beta_{l-1}  \right) \\ \, & \, {} + \sum_{l=-1}^6 \left( r^{\lambda + l - 2} dr \wedge ( -\dss \alpha_{l+1}) + r^{\lambda + l - 1} ( -(\lambda - l + 6) \alpha_{l+1} + \dss \beta_{l+1})) \right).
\end{align*}
Using the fact that $\alpha_0 = 0$ and $\beta_7 = 0$, and $\ds \alpha_7 = 0$ and $\dss \beta_0 = 0$, both these sums can be taken from $l = 0$ to $7$. Relabelling the $l$ to a $k$ again, and combining terms, we find that
\begin{align*} 
(\dc + \dsc) \omega \, = \, & \sum_{k=0}^7 \left( r^{\lambda + k - 2} dr \wedge ( (\lambda + k - 1) \beta_{k - 1} - \ds \alpha_{k-1} - \dss \alpha_{k+1}) \right) \\ \, & \, {} + \sum_{k=0}^7 \left( r^{\lambda + k - 1} ( -(\lambda - k + 6) \alpha_{k+1} + \dss \beta_{k+1} + \ds \beta_{k-1}) \right)
\end{align*}
where it is understood that $\alpha_k$ and $\beta_k$ vanish when $k < 0$ or $k > 7$.
Setting this expression equal to zero yields equations~\eqref{diracclosedeq1} and~\eqref{diracclosedeq2}.
\end{proof}

\begin{cor} \label{homodiracexcludecor}
Suppose that $\omega$ is a homogeneous $k$-form of order $\lambda$ which is closed and coclosed: $\dc \omega = 0$, $\dsc \omega = 0$. Then we have:
\begin{align} \label{k07diracexcludeeq}
& \text{For } k = 0, 7, \qquad \qquad \omega = 0 \, \, \text{if } \, \lambda \in (-7,0), & \\ \label{k16diracexcludeeq} & \text{For } k = 1, 6, \qquad \qquad \omega = 0 \, \, \text{if } \, \lambda \in (-6,-1), & \\ \label{k25diracexcludeeq} & \text{For } k = 2, 5, \qquad \qquad \omega = 0 \, \, \text{if } \, \lambda \in (-5,-2), & \\ \label{k34diracexcludeeq} & \text{For } k = 3, 4, \qquad \qquad \omega = 0 \, \, \text{if } \, \lambda \in (-4,-3). &
\end{align}
This gives an excluded range of orders of homogeneity for closed and coclosed forms on the cone $C$.
\end{cor}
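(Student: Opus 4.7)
The key observation is that closed and coclosed is a strictly stronger condition than harmonic, so one expects the exclusion ranges to widen beyond those of Proposition \ref{homolapexcludeprop}; indeed, the pair of first-order equations $\dc \omega = \dsc \omega = 0$ should yield a cleaner scalar eigenvalue equation on $\Sigma$ and, crucially, cover the case $k = 3$ for which the Laplacian argument broke down. Since $\stc$ intertwines $\dc$ with $\pm \dsc$ and preserves the order of homogeneity by Remark \ref{homoformsstarrmk}, it sends a closed and coclosed homogeneous $k$-form of order $\lambda$ to one of the same order in degree $7 - k$. As the stated exclusion intervals are invariant under $k \leftrightarrow 7 - k$, it suffices to prove the result for $k = 0, 1, 2, 3$.

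For such $k$, the plan is to apply Lemma \ref{homofirstlemma} with $\omega$ a pure $k$-form, so that $\alpha_j = \beta_j = 0$ whenever $j \neq k$. Of the systems \eqref{diracclosedeq1}--\eqref{diracclosedeq2}, only the indices $k \pm 1$ produce nontrivial relations, and they together reduce to the four equations
\begin{equation*}
\ds \beta_k = 0, \qquad \dss \alpha_k = 0, \qquad \ds \alpha_k = (\lambda + k)\, \beta_k, \qquad \dss \beta_k = (\lambda - k + 7)\, \alpha_k
\end{equation*}
on $\Sigma$, expressing the forms $\alpha_k, \beta_k$ as an interlocked pair.

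Applying $\dss$ to the third equation and using $\laps = \ds \dss + \dss \ds$ together with $\dss \alpha_k = 0$ yields $(\lambda + k)\, \dss \beta_k = \laps \alpha_k$; substituting the fourth equation produces
\begin{equation*}
\laps \alpha_k \, = \, (\lambda + k)(\lambda - k + 7)\, \alpha_k,
\end{equation*}
and the symmetric manipulation applied to the fourth equation gives the same eigenvalue equation for $\beta_k$. Because $\laps$ is non-negative on forms over the compact manifold $\Sigma$, whenever $\mu(\lambda) := (\lambda + k)(\lambda - k + 7)$ is strictly negative both $\alpha_k$ and $\beta_k$ must vanish, and hence $\omega = 0$. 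The quadratic $\mu$ has roots $\lambda = -k$ and $\lambda = k - 7$, and is negative exactly on the open interval between them: for $k = 0, 1, 2, 3$ these are $(-7, 0)$, $(-6, -1)$, $(-5, -2)$, $(-4, -3)$, matching the claim.

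The main obstacle is essentially organizational: correctly specializing Lemma \ref{homofirstlemma} to a pure $k$-form and chaining the four first-order relations into a single scalar eigenvalue identity. All substantive input has already been absorbed into that lemma and into the non-negativity of $\laps$, so no further $\G$~specific geometry enters this step; the widening of ranges relative to Proposition \ref{homolapexcludeprop} is explained entirely by having access to both first-order conditions rather than only their composition.
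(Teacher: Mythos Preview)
Your proof is correct and follows essentially the same approach as the paper: both specialize Lemma~\ref{homofirstlemma} to a pure $k$-form to obtain the four first-order relations on $\Sigma$, and both arrive at the eigenvalue equation $\laps \alpha_k = (\lambda+k)(\lambda-k+7)\,\alpha_k$ (and likewise for $\beta_k$), concluding via non-negativity of $\laps$. The only cosmetic difference is that the paper reaches this eigenvalue equation by substituting the first-order relations into the cone Laplacian formula~\eqref{homolapeq}, whereas you derive it directly from $\laps = \ds\dss + \dss\ds$ applied to the first-order relations themselves; your route is marginally more self-contained, and the $\stc$-reduction to $k\leq 3$ is harmless but unnecessary since the argument is uniform in $k$.
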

\begin{proof}
Let $\alpha_l=0$ and $\beta_l = 0$ for all $l \neq k$ in equations~\eqref{diracclosedeq1} and~\eqref{diracclosedeq2}. Thus we have that $\dc\omega = 0$ and $\dsc\omega = 0$ together imply the following equations:
\begin{equation} \label{homolaptempeq}
\ds \alpha_k = (\lambda + k) \beta_k, \qquad \dss \beta_k = (\lambda - k + 7) \alpha_k, \qquad \dss \alpha_k = 0, \qquad \ds \beta_k = 0.
\end{equation}
Since $\omega$ is closed and coclosed, it is also harmonic: $\lapc \omega = 0$. Therefore, equation~\eqref{homolapeq} shows that
\begin{align*}
\laps \alpha_k  & \, = \, (\lambda + k - 2)(\lambda - k + 7) \alpha_k + 2 \dss \beta_k, \\ \laps \beta_k & \, = \, (\lambda + k)(\lambda - k + 5) \beta_k + 2 \ds \alpha_k.
\end{align*}
Now substituting in the relations from~\eqref{homolaptempeq} and simplifying, we obtain
\begin{equation*}
\laps \alpha_k = (\lambda + k) (\lambda - k + 7) \alpha_k, \qquad \qquad \laps \beta_k = (\lambda + k) (\lambda - k + 7) \beta_k.
\end{equation*}
Since the Laplacian has non-negative eigenvalues, we see that both $\alpha_k$ and $\beta_k$ must vanish (and hence $\omega = 0$) if $(\lambda + k)(\lambda - k + 7) < 0$, which occurs exactly when $\lambda$ is between $-k$ and $k-7$.
\end{proof}

\begin{rmk} \label{diracexcludermk}
Corollary~\ref{homodiracexcludecor} should be compared to Proposition~\ref{homolapexcludeprop}. Closed and coclosed forms are harmonic, but not always conversely. The above result says that when the form is known to be closed and coclosed, we can get a bigger range of excluded orders of homogeneity. Also, Proposition~\ref{homolapexcludeprop} tells us nothing about $3$-forms.
\end{rmk}

\begin{cor} \label{homocccor}
Suppose that $\omega$ is a homogeneous $k$-form of order $-k$ which is closed and coclosed: $\dc \omega = 0$, $\dsc \omega = 0$. Then $\omega = \beta$, where $\beta$ is a harmonic $k$-form on the link $\Sigma$: $\laps \beta = 0$.
\end{cor}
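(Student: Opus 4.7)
The plan is to specialize Lemma \ref{homofirstlemma} and the manipulations from the proof of Corollary \ref{homodiracexcludecor} to the boundary order $\lambda = -k$. First I would write the homogeneous $k$-form explicitly as $\omega = r^{-1} dr \wedge \alpha + \beta$, where $\alpha$ is an $r$-independent $(k-1)$-form and $\beta$ is an $r$-independent $k$-form on $\Sigma$; this is simply equation \eqref{homoformseq1} with $\lambda = -k$.

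Next I would substitute $\lambda = -k$ into the four relations \eqref{homolaptempeq} extracted from $\dc\omega = 0$ and $\dsc\omega = 0$. Because $\lambda + k = 0$ while $\lambda - k + 7 = 7 - 2k$, these collapse to
\begin{equation*}
\ds \alpha = 0, \qquad \ds \beta = 0, \qquad \dss \alpha = 0, \qquad \dss \beta = (7 - 2k)\,\alpha.
\end{equation*}

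Since $\omega$ is closed and coclosed it is harmonic, so the computation at the end of the proof of Corollary \ref{homodiracexcludecor} gives $\laps \alpha = (\lambda+k)(\lambda-k+7)\alpha = 0$ and similarly $\laps \beta = 0$. Thus $\alpha$ and $\beta$ are each harmonic forms on the compact link $\Sigma$, and in particular $\beta$ is coclosed on $\Sigma$, $\dss \beta = 0$. Combining this with $\dss \beta = (7 - 2k)\alpha$ and noting that $7 - 2k$ is a nonzero integer for every $k \in \{0,1,\ldots,7\}$, we conclude $\alpha = 0$, so $\omega = \beta$ with $\beta$ a harmonic $k$-form on $\Sigma$, as claimed.

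There is no real obstacle here; the only conceptual subtlety is that $\lambda = -k$ is precisely the boundary order at which the excluded-range argument of Corollary \ref{homodiracexcludecor} breaks down, since the eigenvalue factor $(\lambda+k)(\lambda-k+7)$ vanishes. At that boundary order we cannot force both $\alpha$ and $\beta$ to be zero, but the residual algebraic relation $\dss \beta = (7-2k)\alpha$, combined with the harmonicity of $\beta$ on $\Sigma$, is exactly strong enough to kill $\alpha$ and leave $\beta$ harmonic on the link.
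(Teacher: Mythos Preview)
Your proof is correct and follows essentially the same route as the paper. The only cosmetic difference is the order in which Hodge theory is invoked: the paper observes directly from $\ds\alpha = \dss\alpha = 0$ that $\alpha$ is harmonic, and then from $\dss\beta = (7-2k)\alpha$ that $\alpha$ is also coexact, hence zero; you instead first deduce $\laps\beta = 0$, conclude $\dss\beta = 0$, and then read off $\alpha = 0$ from the same relation. Both arguments use the same four relations at $\lambda = -k$ and the same appeal to Hodge theory on the compact link, so they are equivalent.
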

\begin{proof}
We substitute $\lambda = - k$ in the relations~\eqref{homolaptempeq} to get
\begin{equation*}
\ds \alpha_k = 0, \qquad \dss \beta_k = (7- 2k) \alpha_k, \qquad \dss \alpha_k = 0, \qquad \ds \beta_k = 0.
\end{equation*}
Since $\Sigma$ is compact and oriented, we can use Hodge theory. The first and third equations above say that $\alpha_k$ is harmonic, but since $(7-2k)$ is never zero, the second equations says that $\alpha_k$ is also coexact. Thus $\alpha_k = 0$, and then the second and fourth equations say that $\beta_k$ is harmonic on $\Sigma$.
\end{proof}

The next three propositions are needed in Section~\ref{obssubsection} for the solution of the obstruction problem, and also in Section~\ref{ACgaugefixsec} for the asymptotic expansion of the $\G$~structure on an asymptotically conical $\G$~manifold.
\begin{prop} \label{dirachomokernelprop}
Let $\omega = \sum_{k=0}^3 \omega_{2k}$ be a mixed even-degree form on the cone $C' = (0, \infty) \times \Sigma$, which is homogeneous of order $\lambda = -3$. That is, each $\omega_{2k} = r^{2k-4} dr \wedge \alpha_{2k} + r^{2k-3} \beta_{2k}$, where $\alpha_{2k}$ and $\beta_{2k}$ are $(2k-1)$-forms and $2k$-forms on $\Sigma$, respectively, independent of $r$. If $(d + \dsc)(\omega) = 0$, then $\omega = dr \wedge \alpha_4$, where $\alpha_4$ is a harmonic $3$-form on $\Sigma$.
\end{prop}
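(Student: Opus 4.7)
The plan is to apply Lemma~\ref{homofirstlemma} with $\lambda = -3$ to convert $(d+\dsc)\omega=0$ into a finite system of coupled equations on the link $\Sigma$ relating the $r$-independent forms $\alpha_{2j},\beta_{2j}$, and then eliminate all of them except $\alpha_4$ using Hodge theory on the compact manifold $\Sigma$ together with the non-negativity of $\laps$. Since $\omega$ has no odd-degree component, I set $\alpha_k = \beta_k = 0$ for odd $k$; the $k$-even instances of~\eqref{diracclosedeq1}--\eqref{diracclosedeq2} then hold trivially, while with $\lambda=-3$, $\alpha_0=0$, and $\alpha_8=\beta_8=0$, the instances with $k=1,3,5,7$ become
\begin{align*}
-3\beta_0 &= \dss\alpha_2, & 2\alpha_2 &= \ds\beta_0 + \dss\beta_2, \\
-\beta_2 &= \ds\alpha_2 + \dss\alpha_4, & \ds\beta_2 + \dss\beta_4 &= 0, \\
\beta_4 &= \ds\alpha_4 + \dss\alpha_6, & -2\alpha_6 &= \ds\beta_4 + \dss\beta_6, \\
3\beta_6 &= \ds\alpha_6, & \ds\beta_6 &= 0.
\end{align*}

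First I would handle the low-degree end. Applying $\dss$ to $2\alpha_2 = \ds\beta_0 + \dss\beta_2$ and substituting $\dss\alpha_2 = -3\beta_0$ gives $\dss\ds\beta_0 = -6\beta_0$; since $\beta_0$ is a function, $\laps\beta_0 = \dss\ds\beta_0 = -6\beta_0$, which forces $\beta_0 = 0$. Then $\dss\alpha_2 = 0$ and $\dss\beta_2 = 2\alpha_2$; substituting $\ds\alpha_2 = -\beta_2 - \dss\alpha_4$ from the third equation into $\dss\ds\alpha_2$ yields $\laps\alpha_2 = -\dss\beta_2 = -2\alpha_2$, so $\alpha_2 = 0$. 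The third equation now reads $\beta_2 = -\dss\alpha_4$, exhibiting $\beta_2$ as coexact, while the fourth equation combined with $\dss\beta_2 = 0$ gives $\laps\beta_2 = \dss\ds\beta_2 = -\dss\dss\beta_4 = 0$, so $\beta_2$ is also harmonic; $L^2$-orthogonality of the Hodge decomposition then forces $\beta_2 = 0$.

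An entirely symmetric argument on the upper half of the system yields, in turn, $\laps\beta_6 = -6\beta_6$, then $\laps\alpha_6 = -2\alpha_6$, and finally shows that $\beta_4$ is simultaneously exact (by the fifth equation with $\alpha_6 = 0$) and coclosed (by the fourth with $\beta_2 = 0$), so $\beta_4 = 0$. What remains of the system then collapses to $\ds\alpha_4 = 0$ and $\dss\alpha_4 = 0$, hence $\alpha_4$ is a harmonic $3$-form on $\Sigma$ and $\omega = dr\wedge\alpha_4$, as required. The main obstacle is organisational rather than deep: $\lambda = -3$ is precisely the borderline value at which the coefficients $\lambda+k-1$ and $\lambda-k+6$ of~\eqref{diracclosedeq1}--\eqref{diracclosedeq2} vanish at $k=4$ and $k=3$, so Corollary~\ref{homodiracexcludecor} does not apply to the degree-$3$ and degree-$4$ pieces in isolation, and one must exploit the full coupling of the system above to isolate the harmonic $\alpha_4$.
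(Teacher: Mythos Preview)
Your proof is correct and follows essentially the same approach as the paper: both substitute $\lambda=-3$ into Lemma~\ref{homofirstlemma}, then eliminate the $\alpha_k$ and $\beta_k$ one by one using the non-negativity of $\laps$ and Hodge theory on $\Sigma$, leaving only the harmonic $3$-form $\alpha_4$. The only cosmetic difference is that the paper first derives general Laplacian identities $\laps\alpha_k = -(k-3)(k-4)\alpha_k - 2\ds\beta_{k-2}$ and $\laps\beta_k = -(k-3)(k-2)\beta_k + 2\ds\alpha_k$ and then runs through the indices linearly, whereas you compute each Laplacian ad hoc and work inward from both ends of the system; the content is the same.
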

\begin{proof}
Substituting $\lambda = -3$ into Lemma~\ref{homofirstlemma}, we obtain
\begin{equation}  \label{dirachomokerneltempeq}
(k-4) \beta_{k-1} = \ds \alpha_{k-1} + \dss \alpha_{k+1}, \qquad \qquad (3-k) \alpha_{k+1} = \ds \beta_{k-1} + \dss \beta_{k+1}.
\end{equation}
If we take $\ds$ and $\dss$ of these equations and relabel indices, we get
\begin{align*}
& \ds \dss \alpha_k = (k-5) \ds \beta_{k-2}, & \dss \ds \alpha_k = (k-3) \dss \beta_k, \\ & \ds \dss \beta_k = (4-k) \ds \alpha_k, & \dss \ds \beta_k = (2-k) \dss \alpha_{k+2}.
\end{align*}
These can now be combined to yield
\begin{equation*}
\laps \alpha_k = (k-3) \dss \beta_k + (k-5)\ds \beta_{k-2}, \qquad \qquad \laps \beta_k = (2-k)\dss \alpha_{k+2} + (4-k) \ds \alpha_k.
\end{equation*}
We can now use~\eqref{dirachomokerneltempeq} again to eliminate $\dss \beta_k$ and $\dss \alpha_{k+2}$ in the above expressions to finally obtain
\begin{equation} \label{dirachomokerneltempeq2}
\laps \alpha_k = -(k-3)(k-4) \alpha_k - 2 \ds \beta_{k-2}, \qquad \qquad 
\laps \beta_k = -(k-3)(k-2) \beta_k + 2 \ds \alpha_k.
\end{equation}
Now $\alpha_0 = 0$, so the second equation in~\eqref{dirachomokerneltempeq2} gives $\laps \beta_0 = - 6 \beta_0$, so $\beta_0 = 0$. Then the first equation in~\eqref{dirachomokerneltempeq2} gives $\laps \alpha_2 = -2 \alpha_2$, so $\alpha_2 = 0$. We continue in this way to alternate between the equations. Next we get $\laps \beta_2 = 0$, but from~\eqref{dirachomokerneltempeq} we see that $\beta_2$ is also coexact, so $\beta_2 = 0$. Then $\laps \alpha_4 = 0$, but this time~\eqref{dirachomokerneltempeq} gives no information about $\alpha_4$ so all we can say is that it is a harmonic $3$-form on $\Sigma$, hence closed and coclosed. Then $\laps \beta_4 = -2 \beta_4$, so $\beta_4 = 0$. Continuing we get $\laps \alpha_6 = -6 \alpha_6$, so $\alpha_6 = 0$, and finally $\laps \beta_6 = -12 \beta_6$, so $\beta_6 = 0$.
\end{proof}

\begin{prop} \label{dirachomokernellogsprop}
Let $\omega = \sum_{l=0}^m (\log r)^l \sum_{k=0}^3 \omega_{2k,l}$ be an even-degree mixed form on the cone $C' = (0, \infty) \times \Sigma$, satisfying $(d + \dsc)(\omega) = 0$, where each $\omega_{2k,l}$ is homogeneous of order $-3$, and $m \geq 0$. Then in fact necessarily $m = 0$ and $\omega$ is as given in Proposition~\ref{dirachomokernelprop}.
\end{prop}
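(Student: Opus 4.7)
My plan is to work by the highest power of $\log r$, reducing the problem to Proposition~\ref{dirachomokernelprop}. The key preliminary step is a direct calculation, using \eqref{conedeq}, \eqref{conedleq}, and $d(\log r) = dr/r$, that produces the commutator identity: for any $k$-form $\eta = dr \wedge A + B$ on the cone (with $A, B$ forms on $\Sigma$ possibly depending on $r$),
\[(d + \dsc)\bigl[(\log r)^l \eta\bigr] \;=\; (\log r)^l (d + \dsc)\eta \;+\; \frac{l (\log r)^{l-1}}{r}\bigl(dr \wedge B - A\bigr).\]
Writing $\eta_l = \sum_{k=0}^3 \omega_{2k, l} = dr \wedge A_l + B_l$ with $A_l = \sum_k r^{2k-4}\alpha_{2k, l}$ and $B_l = \sum_k r^{2k-3}\beta_{2k, l}$, the vanishing of $(d + \dsc)\omega$ then becomes a system of equations obtained by equating each coefficient of a power of $\log r$ to zero.

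The top coefficient $(\log r)^m$ yields $(d + \dsc)\eta_m = 0$, so Proposition~\ref{dirachomokernelprop} gives $\eta_m = dr \wedge \alpha_{4, m}$ for some harmonic $3$-form $\alpha_{4, m}$ on $\Sigma$; in particular $A_m = \alpha_{4, m}$ and $B_m = 0$. The $(\log r)^{m-1}$ coefficient then reduces to
\[(d + \dsc)\eta_{m-1} \;=\; \frac{m}{r}\,\alpha_{4, m}.\]
This is a pure $3$-form equation on the cone. I would compute the $3$-form component of the left-hand side using the homogeneous formulas \eqref{homodeq}, \eqref{homodleq} applied to $\omega_{2, m-1}$ (contributing via $d$) and $\omega_{4, m-1}$ (contributing via $\dsc$), noting that the coefficient $\lambda - k + 7$ from \eqref{homodleq} vanishes at $\lambda = -3$, $k = 4$. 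Matching the $dr$-part and the pure-$\Sigma$ part separately produces (in particular) the scalar equation
\[\ds \beta_{2, m-1} + \dss \beta_{4, m-1} \;=\; m\,\alpha_{4, m}.\]

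The finishing move exploits harmonicity of $\alpha_{4, m}$. Applying $\ds$ and using $\ds \alpha_{4, m} = 0$ gives $\ds \dss \beta_{4, m-1} = 0$, so $\dss \beta_{4, m-1} = 0$ by pairing with $\beta_{4, m-1}$ and integrating by parts on the compact link $\Sigma$. The displayed equation collapses to $\ds \beta_{2, m-1} = m\, \alpha_{4, m}$; applying $\dss$ and using $\dss \alpha_{4, m} = 0$ analogously forces $\ds \beta_{2, m-1} = 0$. Hence $m\, \alpha_{4, m} = 0$, and any $m \geq 1$ would imply $\alpha_{4, m} = 0$ and thus $\eta_m = 0$, contradicting the assumption that $m$ is the top log power. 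Therefore $m = 0$ and $\omega = \eta_0$, at which point Proposition~\ref{dirachomokernelprop} finishes the argument. The only place one needs to be careful is the sign bookkeeping in the commutator identity; once that is correct, the obstruction argument is mechanical, and it is essential that the $3$-form component — precisely the degree in which Proposition~\ref{dirachomokernelprop} permits nontrivial solutions — is where the contradiction surfaces.
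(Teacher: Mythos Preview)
Your proof is correct and follows essentially the same approach as the paper: extract the top $\log r$ coefficient, apply Proposition~\ref{dirachomokernelprop}, then read off the $3$-form component at level $m-1$ to get $\ds\beta_{2,m-1} + \dss\beta_{4,m-1} = m\,\alpha_{4,m}$. The only cosmetic difference is that the paper concludes immediately from Hodge orthogonality on the compact link (a harmonic form cannot lie in $\im\ds + \im\dss$), whereas you unwind this into two explicit integration-by-parts steps; the content is identical.
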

\begin{proof}
We prove this by contradiction. Suppose that $m > 0$. Hence $\sum_{k=0}^3 \omega_{2k, m} \neq 0$. Each $\omega_{2k, l}$ is homogenous of order $-3$, so it can be written as
\begin{equation} \label{dirachomokernellogstempeq1}
\omega_{2k, l} \, = \, r^{2k - 4} dr \wedge \alpha_{2k, l} + r^{2k - 3} \beta_{2k,l}
\end{equation}
where for each $l$, $\alpha_{2k, l}$ and $\beta_{2k, l}$ are $(2k-1)$-forms and $2k$-forms on $\Sigma$, respectively, independent of $r$. It is easy to check that if $\omega_k$ is any $k$-form on the cone $C$, then
\begin{equation*}
(d + \dsc) ( (\log r)^l \omega_k) \, = \, (\log r)^l (d + \dsc) (\omega_k) + \frac{l}{r} (\log r)^{l-1} (dr \wedge \omega_k) - \frac{l}{r} (\log r)^{l-1} \left( \ddr \hk \omega_k \right).
\end{equation*}
Using this identity, one can now compute that
\begin{multline*}
(d + \dsc) \left( \sum_{l=0}^m (\log r)^l \sum_{k=0}^3 \omega_{2k,l} \right) \, = \, (\log r)^m (d + \dsc) \left( \sum_{k=0}^3 \omega_{2k, m} \right) \\ \, {} + \sum_{l=0}^{m-1} (\log r)^l \sum_{k=0}^3 \left( d (\omega_{2k, l}) + \dsc (\omega_{2k, l}) + \frac{(l+1)}{r} dr \wedge \omega_{2k, l+1} -  \frac{(l+1)}{r} \ddr \hk \omega_{2k, l+1} \right).
\end{multline*}
The above expression must vanish as a polynomial in $\log r$. Setting the coefficient of $(\log r)^m$ equal to zero, we see that $\sum_{k=0}^3 \omega_{2k, m}$ is in the kernel of $d + \dsc$ and homogeneous of order $-3$, so by Proposition~\ref{dirachomokernelprop} we have
\begin{equation} \label{dirachomokernellogstempeq2}
\sum_{k=0}^3 \omega_{2k, m} \, = \, dr \wedge \alpha_{4,m}
\end{equation}
where $\alpha_{4,m}$ is a non-zero harmonic $3$-form on the link $\Sigma$. Now consider the coefficient of the next highest term, $(\log r)^{m-1}$. Setting it equal to zero and using~\eqref{dirachomokernellogstempeq2} gives
\begin{equation*}
\sum_{k=0}^3 \left( d (\omega_{2k, m-1}) + \dsc (\omega_{2k, m-1}) \right) - \frac{m}{r} \alpha_{4,m} = 0.
\end{equation*}
Using~\eqref{homodeq} and~\eqref{homodleq} with $\lambda = - 3$, this becomes
\begin{multline*}
\sum_{k=0}^3 \left( r^{2k-4} dr \wedge ( (2k-3) \beta_{2k, m-1} - \ds \alpha_{2k, m-1} ) + r^{2k-3} \ds \beta_{2k, m-1} \right) \\ {} + \sum_{k=0}^3 \left( r^{2k-6} dr \wedge ( - \dss \alpha_{2k, m-1} ) + r^{2k-5} ( (2k-4) \alpha_{2k, m-1} + \dss \beta_{2k, m-1}) \right) - \frac{m}{r} \alpha_{4,m} = 0.
\end{multline*}
Taking the $3$-form component of the above equation and simplifying gives
\begin{equation*}
r^{-2} dr \wedge ( - \beta_{2,m-1} - \ds \alpha_{2,m-1} - \dss \alpha_{4,m-1} ) + r^{-1} ( \ds \beta_{1,m-1} + \dss \beta_{4,m-1} - m \alpha_{4,m} ) = 0,
\end{equation*}
which in turn says that
\begin{equation*}
\alpha_{4,m} = \ds \left( \frac{1}{m} \beta_{1,m-1} \right) + \dss \left( \frac{1}{m} \beta_{4,m-1} \right).
\end{equation*}
Since $\Sigma$ is compact, Hodge theory says that harmonic forms are orthogonal to the image of $\ds$ and to the image of $\dss$, so we must have $\alpha_{4,m} = 0$, giving us our contradiction. Therefore $m = 0$.
\end{proof}

\begin{prop} \label{dirachomokernellogsprop2}
Let $\omega = \sum_{l=0}^m (\log r)^l \omega_{4,l}$ be a pure $4$-form on the cone $C' = (0, \infty) \times \Sigma$, satisfying $(d + \dsc)(\omega) = 0$, where each $\omega_{4,l}$ is homogeneous of order $-4$, and $m \geq 0$. Then in fact necessarily $m = 0$ and $\omega$ is as given in Corollary~\ref{homocccor}.
\end{prop}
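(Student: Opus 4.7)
The plan is to argue by contradiction, following exactly the template of the proof of Proposition~\ref{dirachomokernellogsprop}. Suppose $m \geq 1$ with $\omega_{4,m} \neq 0$. Since each $\omega_{4,l}$ is a homogeneous $4$-form of order $-4$, I would write $\omega_{4,l} = r^{-1} dr \wedge \alpha_{4,l} + \beta_{4,l}$, where $\alpha_{4,l}$ and $\beta_{4,l}$ are a $3$-form and a $4$-form on $\Sigma$, respectively, independent of $r$.

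Using the identity for $(d + \dsc)((\log r)^l \omega_k)$ established in the proof of Proposition~\ref{dirachomokernellogsprop}, I would expand $(d+\dsc)\omega$ as a polynomial in $\log r$ and set each coefficient to zero. The coefficient of the top-order term $(\log r)^m$ gives $(d+\dsc)\omega_{4,m} = 0$. Since $\omega_{4,m}$ is homogeneous of order $-4 = -k$ with $k=4$, Corollary~\ref{homocccor} applies directly and forces $\omega_{4,m} = \beta_{4,m}$, where $\beta_{4,m}$ is a harmonic $4$-form on $\Sigma$; in particular $\alpha_{4,m} = 0$.

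Next, considering the coefficient of $(\log r)^{m-1}$, and using the fact that $dr \wedge \omega_{4,m} = dr \wedge \beta_{4,m}$ while $\ddr \hk \omega_{4,m} = 0$, I would obtain the equation
\begin{equation*}
(d + \dsc)(\omega_{4,m-1}) + \frac{m}{r} \, dr \wedge \beta_{4,m} = 0.
\end{equation*}
Applying formulas~\eqref{homodeq} and~\eqref{homodleq} with $k=4$, $\lambda=-4$ to $\omega_{4,m-1}$ and extracting the $dr$-component of the resulting $5$-form part yields the relation $m\,\beta_{4,m} = \ds \alpha_{4,m-1}$. Thus $\beta_{4,m}$ is exact on $\Sigma$; since $\Sigma$ is compact and oriented, Hodge theory says harmonic forms are orthogonal to the image of $\ds$, so $\beta_{4,m} = 0$, contradicting our assumption. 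Hence $m = 0$, in which case $\omega = \omega_{4,0}$ is precisely as described in Corollary~\ref{homocccor}.

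The only real obstacle is the careful bookkeeping needed to separate the $5$-form and $3$-form components of $(d + \dsc)$ acting on mixed-type forms on the cone, but this is a direct application of the identities already in hand and presents no substantive difficulty.
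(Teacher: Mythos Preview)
Your proof is correct and follows exactly the template the paper has in mind: the paper itself omits the argument, saying only that it is ``exactly analogous to the proof of Proposition~\ref{dirachomokernellogsprop},'' and your write-up is precisely that analogue, using Corollary~\ref{homocccor} in place of Proposition~\ref{dirachomokernelprop} at the top-order step and then reading off $m\,\beta_{4,m} = \ds\alpha_{4,m-1}$ from the $dr$-component of the $5$-form part at order $(\log r)^{m-1}$.
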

\begin{proof}
The proof of this proposition is exactly analogous to the proof of Proposition~\ref{dirachomokernellogsprop} and is omitted. However, we note here that it is essential that $\omega$ be a pure $4$-form for this result to be true. In a general even-degree mixed form in the kernel of $d + \dsc$ which is a polynomial in $\log(r)$ with coefficients being homogeneous forms of order $-4$, there \emph{can occur} $\log(r)$ terms.
\end{proof}

The next result is about the type decomposition of $3$-forms on a $\G$~cone, which are homogeneous of order $-3$.
\begin{prop} \label{cones27prop}
Let $(C, \phc)$ be a $\G$~cone. Let $\xi$ be a homogeneous $3$-form on $C$ of order $-3$ which is harmonic. Then $\xi$ is in $\Lambda^3_{27}$ with respect to $\phc$.
\end{prop}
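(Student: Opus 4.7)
The plan is to decompose $\xi = \xi_1 + \xi_7 + \xi_{27}$ according to the type splitting~\eqref{lambda3eq}, and to show that the first two components $\xi_1$ and $\xi_7$ both vanish by reducing to the harmonic exclusion ranges for scalars and $1$-forms established in Proposition~\ref{homolapexcludeprop}.

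First I would observe that each component $\xi_k = \pi_k(\xi)$ is again a homogeneous harmonic $3$-form of order $-3$. The projections $\pi_k$ are pointwise $\G$-equivariant linear maps built from $\phc$ and $\gc$; since $\mathbf{t}^*\phc = t^3 \phc$ and $\mathbf{t}^*\gc = t^2 \gc$ merely rescale the $\G$-structure by overall constants, the $\G$-type decomposition commutes with $\mathbf{t}^*$. Hence each $\xi_k$ remains dilation-invariant and is homogeneous of order $-3$ in the sense of Definition~\ref{homoforms}. Moreover, since $\phc$ is torsion-free, the splitting~\eqref{lambda3eq} is preserved by $\lapc$, so each $\xi_k$ is itself harmonic.

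Next, write $\xi_1 = f \phc$ for a function $f$. Because $\phc$ is homogeneous of order $0$ while $\xi_1$ has order $-3$, the function $f$ is homogeneous of order $-3$. Identity~\eqref{temp27eq1} gives $\lapc(f \phc) = (\lapc f) \phc$, so $\lapc f = 0$. Since $-3 \in (-5, 0)$, Proposition~\ref{homolapexcludeprop} (case $k = 0$) forces $f = 0$, and hence $\xi_1 = 0$. For $\xi_7$, I would use that $\stc : \Omega^3_7 \to \Omega^4_7$ is an isomorphism and that the map $\alpha \mapsto \alpha \wedge \phc$ identifies $\Omega^1$ with $\Omega^4_7$, to write $\stc \xi_7 = \alpha \wedge \phc$ for a unique $1$-form $\alpha$. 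Since $\stc$ and wedging with $\phc$ both preserve orders of homogeneity (Remark~\ref{homoformsstarrmk} together with the fact that $\phc$ has order $0$), the $1$-form $\alpha$ is homogeneous of order $-3$. Commuting $\lapc$ with $\stc$ and applying~\eqref{temp27eq2} to $\alpha \wedge \phc$, the equation $\lapc \xi_7 = 0$ becomes $(\lapc \alpha) \wedge \phc = 0$; injectivity of $\alpha \mapsto \alpha \wedge \phc$ then yields $\lapc \alpha = 0$. Since $-3 \in (-4, -1)$, Proposition~\ref{homolapexcludeprop} (case $k = 1$) gives $\alpha = 0$, and hence $\xi_7 = 0$. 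Therefore $\xi = \xi_{27} \in \Omega^3_{27}$, as required.

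The main thing to check carefully is the commutativity of the $\G$-type decomposition with the dilation action, so that the individual components $\xi_1$ and $\xi_7$ inherit the homogeneity of $\xi$. Once this is in hand, the argument reduces to two direct applications of the harmonic exclusion ranges already proved in Proposition~\ref{homolapexcludeprop} for degrees $0$ and $1$, which is precisely why the scalar and $1$-form cases were singled out there.
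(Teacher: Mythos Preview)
Your proof is correct and follows essentially the same approach as the paper: decompose $\xi$ into its $\G$-type components, use the identities~\eqref{temp27eq1} and~\eqref{temp27eq2} to reduce $\lapc \xi_1 = 0$ and $\lapc \xi_7 = 0$ to the harmonicity of a function and a $1$-form of order $-3$, and then apply the exclusion ranges of Proposition~\ref{homolapexcludeprop}. The only cosmetic difference is that the paper establishes the order $-3$ homogeneity of $f$ and the $1$-form directly via the explicit formulas $f = c\,\stc(\xi \wedge \psc)$ and $\omega = c\,\stc(\xi \wedge \phc)$, whereas you argue it by commuting the projections with the dilation action; both are valid and lead to the same conclusion.
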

\begin{proof}
Let $\xi = \xi_1 + \xi_7 + \xi_{27}$ be the decomposition of $\xi$ into components. Now $\xi_1 = f \phc$ for some function $f$ on $C$, and since, up to a constant, $f = \stc ( \xi \wedge \psc)$, we see that $f$ is homogenous of order $-3$. Similarly, we have $\xi_7 = \stc (\omega \wedge \phc)$ for some $1$-form $\omega$ on $C$, and, up to a constant, $\omega = \stc ( \xi \wedge \phc)$, so $\omega$ is also homogenous of order $-3$.  But $\xi$ is harmonic, and the Laplacian commutes with the projections. Therefore $\lapc (\xi_1) = 0$ and $\lapc(\xi_7) = 0$. By~\eqref{temp27eq1} and~\eqref{temp27eq2} we then see that $\lapc (f) = 0$ and $\lapc (\omega) = 0$. Finally, Proposition~\ref{homolapexcludeprop} says that both $f$ and $\omega$ are zero, since the order $\lambda = -3$ lies in the excluded range for both functions and $1$-forms. Thus $\xi = \xi_{27}$ as claimed.
\end{proof}

We close this section with an observation about representing cohomology classes
of $(a,b) \times \Sigma$, where $(a,b)$ is any open subinterval of $(0, \infty)$.
\begin{prop} \label{formsrepresentprop}
Suppose $B$ is a cohomology class in $H^k ( (a,b) \times \Sigma, \R)$. Then there exists a $k$-form $\beta$ on $(a,b) \times \Sigma$, harmonic with respect to the cone metric, and homogeneous of order $-k$ such that $[\beta] = B$.
\end{prop}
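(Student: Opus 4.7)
The plan is to construct $\beta$ as the pullback to $(a,b) \times \Sigma$ of a Hodge-harmonic representative on the compact link $\Sigma$. Since the projection $p : (a,b) \times \Sigma \to \Sigma$ is a homotopy equivalence (any slice inclusion $i_{r_0} : \sigma \mapsto (r_0,\sigma)$ is a homotopy inverse), the induced map $p^* : H^k(\Sigma,\R) \to H^k((a,b) \times \Sigma, \R)$ is an isomorphism. First I would set $B_\Sigma := (p^*)^{-1}(B)$, and invoke Hodge theory on the compact oriented manifold $(\Sigma,\gs)$ to produce the unique $\laps$-harmonic representative $\beta_\Sigma$ of $B_\Sigma$; in particular $\ds \beta_\Sigma = 0$ and $\dss \beta_\Sigma = 0$. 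Then define $\beta := p^* \beta_\Sigma$, which is automatically closed and satisfies $[\beta] = p^*[\beta_\Sigma] = B$.

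It remains to check that $\beta$ is homogeneous of order $-k$ and harmonic on the cone. In the notation of Definition~\ref{homoforms}, the form $\beta$ corresponds to $\alpha = 0$ together with $r$-independent $\beta$-part equal to $\beta_\Sigma$; writing it as $\beta = r^{-k}(r^{k-1} dr \wedge 0 + r^k \beta_\Sigma)$ displays it as homogeneous of order $\lambda = -k$. Substituting $\lambda = -k$ and $\alpha = 0$ into equation~\eqref{homolapeq}, the coefficient $(\lambda+k)(\lambda-k+5)$ vanishes and every $\alpha$-dependent term drops, leaving
$$\lapc \beta \, = \, r^{-3} \, dr \wedge (-2 \dss \beta_\Sigma) + r^{-2} (\laps \beta_\Sigma),$$
both summands of which are zero since $\beta_\Sigma$ is harmonic on $\Sigma$. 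Hence $\lapc \beta = 0$, as required.

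There is essentially no hard step: the whole point is that $\lambda = -k$ is precisely the distinguished value of the homogeneity for which the cone Laplacian kills $r$-independent pullbacks from $\Sigma$. In fact this observation is the existence counterpart of the rigidity result in Corollary~\ref{homocccor}, which asserts that every closed and coclosed homogeneous $k$-form of order $-k$ on the cone already arises this way.
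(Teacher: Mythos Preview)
Your proof is correct and follows essentially the same approach as the paper's: pull back the Hodge-harmonic representative from $\Sigma$ via the projection, then verify harmonicity on the cone using the explicit Laplacian formula. The only cosmetic difference is that you substitute into the homogeneous formula~\eqref{homolapeq} while the paper uses the general formula~\eqref{conelaplacianeq} with $\alpha=0$, $\beta'=0$ directly.
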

\begin{proof}
The projection $\pi : (a,b) \times \Sigma \to \Sigma$ induces an isomorphism $\pi^* : H^k( \Sigma, \R) \to H^k ( (a,b) \times \Sigma, \R)$ on cohomology by pullback. Then $(\pi^*)^{-1}(B)$ is a class in $H^k(\Sigma, \R)$. Since $\Sigma$ is compact and oriented, by Hodge theory there exists a unique harmonic $k$-form $\beta$ on $\Sigma$ such that $[\beta] = (\pi^*)^{-1}(B)$. Then $\pi^* (\beta) = \beta$, as a form on $(a, b) \times \Sigma$, and represents the class $B$. Now from equation~\eqref{conelaplacianeq}, with $\alpha = 0$ and $\beta' = 0$, we see that $\lapc \beta = \frac{1}{r^2} \laps \beta = 0$, so $\beta$ is a harmonic $k$-form on $(a,b) \times \Sigma$, homogeneous of order $-k$.
\end{proof}

\subsection{Compact \G~manifolds with isolated conical singularities} \label{CSsec}

Let $M$ be a compact, connected topological space, and let $x_1, \ldots, x_n$ be a finite set of isolated points in $M$. We assume that $M' = M \backslash \{x_1, \ldots, x_n \}$ is a smooth non-compact $7$-dimensional manifold which we will call the \emph{smooth part} of $M$ and $\{ x_1, \ldots, x_n\}$ will be called the \emph{singular points} of $M$.

\begin{defn} \label{CSdefn}
The space $M$ is called a \emph{$\G$~manifold with isolated conical singularities}, with cones $C_1, \ldots, C_n$ at $x_1, \ldots, x_n$ and \emph{rates} $\mu_1, \ldots, \mu_n$, where each $\mu_i > 0$, if all of the following holds:
\begin{itemize}
\item The smooth part $M'$ is a $\G$~manifold with torsion-free $\G$~structure $\phm$ and metric $\gm$.
\item There are $\G$~cones $(C_i, \phci, \gci)$ with links $\Sigma_i$ for all $i = 1, \ldots , n$.
\item There is a compact subset $K \subset M'$ such that $M' \backslash K$ is a union of open sets $S_1, \ldots, S_n$ whose closures $\bar S_1, \ldots, \bar S_n$ in $M$ are all \emph{disjoint} in $M$.
\item There is an $\e \in (0,1)$, and for each $i = 1, \ldots, n$, there is a smooth function $f_i : (0, \e) \times \Sigma_i \to M'$ that is a \emph{diffeomorphism} of $(0, \e) \times \Sigma_i$ onto $S_i$.
\item The pullback $f_i^* (\phm)$ is a torsion-free $\G$~structure on the subset $(0, \e) \times \Sigma_i$ of $C_i$. We require that this approach the torsion-free $\G$~structure $\phci$ in a $C^{\infty}$ sense, with rate $\mu_i > 0$. This means that
\begin{equation} \label{CSdefneq}
| \nabci^j \! ( f_i^* (\phm) - \phci ) |_{\gci} \, = \, O (r^{\mu_i - j})
\qquad \forall j \geq 0
\end{equation}
in $(0,\e) \times \Sigma_i$. Note that all norms and derivatives are computed using the cone metric $\gci$.
\end{itemize}
\end{defn}
The third condition ensures that the singular points are isolated in $M$. It is easy to see that the holonomy necessarily has to be exactly $\G$, because the holonomy of the asymptotic cones is exactly $\G$, and the holonomy of $M$ must be at least as big as the holonomy of its asymptotic cones, but it is contained in $\G$ since $\phm$ is a torsion-free $\G$~structure.

Since the metric $\gm$ and the $4$-form $\psm$ are pointwise smooth functions of $\phm$, by Taylor's theorem we also have
\begin{equation} \label{CSdefneq2}
\begin{aligned}
| \nabci^j \! ( f_i^* (\gm) - \gci ) |_{\gci} & \, = \, O (r^{\mu_i - j})
\qquad \forall j \geq 0, \\ | \nabci^j \! ( f_i^* (\psm) - \psci ) |_{\gci} & \, =
\, O (r^{\mu_i - j}) \qquad \forall j \geq 0.
\end{aligned}
\end{equation}
A $\G$~manifold with isolated conical singularities will sometimes be called a $\G$~manifold with ICS for brevity. Figure~\ref{CS1fig} shows a compact manifold with isolated conical singularities.

\begin{figure} [ht]
\centering
\input 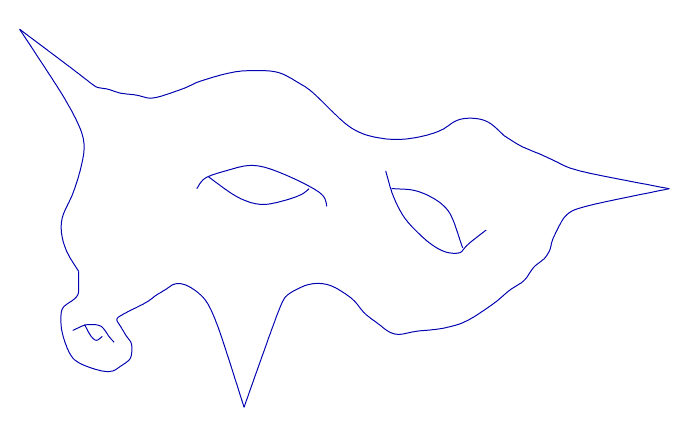_t
\caption{A compact manifold $M$ with isolated conical singularities}
\label{CS1fig}
\end{figure}

\begin{rmk} \label{CSregularityrmk}
It can be shown that if we assume that~\eqref{CSdefneq} holds only for $j=0$ and $j=1$, then there exists \emph{particular} diffeomorphisms $f_i$ that satisfy~\eqref{CSdefneq} for all $j \geq 0$. These special diffeomorphisms satisfy a \emph{gauge-fixing condition} which forces them to solve (in some sense) an elliptic equation, and this results in their improved regularity. This will be discussed in the sequel~\cite{KCSmoduli} to this paper. For our present purposes, it suffices to assume from the outset that~\eqref{CSdefneq} holds for all $j \geq 0$. See also Remark~\ref{gaugefixrmk}.
\end{rmk}

Next we discuss an important fact about $\G$~manifolds with ICS.
\begin{prop} \label{CSclassprop}
The closed $3$-forms $f_i^*(\phm)$ and $\phci$ on $(0, \e) \times \Sigma_i$ represent the same cohomology class in $H^3( (0, \e) \times \Sigma_i, \R) \cong H^3(\Sigma_i, \R)$. Similarly the closed $4$-forms $f_i^*(\psm)$ and $\psci$ on $(0, \e) \times \Sigma_i$ represent the same cohomology class in $H^4( (0, \e) \times \Sigma_i, \R) \cong H^4(\Sigma_i, \R)$. Therefore by Proposition~\ref{coneformsexactprop}, the cohomology classes $[f_i^*(\phm)]$ in $H^3(\Sigma_i, \R)$ and $[f_i^*(\psm)]$ in $H^4(\Sigma_i, \R)$ are always \emph{trivial}.
\end{prop}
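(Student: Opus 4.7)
The plan is to reduce the statement to a direct application of Lemma~\ref{exactformslemma} case (i), combined with Proposition~\ref{coneformsexactprop}. The key observation is that both $f_i^*(\phm)$ and $\phci$ are closed $3$-forms on $(0, \e) \times \Sigma_i$: the former because $\phm$ is torsion-free on $M'$ (so in particular closed) and pullback commutes with $d$; the latter by Proposition~\ref{coneformsexactprop}, which even gives exactness. Hence the difference $\omega = f_i^*(\phm) - \phci$ is a closed $3$-form on $(0, \e) \times \Sigma_i$.

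First I would use the ICS defining condition~\eqref{CSdefneq} at $j=0$ to get the decay estimate $|\omega|_{\gci} = O(r^{\mu_i})$ near the singular point. Since $\mu_i > 0$ and in particular $\mu_i > -3$, Lemma~\ref{exactformslemma} case (i) applies directly to produce a $2$-form $\Omega$ on $(0, \e) \times \Sigma_i$ with $\omega = d\Omega$. This shows $[f_i^*(\phm)] = [\phci]$ in $H^3((0, \e) \times \Sigma_i, \R)$. Proposition~\ref{coneformsexactprop} then finishes the $3$-form case, since $[\phci] = 0$.

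The argument for the $4$-form $\psm$ is structurally identical. The form $\eta = f_i^*(\psm) - \psci$ is closed because $\psm$ is closed (by Remark~\ref{g2manifoldrmk}, the coclosedness $d\stph = 0$ combined with $\psm = \st_{\phm} \phm$ gives $d\psm = 0$) and $\psci$ is closed by Proposition~\ref{coneformsexactprop}. The asymptotic estimate~\eqref{CSdefneq2} with $j=0$ gives $|\eta|_{\gci} = O(r^{\mu_i})$, and $\mu_i > 0 > -4$ allows another appeal to Lemma~\ref{exactformslemma} case (i). Again Proposition~\ref{coneformsexactprop} makes $[\psci]$ itself trivial, so $[f_i^*(\psm)] = 0$.

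There is essentially no obstacle here: all the analytic work is already packaged in Lemma~\ref{exactformslemma}, and the exactness of the model forms is Proposition~\ref{coneformsexactprop}. The only bookkeeping step is the isomorphism $H^k((0, \e) \times \Sigma_i, \R) \cong H^k(\Sigma_i, \R)$, which is simply homotopy invariance applied to the retraction $(0, \e) \times \Sigma_i \to \Sigma_i$; under this identification the pulled-back classes descend to classes on $\Sigma_i$, which by the above are trivial.
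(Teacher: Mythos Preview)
Your proposal is correct and follows exactly the same approach as the paper: the paper's proof is a one-line appeal to Lemma~\ref{exactformslemma} using the decay estimates $|f_i^*(\phm) - \phci|_{\gci} = O(r^{\mu_i})$ and $|f_i^*(\psm) - \psci|_{\gci} = O(r^{\mu_i})$ with $\mu_i > 0$. Your write-up simply spells out the details the paper leaves implicit.
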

\begin{proof}
Using $| ( f_i^* (\phm) - \phci ) |_{\gci} = O (r^{\mu_i})$ and $| ( f_i^* (\psm) - \psci ) |_{\gci} = O (r^{\mu_i})$, and the fact that $\mu_i > 0$, this follows immediately from Lemma~\ref{exactformslemma}.
\end{proof}
\begin{rmk} \label{CSclassrmk}
The analogous statement will be \emph{false} for asymptotically conical $\G$~manifolds, as discussed in Proposition~\ref{ACclassprop} and before Definition~\ref{ACinvariantsdefn}.
\end{rmk}

On several occasions we will need to compare $f_i^* (\stm \omega)$ with $\stci f_i^*(\omega)$ near $x_i$.
\begin{lemma} \label{starcomparelemma}
Let $\omega$ be a smooth $k$-form on $M'$, with $|f_i^*( \omega)|_{\gci} = O(r^{\lambda})$ near the $i^{\text{th}}$ singular point $x_i$. Then the $(7-k)$-forms $f_i^*(\stm \omega)$ and $\stci f_i^*(\omega)$ satisy
\begin{equation} \label{starcompareeq}
| f_i^*(\stm \omega) - \stci f_i^*(\omega) |_{\gci} = \, O(r^{\lambda + \mu_i}).
\end{equation}
\end{lemma}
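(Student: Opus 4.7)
The plan is to exploit the naturality of the Hodge star under diffeomorphism together with its smooth dependence on the metric, and then to plug in the decay rates already provided by~\eqref{CSdefneq2}.

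First I would use the fact that $f_i$ is a diffeomorphism onto its image, so
\begin{equation*}
f_i^*(\stm \omega) \, = \, \st_{f_i^*(\gm)} f_i^*(\omega),
\end{equation*}
since pulling back an arbitrary metric pulls back its Hodge star operator (the orientations match because $f_i$ is a diffeomorphism onto the relevant collar of the singular point and the $\G$-structure orientation is preserved). This reduces the problem to comparing the two Hodge star operators $\st_{f_i^*(\gm)}$ and $\st_{\gci}$, acting on the single $k$-form $f_i^*(\omega)$, both viewed as sections of $\Lambda^{7-k}((0,\e) \times \Sigma_i)$.

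Next I would invoke the fact that, pointwise, the Hodge star operator is a smooth function of the metric. Concretely, if $g$ and $h$ are two Riemannian metrics on a fixed vector space with $g$ and $h$ sufficiently close, then for any $k$-form $\eta$ one has a pointwise bound
\begin{equation*}
| \st_g \eta - \st_h \eta |_g \, \leq \, C \, |g - h|_g \, |\eta|_g,
\end{equation*}
where $C$ depends only on how close $g$ and $h$ are. This follows by writing the coefficients of the Hodge star in a $g$-orthonormal frame as rational functions of the coefficients of $h$ in that frame, and applying the mean value theorem. By~\eqref{CSdefneq2} with $j=0$, we have $|f_i^*(\gm) - \gci|_{\gci} = O(r^{\mu_i})$, so in particular on a neighborhood of the form $(0, \e') \times \Sigma_i$ with $\e'$ small enough the metric $f_i^*(\gm)$ lies in any preassigned neighborhood of $\gci$, and $C$ may be chosen uniform.

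Finally I would apply this Lipschitz estimate with $g = \gci$, $h = f_i^*(\gm)$, and $\eta = f_i^*(\omega)$. Using $|f_i^*(\omega)|_{\gci} = O(r^{\lambda})$ and $|f_i^*(\gm) - \gci|_{\gci} = O(r^{\mu_i})$, we immediately obtain
\begin{equation*}
| f_i^*(\stm \omega) - \stci f_i^*(\omega) |_{\gci} \, = \, |\st_{f_i^*(\gm)} f_i^*(\omega) - \stci f_i^*(\omega)|_{\gci} \, \leq \, C \, O(r^{\mu_i}) \, O(r^{\lambda}) \, = \, O(r^{\lambda + \mu_i}),
\end{equation*}
which is~\eqref{starcompareeq}. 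The only step requiring genuine care is the uniform Lipschitz estimate for $\st$ as a function of the metric; however, since this is a pointwise algebraic statement and $f_i^*(\gm)$ converges uniformly to $\gci$ on small collars, this presents no real obstacle.
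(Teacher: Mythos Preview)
Your proof is correct and follows essentially the same approach as the paper's: both use naturality of the Hodge star under pullback to rewrite $f_i^*(\stm \omega) = \st_{f_i^*(\gm)} f_i^*(\omega)$, then invoke the smooth (Lipschitz) dependence of $\st$ on the metric together with $|f_i^*(\gm) - \gci|_{\gci} = O(r^{\mu_i})$ from~\eqref{CSdefneq2}. Your version just spells out the Lipschitz estimate for $\st$ and the uniformity of the constant a bit more carefully than the paper does.
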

\begin{proof}
We begin by computing that
\begin{equation*}
|f_i^*(\stm \omega) - \stci f_i^*(\omega) |_{\gci} = |\st_{f_i^*(\gm)} (f_i^*(\omega)) - \stci (f_i^* (\omega)) |_{\gci} =  |(\st_{f_i^*(\gm)} - \stci) f_i^*(\omega)|_{\gci}.
\end{equation*}
But by~\eqref{CSdefneq2}, we have $|f_i^*(\gm) - \gci|_{\gci} = O(r^{\mu_i})$, and thus
\begin{equation*}
|(\st_{f_i^*(\gm)} - \stci) f_i^*(\omega)|_{\gci} \leq C r^{\mu_i} |f_i^*(\omega)|_{\gci} \leq C r^{\lambda + \mu_i},
\end{equation*}
and the proof is complete.
\end{proof}

Finally, we close this section with a definition involving the algebraic topology of the non-compact smooth part $M'$ of a compact $\G$~manifold $M$ with isolated conical singularities, which will be crucial in understanding and solving the obstruction problem in Sections~\ref{obssubsection} and~\ref{obssubsection2}.

Fix some $r_0 \in (0, \e)$. The map $\iota_i : \Sigma_i \to (0, \e) \times \Sigma_i$ given by $\iota_i(\sigma) = (r_0, \sigma)$ embeds the link $\Sigma_i$ as a submanifold of $(0, \e) \times \Sigma_i$, and the pullback map $\iota_i^* : H^k ( (0, \e) \times \Sigma_i, \R) \to H^k (\Sigma_i, \R)$ is an isomorphism, which is just restriction on the level of forms. We also have the smooth maps $f_i :(0, \e) \times \Sigma_i \to M'$, and thus for each $i$ we get a map $(f_i \circ \iota_i)^* : H^k (M', \R) \to H^k (\Sigma_i, \R)$.
\begin{defn} \label{topdefn}
For each $k =0, \ldots, 7$, we define a map $\Upsilon^k : H^k (M', \R) \to \oplus_{i=1}^n H^k (\Sigma_i, \R)$ by
\begin{equation} \label{topeq}
\Upsilon^k( [\omega] ) \, = \, \oplus_{i=1}^n \, (f_i \circ \iota_i)^* ([\omega])
\end{equation}
for any cohomology class $[\omega] \in H^k(M', \R)$. Essentially, at each end we restrict the $k$-form $\omega$ to the link $\Sigma_i$ and take its cohomology in $H^k (\Sigma_i, \R)$.
\end{defn}
It is possible to fit the maps $\Upsilon^k$ into a long exact sequence in cohomology that is obtained by considering $M'$ as the interior of a compact manifold $\bar M'$ with boundary $\oplus_{i=1}^n \Sigma_i$ and taking the relative cohomology long exact sequence. However, for our present purposes we will not really need anything more than the above definition of $\Upsilon^k$.

\begin{rmk} \label{noICSrmk}
There are at present no known examples of compact $\G$~manifolds with isolated conical singularities, although they are expected to exist in abundance. Our main theorem in this paper can be interpreted as further evidence for the likelihood of their existence, and that they should arise as `boundary points' in the moduli space of smooth compact $\G$~manifolds. The author is currently working in collaboration with Dominic Joyce on a new construction of compact $\G$~manifolds which should also be able to produce the first examples of compact $\G$~manifolds with ICS.
\end{rmk}

\subsection{Asymptotically conical $\G$~manifolds} \label{ACsec}

In this section we define an \emph{asymptotically conical $\G$~manifold}, and discuss three explicit examples. Let $N$ be a non-compact, connected smooth $7$-dimensional manifold.
\begin{defn} \label{ACdefn}
The manifold $N$ is called an \emph{asymptotically conical} $\G$~manifold with cone $C$ and \emph{rate} $\nu < 0$ if all of the following holds:
\begin{itemize}
\item The manifold $N$ is a $\G$~manifold with torsion-free $\G$~structure $\phn$ and metric $\gn$.
\item There is a $\G$~cone $(C, \phc, \gc)$ with link $\Sigma$.
\item There is a compact subset $L \subset N$.
\item There is an $R > 1$, and a smooth function $h : (R, \infty) \times \Sigma \to N$ that is a \emph{diffeomorphism} of $(R, \infty) \times \Sigma$ onto $N \backslash L$.
\item The pullback $h^* (\phn)$ is a torsion-free $\G$~structure on the subset $(R, \infty) \times \Sigma$ of $C$. We require that this approach the torsion-free $\G$~structure $\phc$ in a $C^{\infty}$ sense, with
rate $\nu < 0$. This means that
\begin{equation} \label{ACdefneq}
| \nabc^j ( h^* (\phn) - \phc ) |_{\gc} \, = \, O (r^{\nu - j})
\qquad \forall j \geq 0
\end{equation}
in $(R,\infty) \times \Sigma$. Note that all norms and derivatives are computed using the cone metric $\gc$.
\end{itemize}
\end{defn}
This should be compared to Definition~\ref{CSdefn}. An asymptotically conical $\G$~manifold also has holonomy exactly equal to $\G$, by the same argument as in the ICS case. Also, an asymptotically conical $\G$~manifold always has only one asymptotic end. This follows from the \emph{Cheeger--Gromoll splitting theorem}, which says that a complete non-compact Ricci-flat manifold with more than one end isometrically splits into a Riemannian product, and hence if we had more than one end, the holonomy would be reducible. Hence the link $\Sigma$ of the asymptotic cone of $N$ must be connected. This is why we defined the link of a $\G$~cone to be connected in Section~\ref{g2conessec}. See Besse~\cite{Besse} for more details.

Since the metric $\gn$ and the $4$-form $\psn$ are pointwise smooth functions of $\phn$, by Taylor's theorem we also have
\begin{equation}  \label{ACdefneq2}
\begin{aligned}
| \nabc^j ( h^* (\gn) - \gc ) |_{\gc} &\, = \, O (r^{\nu - j}) \qquad \forall j \geq 0, \\ | \nabc^j ( h^* (\psn) - \psc ) |_{\gc} & \, = \, O (r^{\nu - j}) \qquad \forall j \geq 0.
\end{aligned}
\end{equation}
An asymptotically conical $\G$~manifold will sometimes be called an AC $\G$~manifold for brevity. Figure~\ref{AC1fig} shows an asymptotically conical manifold.

\begin{figure} [ht]
\centering
\input 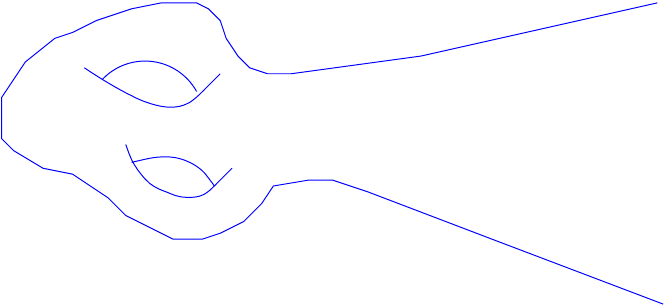_t
\caption{An asymptotically conical manifold $N$}
\label{AC1fig}
\end{figure}

\begin{rmk} \label{ACregularityrmk}
As in Remark~\ref{CSregularityrmk}, it can be shown that if we assume that~\eqref{ACdefneq} holds only for $j=0$ and $j=1$, then there exists a \emph{particular} diffeomorphism $h$ that satisfies~\eqref{ACdefneq} for all $j \geq 0$.
\end{rmk}

There are three known examples of asymptotically conical $\G$~manifolds, whose asymptotic cones have links given by the strictly nearly K\"ahler manifolds of Examples~\ref{cp3ex},~\ref{flagex}, and~\ref{s3s3ex}, respectively. They are all total spaces of vector bundles over a compact base. They were discovered by Bryant--Salamon~\cite{BS} and were the first examples of complete $\G$~manifolds.

\begin{ex} \label{asds4ex}
{\bf $\Lambda^2_-(S^4)$}, the bundle of anti-self-dual $2$-forms over the $4$-sphere. This is a non-trivial rank $3$ vector bundle over the standard round $S^4$. This AC $\G$~manifold is asymptotic to the cone over the non-K\"ahler $\C \PR^3$ of Example~\ref{cp3ex}, with rate $\nu = -4$.
\end{ex}

\begin{ex} \label{asdcp2ex}
{\bf $\Lambda^2_-(\C \PR^2)$}, the bundle of anti-self dual $2$-forms over the complex projective plane.
This is a non-trivial rank $3$ vector bundle over the standard Fubini-Study $\C \PR^2$. This AC $\G$~manifold is asymptotic to the cone over the complex flag manifold $F_{1,2}$ of Example~\ref{flagex}, also with rate $\nu = -4$.
\end{ex}

\begin{ex} \label{spins3ex}
{\bf $\spi (S^3)$}, the spinor bundle of the $3$-sphere. This is a trivial rank $4$ vector bundle over the standard round $S^3$, hence is topologically $S^3 \times \R^4$. This AC $\G$~manifold is asymptotic to the cone over the nearly K\"ahler $S^3 \times S^3$ of Example~\ref{s3s3ex}, with rate $\nu = -3$.
\end{ex}

\begin{rmk} \label{ACexamplesrmk}
Explicit formulas for the asymptotically conical $\G$~structures of Examples~\ref{asds4ex},~\ref{asdcp2ex}, and~\ref{spins3ex}, as well as the fact that their rates are $-4$, $-4$, and $-3$, respectively,  can be found in Bryant--Salamon~\cite{BS}, and also in Atiyah--Witten~\cite{AW}. We will not have need for these explicit formulas.
\end{rmk}

Next we discuss the AC analogue of Proposition~\ref{CSclassprop}, which is \emph{different} in a very important way which will lead to topological obstructions to our desingularization procedure.
\begin{prop} \label{ACclassprop}
The two closed $3$-forms $h^*(\phn)$ and $\phc$ on $(R, \infty) \times \Sigma$ represent the same cohomology class in $H^3( (R, \infty) \times \Sigma, \R) \cong H^3(\Sigma, \R)$ \emph{if the rate $\nu$ satisfies $\nu < -3$}. This also holds if $H^3(\Sigma, \R) = 0$. Similarly the two closed $4$-forms $h^*(\psn)$ and $\psc$ on $(R, \infty) \times \Sigma$ represent the same cohomology class in $H^4( (R, \infty) \times \Sigma, \R) \cong H^4(\Sigma, \R)$ \emph{if the rate $\nu$ satisfies $\nu < -4$}. This also holds if $H^4(\Sigma, \R) = 0$.
\end{prop}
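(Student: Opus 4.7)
The plan is to apply Lemma~\ref{exactformslemma} case ii) directly to the difference forms $h^*(\phn) - \phc$ and $h^*(\psn) - \psc$ on the asymptotic end $(R, \infty) \times \Sigma$. Both differences are \emph{closed} because $\phn, \phc$ are closed (each being part of a torsion-free $\G$~structure) and similarly for $\psn, \psc$ (noting that $\psn$ and $\psc$ are also closed since the $\G$~structures are torsion-free, which by Remark~\ref{g2manifoldrmk} is equivalent to $d\ph=0$ and $d\ps=0$). Furthermore, the AC condition~\eqref{ACdefneq} and its consequence~\eqref{ACdefneq2} give the decay
\begin{equation*}
|h^*(\phn) - \phc|_{\gc} = O(r^{\nu}), \qquad |h^*(\psn) - \psc|_{\gc} = O(r^{\nu}).
\end{equation*}

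For the $3$-form statement, I would apply Lemma~\ref{exactformslemma} case ii) with $k = 3$ and $\lambda = \nu$. The hypothesis of that case requires $\lambda < -k$, i.e.\ $\nu < -3$, which is precisely the assumed rate condition. The lemma then furnishes a $2$-form $\Omega$ on $(R', \infty) \times \Sigma$ (for some $R' \geq R$) with $h^*(\phn) - \phc = d \Omega$, so that $[h^*(\phn)] = [\phc]$ in $H^3((R', \infty) \times \Sigma, \R)$. Since the inclusion $(R', \infty) \times \Sigma \hookrightarrow (R, \infty) \times \Sigma$ induces an isomorphism on $H^3$ (both retract onto $\Sigma$), the equality of classes holds on the full end. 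The alternative hypothesis $H^3(\Sigma, \R) = 0$ is immediate, since then every closed $3$-form is automatically cohomologous to every other.

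The $4$-form statement is completely analogous: apply Lemma~\ref{exactformslemma} case ii) with $k = 4$ and $\lambda = \nu$, where now the constraint $\lambda < -k$ reads $\nu < -4$. This gives $h^*(\psn) - \psc = d \Omega$ for some $3$-form $\Omega$ on the end, and hence the two $4$-forms represent the same cohomology class. Again, the case $H^4(\Sigma, \R) = 0$ is trivial.

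There is essentially no obstacle here beyond checking hypotheses: the entire content of the proposition is encoded in the strict inequality $\lambda < -k$ from Lemma~\ref{exactformslemma} case~ii), and the content of Proposition~\ref{ACclassprop} is precisely to contrast the AC situation with the ICS case (Proposition~\ref{CSclassprop}), where the opposite inequality $\lambda = \mu_i > 0 > -k$ places us in case i) of the same lemma unconditionally. The whole point of stating the proposition separately is to emphasize that when $-3 \leq \nu < 0$ for the $3$-form (or $-4 \leq \nu < 0$ for the $4$-form) and the relevant cohomology of $\Sigma$ is nonzero, the classes $[h^*(\phn)]$ and $[\phc]$ (resp.\ $[h^*(\psn)]$ and $[\psc]$) need \emph{not} agree, which is the source of the topological obstructions to be resolved later in Section~\ref{obstructionsec}.
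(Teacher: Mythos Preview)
Your proof is correct and follows essentially the same approach as the paper's: apply Lemma~\ref{exactformslemma} case~ii) to the closed difference forms $h^*(\phn)-\phc$ and $h^*(\psn)-\psc$, using the decay rates from~\eqref{ACdefneq} and~\eqref{ACdefneq2}. The paper's version is terser, but the substance is identical.
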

\begin{proof}
This follows immediately from $| ( h^* (\phm) - \phc ) |_{\gc} = O (r^{\nu})$ and $| ( h^* (\psm) - \psc ) |_{\gc} = O (r^{\nu})$, using Lemma~\ref{exactformslemma}. The second part of each statement is automatic.
\end{proof}

Therefore, in contrast to the case of $\G$~manifolds with ICS as discussed in Proposition~\ref{CSclassprop}, we \emph{cannot} conclude that the cohomology classes $[h^*(\phn)]$ and $[h^*(\psn)]$ are trivial in $H^3(\Sigma, \R)$ and $H^4 (\Sigma, \R)$, respectively, and indeed in general when the rate $\nu$ is not sufficiently negative, they will not be. This will introduce some \emph{obstructions} to our glueing procedure, which are discussed in Section~\ref{formsconstructionsec}. These observations lead us to make the following definition.

\begin{defn} \label{ACinvariantsdefn}
For an AC $\G$~manifold $(N, \phn, \psn, \gn)$, we define the cohomological invariants $\Phi(N) \in H^3(\Sigma, \R)$ and $\Psi(N) \in H^4(\Sigma, \R)$ to be the cohomology classes $[h^*(\phn)]$ and $[h^*(\psn)]$, respectively. By Proposition~\ref{ACclassprop}, $\Phi(N) = 0$ if the rate $\nu < -3$ and $\Psi(N) = 0$ if $\nu < -4$.
\end{defn}

\begin{rmk} \label{BSratesrmk}
For the Bryant--Salamon examples of asymptotically conical $\G$~manifolds described in Examples~\ref{asds4ex},~\ref{asdcp2ex}, and~\ref{spins3ex}, \emph{only one} of these invariants can be (and is) non-zero in each case, by Remark~\ref{NKcohomologyrmk} and Definition~\ref{ACinvariantsdefn}. For the spaces $\Lambda^2_-(S^4)$ and $\Lambda^2_-(\C \PR^2)$, we have $\Phi(N) = 0$ and $\Psi(N) \neq 0$, whereas for the space $\spi (S^3)$ we have $\Phi(N) \neq 0$ and $\Psi(N) = 0$.
\end{rmk}

\section{Desingularization of compact $\G$~manifolds with ICS}
\label{desingsec}

Suppose $M$ is a compact $\G$~manifold with isolated conical singularities $x_1, \ldots, x_n$, cones $C_1, \ldots, C_n$, and rates $\mu_1, \ldots, \mu_n >0$. Assume also that we have asymptotically conical $\G$~manifolds $N_1, \ldots, N_n$, with the same cones $C_1, \ldots, C_n$, and rates $\nu_1, \ldots, \nu_n \leq -3$. (We will see later why we need to assume that each $\nu_i \leq -3$ rather than just $\nu_i < 0$.) We want to \emph{desingularize} $M$ to obtain a \emph{smooth} compact $\G$~manifold. The idea is to cut out a neighbourhood of each singularity $x_i$, and \emph{glue} in $N_i$. In this way we obtain a smooth compact manifold $\tilde M$, and then we need to show using analysis that $\tilde M$ admits a torsion-free $\G$~structure.

The main tool that we will require is the following theorem of Joyce, which says that if one can find a closed $\G$~structure $\ph$ on a compact manifold $M$ whose torsion is sufficiently small, then there exists a \emph{torsion-free} $\G$~structure $\tilde \ph$ on $M$ which is \emph{close to} $\ph$ in some sense. It was used by Joyce in~\cite{J1,J2} to construct the first compact examples of manifolds with $\G$~holonomy. (Another glueing construction of smooth compact $\G$~manifolds is due to Kovalev~\cite{Kov}.) The precise statement of Joyce's theorem is as follows.

\begin{thm}[Joyce~\cite{J4} Theorem 11.6.1] \label{joycethm}
Let $\kappa$, $D_1$, $D_2$, and $D_3$ be any positive constants. Then there exists $s_0 \in (0, 1]$ and $D_4 > 0$, such that whenever $0 < s \leq s_0$, the following holds:

Let $M$ be a smooth compact $7$-manifold, with $\G$~structure $\ph$ and associated metric $g$ satisfying $d \ph = 0$. Suppose there is a smooth $3$-form  $\chi$ on $M$ satisfying $d^*_g \chi = d^*_g \ph$ such that
\begin{enumerate}[i)]
\item $\, \, {|| \chi ||}_{C^0} \leq D_1 s^{\kappa}, \qquad {|| \chi ||}_{L^2} \leq D_1 s^{\frac{7}{2} + \kappa}, \qquad {|| d^*_g \chi ||}_{L^{14}} \leq D_1 s^{-\frac{1}{2} + \kappa}$.
\item the \emph{injectivity radius} $\mathcal{I}(g)$ satisfies $\mathcal{I}(g) \geq D_2 s$.
\item the \emph{Riemann curvature} $\mathcal{R}(g)$ satisfies ${|| \mathcal{R}(g) ||}_{C^0} \leq D_3 s^{-2}$.
\end{enumerate}
Then there exists a smooth, \emph{torsion-free} $\G$~structure $\tilde \ph$ with metric $\tilde g$ on $M$ and such that
\begin{itemize}
\item $\, {|| \ph - \tilde \ph ||}_{C^0} \leq D_4 s^{\kappa}$.
\item $\, [\ph] = [\tilde \ph]$ in $H^3(M, \mathbb R)$.
\end{itemize}
Here all norms are computed using the original metric $g$.
\end{thm}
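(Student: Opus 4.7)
I would seek the torsion-free $\G$~structure in the form $\tilde\ph = \ph + d\eta$ for an unknown $2$-form $\eta$, so that closedness and the cohomology class $[\tilde\ph] = [\ph]$ are automatic. By Remark~\ref{g2manifoldrmk}, what remains is $d\,\Theta(\tilde\ph) = 0$. Applying Lemma~\ref{quadlemma} at $\ph$ with increment $\xi = d\eta$ converts this into a quasilinear second order elliptic equation for $\eta$,
\begin{equation*}
d\stp\!\bigl(\tfrac{4}{3}\pi_1(d\eta) + \pi_7(d\eta) - \pi_{27}(d\eta)\bigr) + d\,\Fp(d\eta) \, = \, -d\ps,
\end{equation*}
whose linearisation at $\eta=0$ is, up to zeroth order terms, the Hodge--Laplace operator $d d^* d$ acting on coexact $2$-forms. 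The torsion $d^*\ph$ on the right-hand side is the obstruction to $\eta = 0$ already being a solution, and is precisely what the hypotheses on $\chi$ are designed to control.

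\textbf{Iteration and the role of $\chi$.} I would set up a Banach fixed-point argument in a suitable Sobolev space $W^{k,p}(M,g)$ with $k,p$ large enough to embed into $C^1$. Start from $\eta_0 = 0$, and define $\eta_{m+1}$ by solving the \emph{linear} problem obtained by freezing the quadratic correction $\Fp(d\eta_m)$ on the right-hand side; each step inverts $d d^* d$ on coexact $2$-forms, which is an elliptic isomorphism onto its image. The auxiliary $3$-form $\chi$ enters as the concrete quantitative proxy for $d^*\ph$: because $d^*\chi = d^*\ph$, the bounds of hypothesis (i) on $\|\chi\|_{C^0}$, $\|\chi\|_{L^2}$, and $\|d^*\chi\|_{L^{14}}$ translate, via integration by parts and duality, into exactly the norms of the right-hand side $d^*\ph$ needed to start the iteration inside a ball of radius $O(s^\kappa)$.

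\textbf{Scale-invariant estimates.} The analytic core is to obtain Sobolev embedding, $L^p$ elliptic regularity, and Moser-type estimates on $(M,g)$ with constants depending on $s$ in a controlled way. Hypotheses (ii) and (iii)---$\mathcal{I}(g) \geq D_2 s$ and $\|\mathcal{R}(g)\|_{C^0} \leq D_3 s^{-2}$---are exactly the conditions under which the rescaled metric $s^{-2}g$ has bounded geometry with unit-scale constants. Covering $M$ by geodesic balls of radius $\sim s$, transplanting the Euclidean Sobolev and Calder\'on--Zygmund estimates through the exponential map, and patching via a Lipschitz partition of unity, one extracts constants of the form $C s^{-\alpha}$ with explicit $\alpha$ for each relevant inequality (in particular for $W^{2,7} \hookrightarrow C^0$ and the $L^{14}$--$L^2$ interpolation in dimension $7$). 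Combined with the quadratic bounds~\eqref{quadeq2} on $\Fp$ and $\nab\Fp$, this makes the iteration map a contraction on a ball of radius $O(s^\kappa)$ once $s < s_0$; passing to the limit and elliptic bootstrapping give a smooth $\eta$, whence $\tilde\ph = \ph + d\eta$ is torsion-free with $\|\tilde\ph - \ph\|_{C^0} \leq D_4 s^\kappa$.

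\textbf{Main obstacle.} The hard part is the bookkeeping of exponents of $s$. The specific powers $\tfrac{7}{2}+\kappa$, $-\tfrac{1}{2}+\kappa$, and the choice of $L^{14}$ (the conformal exponent paired with $L^2$ in dimension $7$) in hypothesis (i) are tuned precisely to cancel the $s^{-\alpha}$ factors produced by rescaling the Sobolev embeddings against the $s^\kappa$ smallness of the torsion, so that the contraction closes with a uniform gap independent of $s$. This balancing, rather than any single analytic inequality, is what makes Theorem~\ref{joycethm} delicate, and it is the reason the result is stated with this particular triple of norms rather than a more symmetric one.
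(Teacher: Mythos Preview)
The paper does not prove this theorem: it is quoted verbatim from Joyce's book (Theorem~11.6.1 of~\cite{J4}) and used as a black box, with only Remark~\ref{improvedconstantsrmk} commenting on the sharpness of the exponents. So there is no proof in the paper to compare your proposal against.

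That said, your outline is a faithful sketch of the strategy Joyce actually uses in~\cite{J4}. The torsion-free structure is sought as $\tilde\ph = \ph + d\eta$, the equation $d\Theta(\ph + d\eta) = 0$ is expanded via the analogue of Lemma~\ref{quadlemma}, and the resulting quasilinear equation is solved by a contraction argument whose analytic inputs (Sobolev embeddings, elliptic estimates) are made uniform in $s$ precisely through the bounded-geometry hypotheses (ii) and (iii). Your identification of the role of $\chi$ as a proxy for the torsion, and of the specific exponents in (i) as being tuned to the $7$-dimensional Sobolev scaling, is also correct. One point worth sharpening: in Joyce's actual argument the unknown is taken to be a function $f$ (working in the $\Omega^3_1$ component) rather than a general $2$-form $\eta$, which simplifies the linear part to the scalar Laplacian and avoids gauge ambiguity; but the spirit of your sketch is right.
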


\begin{rmk} \label{improvedconstantsrmk}
The exponents in i) of Theorem~\ref{joycethm} are the best possible for the theorem to be true, and are improvements to those presented in Theorem 11.6.1 of Joyce~\cite{J4}. See the discussion at the bottom of page 296 of Joyce~\cite{J4} for more details.
\end{rmk}

In Section~\ref{manifoldconstructionsec} we construct a one parameter family $\tilde M_s$ of \emph{compact, smooth} $7$-manifolds for small $s>0$. Then in Section~\ref{formsconstructionsec} we construct a closed $\G$~structure $\ph_s$ on $\tilde M_s$, and finally in Section~\ref{torsionfreesec} we show that for $s$ sufficiently small, there exists a torsion-free $\G$~structure $\tilde{\ph}_s$ on $\tilde M_s$ close to $\ph_s$.

\subsection{Construction of the smooth compact manifolds $\tilde M_s$}
\label{manifoldconstructionsec}

By letting $R = \max (R_1, \ldots, R_n)$, we can assume that the parameter $R$ in Definition~\ref{ACdefn} is the same for all the AC $\G$~manifolds $N_1, \ldots, N_n$. For now we will consider only those $s > 0$ which are small enough so that $2sR < \e$. Later, in Section~\ref{formsconstructionsec}, we will need to further restrict the values of $s$.

We first apply a \emph{homothety} (scaling) to each asymptotically conical $\G$~manifold $(N_i, \phni, \psni, \gni)$ as follows. We have a diffeomorphism $h_i : (R, \infty) \to N_i \backslash L_i$, such that
$h_i^* (\phni) = \phci + O(r^{\nu_i})$ as $r \to \infty$. For a fixed constant $s$, we define
\begin{equation} \label{scaledACdefneq}
N_{i,s} = N_i, \qquad \phnis = s^3 \phni, \qquad \psnis = s^4 \psni, \qquad \gnis = s^2 \gni.
\end{equation}
It is clear that $(N_{i,s}, \phnis, \psnis, \gnis)$ is again a $\G$~manifold, as we have simply scaled the $\G$~structure by a constant. We claim that $N_{i,s}$ is still asymptotically conical with the same asymptotic $\G$~cone $(C_i, \phci, \psci, \gci)$ and the same rate $\nu_i$. To see this, define $h_{i,s} : (sR, \infty) \times \Sigma_i \to N_i \backslash L_i$ by $h_{i,s} (r, \sigma) = h_i (s^{-1}r, \sigma)$. That is, $h_{i,s} = h_i \circ (\mathbf{s^{-1}})$, where $\mathbf{s^{-1}}$ is a dilation as defined in~\eqref{dilationeq}. Then we have
\begin{align} \nonumber
h_{i,s}^* ( \phnis) - \phci & = (\mathbf{s^{-1}})^* \circ h_i^* (s^3 \phni) - \phci = s^3 \, (\mathbf{s^{-1}})^* ( h_i^* (\phni) ) -  s^3 \, (\mathbf{s^{-1}})^* (\phci) \\ \label{scalingformeq} & = s^3 \, (\mathbf{s^{-1}})^*
(h_i^* (\phni) -  \phci)
\end{align}
where we have used the fact that $\phci$ is a dilation-equivariant $3$-form. Now using~\eqref{scalemetriceq}, we see that
\begin{equation*}
| (h_{i,s}^* (\phnis) - \phci ) |_{\gci}  = \, O (r^{\nu_i}),
\end{equation*}
and similarly
\begin{equation*}
| \nabc^j ( h_{i,s}^* (\phnis) - \phci) |_{\gci} = \, O (r^{\nu_i - j}) \qquad
\forall j \geq 0
\end{equation*}
as claimed. Essentially, all we have done here is to rescale each asymptotically conical $\G$~manifold $N_i$ so that the value $R$ where $N_i \backslash L_i$ resembles $(R, \infty) \times \Sigma_i$ is changed to $sR$. By taking $s$ small enough so that $sR < \e$, we will be able to identify the subset $(2sR, \e) \times \Sigma_i$ of $N_{i,s}$ with the subset $(2sR, \e) \times \Sigma_i$ of a neighbourhood of the singularity $x_i$ of $M$ in order to be able to glue the two manifolds together. This will all be made more precise now.

For each asymptotically conical $\G$~manifold $N_{i,s}$, we want to keep the compact part $L_i$ together with a little bit of the part which looks like a cone. Specifically, for each $i = 1, \ldots, n$, define
\begin{equation} \label{ACpiecedefneq}
P_{i,s} = L_i \, \cup \, h_{i,s} \left( (sR', \e) \times \Sigma_i \right),
\end{equation}
where $R' = 2R$. The reason for using $2R$ rather than $R$ will be apparent in Section~\ref{formsconstructionsec}. See Figure~\ref{GLUE1fig} for a picture of $P_{i,s}$.
\begin{figure} [ht]
\centering
\input 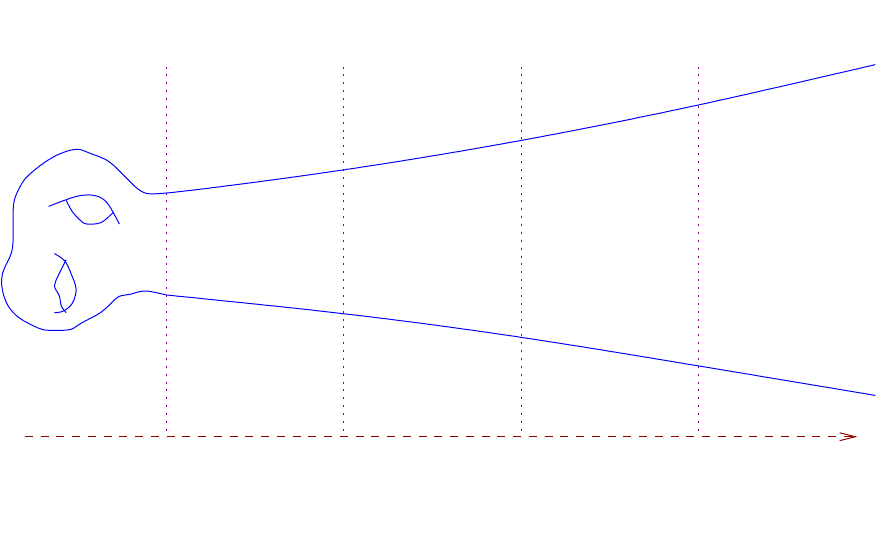_t
\caption{The region $P_{i,s}$ of the scaled AC manifold $N_{i,s}$}
\label{GLUE1fig}
\end{figure}
Similarly for the $\G$~manifold $M$ with isolated conical singularities, we want to keep the compact part $K = M' \backslash \sqcup_{i=1}^n S_i$ together with a little bit of each $S_i$ which looks like a cone. Specifically, we define
\begin{equation} \label{CSpiecedefneq}
Q_s = K \, \cup \, \sqcup_{i=1}^n f_i \left( (sR', \e) \times \Sigma_i \right).
\end{equation}
See Figure~\ref{GLUE2fig} for a picture of the part of $Q_s$ near $x_i$. The exponent $\gamma$ in both Figures~\ref{GLUE1fig} and~\ref{GLUE2fig} will be explained in Section~\ref{formsconstructionsec}.
\begin{figure} [ht]
\centering
\input 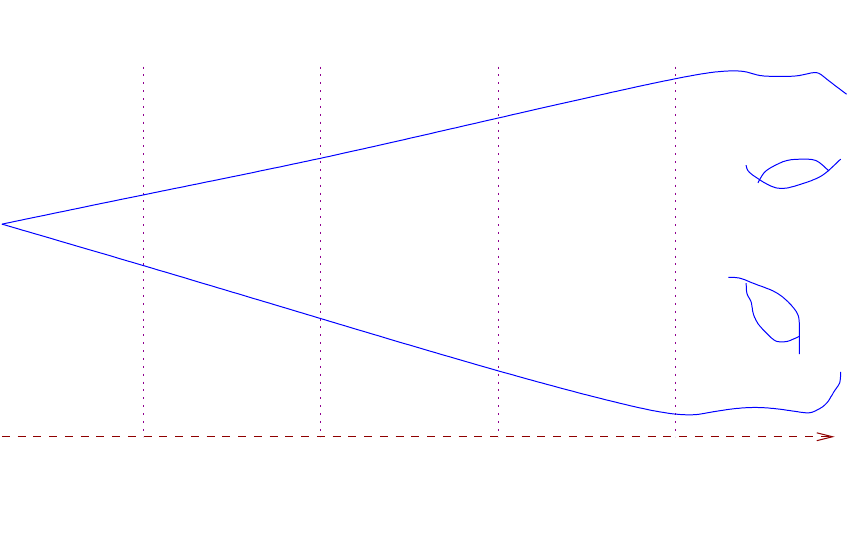_t
\caption{Part of the region $Q_s$ of the manifold $M$ with ICS}
\label{GLUE2fig}
\end{figure}
Taking the composition $h_{i,s} \circ f_i^{-1}$ of the two diffeomorphisms gives an identification which allows us to define the \emph{$i^{\text{th}}$ overlapping region} $U_{i,s}$ by
\begin{equation} \label{overlapdefneq}
U_{i,s} \, = \, (sR', \e) \times \Sigma_i \, \cong \, f_i \left( (sR', 2 \e) \times \Sigma_i \right) \, \cong \, h_{i,s} \left( (sR', 2 \e ) \times
\Sigma_i \right).
\end{equation}
We use this identification to perform our glueing, by defining the \emph{smooth, compact} $7$-manifold $\tilde M_s$ to be
\begin{equation} \label{Msdefneq}
\tilde M_s \, = \, \, \left( \sqcup_{i=1}^n  (P_{i,s} \backslash U_{i,s}) \right) \, \sqcup \, \left( Q_s \backslash \sqcup_{i=1}^n U_{i,s} \right) \, \sqcup \, \left( \sqcup_{i=1}^n U_{i,s} \right).
\end{equation}
This defines $\tilde M_s$ as a smooth manifold. See Figure~\ref{GLUE3fig} for a picture of $\tilde M_s$.
\begin{figure} [ht]
\centering
\input 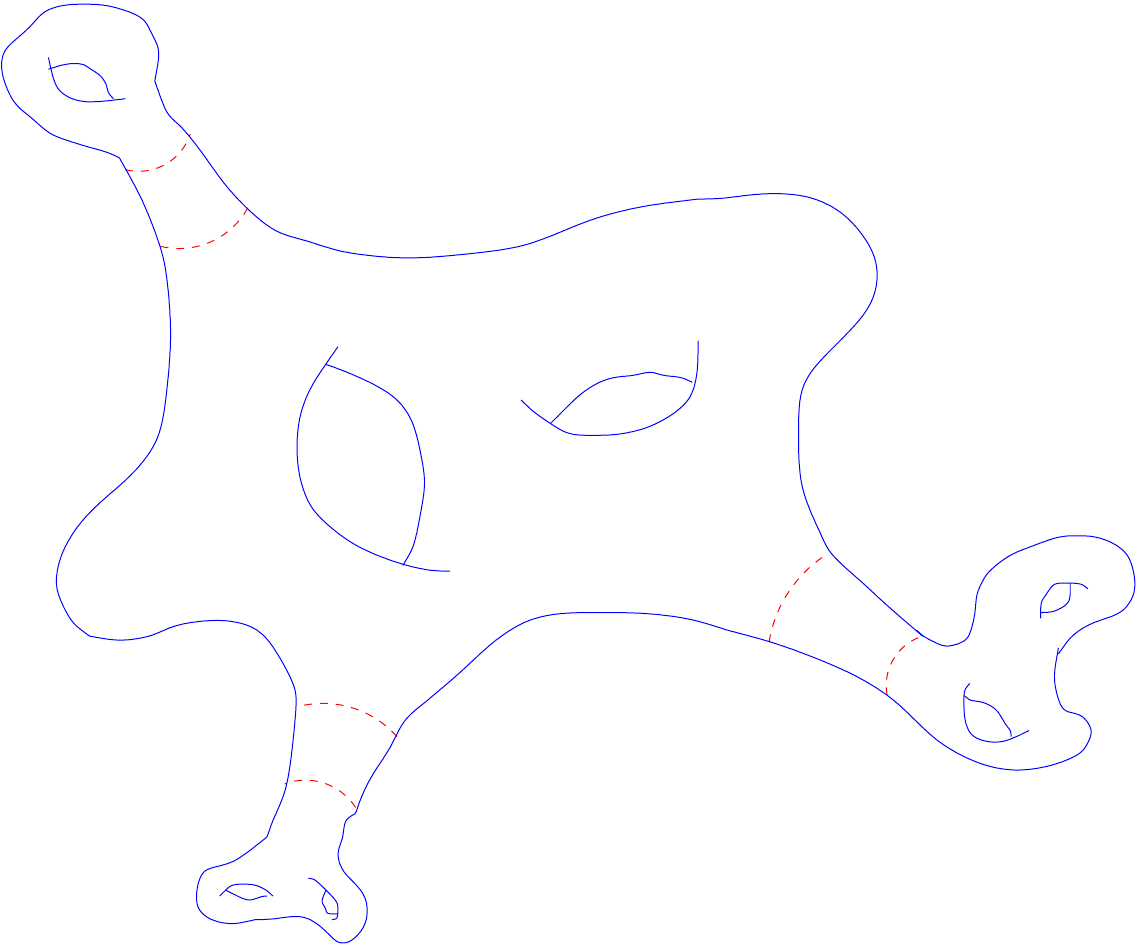_t
\caption{The smooth compact manifold $\tilde M_s$}
\label{GLUE3fig}
\end{figure}
It is clear that for all small $s$ the $\tilde M_s$'s are diffeomorphic, and that the `limit' as $s \to 0$ of $\tilde M_s$ is $M$, our original compact manifold with isolated singularities. 

Essentially, we have done the topology of the desingularization, and now we need to do the geometry. The next step is to construct a closed $\G$~structure $\ph_s$ on $\tilde M_s$ with small torsion. This is where the rates $\mu_i$ and $\nu_i$ will be important.

\subsection{Construction of the $\G$~structure $\ph_s$ on $\tilde M_s$}
\label{formsconstructionsec}

We will now construct a closed $\G$~structure $\ph_s$ on $\tilde M_s$, by patching together the torsion-free $\G$~structures on $M'$ and the $N_i$'s. Strictly speaking, we will actually construct a pair
$(\ph_s, \ps_s)$, which are non-degenerate closed $3$-forms and $4$-forms on $\tilde M_s$, respectively, but $\ps_s$ will not equal $\st_{g_{\ph_s}} \ph_s$, although it will be `close' in a sense to be made precise, when $s$ is sufficiently small. It is from this pair that we will be able to measure the torsion of $\ph_s$. We will see that sometimes the construction of the pair $(\ph_s, \ps_s)$ will only be possible if certain topological conditions are satisfied.  Finally in Section~\ref{torsionfreesec} we will show that for $s$ sufficiently small, $\ph_s$ has small enough torsion to apply Theorem~\ref{joycethm}.

In this section we will require two analytic results, Theorems~\ref{ACasymptoticexpansionthm} and~\ref{obsthm}, whose proofs will be postponed until Sections~\ref{ACgaugefixsec} and~\ref{obstructionsec}, respectively, as they require Fredholm theory on weighted Sobolev spaces and are best treated separately.

The basic idea behind the construction of the pair $(\ph_s, \ps_s)$ on $\tilde M_s$ is that we need to smoothly interpolate between the pair $(\phnis, \psnis)$ on $N_{i,s}$ and the pair $(\phm, \psm)$ for each $i = 1, \ldots, n.$ To be able to do this, we first need a good asymptotic expansion of the $\G$~structure near infinity for an asymptotically conical $\G$~manifold.

\begin{defn} \label{gaugefixdefn}
Let $N_i$ be an AC $\G$~manifold, with diffeomorphism $h_i : (R, \infty) \times \Sigma_i \to N_i \backslash L_i$. We say that $h_i$ satisfies the \emph{gauge-fixing condition} if $h_i^*(\phni) - \phci$ lies in $\wtht$ with respect to $\phci$.
\end{defn}
\begin{rmk} \label{gaugefixrmk}
The reason for making this definition is the following. One can show that if the gauge-fixing condition is satisfied, the forms $h_i^*(\phn) - \phci$ and $h_i^*(\psn) - \psci$ satisfy \emph{elliptic} equations with respect to weighted Sobolev spaces, and as such they possess improved regularity properties. This will become apparent in Section~\ref{ACgaugefixsec}. In the sequel~\cite{KCSmoduli} we show that one can always find a diffeomorphism $h_i$ for which the gauge-fixing condition is satisfied.
\end{rmk}
\begin{rmk} \label{gaugefixBSrmk}
For the Bryant--Salamon examples of asymptotically conical $\G$~manifolds, given by Examples~\ref{asds4ex},~\ref{asdcp2ex}, and~\ref{spins3ex}, it is trivial to verify that one can always find a diffeomorphism for which the gauge-fixing condition is satisfied. This is because these examples are \emph{cohomogeneity one}, which means the $\G$~structure is described by functions of one variable. The gauge-fixing condition is then equivalent to a change of variables (on the asymptotic end) satisfying a linear ordinary differential equation, which can be integrated exactly.
\end{rmk}

For our purposes, the importance of the gauge-fixing condition is the following theorem.
\begin{thm} \label{ACasymptoticexpansionthm}
Suppose that $N_i$ is an asymptotically conical $\G$~manifold with rate $\nu_i \leq -3$, and that $h_i$ satisfies the gauge-fixing condition given in Definition~\ref{gaugefixdefn}. Then on the subset $(2R, \infty) \times \Sigma_i$ of the cone $C_i$ we can write
\begin{align} \label{threeformasymptoticexpansioneq}
h_i^*(\phni) \, & = \, \phci + \xi_i + d\zeta_i,
\\ \label{fourformasymptoticexpansioneq}
h_i^*(\psni) \, & = \, \psci + \eta_i - \stci \xi_i + d\theta_i.
\end{align}
where $\xi_i$ is a harmonic $3$-form, homogeneous of order $-3$, and in $\wtht$ with respect to $\phci$, $\eta_i$ is a harmonic $4$-form, homogeneous of order $-4$, and $\zeta_i$ and $\theta_i$ are $2$-forms and $3$-forms on $(2R, \infty) \times \Sigma_i$, respectively, satisfying
\begin{equation} \label{ACinterpeq3new}
| \nabci^j \zeta_i |_{\gci} = \, \, O(r^{\nu'_i + 1 - j}), \qquad \quad | \nabci^j \theta_i |_{\gci} = \, \, O(r^{\nu'_i + 1 - j}), \qquad \forall j \geq 0,
\end{equation}
where $\nu'_i = - 4$. Furthermore, $[\xi_i] = \Phi(N_i)$ and $[\eta_i] = \Psi(N_i)$, where $\Phi(N_i)$ and $\Psi(N_i)$ are the cohomological invariants of the AC $\G$~manifold $N_i$ from Definition~\ref{ACinvariantsdefn}.
\end{thm}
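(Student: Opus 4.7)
The plan is to reduce both asymptotic expansions to a Lockhart--McOwen analysis of the elliptic operator $d + \dsc$ on the cone end, and then extract the leading terms at the critical indicial roots identified in Section~\ref{conesformssec}. First I would set $\beta_i = h_i^*(\phni) - \phci$, which by Definition~\ref{ACdefn} satisfies $|\nabci^j \beta_i|_{\gci} = O(r^{\nu_i - j})$ and is closed. The gauge-fixing condition forces $\pi_1(\beta_i) = \pi_7(\beta_i) = 0$, so Lemma~\ref{quadlemma} specializes to $h_i^*(\psni) - \psci = -\stci \beta_i + F_{\phci}(\beta_i)$; closedness of the left-hand side combined with $d \stci = \pm \stci \dsc$ then forces $\dsc \beta_i$ to be quadratic in $\beta_i$, with $|\dsc \beta_i|_{\gci} = O(r^{2\nu_i - 1})$. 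Since $\nu_i \leq -3$, this shows $\beta_i$ lies in the kernel of $d + \dsc$ modulo an error decaying strictly faster than any rate in $[-4, \nu_i]$.

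Next I would apply the Lockhart--McOwen asymptotic expansion theorem (summarized in Section~\ref{analsec}). The indicial roots of $d + \dsc$ are the orders $\lambda$ at which homogeneous kernel elements exist, and by Corollary~\ref{homodiracexcludecor} there are no such roots in the open interval $(-4,-3)$ in degrees $3$ and $4$. The expansion should therefore produce a single leading term for $\beta_i$ at the borderline weight $\lambda = -3$; by Corollary~\ref{homocccor} this term is the pullback $\xi_i$ of a harmonic $3$-form on $\Sigma_i$, and Proposition~\ref{cones27prop} forces $\xi_i \in \wtht$ with respect to $\phci$, consistent with gauge-fixing. Potential $(\log r)$-terms at weight $-3$ can be excluded by applying Proposition~\ref{dirachomokernellogsprop} to the mixed even-degree form $h_i^*(\psni) - \psci$, whose order-$(-3)$ part is controlled by $\beta_i$ via the identity above. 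The remainder $\beta_i - \xi_i$ is then closed with $O(r^{-4})$ decay, so Lemma~\ref{exactformslemma}(ii) writes it as $d\zeta_i$ with $|\nabci^j \zeta_i|_{\gci} = O(r^{\nu'_i + 1 - j})$ for $\nu'_i = -4$.

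For the $4$-form expansion, I would substitute $\beta_i = \xi_i + d\zeta_i$ into $h_i^*(\psni) - \psci$, rewriting it as $-\stci \xi_i + \Xi_i$ with $\Xi_i = -\stci d\zeta_i + F_{\phci}(\beta_i)$; the $4$-form $\Xi_i$ is closed, has decay $O(r^{-4})$, and $\dsc \Xi_i$ is of quadratic order in $\beta_i$. Running the same asymptotic expansion on $\Xi_i$ at the next indicial weight $\lambda = -4$, excluding $(\log r)$-terms now via Proposition~\ref{dirachomokernellogsprop2}, and identifying the leading term by Corollary~\ref{homocccor} as the pullback $\eta_i$ of a harmonic $4$-form on $\Sigma_i$, should produce $\Xi_i = \eta_i + d\theta_i$ with $\theta_i$ satisfying the claimed decay via Lemma~\ref{exactformslemma}. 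The cohomology identifications are then automatic: $\phci$ and $d\zeta_i$ are exact (Proposition~\ref{coneformsexactprop}), so $[\xi_i] = [h_i^*(\phni)] = \Phi(N_i)$; and $\psci$, $d\theta_i$ are exact, while $\stci \xi_i = -dr \wedge \sts \xi_i = d(-r\, \sts \xi_i)$ is exact on the cone end (using $\dss \xi_i = 0$ since $\xi_i$ is harmonic on $\Sigma_i$), giving $[\eta_i] = \Psi(N_i)$.

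The main obstacle I anticipate is the borderline nature of both weights $\lambda = -3$ and $\lambda = -4$: they sit exactly at the edge of the excluded ranges of Corollary~\ref{homodiracexcludecor}, so the indicial kernel is non-trivial there and an a priori Lockhart--McOwen expansion is only polyhomogeneous. The no-$\log$-terms results, Propositions~\ref{dirachomokernellogsprop} and~\ref{dirachomokernellogsprop2}, are the essential input upgrading this to an expansion in genuine integer powers of $r$, which is exactly the form required by the glueing construction of Section~\ref{formsconstructionsec}.
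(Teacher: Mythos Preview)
Your overall strategy is correct and the identifications of the leading homogeneous pieces via Corollary~\ref{homodiracexcludecor}, Corollary~\ref{homocccor}, Propositions~\ref{dirachomokernelprop}--\ref{dirachomokernellogsprop2}, and Proposition~\ref{cones27prop} are all in the right places. However, there is a genuine gap at the step where you invoke ``the Lockhart--McOwen asymptotic expansion theorem.'' The expansion results stated in Section~\ref{analsec} (Proposition~\ref{asymptoticexpansionprop2} and Corollary~\ref{asymptoticexpansioncor2}) apply only to elements of $\ker_{\infty}(d+\dsn)$ on the complete AC manifold $N$. Your $\beta_i = h_i^*(\phni) - \phci$ lives on the cone end $(R,\infty)\times\Sigma_i$, and satisfies $d\beta_i = 0$ with $|\dsc\beta_i|_{\gci} = O(r^{2\nu_i-1})$: it is only \emph{approximately} annihilated by $d+\dsc$, and no result in the paper lets you expand such a form directly. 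The same defect recurs for $\Xi_i$ in your $4$-form step, and your proposed use of Proposition~\ref{dirachomokernellogsprop} on $h_i^*(\psni)-\psci$ has the same problem: that proposition is about log-polynomials of homogeneous forms lying in the kernel, not about the full difference $h_i^*(\psni)-\psci$.

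The paper fills exactly this gap in Lemma~\ref{ACgaugefixlastpreliminarylemma}: one transfers to $N$, extends the error term $\dsn\upsilon$ from the end to all of $N$ by a cutoff, and then uses the Fredholm alternative (Theorem~\ref{fredholmalternativethm2}) to solve $(d+\dsn)(\omega_4) = \dsn d(\vartheta\alpha)$, producing a closed $\omega_4 \in L^p_{1,-5}$ such that $\psn - (h^{-1})^*(\psc) - \omega_4$ is genuinely in $\ker_\infty(d+\dsn)$. Only then does Corollary~\ref{asymptoticexpansioncor2} apply. This step requires the complete manifold $N$: the cone end alone has a finite boundary, and the Fredholm theory of Section~\ref{lockhartACsec} is not set up there (cf.\ Remark~\ref{perversermk}). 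Beyond this gap, the paper also economizes by running the expansion once on the $4$-form to extract both the rate $-3$ and $-4$ pieces simultaneously, and then recovers the $3$-form expansion algebraically via $\Theta^{-1}$ and the estimates~\eqref{Jesteq},~\eqref{quadeq4}, rather than performing two separate expansions as you propose.
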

Theorem~\ref{ACasymptoticexpansionthm} is proved in Section~\ref{ACgaugefixsec}. From now on we will assume that all of our AC $\G$~manifolds $N_1, \ldots, N_n$ have been gauge-fixed, so the asymptotic expansions in~\eqref{threeformasymptoticexpansioneq} and~\eqref{fourformasymptoticexpansioneq} apply. As noted in Remark~\ref{gaugefixBSrmk}, this can always be done for all the known examples of asymptotically conical $\G$~manifolds.

\begin{rmk} \label{minus3rmk}
It is because we must use Theorem~\ref{ACasymptoticexpansionthm} (see also Remark~\ref{nogaugefixrmk}) which requires each $\nu_i \leq -3$, that we need to make this restriction on the $\nu_i$'s. If some $\nu_{i_o} \in (-3,0)$, then the question of whether or not the glueing construction could be possible would depend on the spectrum of the Laplacian on the particular link $\Sigma_{i_0}$. Therefore, only if all $\nu_i \leq -3$ can we hope to be able to prove a general result like our Theorem~\ref{mainthm}.
\end{rmk}

Using the scaling in~\eqref{scaledACdefneq}, and the fact that $\xi_i$ and $\eta_i$ are dilation-invariant (see Remark~\ref{homoformsfirstrmk}), we can now argue just as in equation~\eqref{scalingformeq} to obtain
\begin{align} \label{scaledACinterpeq1}
h_{i,s}^*(\phnis) & \, = \, \phci + s^3 \xi_i + d \zeta_{i,s}, \\ \label{scaledACinterpeq2} h_{i,s}^*(\psnis) & \, = \, \psci + s^4 \eta_i - s^3 \stci \xi_i + d \theta_{i,s},
\end{align}
where $\zeta_{i,s} (r, \sigma) = s^3 \, \zeta_i (s^{-1}r, \sigma)$ and $\theta_{i,s} (r, \sigma) = s^4 \, \theta_i (s^{-1}r, \sigma)$. Now we can compute
\begin{align*}
| \nabci^j \zeta_{i,s} (r, \sigma) |_{\gci(r,\sigma)} & \, = \, s^3 | \nabci^j \zeta_i (s^{-1}r, \sigma) |_{\gci(r, \sigma)} \, = \, s^3 s^{-2-j} | \nabci^j \zeta_i (s^{-1}r, \sigma) |_{\gci(s^{-1}r, \sigma)} \\ & \, = \, s^{1-j} \, O( (s^{-1}r)^{\nu_i' + 1 - j} ) \, = \, s^{-\nu_i'} \, O(r^{\nu_i' + 1 - j}),
\end{align*}
where we have used~\eqref{scalemetriceq} and the fact that $\nabci^j \zeta_i \in (T^*)^j \otimes \Lambda^2 (T^*)$. We can compute similarly for $\theta_{i,s}$, and find that
\begin{equation} \label{scaledACinterpeq3}
| \nabci^j \zeta_{i,s} |_{\gci} = \, \, s^{-\nu'_i} \, O(r^{\nu'_i + 1 - j}), \qquad \quad | \nabci^j \theta_{i,s} |_{\gci} = \, \, s^{-\nu'_i} \, O(r^{\nu'_i + 1 - j}), \qquad \forall j \geq 0,
\end{equation}
on $(R', \infty) \times \Sigma_i$, where $R' = 2R$.

Now consider the $i^{\text{th}}$ conical singularity $x_i$ of $M$. By Proposition~\ref{CSclassprop}, on $(0, \e) \times \Sigma_i$ we can write
\begin{align} \label{CSinterpeq1}
f_i^*(\phm) & \, = \, \phci + d \alpha_i, \\ \label{CSinterpeq2} f_i^*(\psm) & \, = \, \psci + d \beta_i,
\end{align}
where $\alpha_i$ is a $2$-form and $\beta_i$ is a $3$-form on $(0, \e) \times \Sigma_i$, and by Lemma~\ref{exactformslemma} we know that
\begin{equation} \label{CSinterpeq3}
| \nabci^j \alpha_i |_{\gci} = \, \, O(r^{\mu_i + 1 - j}), \qquad \quad | \nabci^j \beta_i |_{\gci} = \, \, O(r^{\mu_i + 1 - j}), \qquad \forall j \geq 0.
\end{equation}

In order to be able to patch together the $\G$~structures on $M'$ and the $N_{i,s}$'s smoothly to obtain a $\G$~structure $\ph_s$ on $\tilde M_s$, we will arrange that in the $i^{\text{th}}$ overlap region
$U_{i,s}$, the forms will be in the same cohomology class, and hence differ by an exact piece.
Comparing equations~\eqref{scaledACinterpeq1} and~\eqref{scaledACinterpeq2} with equations~\eqref{CSinterpeq1} and~\eqref{CSinterpeq2}, we see that we need to replace the forms $\phm$ and $\psm$ on $M'$ with $\phm + s^3 \xi$ and $\psm + s^4 \eta - s^3 \stm \xi$ that are asymptotic, at the appropriate rate, to $\phci + s^3 \xi_i$ and $\psci + s^4 \eta_i - s^3 \stci \xi_i$ near $x_i$, respectively. In fact, for the construction to work, we need $\eta$ to be closed but $\xi$ to be both closed \emph{and} coclosed. There are topological obstructions that can prevent this from being possible. The resolution of this obstruction problem is given by the following theorem.

\begin{thm} \label{obsthm}
Let $M$ be a compact $\G$~manifold with isolated conical singularities. For each $i$, let $\xi_i$ and $\eta_i$ be $3$-forms and $4$-forms, homogeneous of orders $-3$ and $-4$, respectively, and closed and coclosed on each cone $C_i$, with respect to $\gci$. Suppose that
\begin{align} \label{obsconditionseq1}
& \oplus_{i=1}^n [\xi_i] \in \oplus_{i=1}^n H^3(\Sigma_i, \R) \text{ lies in the image of } \Upsilon^3 : H^3(M', \R) \to \oplus_{i=1}^n H^3(\Sigma_i, \R), \\ \label{obsconditionseq2} & \oplus_{i=1}^n [\eta_i] \in \oplus_{i=1}^n H^4(\Sigma_i, \R) \text{ lies in the image of } \Upsilon^4 : H^4(M', \R) \to \oplus_{i=1}^n H^4(\Sigma_i, \R).
\end{align}
where the maps $\Upsilon^k$ are given in Definition~\ref{topdefn}. Then for $\delta > 0$ sufficiently small, there exists a smooth $3$-form $\xi$ and a smooth $4$-form $\eta$ on $M'$ such that
\begin{align} \nonumber
& d \xi = 0, \qquad d^*_{\gm} \xi = 0, \qquad d \eta = 0, \\
\label{obseq1} & | \nabci^j \! ( f_i^* (\xi) - \xi_i ) |_{\gci} = \, \, O (r^{-3 + \delta - j})
\qquad \forall j \geq 0, \\ \label{obseq2} & | \nabci^j \! ( f_i^* (\eta) - \eta_i ) |_{\gci} = \, \, O (r^{-4 + \delta - j}) \qquad \forall j \geq 0.
\end{align}
Notice that we do not say that $\eta$ is coclosed. Furthermore, the $3$-form $\xi$ is in $\Lambda^3_{27}$ with respect to the $\G$~structure $\phm$.
\end{thm}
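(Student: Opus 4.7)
The $4$-form $\eta$ can be built directly, while the $3$-form $\xi$ requires the Lockhart--McOwen machinery whose cokernel identification is the main obstacle. For $\eta$: hypothesis~\eqref{obsconditionseq2} provides a closed $\omega_4 \in \Omega^4(M')$ with $\Upsilon^4([\omega_4]) = \oplus_{i=1}^n [\eta_i]$. Since $\iota_i^* \colon H^4((0,\e)\times\Sigma_i) \to H^4(\Sigma_i)$ is an isomorphism, $f_i^*\omega_4 - \eta_i$ is exact on each end. After possibly adjusting $\omega_4$ within its cohomology class via Lockhart--McOwen to obtain a rate strictly better than $-4$ at each $x_i$, Lemma~\ref{exactformslemma} furnishes a primitive $\tau_i$ with controlled decay. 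Choose cutoffs $\chi_i \equiv 1$ near $x_i$ supported in $(0,\e)\times\Sigma_i$, and set $\eta = \omega_4 - \sum_i d(\chi_i\tau_i)$. Then $\eta$ is smooth and closed on $M'$ and coincides with $\eta_i$ wherever $\chi_i = 1$, so~\eqref{obseq2} holds trivially in the inner region and by $\omega_4$'s improved rate in the transition region.

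For $\xi$ the analogous construction using~\eqref{obsconditionseq1} produces a closed $\tilde\xi \in \Omega^3(M')$ equal to $\xi_i$ near each $x_i$. This $\tilde\xi$ is not coclosed: near $x_i$, $d^*_\gm\tilde\xi = (d^*_\gm - d^*_{\gci})\xi_i$, which by Lemma~\ref{starcomparelemma} and~\eqref{CSdefneq2} has $\gci$-norm $O(r^{\mu_i - 4})$. Seek $\xi = \tilde\xi + dq$ with $d^*\xi = 0$; this reduces to a Laplace-type equation for the $2$-form $q$. The Lockhart--McOwen theory of Section~\ref{lockhartCSsec} applies: the elliptic operator $d + d^*$ on mixed-degree forms is Fredholm between weighted Sobolev spaces whose weight avoids the discrete set of critical rates, which are the orders of non-trivial homogeneous solutions of $(d + d^*) = 0$ on the cones. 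By Proposition~\ref{homolapexcludeprop} and Corollary~\ref{homodiracexcludecor}, the weight $-3+\delta$ avoids these critical rates for $\delta > 0$ sufficiently small, while Propositions~\ref{dirachomokernelprop} and~\ref{dirachomokernellogsprop} pin down the kernel at the boundary value $-3$. A standard duality argument identifies the cokernel of this Fredholm operator with a finite-dimensional space of closed-coclosed forms on the links $\Sigma_i$, and hypothesis~\eqref{obsconditionseq1} is precisely the condition that the source term $d^*\tilde\xi$ pairs trivially with this cokernel; hence $q$ exists, yielding a closed and coclosed $\xi$ with the required asymptotics.

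To verify $\xi \in \wtht$, decompose $\xi = f\phm + \stm(\omega \wedge \phm) + \xi_{27}$ with respect to $\phm$ using~\eqref{lambda3eq}. Since $\phm$ and $\psm$ are parallel and $\G$-manifolds are Ricci-flat, identities~\eqref{temp27eq1}--\eqref{temp27eq2} imply that $f$ and $\omega$ are individually harmonic on $M'$. The matching $\xi - \xi_i = O(r^{-3+\delta})$, combined with $\xi_i \in (\wtht)_{\phci}$ (Proposition~\ref{cones27prop}) and $\phm - \phci = O(r^{\mu_i})$, gives pointwise decay estimates on $|f|$ and $|\omega|$. Lockhart--McOwen bootstrapping through the excluded ranges $(-5,0)$ for harmonic functions and $(-4,-1)$ for harmonic $1$-forms (Proposition~\ref{homolapexcludeprop}) sharpens these bounds until removable-singularity arguments extend $f$ and $\omega$ to smooth harmonic forms on the compact $M$. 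A Weitzenb\"ock/Stokes argument using Ricci-flatness, controlled boundary terms, and the fact that full $\G$-holonomy forbids non-trivial parallel $1$-forms, together with a pairing of $\xi \wedge \psm$ against the cohomological constraints to pin down the residual constant in $f$, forces $f = 0$ and $\omega = 0$, so $\xi \in \wtht$ as required.
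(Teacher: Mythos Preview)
Your $\eta$ construction matches the paper's (Proposition~\ref{etaprop}), and the overall strategy for $\xi$ is right. Two points need attention.

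The ansatz $\xi = \tilde\xi + dq$ mislocates where~\eqref{obsconditionseq1} enters, and the correction need not be exact. The paper starts from $\xi'$ equal to $\xi_i$ near $x_i$ but \emph{not} assumed closed, solves $(d + \dsm)\omega = -d\xi' - \dsm\xi'$ for an odd mixed form $\omega$ (Theorem~\ref{solvabilitythm}), and takes the correction to be $\omega_3$, which is closed but not in general exact. The obstruction to solvability is the boundary pairing $\sum_i\int_{\{r\}\times\Sigma_i} \xi_i \wedge \stsi\alpha_{4,i}$ coming from the $4$-form piece $-d\xi'$ paired against $\upsilon_4$, and \emph{that} is where~\eqref{obsconditionseq1} is consumed. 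In your setup you have already spent~\eqref{obsconditionseq1} building a globally closed $\tilde\xi$; the right-hand side is then only the $2$-form $-\dsm\tilde\xi$, whose pairing with $\ker P^*$ involves only $\upsilon_2$, and by Proposition~\ref{adjointkernelprop} one has $|\upsilon_2| = O(r^{-3+\delta})$ with no order $-3$ leading term, so the boundary term vanishes automatically. Thus your claim that~\eqref{obsconditionseq1} is ``precisely the condition that $\dsm\tilde\xi$ pairs trivially with the cokernel'' is not correct --- once $\tilde\xi$ is closed the solvability is free. Also, if you literally try to solve $\Delta q = -\dsm\tilde\xi$ on $2$-forms, the cokernel consists of harmonic $2$-forms at rate $-3-\delta$, which are \emph{not} closed in general (Corollary~\ref{IBPcor} fails below rate $-3$), so that route is genuinely harder.

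The real gap is the claim $f = 0$. Bootstrapping through $(-5,0)$ correctly shows $f$ is constant, but near $x_i$ you only get $f = O(r^{-3+\delta})$, which is no constraint since $r^{-3+\delta}\to\infty$ as $r\to 0$; the vague ``pairing of $\xi\wedge\psm$ against cohomological constraints'' does not force the constant to vanish, and indeed it need not. The paper does not attempt this: it simply replaces $\tilde\xi$ by $\xi = \pi_{27}(\tilde\xi) = \tilde\xi - f\phm$ (Proposition~\ref{projectionprop}), observes that $f\phm$ with $f$ constant is separately closed and coclosed so $\xi$ inherits both, and checks that~\eqref{obseq1} survives the projection because $\xi_i\in(\wtht)_{\phci}$ already (Proposition~\ref{cones27prop}) and the projections with respect to $\phm$ and $\phci$ agree to order $O(r^{\mu_i})$.
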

Theorem~\ref{obsthm} is proved in Section~\ref{obstructionsec}. Assuming this result for the moment, we now continue our construction of the pair $(\ph_s, \ps_s)$.

\begin{cor} \label{xietaestimatescor}
Near the $i^{\text{th}}$ singular point $x_i$, the $3$-form $f_i^*(\xi)$ and the $4$-form $f_i^*(\eta)$ satisfy
\begin{equation} \label{xietaestimateeq}
|f_i^*(\xi)|_{\gci} = O(r^{-3}), \qquad \qquad |f_i^*(\eta)|_{\gci} = O(r^{-4}).
\end{equation}
\end{cor}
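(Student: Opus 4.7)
The plan is to combine the asymptotic estimates of Theorem~\ref{obsthm} with the homogeneity of $\xi_i$ and $\eta_i$ via the triangle inequality. Specifically, Theorem~\ref{obsthm} gives us
\begin{equation*}
|f_i^*(\xi) - \xi_i|_{\gci} = O(r^{-3+\delta}), \qquad |f_i^*(\eta) - \eta_i|_{\gci} = O(r^{-4+\delta}),
\end{equation*}
for some $\delta > 0$. Since $\xi_i$ is a homogeneous $3$-form of order $-3$ and $\eta_i$ is a homogeneous $4$-form of order $-4$, Definition~\ref{homoforms} (together with equation~\eqref{homoformseq2}) immediately yields $|\xi_i|_{\gci} = O(r^{-3})$ and $|\eta_i|_{\gci} = O(r^{-4})$.

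Writing $f_i^*(\xi) = (f_i^*(\xi) - \xi_i) + \xi_i$ and applying the pointwise triangle inequality, we get
\begin{equation*}
|f_i^*(\xi)|_{\gci} \leq |f_i^*(\xi) - \xi_i|_{\gci} + |\xi_i|_{\gci} \leq C \, r^{-3+\delta} + C \, r^{-3} \leq C' r^{-3},
\end{equation*}
as $r \to 0$, since $\delta > 0$ makes the first term dominated by the second. The analogous argument for $\eta$ gives $|f_i^*(\eta)|_{\gci} = O(r^{-4})$, completing the proof. There is no real obstacle here — the corollary is just a convenient packaging of the estimates from Theorem~\ref{obsthm} which will be used repeatedly in the torsion estimates of Section~\ref{torsionfreesec}.
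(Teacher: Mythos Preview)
Your proof is correct and is essentially the same as the paper's: the paper uses the triangle inequality together with~\eqref{obseq1} and the homogeneity bound $|\xi_i|_{\gci} = O(r^{-3})$ (and similarly~\eqref{obseq2} and $|\eta_i|_{\gci} = O(r^{-4})$) in exactly the same way.
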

\begin{proof}
By equation~\eqref{obseq1} we know that $|f_i^*(\xi)|_{\gci} \leq |\xi_i|_{\gci} + O(r^{-3 + \delta})$, and also $|\xi_i|_{\gci} = O(r^{-3})$, so $|f_i^*(\xi)|_{\gci} = O(r^{-3})$. The proof for $\eta$ is identical using~\eqref{obseq2}.
\end{proof}
\begin{cor} \label{Eestimatecor}
If $\delta >0$ is sufficiently small, then near the $i^{\text{th}}$ singular point $x_i$, the $4$-form $f_i^*(\stm \xi)$ and the $4$-form $\stci \xi_i$ satisy
\begin{equation} \label{Etempeq}
| f_i^*(\stm \xi) - \stci \xi_i |_{\gci} = \, O(r^{-3 + \delta}),
\end{equation}
and as a result we can say that
\begin{equation} \label{xistareq}
f_i^*(\stm \xi) = \stci \xi_i - dE_i,
\end{equation}
for some smooth $3$-form $E_i$ on $(0, \e) \times \Sigma_i$, where
\begin{equation} \label{Eesteq}
| \nabci^j E_i |_{\gci} = \, \, O(r^{-2 + \delta - j}).
\end{equation}
\end{cor}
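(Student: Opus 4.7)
The plan is to first establish the pointwise bound~\eqref{Etempeq} via a triangle inequality, then observe that both $f_i^*(\stm\xi)$ and $\stci\xi_i$ are closed $4$-forms, so that Lemma~\ref{exactformslemma} produces the required primitive $E_i$ with the claimed decay in all derivatives.

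First I would split
\[
f_i^*(\stm \xi) - \stci \xi_i \;=\; \bigl(f_i^*(\stm \xi) - \stci f_i^*(\xi)\bigr) \;+\; \stci\bigl(f_i^*(\xi) - \xi_i\bigr).
\]
The second summand is bounded by~\eqref{obseq1} (with $j=0$) together with the fact that $\stci$ is an isometry, giving $O(r^{-3+\delta})$. For the first summand, Corollary~\ref{xietaestimatescor} gives $|f_i^*(\xi)|_{\gci} = O(r^{-3})$, so Lemma~\ref{starcomparelemma} with $\lambda = -3$ yields $O(r^{-3+\mu_i})$. Taking $\delta \leq \mu_i$ (this is what ``$\delta$ sufficiently small'' means), one obtains~\eqref{Etempeq}.

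Next, since Theorem~\ref{obsthm} asserts $d\xi = 0$ and $d^*_{\gm}\xi = 0$, one has $d(\stm\xi) = \pm\stm(d^*_{\gm}\xi) = 0$, so $f_i^*(\stm\xi)$ is closed. Likewise, $\xi_i$ is closed and coclosed on the cone, so $\stci\xi_i$ is closed. Hence $f_i^*(\stm\xi) - \stci\xi_i$ is a closed $4$-form on $(0,\e)\times\Sigma_i$ whose $\gci$-norm is $O(r^{\lambda})$ with $\lambda = -3+\delta$. Since $\lambda > -4 = -k$ and $\lambda \neq -1$ for $\delta$ sufficiently small, case i) of Lemma~\ref{exactformslemma} produces a $3$-form $\Omega$ with $f_i^*(\stm\xi) - \stci\xi_i = d\Omega$ and $|\Omega|_{\gci} = O(r^{-2+\delta})$. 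Setting $E_i = -\Omega$ yields~\eqref{xistareq} and the $j=0$ case of~\eqref{Eesteq}.

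For the full derivative bounds, I would appeal to the second part of Lemma~\ref{exactformslemma}, which applies since $\lambda = -3+\delta < -1$, and requires $|\nabci^j(f_i^*(\stm\xi)-\stci\xi_i)|_{\gci} = O(r^{-3+\delta-j})$ for all $j \geq 0$. The second summand in the splitting above decays correctly in all derivatives by~\eqref{obseq1}. For the first summand I need a higher-derivative version of Lemma~\ref{starcomparelemma}: writing $\st_{f_i^*\gm} - \stci$ as a smooth algebraic function of the metric perturbation $f_i^*(\gm)-\gci$, whose covariant derivatives decay like $O(r^{\mu_i-l})$ by~\eqref{CSdefneq2}, and using Leibniz together with $|\nabci^j f_i^*(\xi)|_{\gci} = O(r^{-3-j})$ (which follows from~\eqref{obseq1} plus the fact that $\xi_i$ is homogeneous of order $-3$), one obtains $|\nabci^j(f_i^*(\stm\xi) - \stci f_i^*(\xi))|_{\gci} = O(r^{-3+\mu_i-j})$. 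The main obstacle, and essentially the only non-routine step, is this bookkeeping: Lemma~\ref{starcomparelemma} is stated only for $j=0$, so one must carefully extend it using the smooth dependence of $\st$ on the metric together with the all-order decay of the metric difference in~\eqref{CSdefneq2}.
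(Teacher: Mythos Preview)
Your proof is correct and follows essentially the same approach as the paper: the same triangle-inequality splitting, the same use of Lemma~\ref{starcomparelemma} and~\eqref{obseq1}, the choice $\delta < \mu_i$, and the appeal to Lemma~\ref{exactformslemma}. You are in fact more careful than the paper, which simply says ``equations~\eqref{xistareq} and~\eqref{Eesteq} now follow from Lemma~\ref{exactformslemma}'' without spelling out the closedness verification or the higher-derivative bookkeeping you correctly identify as the only non-routine step.
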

\begin{proof}
We compute that
\begin{align*}
|f_i^*(\stm \xi) - \stci \xi_i |_{\gci} & \leq |f_i^*(\stm \xi) - \stci (f_i^* (\xi)) |_{\gci} + | \stci (f_i^*(\xi)) - \stci \xi_i |_{\gci} \\ & =  |f_i^*(\stm \xi) - \stci (f_i^* (\xi)) |_{\gci} + | f_i^*(\xi) - \xi_i |_{\gci} \\ & = O(r^{-3 + \mu_i})
+ O(r^{-3 + \delta})
\end{align*}
where we have used~\eqref{xietaestimateeq} and Lemma~\ref{starcomparelemma} on the first term, and~\eqref{obseq1} and the fact that $\stci$ is an isometry with respect to $\gci$ on the second term. This gives~\eqref{Etempeq} as long as we choose $\delta$ less than each $\mu_i$. Now the $3$-form 
$f_i^*(\stm \xi) - \stci \xi_i$ is closed by Theorem~\ref{obsthm}, and so equations~\eqref{xistareq} and~\eqref{Eesteq} now follow from Lemma~\ref{exactformslemma}.
\end{proof}
\begin{rmk} \label{Esignrmk}
The sign on $E_i$ in~\eqref{xistareq} is chosen for later convenience.
\end{rmk}

Recall that if each $\nu_i < -4$, then all the $\xi_i$'s and $\eta_i$'s vanish, so we can just take $\xi$ and $\eta$ to both be zero. This is the \emph{unobstructed case}. From now on, we will assume that the situation is either unobstructed, or that the conditions~\eqref{obsconditionseq1} and~\eqref{obsconditionseq2} are satisfied. By Lemma~\ref{exactformslemma} and equation~\eqref{obseq1}, since $\delta > 0$, we see that for each $i$ there exists a smooth $2$-form $A_i$ on $(0, \e) \times \Sigma_i$ such that
\begin{equation} \label{obsCSinterpeq1}
f_i^*(\phm + s^3 \xi) \, = \, \phci + s^3 \xi_i + d \alpha_i + s^3 dA_i,
\end{equation}
where we have also used equation~\eqref{CSinterpeq1}.
Similarly using equations~\eqref{obseq2},~\eqref{xistareq}, and~\eqref{CSinterpeq2}, we have that for each $i$ there exists  a smooth $3$-form $B_i$ on $(0, \e) \times \Sigma_i$ such that
\begin{equation} \label{obsCSinterpeq2}
f_i^*(\psm + s^4 \eta - s^3 \stm \xi) \, = \, \psci + s^4 \eta_i - s^3 \stci \xi_i + d \beta_i + s^4 dB_i + s^3 dE_i.
\end{equation}
Also, by Lemma~\ref{exactformslemma} and equations~\eqref{CSinterpeq3}, we have
\begin{align} \label{obsCSinterpeq3}
| \nabci^j \alpha_i |_{\gci} & = \, O(r^{\mu_i + 1 - j}), & | \nabci^j A_i |_{\gci} & = \, O(r^{-2 + \delta - j}), \qquad \forall j \geq 0, & \\ \label{obsCSinterpeq4} | \nabci^j \beta_i |_{\gci} & = \, O(r^{\mu_i + 1 - j}), &
| \nabci^j B_i |_{\gci} & = \, O(r^{-3 + \delta - j}), \qquad \forall j \geq 0. &
\end{align}

We are now ready to construct a smooth $3$-form $\ph_s$ and a smooth $4$-form $\ps_s$ on $\tilde M_s$. Let us take $\g \in (0,1)$, which will be chosen later. It will become clear later why we need to do this. From now on we will consider only those values of $s > 0$ which are small enough so that
\begin{equation} \label{sbounds}
s R' < s^{\g} < 2 s^{\g} < \e.
\end{equation}
Recall that $R' = 2R$. These inequalities hold for all $s < \mathrm{max}
( {\left( \frac{\e}{2} \right)}^{\frac{1}{\g}}, 
{\left( \frac{1}{R'} \right)}^{\frac{1}{1-\g}} )$. This enables us to split up each overlapping region $U_{i,s}$ into three disjoint pieces:
\begin{equation*}
U_{i,s} = (sR', s^{\g}) \times \Sigma_i \, \sqcup \, [s^{\g}, 2 s^{\g}] \times \Sigma_i \, \sqcup \, (2s^{\g}, \e) \times \Sigma_i.
\end{equation*}
The reader should refer again to Figures~\ref{GLUE1fig} and~\ref{GLUE2fig}.

We can write the smooth compact manifold $\tilde M_s$ as the (non-disjoint) union of the \emph{inner piece} $K \, \cup \, \sqcup_{i=1}^n f_i \left( (2s^{\g}, \e) \times \Sigma_i \right)$, together with an \emph{outer piece} $L_i \, \cup \, h_{i,s} \left( (sR', s^{\g}) \times \Sigma_i \right)$ and an overlapping region $U_{i,s} \, = \, (sR', \e) \times \Sigma_i$ for each $i = 1 , \ldots, n$. Thinking about it in this way, the inner piece intersects each $U_{i,s}$ in the set $f_i \left( (2s^{\g}, \e) \times \Sigma_i \right) \, \cong \, (2s^{\g}, \e) \times \Sigma_i$, and each inner piece intersects its associated $U_{i,s}$ in the set 
$h_{i,s} \left( (sR', s^{\g}) \times \Sigma_i \right) \, \cong \, (sR', s^{\g}) \times \Sigma_i$.

Let $u: (0, \infty) \to \R$ be any smooth increasing function such that
\begin{equation*}
u(r) = \begin{cases} 0 & \text{for } 0 < r \leq 1, \\ 1 & \text{for } 2 \leq r < \infty. \end{cases}
\end{equation*}
We will use $u(r)$ to \emph{interpolate} between the pair $(\phm + s^3 \xi, \psm + s^4 \eta - s^3 \stm \xi)$
on $M'$ and the pairs $(\phnis, \psnis)$ on the $N_{i,s}$'s.
Define $u_s(r) = u(s^{-\g} r)$. Then clearly
\begin{equation*}
u_s(r) = \begin{cases} 0 & \text{for } 0 < r \leq s^{\g}, \\ 1 & \text{for } 2s^{\g} \leq r < \infty. \end{cases}
\end{equation*}
Using $u_s$, we define a smooth $2$-form $\rho_i$ and a smooth $3$-form $\tau_i$ on each $(sR', \e) \times \Sigma_i$ as follows:
\begin{align} \label{rhodefneq}
\rho_i & = (\alpha_i + s^3 A_i)u_s  + \zeta_{i,s}(1 - u_s), \\ \label{taudefneq} \tau_i & = (\beta_i + s^4 B_i + s^3 E_i)u_s  + \theta_{i,s}(1 - u_s).
\end{align}

\begin{defn} \label{phspssdefn}
We define a smooth $3$-form $\ph_s$ on $\tilde M_s$ as follows:
\begin{equation} \label{phsdefneq}
\ph_s = \begin{cases} \phm + s^3 \xi & \text{on } K \, \cup \, \sqcup_{i=1}^n f_i \left( (2s^{\g}, \e) \times \Sigma_i \right), \\ \phci + s^3 \xi_i + d \rho_i & \text{on }  (sR', \e) \times \Sigma_i, \\ \phnis & \text{on } L_i \, \cup \, h_{i,s} \left( (sR', s^{\g}) \times \Sigma_i \right), \end{cases}
\end{equation}
and we define a smooth $4$-form $\ps_s$ on $\tilde M_s$ as follows:
\begin{equation} \label{pssdefneq}
\ps_s = \begin{cases} \psm + s^4 \eta - s^3 \stm \xi & \text{on } K \, \cup \, \sqcup_{i=1}^n f_i \left( (2s^{\g}, \e) \times \Sigma_i \right), \\ \psci + s^4 \eta_i - s^3 \stci \xi_i + d \tau_i & \text{on }  (sR', \e) \times \Sigma_i, \\ \psnis & \text{on } L_i \, \cup \, h_{i,s} \left( (sR', s^{\g}) \times \Sigma_i \right). \end{cases}
\end{equation}
\end{defn}

We need to check that $\ph_s$ is well defined, in the regions where we have given it two definitions. When $r \leq s^{\g}$, then $\rho_i = \zeta_{i,s}$ and therefore by~\eqref{scaledACinterpeq1} we have $\ph_s = h^*_{i,s} (\phnis)$ on $(sR', s^{\g}) \times \Sigma_i$ and $\ph_s = \phnis$ on $h_{i,s} \left( (sR', s^{\g}) \times \Sigma_i \right)$, and these are indeed identified with each other under the diffeomorphism $h_{i,s}$. Whereas when $r \geq 2 s^{\g}$, then $\rho_i = \alpha_i + s^3 A_i$ and therefore by~\eqref{obsCSinterpeq1} we have $\ph_s = f^*_i (\phm + s^3 \xi)$ on $(2s^{\g}, \e) \times \Sigma_i$ and $\ph_s = \phm + s^3 \xi$ on $f_i \left( (2s^{\g}, \e) \times \Sigma_i \right)$, and these are identified with each other under the diffeomorphism $f_i$. Similary one checks using~\eqref{scaledACinterpeq2} and~\eqref{obsCSinterpeq2} that $\ps_s$ is also well defined. It is clear that $\ph_s$ and $\ps_s$ are both \emph{closed} forms, for all $s > 0$.

\subsection{Existence of the torsion-free $\G$~structure $\tilde{\ph}_s$ on $\tilde M_s$}
\label{torsionfreesec}

In this section, we will show that from the pair $(\ph_s, \ps_s)$ constructed in Section~\ref{formsconstructionsec}, we can satisfy the hypotheses of Joyce's Theorem~\ref{joycethm} to conclude that $\tilde M_s$ admits a torsion-free $\G$~structure $\tilde \ph_s$.

Since non-degeneracy is an open condition, it will follow from the estimates in Proposition~\ref{glueestimatesprop} that for $s$ sufficiently small, $\ph_s$ and $\ps_s$ are non-degenerate $3$-forms and $4$-forms, respectively. However, $\ps_s$ is \emph{not} equal to $\st_{g_s} \ph_s$, where $g_s$ is the metric determined by $\ph_s$, so $\ph_s$ is not coclosed with respect to its induced metric, and hence not torsion-free. However, for $s$ sufficiently small, we will see that $\ps_s$ is close to $\st_{g_s} \ph_s$ in a suitable sense, and this difference, together with its derivative, is a measure of the torsion of $\ph_s$. 

\begin{defn} \label{chidefn}
For each $s$, we define a smooth $3$-form $\chi_s$ on $\tilde M_s$ by
\begin{equation} \label{chidefneq}
\chi_s \, = \, \ph_s - \st_{g_s} \ps_s.
\end{equation}
\end{defn}
\begin{rmk} \label{chirmk}
In the notation of Lemma~\ref{quadlemma} and Remark~\ref{quadrmk}, we have $\chi_s = \ph_s - \Theta^{-1} (\ps_s)$.
\end{rmk}

\begin{lemma} \label{chilemma}
The form $\chi_s$ satisfies $d^*_{g_s} \chi_s = d^*_{g_s} \phi_s$ for all $s$.
\end{lemma}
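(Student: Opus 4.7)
The plan is to unpack the definition of $\chi_s$ and use the fact, noted at the very end of Section~\ref{formsconstructionsec}, that the $4$-form $\ps_s$ is closed for every $s>0$. By linearity of the codifferential,
\begin{equation*}
d^*_{g_s} \chi_s \, = \, d^*_{g_s} \ph_s \, - \, d^*_{g_s} (\st_{g_s} \ps_s),
\end{equation*}
so it suffices to verify that the second term vanishes.

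The key step is the standard identity $d^* = \pm \st d \st$ on $7$-manifolds, applied with respect to the metric $g_s$. Since $\st_{g_s} \st_{g_s} = \mathrm{id}$ on forms of any degree on an oriented Riemannian $7$-manifold, we get
\begin{equation*}
d^*_{g_s} (\st_{g_s} \ps_s) \, = \, \pm \st_{g_s} d \st_{g_s} \st_{g_s} \ps_s \, = \, \pm \st_{g_s} d \ps_s,
\end{equation*}
and this is zero because $\ps_s$ is closed by construction (each of the three pieces in Definition~\ref{phspssdefn} is a closed $4$-form on its domain, the gluing is well-defined, and so $d\ps_s = 0$ globally on $\tilde M_s$).

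Combining the two displays yields $d^*_{g_s}\chi_s = d^*_{g_s}\ph_s$, which is the assertion. There is no real obstacle here: the entire content of the lemma is the formal computation above together with the already-established closedness of $\ps_s$. The lemma is stated separately only because it is the identity needed to match the hypothesis $d^*_g \chi = d^*_g \ph$ of Joyce's Theorem~\ref{joycethm}, which will be invoked once the quantitative estimates on $\chi_s$, $\mathcal{I}(g_s)$, and $\mathcal{R}(g_s)$ are in hand in the subsequent Proposition.
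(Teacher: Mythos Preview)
Your proof is correct and follows exactly the same approach as the paper's: expand $\chi_s$ by linearity, use $d^* = \pm \st d \st$ together with $\st^2 = \mathrm{id}$, and conclude from $d\ps_s = 0$. The only cosmetic difference is that the paper tracks the specific sign on the relevant degree, whereas you (harmlessly) leave it as $\pm$ since the term vanishes anyway.
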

\begin{proof}
We compute
\begin{equation*}
d^*_{g_s} \chi_s = d^*_{g_s} ( \ph_s - \st_{g_s} \ps_s ) = d^*_{g_s} \ph_s - \st_{g_s} d \st_{g_s} \st_{g_s} \ps_s = d^*_{g_s} \ph_s - \st_{g_s} d \ps_s = d^*_{g_s} \ph_s
\end{equation*}
using the fact that $d^*_{g_s} = \st_{g_s} d \st_{g_s}$ on $4$-forms, that $\st_{g_s}^2 = 1$, and that $d \ps_s = 0$. 
\end{proof}

This $3$-form $\chi_s$ will play the role of $\chi$ in Theorem~\ref{joycethm}. To apply this result, we need to estimate various norms of $\chi_s$ in the different regions of $\tilde M_s$.
\begin{prop} \label{chiprop}
The $3$-form $\chi_s$ on $\tilde M_s$ is given by
\begin{equation} \label{chiregionseq}
\chi_s = \begin{cases} -\Jpm (s^4 \eta) - \Gpm (s^4 \eta - s^3 \stm \xi) & \text{on } K \, \cup \, \sqcup_{i=1}^n f_i \left( (2s^{\g}, \e) \times \Sigma_i \right), \\ \left[ \begin{array}{l} d\rho_i - \Jpci (s^4 \eta_i + d\tau_i) \\ \, {}- \Gpci (s^4 \eta_i - s^3 \stci \xi_i + d \tau_i) \end{array} \right] & \text{on }  [s^{\g}, 2s^{\g}] \times \Sigma_i, \\ \, \, 0 & \text{on } L_i \, \cup \, h_{i,s} \left( (sR', s^{\g}) \times \Sigma_i \right), \end{cases}
\end{equation}
in the different regions used in Definition~\ref{phspssdefn}. Here the maps  $J$ and $G$ are as defined in Remark~\ref{quadrmk} and equation~\eqref{Jdefneq}.
\end{prop}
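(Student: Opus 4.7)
The strategy is to evaluate $\chi_s = \ph_s - \Theta^{-1}(\ps_s)$ in each of the three regions appearing in Definition~\ref{phspssdefn}, using Remark~\ref{quadrmk} to expand $\Theta^{-1}$ around a convenient base $\G$~structure in each region. The key algebraic input is that both $\xi$ (with respect to $\phm$) and $\xi_i$ (with respect to $\phci$) lie in $\Omega^3_{27}$: by the formula~\eqref{Jdefneq} for $J$, applied to a pure $\Omega^4_{27}$ form, we have $\Jpm(\stm \xi) = -\st_{\!\phm}(\stm \xi) = -\xi$ and similarly $\Jpci(\stci \xi_i) = -\xi_i$, since $\st^2 = +1$ on $4$-forms in seven dimensions. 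The type property of $\xi$ is precisely the last statement of Theorem~\ref{obsthm}, and the type property of $\xi_i$ follows from Proposition~\ref{cones27prop}, so both identities are available.

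On the \emph{inner} region, $\ph_s = \phm + s^3 \xi$ and $\ps_s = \psm + (s^4 \eta - s^3 \stm \xi)$, so taking the perturbation $\tilde\eta = s^4 \eta - s^3 \stm \xi$ and applying Remark~\ref{quadrmk} about $\phm$ gives
\begin{equation*}
\Theta^{-1}(\ps_s) = \phm + \Jpm(s^4 \eta) - s^3 \Jpm(\stm \xi) + \Gpm(\tilde\eta).
\end{equation*}
Using $\Jpm(\stm \xi) = -\xi$, the $s^3 \xi$ terms cancel in $\ph_s - \Theta^{-1}(\ps_s)$, leaving the first line of~\eqref{chiregionseq}. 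On the \emph{outer} region the $\G$~structure is literally $\phnis$, so $\st_{g_s}\ps_s = \st_{g_{\phnis}}\psnis = \phnis = \ph_s$ by torsion-freeness of $\phnis$, and $\chi_s = 0$; this also explains why we need only exhibit the middle formula on $[s^\gamma, 2s^\gamma] \times \Sigma_i$, since on the complementary parts of the overlap $\ph_s$ and $\ps_s$ coincide either with the inner or the outer expressions. On the \emph{transition} region, the same expansion applied around $\phci$ with perturbations $d\rho_i$ for the $3$-form and $\tilde\eta_i = s^4\eta_i - s^3\stci\xi_i + d\tau_i$ for the $4$-form yields
\begin{equation*}
\Theta^{-1}(\ps_s) = \phci + \Jpci(s^4\eta_i) - s^3 \Jpci(\stci\xi_i) + \Jpci(d\tau_i) + \Gpci(\tilde\eta_i),
\end{equation*}
and again the $s^3\xi_i$ terms cancel by the type identity $\Jpci(\stci\xi_i) = -\xi_i$, so that $\chi_s = d\rho_i - \Jpci(s^4\eta_i + d\tau_i) - \Gpci(\tilde\eta_i)$ by linearity of $J$.

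The hypotheses of Remark~\ref{quadrmk} require the $4$-form perturbations to have small pointwise norm with respect to the underlying metric, so implicit in the computation is the claim that for $s$ small enough, $s^4\eta - s^3\stm\xi$, $\tilde\eta_i$, and the analogous perturbations are indeed pointwise small; this will follow from the estimates to be established in Proposition~\ref{glueestimatesprop} (which is already referenced in the paragraph preceding Definition~\ref{chidefn}), and so the proof here is essentially an algebraic unwinding. The only real content is the cancellation of the $s^3\xi$ and $s^3\xi_i$ terms, and the main point to emphasize is that this cancellation is exactly why Theorem~\ref{obsthm} insists on $\xi \in \Omega^3_{27}$: without that type constraint the $J$-map would produce unwanted $s^3$-order contributions to $\chi_s$ which would ruin the torsion estimate in Section~\ref{torsionfreesec}.
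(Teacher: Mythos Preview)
Your proof is correct and follows essentially the same route as the paper: expand $\Theta^{-1}(\ps_s)$ about the appropriate base $\G$~structure in each region via Remark~\ref{quadrmk}, and use the $\Omega^3_{27}$ type of $\xi$ (Theorem~\ref{obsthm}) and of $\xi_i$ (Proposition~\ref{cones27prop}) to obtain $\Jpm(\stm\xi)=-\xi$ and $\Jpci(\stci\xi_i)=-\xi_i$, which forces the $s^3$-terms to cancel. Your additional remarks about the smallness hypothesis and about why the middle formula need only be verified on $[s^\gamma,2s^\gamma]\times\Sigma_i$ are accurate supplementary commentary, but the substance of the argument matches the paper's proof.
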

\begin{proof}
On the region $L_i \, \cup \, h_{i,s} \left( (sR', s^{\g}) \times \Sigma_i \right)$, we have $\ps_s = \psnis$ and $\ph_s = \phnis$, so
\begin{equation*}
\chi_s = \phnis - \Theta^{-1}(\psnis) = 0.
\end{equation*}
In other words, the $\G$~structure $\ph_s$ is already torsion-free on these regions. Now on the inner region $K \, \cup \, \sqcup_{i=1}^n f_i \left( (2s^{\g}, \e) \times \Sigma_i \right)$, we have $\ps_s = \psm + s^4 \eta - s^3 \stm \xi$, and hence by~\eqref{quadeq3} we have
\begin{equation*}
\Theta^{-1} (\ps_s) = \phm + \Jpm (s^4 \eta - s^3 \stm \xi ) + \Gpm (s^4 \eta - s^3 \stm \xi ).
\end{equation*}
However, since by Theorem~\ref{obsthm} we know that $\xi$ is in $\wtht$ with respect to $\phm$, equation~\eqref{Jdefneq} tells us that $\Jpm (\stm \xi) = - \xi$. Therefore
\begin{equation*}
\Theta^{-1} (\ps_s) = \phm + s^3 \xi + \Jpm (s^4 \eta) + \Gpm (s^4 \eta - s^3 \stm \xi ).
\end{equation*}
Now since $\ph_s = \phm + s^3 \xi$, we obtain the expression for $\chi_s$ in the first line of~\eqref{chiregionseq}. Finally, for the overlap regions $[s^{\g}, 2s^{\g}] \times \Sigma_i$, the calculation is similar, this time using the fact that $\xi_i$ is in $\wtht$ with respect to $\phci$, which was shown in Proposition~\ref{cones27prop}.
\end{proof}
\begin{rmk} \label{disjointrmk}
Notice in~\eqref{chiregionseq} we are now writing the overlap regions as $[s^{\g}, 2s^{\g}] \times \Sigma_i$ as opposed to $(sR', \e) \times \Sigma_i$ as was used in Definition~\ref{phspssdefn}, so that now all the regions are disjoint. This will make is easier to estimate various norms in what follows, so we will decompose $\tilde M_s$  this way from now on.
\end{rmk}

Note that in the unobstructed case, we have $\xi = 0$ and $\eta = 0$. Thus $\chi_s$ also vanishes on the inner region, so $\ph_s$ only fails to be torsion-free in the annuli $[s^{\g}, 2 s^{\g}] \times \Sigma_i$ where we have interpolated between two torsion-free $\G$~structures. However in the obstructed case, the $\G$~structure $\ph_s$ also has torsion on the inner region coming from the corrections $\xi$ and $\eta$ to $\phm$ and $\psm$.

We are now ready to begin estimating various norms of $\chi_s$ in order to apply Theorem~\ref{joycethm}. We begin with estimates of $d\rho_i$ and $d\tau_i$.
\begin{prop} \label{glueestimatesprop}
Consider the $2$-forms $\rho_i$ and the $3$-forms $\tau_i$ given by equations~\eqref{rhodefneq} and~\eqref{taudefneq}. In the region $s^{\g} \leq r \leq 2 s^{\g}$, and with respect to the cone metric $\gci$, we have the following estimates:
\begin{align} \label{rhoestimateeq}
| d\rho_i |_{\gci} &  \leq  \, C ( s^{\g \mu_i} + s^{-\nu'_i(1 - \g)} + s^{3(1-\g) + \delta \g} ), \\ \label{rho2estimateeq} | \nabci d\rho_i |_{\gci} & \leq  \, C ( s^{\g \mu_i - \g} + s^{-\nu'_i(1 - \g) -\g} + s^{3(1-\g) + \delta \g - \g} ), \\ \label{tauestimateeq}
| d\tau_i |_{\gci} &  \leq  \, C ( s^{\g \mu_i} + s^{-\nu'_i(1 - \g)} + s^{3(1-\g) + \delta \g} ), \\ \label{tau2estimateeq} | \nabci d\tau_i |_{\gci} & \leq  \, C ( s^{\g \mu_i - \g} + s^{-\nu'_i(1 - \g) -\g} + s^{3(1-\g) + \delta \g - \g} ),
\end{align}
where each $C$ denotes some positive constant.
\end{prop}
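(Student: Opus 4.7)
The plan is to compute $d\rho_i$ and $d\tau_i$ directly using the Leibniz rule applied to the interpolation formulas~\eqref{rhodefneq} and~\eqref{taudefneq}, and then to bound each resulting term on the annular region $s^{\gamma} \leq r \leq 2s^{\gamma}$ using the pointwise estimates already in hand. The key ingredients will be: the bounds~\eqref{obsCSinterpeq3}--\eqref{obsCSinterpeq4} for $\alpha_i$, $\beta_i$, $A_i$, $B_i$ and their derivatives; the scaled AC bound~\eqref{scaledACinterpeq3} for $\zeta_{i,s}$ and $\theta_{i,s}$ and their derivatives; the estimate~\eqref{Eesteq} for $E_i$; and finally the observation that since $u_s(r) = u(s^{-\gamma} r)$ we have $|du_s|_{\gci} \leq C s^{-\gamma}$ and $|\nabci du_s|_{\gci} \leq C s^{-2\gamma}$, with these derivatives supported precisely in the annulus $s^{\gamma} \leq r \leq 2s^{\gamma}$.

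Concretely, I would expand
\begin{equation*}
d\rho_i \, = \, (d\alpha_i + s^3 dA_i) u_s + d\zeta_{i,s}(1-u_s) + \bigl(\alpha_i + s^3 A_i - \zeta_{i,s}\bigr) \wedge du_s,
\end{equation*}
and then estimate each of the three summands on $r \sim s^{\gamma}$. The derivative terms $d\alpha_i$, $dA_i$, $d\zeta_{i,s}$ are controlled via~\eqref{desteq} by the corresponding covariant derivative bounds, giving contributions bounded by $s^{\gamma \mu_i}$, $s^3 \cdot s^{\gamma(-3+\delta)} = s^{3(1-\gamma) + \delta\gamma}$, and $s^{-\nu'_i} \cdot s^{\gamma \nu'_i} = s^{-\nu'_i(1-\gamma)}$, respectively. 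The cross terms $\alpha_i \wedge du_s$, $s^3 A_i \wedge du_s$, $\zeta_{i,s} \wedge du_s$ each lose a power of $r$ in the decay estimate but gain a factor $s^{-\gamma}$ from $|du_s|$; substituting $r \sim s^{\gamma}$ gives back exactly the same three exponents $s^{\gamma \mu_i}$, $s^{3(1-\gamma)+\delta\gamma}$, $s^{-\nu'_i(1-\gamma)}$. Adding them up yields~\eqref{rhoestimateeq}.

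For $\nabci d\rho_i$, I would differentiate the formula above once more. Each appearance of a covariant derivative either hits the already-estimated form (where the decay estimates~\eqref{obsCSinterpeq3},~\eqref{scaledACinterpeq3},~\eqref{Eesteq} include the $j=2$ case, effectively losing one power of $r$) or hits $u_s$, producing either a further factor of $s^{-\gamma}$ from $\nabci du_s$ or from $du_s \otimes du_s$. At $r \sim s^{\gamma}$, losing one power of $r$ and gaining one factor of $s^{-\gamma}$ are the same thing, so every term is exactly $s^{-\gamma}$ times the corresponding term in the $|d\rho_i|$ estimate, yielding~\eqref{rho2estimateeq}. The estimates for $d\tau_i$ and $\nabci d\tau_i$ are completely analogous, with the additional term $s^3 E_i$ in the inner piece: its contribution $s^3 \cdot s^{\gamma(-2+\delta)}$ at $r \sim s^{\gamma}$ is $s^{3 - 2\gamma + \delta \gamma}$, which is dominated by $s^{3(1-\gamma) + \delta \gamma}$ for $\gamma \in (0,1)$, so it is absorbed into the third summand; the $s^4 B_i$ term likewise contributes $s^4 \cdot s^{\gamma(-3+\delta)} = s^{4-3\gamma + \delta\gamma}$, again absorbed into $s^{3(1-\gamma)+\delta\gamma}$, producing~\eqref{tauestimateeq} and~\eqref{tau2estimateeq}.

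This is pure bookkeeping once the correct formula for $d\rho_i$ is laid out; there is no real obstacle. The only subtlety worth watching is to make sure that at $r \sim s^{\gamma}$ the dominant behavior in each of the three ``sources'' (the ICS side $\alpha_i, \beta_i$; the AC side $\zeta_{i,s}, \theta_{i,s}$; the correction-form side $A_i, B_i, E_i$) really does give rise to exactly the three exponents listed, and in particular that the contribution of the cut-off derivative $du_s$ never produces a worse power than the ``naive'' contribution of the same form on the same region — a consistency that holds because the terms being interpolated agree up to exact pieces whose size matches $|du_s|^{-1}$ times the size of the exact piece itself, by construction.
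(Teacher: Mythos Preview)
Your proposal is correct and follows essentially the same approach as the paper: expand $d\rho_i$ and $d\tau_i$ via the Leibniz rule, use $|du_s|_{\gci} \leq C s^{-\gamma}$, and then substitute $r \sim s^{\gamma}$ into the pointwise bounds~\eqref{obsCSinterpeq3},~\eqref{obsCSinterpeq4},~\eqref{scaledACinterpeq3}, and~\eqref{Eesteq}. One small slip: the $s^3 E_i$ term contributes to $d\tau_i$ only through $s^3 dE_i$ and $s^3 E_i \wedge du_s$, each of which is already exactly $O(s^{3(1-\gamma)+\delta\gamma})$ rather than the smaller $O(s^{3-2\gamma+\delta\gamma})$ you wrote --- so it \emph{is} the third summand rather than being absorbed into it, but this does not affect the result.
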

\begin{proof}
Notice that $\frac{d}{dr} u_s(r) = s^{-\g} u_s'(r)$. Now from~\eqref{rhodefneq}, we see that
\begin{equation} \label{rhotempeq}
d\rho_i = s^{-\g} u_s' dr \wedge (\alpha_i + s^3 A_i) + u_s (d\alpha_i + s^3 dA_i) - s^{-\g} u_s' dr \wedge \zeta_{i.s} + (1 - u_s) d \zeta_{i,s}.
\end{equation}
Since $u_s$ and $u_s'$ are uniformly bounded functions of $r$ (independent of $s$), we have
\begin{equation*}
| d\rho_i |_{\gci} \, \leq \, C s^{-\g}  | \alpha_i + s^3 A_i |_{\gci} + C |d\alpha_i + s^3 dA_i |_{\gci} + C s^{-\g} |\zeta_{i.s} |_{\gci}  + C |d \zeta_{i,s}|_{\gci}.
\end{equation*}
Using equations~\eqref{desteq},~\eqref{obsCSinterpeq3}, and~\eqref{scaledACinterpeq3}, we compute \begin{equation*}
| d\rho_i |_{\gci} \, \leq \, C ( s^{-\g} r^{\mu_i + 1} + s^{3-\g} r^{-2 + \delta} + r^{\mu_i} + s^3 r^{-3 + \delta} + s^{-\g} s^{-\nu'_i} r^{\nu'_i + 1} + s^{-\nu'_i} r^{\nu'_i} ).
\end{equation*}
Since $s^{\g} \leq r \leq 2 s^{\g}$, we have $r^{a} \leq C s^{\g a}$ for any $a$. Therefore the above expression becomes
\begin{equation*}
| d\rho_i |_{\gci} \, \leq \, C ( s^{\g \mu_i} + s^{3(1-\g) + \delta \g} + s^{\g \mu_i} + s^{3(1-\g) + \delta \g} + s^{-\nu'_i(1 - \g)} + s^{-\nu'_i(1 - \g)} ),
\end{equation*}
which is~\eqref{rhoestimateeq}. To obtain~\eqref{rho2estimateeq}, take the covariant derivative of~\eqref{rhotempeq} and proceed as before.

For the estimates on $\tau_i$, equation~\eqref{taudefneq} shows that
\begin{equation} \label{tautempeq}
\begin{gathered}
d\tau_i \, = \, s^{-\g} u_s' dr \wedge (\beta_i + s^4 B_i + s^3 E_i) + u_s (d\beta_i + s^4 dB_i + s^3 dE_i) \\ {} - s^{-\g} u_s' dr \wedge \theta_{i.s} + (1 - u_s) d \theta_{i,s}.
\end{gathered}
\end{equation}
Now we use equations~\eqref{desteq},~\eqref{obsCSinterpeq4},~\eqref{Eesteq}, and~\eqref{scaledACinterpeq3}, and get
\begin{equation*}
| d\tau |_{\gci} \, \leq \, C( s^{\g \mu_i} + s^{4(1-\g) + \delta \g} + s^{3(1-\g) + \delta \g} + s^{-\nu'_i(1 - \g)} ),
\end{equation*}
where the $O( s^{4(1-\g) + \delta \g} )$ terms come from $B_i$ and the $O( s^{3(1-\g) + \delta \g} )$ terms come from $E_i$. But since $s < 1$, the former terms are smaller than, and can be absorbed by, the latter terms. This gives~\eqref{tauestimateeq}. The proof of~\eqref{tau2estimateeq} is similar.
\end{proof}
\begin{rmk} \label{gammarmk}
The estimates in Proposition~\ref{glueestimatesprop} explain why we needed to introduce the parameter $\g \in (0,1)$ to construct $\ph_s$ and $\ps_s$. If, instead of~\eqref{sbounds}, we had divided the interval $(sR', \e)$ into $sR' < R_1 < R_2 < \e$, which corresponds to $\g = 0$, or into $s R' < s R_1 < s R_2 <s \e$, which corresponds to $\g = 1$, then the estimates of various norms of $\chi_s$ would include terms of size $O(1)$, and they would not be small enough for us to apply Theorem~\ref{joycethm} later.
\end{rmk}

In Theorem~\ref{joycethm}, all the norms are measured with respect to the metric $g_s$ induced by $\ph_s$. However, it is easier for us to estimate various terms using the metrics $\gm$ from $\phm$ or $\gci$ from $\phci$. To translate these to estimates using $g_s$, we need the following observation.
\begin{lemma} \label{uniformequivlemma}
For $s$ sufficiently small, the metric $g_s$ is \emph{uniformly close} to the metric $\gm$ (in the $\gm$ metric) on the inner region $K \, \cup \, \sqcup_{i=1}^n f_i \left( (2s^{\g}, \e) \times \Sigma_i \right)$. This means that on this region
\begin{equation*}
\sup |\gm - g_s|_{\gm} \leq e, \qquad \qquad \sup |\gm^{-1} - g_s^{-1}|_{\gm} \leq e,
\end{equation*}
for some small $e < \frac{1}{2}$. As a result, it follows easily that for any tensor $\omega$ defined on the inner region,
\begin{equation} \label{unifeq1}
|\omega|_{g_s} \leq C |\omega|_{\gm}
\end{equation}
for some $C > 0$, independent of $s$ and $\omega$.

Similarly for $s$ sufficiently small, the metric $g_s$ is uniformly close to the metric $\gci$ (in the $\gci$ metric) on the overlap region $[s^{\g}, \e) \times \Sigma_i$, and hence
\begin{equation} \label{unifeq2}
|\omega|_{g_s} \leq C |\omega|_{\gci}.
\end{equation}
\end{lemma}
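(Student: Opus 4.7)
My plan is to combine the size estimates on $\ph_s - \phm$ (and on $\ph_s - \phci$) that follow from Definition~\ref{phspssdefn} with the fact that the metric $g_{\ph}$ is a pointwise smooth function of the positive $3$-form $\ph$. First I would verify the smallness of $\ph_s - \phm$ on the inner region. By Definition~\ref{phspssdefn} we have $\ph_s - \phm = s^3 \xi$ there. On the compact set $K$, the form $\xi$ is bounded in $\gm$; near each singular point $x_i$, Corollary~\ref{xietaestimatescor} gives $|f_i^*(\xi)|_{\gci} = O(r^{-3})$, and~\eqref{CSdefneq2} shows that $\gm$ and $\gci$ are uniformly equivalent near $x_i$. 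Since $r \geq 2 s^{\g}$ on this region, we obtain
\begin{equation*}
|\ph_s - \phm|_{\gm} \leq C s^3 r^{-3} \leq C s^{3(1-\g)},
\end{equation*}
which tends to zero as $s \to 0$ because $\g \in (0,1)$.

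Next, smooth dependence of the associated metric on the $3$-form, together with the fact that $\phm$ has constant pointwise norm $|\phm|_{\gm}^2 = 7$ (so that $\phm$ lies uniformly in the interior of the open subbundle $\Omega^3_+$), would give a uniform Lipschitz estimate
\begin{equation*}
|g_s - \gm|_{\gm} + |g_s^{-1} - \gm^{-1}|_{\gm} \leq C |\ph_s - \phm|_{\gm}
\end{equation*}
as soon as $|\ph_s - \phm|_{\gm}$ is smaller than some fixed constant. Hence both terms on the left are $O(s^{3(1-\g)})$, so choosing $s$ sufficiently small makes them less than any prescribed $e < \tfrac{1}{2}$.

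The overlap region $[s^{\g}, \e) \times \Sigma_i$ is treated in exactly the same way, now using the local formula $\ph_s = \phci + s^3 \xi_i + d\rho_i$. The term $s^3 \xi_i$ has $\gci$-norm $O(s^3 r^{-3}) = O(s^{3(1-\g)})$ on this region since $\xi_i$ is homogeneous of order $-3$. The form $d\rho_i$ is controlled by Proposition~\ref{glueestimatesprop} on the middle annulus $[s^{\g}, 2 s^{\g}] \times \Sigma_i$, by~\eqref{obsCSinterpeq3} on the outer subinterval $[2 s^{\g}, \e)$ (where $\rho_i = \alpha_i + s^3 A_i$), and by~\eqref{scaledACinterpeq3} on the inner subinterval $(s R', s^{\g}]$ (where $\rho_i = \zeta_{i,s}$). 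All resulting exponents in $s$ are strictly positive because $\g \in (0,1)$, $\mu_i > 0$, $\delta > 0$, and $-\nu'_i = 4 > 0$. Since $|\phci|_{\gci}^2 = 7$ is likewise constant, the same Lipschitz argument as above yields $|g_s - \gci|_{\gci}$ and $|g_s^{-1} - \gci^{-1}|_{\gci}$ both $\leq e$ for $s$ small.

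Finally, the pointwise equivalences~\eqref{unifeq1} and~\eqref{unifeq2} would follow from elementary linear algebra: once $|g_s - g|_g$ and $|g_s^{-1} - g^{-1}|_g$ are both at most $e < \tfrac{1}{2}$, contracting any tensor of type $(p,q)$ with $g_s$ instead of $g$ changes its squared norm by a multiplicative factor of at most $(1 + Ce)^{p+q}$, giving a uniform constant independent of $s$ and of the tensor $\omega$. I do not anticipate any serious analytic obstacle here; the only delicate point is the bookkeeping of the exponents across the three subregions of each overlap to ensure they are all strictly positive, which was precisely the motivation for introducing the parameter $\g \in (0,1)$ explained in Remark~\ref{gammarmk}.
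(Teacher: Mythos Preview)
Your proposal is correct and follows essentially the same approach as the paper: bound $|\ph_s - \phm|_{\gm}$ on the inner region via $s^3\xi$ and $|\ph_s - \phci|_{\gci}$ on the overlap via $s^3\xi_i + d\rho_i$, then use smooth dependence of $g_\ph$ on $\ph$. Your treatment is in fact slightly more careful than the paper's, which invokes the estimate~\eqref{rhoestimateeq} on all of $[s^\g,\e)$ without comment even though it is stated only on $[s^\g,2s^\g]$; note however that the subinterval $(sR',s^\g]$ you mention lies outside the stated overlap region and is not needed, and that on $[2s^\g,\e)$ the term $|d\alpha_i|_{\gci}=O(r^{\mu_i})\leq \e^{\mu_i}$ is a fixed small constant rather than an $s$-power, which is still enough for uniform closeness.
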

\begin{proof}
On the inner region $K \, \cup \, \sqcup_{i=1}^n f_i \left( (2s^{\g}, \e) \times \Sigma_i \right)$, from Definition~\ref{phspssdefn} we have $\ph_s - \phm = s^3 \xi$. On the subset $K$, which is a compact manifold with boundary, $|\xi|_{\gm}$ is bounded, so $s^3 |\xi|_{\gm} = O(s^{3})$. Meanwhile on the subset $\sqcup_{i=1}^n f_i \left( (2s^{\g}, \e) \times \Sigma_i \right)$, we claim that $s^3 |\xi|_{\gm} = O(s^{3(1-\g)})$. To see this, first we note that by~\eqref{CSdefneq2}, we have $|f_i^*(\gm) - \gci|_{\gci} = O(r^{\mu_i})$, which is uniformly bounded as $r \leq \e$. Therefore $|\xi|_{\gm} = |f_i^*(\xi)|_{f_i^*(\gm)} \leq C |f_i^*(\xi)|_{\gci}$ for some $C > 0$. But by~\eqref{obseq1} we know that $|f_i^*(\xi)|_{\gci} \leq |\xi_i|_{\gci} + O(r^{-3 + \delta})$, and also $|\xi_i|_{\gci} = O(r^{-3})$, so $s^3 |\xi|_{\gm} \leq C s^3 r^{-3} \leq C s^{3(1-\g)}$ as claimed, since $r^{-1} \leq s^{-\g}$. Therefore on the inner region we have
\begin{equation*}
\sup |\ph_s - \phm|_{\gm} \leq C( s^3 + s^{3(1-\g)})
\end{equation*}
which can be made as small as we want by taking $s$ sufficiently small. Therefore we can make $|g_s - \gm|_{\gm} < e$ for some $e < \frac{1}{2}$. The claim now follows.

On the region $[s^{\g}, \e) \times \Sigma_i$, we have from Definition~\ref{phspssdefn} that $\ph_s - \phci = s^3 \xi_i + d\rho_i$. Now $|\xi_i|_{\gci} = O(r^{-3})$, so $s^3 |\xi_i|_{\gci} \leq C s^{3(1-\g)} $ since $r^{-1} \leq C s^{-\g}$. Using~\eqref{rhoestimateeq}, we have
\begin{equation*}
\sup |\ph_s - \phci|_{\gci} \leq C( s^{3(1-\g)} + s^{\g \mu_i} + s^{-\nu'_i(1 - \g)} + s^{3(1-\g) + \delta \g} ),
\end{equation*}
and now the claim follows as in the first case.
\end{proof}
We will use~\eqref{unifeq1} and~\eqref{unifeq2} repeatedly in what follows, without explicit mention.
We can now estimate the pointwise norms of $\chi_s$ and $\nab{s} \chi_s$, where $\nab{s}$ is the covariant derivative with respect to the metric $g_s$ of $\ph_s$.

\begin{prop} \label{Czeroestimatesprop}
Let $\chi_s$ be as given in Proposition~\ref{chiprop}. Then we have the following pointwise estimates for $|\chi_s|_{g_s}$ and $|\nab{s} \chi_s|_{g_s}$, on the different regions that were used in~\eqref{chiregionseq}. On the subset $K$ of the inner region, we have
\begin{equation} \label{Czeroestimateseq1a}
|\chi_s|_{g_s} \leq \, C s^4, \qquad \qquad |\nab{s} \chi_s|_{g_s} \leq \, C s^4.
\end{equation}
On each subset $f_i \left( (2s^{\g}, \e) \times \Sigma_i \right)$ of the inner region, we have
\begin{equation} \label{Czeroestimateseq1b}
|\chi_s|_{g_s} \leq \, C s^4 r^{-4}, \qquad \qquad |\nab{s} \chi_s|_{g_s} \leq \, C s^4 r^{-4}.
\end{equation}
On the overlap regions $[s^{\g}, 2s^{\g}] \times \Sigma_i$ we have
\begin{equation} \label{Czeroestimateseq2}
\begin{aligned}
|\chi_s|_{g_s} & \leq \, C ( s^{\g \mu_i} + s^{-\nu'_i(1 - \g)} + s^{3(1-\g) + \delta \g} ), \\
|\nab{s} \chi_s|_{g_s} & \leq \, C ( s^{\g \mu_i - \g} + s^{-\nu'_i(1 - \g) -\g} + s^{3(1-\g) + \delta \g - \g} ),
\end{aligned}
\end{equation}
and finally on the outer regions $L_i \, \cup \, h_{i,s} \left( (sR', s^{\g}) \times \Sigma_i \right)$ we have
\begin{equation} \label{Czeroestimateseq3}
|\chi_s|_{g_s} = 0, \qquad \qquad |\nab{s} \chi_s|_{g_s} = 0.
\end{equation}
\end{prop}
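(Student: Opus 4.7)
The proof is a region-by-region computation based on the expression for $\chi_s$ given in Proposition~\ref{chiprop}, together with the pointwise estimates~\eqref{Jesteq} for $\Jp$, the quadratic bounds in Lemma~\ref{quadlemma} and Remark~\ref{quadrmk} for $\Fp$ and $\Gp$, the obstruction estimates~\eqref{obseq1}--\eqref{obseq2} and~\eqref{Eesteq}, and the interpolation bounds in Proposition~\ref{glueestimatesprop}. Throughout, Lemma~\ref{uniformequivlemma} lets us measure norms in $\gm$ or $\gci$ at the cost of an absolute constant, and the analogous argument applied to $\nabm(g_s-\gm)$ and $\nabci(g_s-\gci)$ shows that $\nab{s}$ may be replaced by $\nabm$ or $\nabci$ modulo lower-order terms. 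The outer estimate~\eqref{Czeroestimateseq3} is then immediate from Proposition~\ref{chiprop}, which gives $\chi_s\equiv 0$ there.

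On the inner region we have $\chi_s=-\Jpm(s^4\eta)-\Gpm(s^4\eta-s^3\stm\xi)$. On the compact set $K$ the forms $\xi$, $\eta$, $\stm\xi$ are smooth and bounded with bounded covariant derivatives, so~\eqref{Jesteq} gives $|\Jpm(s^4\eta)|_{\gm}\leq Cs^4$, and Remark~\ref{quadrmk} gives $|\Gpm(s^4\eta-s^3\stm\xi)|_{\gm}\leq Cs^6$, which is subdominant; this proves~\eqref{Czeroestimateseq1a}, and the derivative bound follows from the $\nab{}$-parts of~\eqref{Jesteq} and~\eqref{quadeq4} after noting $\nabm\phm=0$. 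On each $f_i((2s^{\g},\e)\times\Sigma_i)$ we pull back by $f_i$: Corollary~\ref{xietaestimatescor} combined with the homogeneity of $\xi_i$, $\eta_i$ yields $|f_i^*\xi|_{\gci}=O(r^{-3})$ and $|f_i^*\eta|_{\gci}=O(r^{-4})$. Hence $|\Jpm(s^4\eta)|_{\gci}\leq Cs^4r^{-4}$, while the quadratic term satisfies $|\Gpm(s^4\eta-s^3\stm\xi)|_{\gci}\leq C(s^3r^{-3})^2=Cs^2r^{-2}\cdot s^4r^{-4}$, which is dominated by $Cs^4r^{-4}$ because $r\geq 2s^{\g}$ forces $s^2r^{-2}\leq Cs^{2(1-\g)}\ll 1$. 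This gives~\eqref{Czeroestimateseq1b}; the derivative estimate proceeds identically, each $\nabci$ costing an extra factor $r^{-1}$ that is absorbed into the $s^4r^{-4}$ structure.

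On the overlap region $[s^{\g},2s^{\g}]\times\Sigma_i$ the expression $\chi_s=d\rho_i-\Jpci(s^4\eta_i+d\tau_i)-\Gpci(s^4\eta_i-s^3\stci\xi_i+d\tau_i)$ combines three pieces: Proposition~\ref{glueestimatesprop} bounds $|d\rho_i|_{\gci}$, $|d\tau_i|_{\gci}$ (and their $\nabci$-derivatives) by precisely the three terms in~\eqref{Czeroestimateseq2}; the background forms satisfy $|s^4\eta_i|_{\gci}\leq Cs^4r^{-4}\leq Cs^{-\nu'_i(1-\g)}$ and $|s^3\stci\xi_i|_{\gci}\leq Cs^{3(1-\g)}$, already of the types appearing in~\eqref{Czeroestimateseq2}; and the quadratic $\Gpci$-term is strictly smaller than the linear $\Jpci$ and $d\rho_i$ terms for $s$ small. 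Combining these with~\eqref{Jesteq} and Remark~\ref{quadrmk} produces~\eqref{Czeroestimateseq2}, and differentiating yields the extra factor $s^{-\g}$ familiar from Proposition~\ref{glueestimatesprop}. The main nuisance throughout is bookkeeping rather than any single estimate: one must verify in every subregion that every quadratic $G$-type contribution is strictly dominated by the linear $J$- and interpolation-derivative terms, which ultimately rests on the bound $r\geq s^{\g}$ forcing each quadratic term to pick up a small positive power of $s$, and this is precisely where the parameter $\g\in(0,1)$ earns its keep.
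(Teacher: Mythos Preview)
Your proposal is correct and follows essentially the same approach as the paper: region-by-region application of the bounds~\eqref{Jesteq} and~\eqref{quadeq4} to the expression for $\chi_s$ in Proposition~\ref{chiprop}, using Lemma~\ref{uniformequivlemma} to pass between metrics, Corollary~\ref{xietaestimatescor} (and the analogous derivative bounds) on the inner conical pieces, and Proposition~\ref{glueestimatesprop} on the overlaps, with the quadratic $G$-terms absorbed via $sr^{-1}\leq Cs^{1-\g}$. One small imprecision: on the overlap you say $|s^3\stci\xi_i|_{\gci}\leq Cs^{3(1-\g)}$ is ``already of the types appearing in~\eqref{Czeroestimateseq2}'', but $s^{3(1-\g)}$ is strictly larger than $s^{3(1-\g)+\delta\g}$; this is harmless only because $\xi_i$ enters $\chi_s$ solely through the quadratic $\Gpci$-term, which you do then note is subdominant --- just make that dependence explicit.
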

\begin{proof}
We start with the inner region. Using~\eqref{chiregionseq} and~\eqref{Jesteq} and~\eqref{quadeq4}, we see that
\begin{align} \nonumber
|\chi_s|_{g_s} \leq C |\chi_s|_{\gm} & \leq C |\Jpm (s^4 \eta)|_{\gm} + C |\Gpm (s^4 \eta - s^3 \stm \xi)|_{\gm} \\ \nonumber & \leq C s^4 |\eta|_{\gm} + C |s^4 \eta - s^3 \stm \xi|^2_{\gm} \\ \label{innerCzerotempeq} & \leq  C s^4 |\eta|_{\gm} +  C {( s^4 |\eta|_{\gm} + s^3 |\xi|_{\gm} )}^2,
\end{align}
where we have used the fact that $\stm$ is an isometry. A similar computation yields
\begin{equation} \label{innerCzerotempeq2}
|\nab{s} \chi_s|_{g_s} \leq \, C s^4 |\nabm \eta|_{\gm} + C {( s^4 |\nabm \eta|_{\gm} + s^3 |\nabm \xi|_{\gm}) ( s^4 |\eta|_{\gm} + s^3 |\xi|_{\gm})}.
\end{equation}
To obtain the above estimate we need to use the fact that $\nabm$ commutes with $\stm$, and that $\phm$ is torsion-free, so that the $|\nabm \phm|_{\gm}$ terms in~\eqref{Jesteq} and~\eqref{quadeq4} are zero. Now the subset $K$ of the inner region is a compact manifold with boundary, so $|\xi|_{\gm}$, $|\eta|_{\gm}$, $|\nabm \xi|_{\gm}$, and $|\nabm \eta|_{\gm}$ are all bounded there. Hence by~\eqref{innerCzerotempeq} we have
\begin{equation*}
|\chi_s|_{g_s} \leq C ( s^4 + s^6 + s^7 + s^8) \leq C s^4
\end{equation*}
by absorbing smaller terms into the dominant term. Similarly by~\eqref{innerCzerotempeq2} we get $|\nab{s} \chi_s|_{g_s} \leq \, C s^4$. This proves~\eqref{Czeroestimateseq1a}. Now consider a subset $f_i ((2 s^{\g}, \e)  \times \Sigma_i)$ of the inner region. By~\eqref{CSdefneq2}, we have $|f_i^*(\gm) - \gci|_{\gci} = O(r^{\mu_i})$, which is uniformly bounded as $r \leq \e$. Therefore 
\begin{equation*}
|\xi|_{\gm} = |f_i^*(\xi)|_{f_i^*(\gm)} \leq C | f_i^*(\xi)|_{\gci} \leq C |\xi_i|_{\gci} + O(r^{-3 + \delta}) 
\leq C r^{-3},
\end{equation*}
using~\eqref{obseq1} and the fact that $\xi_i$ is homogenous of order $-3$. In exactly the same way, we can show that on $f_i ((2 s^{\g}, \e)  \times \Sigma_i)$, we have
\begin{equation} \label{mainestimatestempeq}
|\xi|_{\gm} \leq C r^{-3}, \qquad |\nabm \xi|_{\gm} \leq C r^{-4}, \qquad |\eta|_{\gm} \leq C r^{-4}, 
\qquad |\nabm \eta|_{\gm} \leq C r^{-5}.
\end{equation}
Hence~\eqref{innerCzerotempeq} and~\eqref{innerCzerotempeq2} then give
\begin{equation*}
|\chi_s|_{g_s} \leq \, C s^4 r^{-4}, \qquad \qquad |\nab{s} \chi_s|_{g_s} \leq \, C s^4 r^{-4},
\end{equation*}
which is~\eqref{Czeroestimateseq1b}. Here we have used the fact that the $s^k r^{-k}$ terms for $k > 4$ are dominated by $s^4 r^{-4}$ since $s r^{-1} < C s^{1-\g} < C$, because $s < 1$ and $\g \in (0,1)$.

We move on to the overlap regions. Again using~\eqref{chiregionseq} and~\eqref{Jesteq} and~\eqref{quadeq4}, we get
\begin{align*}
|\chi_s|_{g_s} \leq C |\chi_s|_{\gci} & \leq C |d\rho_i|_{\gci} +  C |\Jpci (s^4 \eta_i + d\tau_i)|_{\gci} + C |\Gpci (s^4 \eta_i - s^3 \stci \xi_i + d\tau_i)|_{\gci} \\ & \leq C |d\rho_i|_{\gci} +  C | s^4 \eta_i + d\tau_i |_{\gci} + C | s^4 \eta_i - s^3 \stci \xi_i + d\tau_i |^2_{\gci}.
\end{align*}
Now we use~\eqref{rhoestimateeq} and~\eqref{tauestimateeq} to obtain
\begin{equation} \label{Czerotempeq}
|\chi_s|_{g_s} \leq C ( s^{\g \mu_i} + s^{-\nu'_i(1 - \g)} + s^{3(1-\g) + \delta \g}) + C s^4 |\eta_i|_{\gci} +
C ( s^4 |\eta_i|_{\gci} + s^3 |\xi_i|_{\gci} + |d\tau_i|_{\gci})^2.
\end{equation}
We claim that the second and third terms above are smaller than, and can thus be absorbed by, the first term. To see this, recall that $|\xi_i|_{\gci} = O(r^{-3})$ and $|\eta_i|_{\gci} = O(r^{-4})$, and that $r^a \leq C r^{a \g}$ for $r \in [s^{\g}, 2s^{\g}]$, and thus
\begin{equation*}
s^4 |\eta_i|_{\gci} \leq C s^{4 (1-\g)}, \qquad \qquad s^3 |\xi_i|_{\gci} \leq C s^{3(1-\g)}. 
\end{equation*}
Since $\delta $ is close to zero, the $O(s^{4(1-\g)})$ term can be absorbed in $O(s^{3(1-\g) + \delta \g})$.
Now the final term in~\eqref{Czerotempeq} is
\begin{equation*}
( O(s^{3(1-\g)}) +  O(s^{\g \mu_i}) + O(s^{-\nu'_i(1 - \g)})^2.
\end{equation*}
Since $s < 1$, and $\delta$ is close to zero, every product of two terms in the above expression can be absorbed by one of the summands in the first term of~\eqref{Czerotempeq}. This proves the first estimate in~\eqref{Czeroestimateseq2}. The second estimate in~\eqref{Czeroestimateseq2} is obtained similarly to the estimate in~\eqref{innerCzerotempeq2}, this time using the fact that $\nabci$ commutes with $\stci$ and that $\phci$ is torsion-free, so the $|\nabci \phci|_{\gci}$ terms in~\eqref{Jesteq} and~\eqref{quadeq4} are zero.

Finally the equations in~\eqref{Czeroestimateseq3} are immediate from~\eqref{chiregionseq}.
\end{proof}

We can now use Proposition~\ref{Czeroestimatesprop} to estimate the norms needed in part i) of Theorem~\ref{joycethm}. 
\begin{thm} \label{mainestimatesthm}
Let $\chi_s$ be as given in Proposition~\ref{chiprop}. Then the following estimates hold for norms of $\chi_s$ on the smooth compact manifold $\tilde M_s$.
\begin{align} \label{Czerofinaleq}
{|| \chi_s ||}_{C^0} & \leq C ( s^{\g \mu_i} + s^{-\nu'_i(1 - \g)} + s^{3(1-\g) + \delta \g} ), \\ \label{L2finaleq} {|| \chi_s ||}_{L^2} & \leq C ( s^{\g \mu_i + \frac{7}{2}\g} + s^{-\nu'_i(1 - \g) + \frac{7}{2}\g} + s^{3(1-\g) + \delta \g + \frac{7}{2}\g} ) , \\ \label{L14finaleq} {|| d^*_{g_s} \chi_s ||}_{L^{14}} & \leq C ( s^{\g \mu_i - \frac{1}{2}\g} + s^{-\nu'_i(1 - \g) - \frac{1}{2}\g} + s^{3(1-\g) + \delta \g - \frac{1}{2}\g} ).
\end{align}
\end{thm}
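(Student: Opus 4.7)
The plan is to decompose $\tilde M_s$ into the four disjoint regions used in Proposition~\ref{chiprop} and Proposition~\ref{Czeroestimatesprop} --- namely the compact inner piece $K$, the inner cone parts $f_i((2s^\gamma,\e)\times\Sigma_i)$, the overlap annuli $[s^\gamma, 2s^\gamma]\times\Sigma_i$, and the outer pieces $L_i\cup h_{i,s}((sR',s^\gamma)\times\Sigma_i)$ --- and in each region bound $\|\chi_s\|_{L^p}$ using the pointwise estimates already established. For the $L^{14}$ estimate on $d^*_{g_s}\chi_s$, I would invoke~\eqref{dstesteq}, which gives $|d^*_{g_s}\chi_s|_{g_s}\leq C|\nabla_s\chi_s|_{g_s}$, and then apply the same pointwise bounds from Proposition~\ref{Czeroestimatesprop}. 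Throughout I would use Lemma~\ref{uniformequivlemma} to freely switch between $g_s$ and the reference metrics $\gm$ and $\gci$, and the volume form expression $\volc = r^6\,dr\wedge\vols$ on the conical regions.

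For the $C^0$ estimate, the overlap bound in~\eqref{Czeroestimateseq2} is already of the stated form; on $K$ we have $|\chi_s|\leq Cs^4$, and on the inner cone parts $|\chi_s|\leq Cs^4r^{-4}\leq Cs^{4(1-\gamma)}=Cs^{-\nu'_i(1-\gamma)}$ since $\nu'_i=-4$ and $r\geq s^\gamma$. So both inner contributions are dominated by the middle overlap term. For the $L^2$ estimate, on $K$ the pointwise bound gives $\|\chi_s\|_{L^2(K)}\leq Cs^4$, which is absorbed. On the inner cone part, I would compute
\begin{equation*}
\int_{2s^\gamma}^{\e} (s^4 r^{-4})^2\, r^6\,dr \;\leq\; C s^8 \int_{2s^\gamma}^{\e} r^{-2}\,dr \;\leq\; C s^{8-\gamma},
\end{equation*}
giving a contribution of order $s^{4-\gamma/2}=s^{-\nu'_i(1-\gamma)+7\gamma/2}$, which matches the middle term of~\eqref{L2finaleq}. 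On each overlap annulus, the volume is $O(s^{7\gamma})$, so taking square roots the pointwise $C^0$ bound of~\eqref{Czeroestimateseq2} is multiplied by $s^{7\gamma/2}$, producing exactly the three summands in~\eqref{L2finaleq}; and the outer region contributes nothing by~\eqref{Czeroestimateseq3}.

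For the $L^{14}$ estimate, I would first replace $d^*_{g_s}\chi_s$ by $\nabla_s\chi_s$ via~\eqref{dstesteq} at the cost of a constant. On $K$ the bound $|\nabla_s\chi_s|\leq Cs^4$ yields a contribution of order $s^4$, which is absorbed. On the inner cone part,
\begin{equation*}
\int_{2s^\gamma}^{\e} (s^4 r^{-4})^{14}\, r^6\,dr \;=\; s^{56}\int_{2s^\gamma}^{\e} r^{-50}\,dr \;\leq\; C s^{56}(s^\gamma)^{-49} \;=\; C s^{56-49\gamma},
\end{equation*}
giving a contribution of order $s^{4-7\gamma/2}$; since $-\nu'_i(1-\gamma)-\gamma/2 = 4-9\gamma/2 < 4-7\gamma/2$, this is absorbed by the middle term of~\eqref{L14finaleq}. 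On the overlap annuli the volume factor $(s^{7\gamma})^{1/14}=s^{\gamma/2}$ multiplies the pointwise bound on $|\nabla_s\chi_s|$ in~\eqref{Czeroestimateseq2}, producing precisely the three summands in~\eqref{L14finaleq}; and the outer region again contributes zero.

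The main obstacle is really just bookkeeping: verifying that each of the ``stray'' exponents arising from the inner pieces and from cross-terms in the expansions of $\chi_s$ is dominated by one of the three designated summands in each bound. In particular, the verification that the inner cone contribution $s^{4-7\gamma/2}$ to the $L^{14}$ bound is swallowed by the $s^{-\nu'_i(1-\gamma)-\gamma/2}$ term relies crucially on the precise value $\nu'_i=-4$ from Theorem~\ref{ACasymptoticexpansionthm}; this is one of the places where the choice of expansion rate plays an essential role.
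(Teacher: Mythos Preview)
Your proof is correct and follows essentially the same strategy as the paper: decompose $\tilde M_s$ into the four regions, apply the pointwise bounds of Proposition~\ref{Czeroestimatesprop}, integrate against $r^6\,dr\wedge\vols$ on the conical pieces, and use~\eqref{dstesteq} to reduce the $L^{14}$ bound on $d^*_{g_s}\chi_s$ to the pointwise bound on $\nabla_s\chi_s$.

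The only notable difference is bookkeeping: you absorb the inner-cone contributions (of order $s^{4(1-\gamma)}$, $s^{4-\gamma/2}$, and $s^{4-7\gamma/2}$ respectively) into the \emph{middle} summand $s^{-\nu'_i(1-\gamma)+\cdots}$, using that $\nu'_i=-4$ makes these exponents match or dominate. The paper instead absorbs them into the \emph{third} summand $s^{3(1-\gamma)+\delta\gamma+\cdots}$, which forces the auxiliary smallness condition $\delta<(1-\gamma)/\gamma$ (equation~\eqref{deltatempeq}). Your route is slightly cleaner in that it avoids this extra constraint on $\delta$; the paper's route has the mild advantage of not invoking the specific value $\nu'_i=-4$ at this stage.
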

\begin{proof}
We begin with the $C^0$ norms of $\chi_s$ and of $\nab{g_s} \chi_s$ with respect to $g_s$. Using the estimates in~\eqref{Czeroestimateseq1a},~\eqref{Czeroestimateseq1b},~\eqref{Czeroestimateseq2}, and~\eqref{Czeroestimateseq3}, we find
\begin{align} \label{Czerofinaleqtemp}
{|| \chi_s ||}_{C^0} & = \sup_{p \in \tilde M_s} |\chi_s (p)|_{g_s(p)} \leq C ( s^{\g \mu_i} + s^{-\nu'_i(1 - \g)} + s^{3(1-\g) + \delta \g} ), \\ \label{L14tempeq}
{|| \nab{g_s} \chi_s ||}_{C^0} & = \sup_{p \in \tilde M_s} | (\nab{g_s} \chi_s) (p)|_{g_s(p)} \leq C ( s^{\g \mu_i - \g} + s^{-\nu'_i(1 - \g) -\g} + s^{3(1-\g) + \delta \g - \g} ),
\end{align}
where in both cases the terms from~\eqref{Czeroestimateseq1a} and~\eqref{Czeroestimateseq1b} are absorbed by the last terms of~\eqref{Czeroestimateseq2}, using the fact that $r^{-4} \leq C s^{-4\g}$ in~\eqref{Czeroestimateseq1b}. The estimate~\eqref{Czerofinaleqtemp} is~\eqref{Czerofinaleq}.

We now move on to the $L^2$ norm of $\chi_s$. This is
\begin{equation*}
{|| \chi_s ||}_{L^2} = {\left( \int_{\tilde M_s} |\chi_s|^2_{g_s} \vol_{g_s} \right)}^{\frac{1}{2}}.
\end{equation*}
We will consider the contributions to the integral coming from the different regions in~\eqref{chiregionseq}. By~\eqref{Czeroestimateseq3}, there is no contribution from the outer regions. By Lemma~\ref{uniformequivlemma} and equation~\eqref{volceq}, in the overlap region $[s^{\g}, 2s^{\g}] \times \Sigma_i$ we have $\vol_{g_s} \leq C \volc = C r^6 dr \wedge \vols$. Now the estimate~\eqref{Czeroestimateseq2} gives
\begin{align*}
\int_{[s^{\g}, 2s^{\g}] \times \Sigma_i} |\chi_s|^2_{g_s} \vol_{g_s}  & \leq C \vol(\Sigma_i) \int_{s^{\g}}^{2s^{\g}} |\chi_s|^2_{g_s} r^6 dr \\ & \leq C ( s^{\g \mu_i} + s^{-\nu'_i(1 - \g)} + s^{3(1-\g) + \delta \g} )^2
\int_{s^{\g}}^{2s^{\g}} r^6 dr \\ & \leq C ( s^{\g \mu_i} + s^{-\nu'_i(1 - \g)} + s^{3(1-\g) + \delta \g} )^2 s^{7 \g},
\end{align*}
and hence the contribution to ${|| \chi_s ||}_{L^2}$ from the overlap region is bounded by
\begin{equation} \label{mainestimatestempeq2}
C ( s^{\g \mu_i + \frac{7}{2}\g} + s^{-\nu'_i(1 - \g) + \frac{7}{2}\g} + s^{3(1-\g) + \delta \g + \frac{7}{2}\g} ).
\end{equation}
On a subset $f_i ((2 s^{\g}, \e)  \times \Sigma_i)$ of the inner region, by Lemma~\ref{uniformequivlemma} we again have $\vol_{g_s} \leq C r^6 dr \wedge \vols$, and so by~\eqref{Czeroestimateseq1b} we see
\begin{align*}
\int_{(2 s^{\g}, \e)  \times \Sigma_i} |\chi_s|^2_{g_s} \vol_{g_s}  & \leq C \vol(\Sigma_i) \int_{2s^{\g}}^{\e} |\chi_s|^2_{g_s} r^6 dr \\ & \leq C s^8  \int_{2 s^{\g}}^{\e} r^{-8} r^6 dr \\ & \leq C ( s^8 + s^{8 - \g}) \leq C s^{8 - \g} ,
\end{align*}
and so the contribution to ${|| \chi_s ||}_{L^2}$ from this region is bounded by
\begin{equation} \label{mainestimatestempeq3}
C s^{4 - \frac{1}{2}\g} = C s^{4(1-\g) + \frac{7}{2}\g}.
\end{equation}
Finally the subset $K$ of the inner region is compact, so by~\eqref{Czeroestimateseq1a}, the contribution to ${|| \chi_s ||}_{L^2}$ on this region is bounded by $C s^4$, which is smaller than, and hence absorbed by,~\eqref{mainestimatestempeq3}. Putting~\eqref{mainestimatestempeq2} and~\eqref{mainestimatestempeq3} together, we find
\begin{equation*}
{|| \chi_s ||}_{L^2} \leq C ( s^{\g \mu_i + \frac{7}{2}\g} + s^{-\nu'_i(1 - \g) + \frac{7}{2}\g} + s^{3(1-\g) + \delta \g + \frac{7}{2}\g} +  s^{4(1-\g) + \frac{7}{2} \g} ).
\end{equation*}
It is easy to check that the fourth term above is smaller than, and absorbed by, the third term, exactly when
\begin{equation} \label{deltatempeq}
\delta < \frac{1 - \g}{\g}, 
\end{equation}
so by taking $\delta$ smaller if necessary, we ensure that~\eqref{L2finaleq} holds.

Lastly we need to consider the $L^{14}$ norm of $d^*_{g_s} \chi_s$. This is
\begin{equation*}
{|| d^*_{\g_s} \chi_s ||}_{L^{14}} = {\left( \int_{\tilde M_s} |d^*_{g_s} \chi_s|^{14}_{g_s} \vol_{g_s} \right)}^{\frac{1}{14}}.
\end{equation*}
By equation~\eqref{dstesteq}, we have ${| d^*_{g_s} \chi_s |}_{g_s} \leq C {| \nab{g_s}\chi_s |}_{g_s}$.
Now we proceed exactly as in the derivation of the $L^2$ estimates, using the pointwise estimates on $| \nab{g_s}\chi_s |$ given in~\eqref{Czeroestimateseq1a},~\eqref{Czeroestimateseq1b}, and~\eqref{Czeroestimateseq2}. We sketch the details. On the overlap regions, one obtains that the contribution to ${|| d^*_{\g_s} \chi_s ||}_{L^{14}}$ is bounded by
\begin{equation} \label{mainestimatestempeq4}
C ( s^{\g \mu_i - \g + \frac{1}{2}\g} + s^{-\nu'_i(1 - \g) - \g + \frac{1}{2}\g} + s^{3(1-\g) + \delta \g - \g + \frac{1}{2}\g} ),
\end{equation}
whereas on the inner region the contribution to ${|| d^*_{\g_s} \chi_s ||}_{L^{14}}$ is bounded by
\begin{equation*}
C s^{4 - \frac{7}{2}\g} = C s^{4(1-\g) + \frac{1}{2}\g}.
\end{equation*}
This term is absorbed by the third term in~\eqref{mainestimatestempeq4} whenever $\delta < \frac{1}{\g}$, which is automatic from~\eqref{deltatempeq}. Thus we obtain the estimate~\eqref{L14finaleq}.
\end{proof}

\begin{cor} \label{mainestimatescor}
There exists $\g \in (0,1)$ and $\kappa > 0$ such that, for $s$ sufficiently small, the closed $\G$~structure $\ph_s$ constructed in Definition~\ref{phspssdefn} satisfies part i) of Joyce's Theorem~\ref{joycethm}:
\begin{equation*}
\, \, {|| \chi_s ||}_{C^0} \leq D_1 s^{\kappa}, \qquad
{|| \chi_s ||}_{L^2} \leq D_1 s^{\frac{7}{2} + \kappa}, \qquad {|| d^*_{g_s} \chi_s ||}_{L^{14}} \leq D_1
s^{-\frac{1}{2} + \kappa}.
\end{equation*}
\end{cor}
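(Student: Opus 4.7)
The plan is to reduce the corollary to a finite set of inequalities on the exponents of $s$ appearing in Theorem~\ref{mainestimatesthm} and then verify that these can be made compatible by choosing $\gamma$, $\delta$, and $\kappa$ carefully. Since $0 < s < 1$, an expression $s^{a_1}+s^{a_2}+s^{a_3}$ is bounded above by $3s^{\min(a_1,a_2,a_3)}$, so it suffices that the minimum exponent in each of \eqref{Czerofinaleq}, \eqref{L2finaleq}, \eqref{L14finaleq} exceed the corresponding target $\kappa$, $\tfrac{7}{2}+\kappa$, or $-\tfrac{1}{2}+\kappa$. Writing $\mu=\min_i\mu_i>0$ and recalling $\nu'_i=-4$, this produces nine inequalities in $\gamma$, $\delta$, $\kappa$.

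Inspection shows that the three constraints coming from the $L^2$ bound \eqref{L2finaleq} are the binding ones, namely
\begin{equation*}
\gamma\mu-\tfrac{7}{2}(1-\gamma)\geq\kappa, \qquad \tfrac{1}{2}(1-\gamma)\geq\kappa, \qquad \delta\gamma-\tfrac{1}{2}(1-\gamma)\geq\kappa,
\end{equation*}
and the six constraints coming from \eqref{Czerofinaleq} and \eqref{L14finaleq} are automatically implied by these for any $\kappa<1/2$. The first inequality forces $\gamma$ close to $1$ (specifically $\gamma>7/(7+2\mu)$); the second puts an upper bound $\gamma<1-2\kappa$; and the third forces $\delta>(1-\gamma)/(2\gamma)$, a lower bound which must be reconciled with the upper bound $\delta<(1-\gamma)/\gamma$ already imposed in \eqref{deltatempeq} during the proof of Theorem~\ref{mainestimatesthm}.

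My plan is therefore the following three-step choice. First, pick any $\gamma\in\bigl(7/(7+2\mu),\,1\bigr)$. Second, using the fact that the interval $\bigl((1-\gamma)/(2\gamma),\,(1-\gamma)/\gamma\bigr)$ is non-empty, choose $\delta$ in that interval; this keeps the upper bound \eqref{deltatempeq} used in Theorem~\ref{mainestimatesthm} valid while making the third $L^2$ inequality satisfiable. Third, take $\kappa$ to be any positive number strictly less than the minimum of $\gamma\mu-\tfrac{7}{2}(1-\gamma)$, $\tfrac{1}{2}(1-\gamma)$, and $\delta\gamma-\tfrac{1}{2}(1-\gamma)$. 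With these choices each of the nine exponent inequalities holds, so setting $D_1=3C$ (where $C$ is the constant of Theorem~\ref{mainestimatesthm}) and restricting to $s$ small enough that \eqref{sbounds} also holds yields the three required estimates. The argument is purely bookkeeping on exponents; there is no genuine obstacle, the only care needed being the simultaneous compatibility of the lower bound on $\delta$ demanded here with the upper bound \eqref{deltatempeq} that was already used to establish \eqref{L2finaleq} and \eqref{L14finaleq}.
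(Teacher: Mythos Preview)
Your proposal is correct and follows essentially the same approach as the paper: both reduce to the nine exponent inequalities, observe that the three $L^2$ inequalities (\ref{ineq2} in the paper, your displayed trio) imply the other six whenever $\gamma<1$, and then verify that the resulting system in $(\gamma,\delta,\kappa)$ is solvable. The only cosmetic difference is the order of choices---you fix $\gamma$ first, then $\delta$ in the interval $\bigl((1-\gamma)/(2\gamma),(1-\gamma)/\gamma\bigr)$, then $\kappa$, whereas the paper phrases it as conditions on $\kappa$ (namely $\kappa<\min(\delta,\tfrac12,\mu_i)$) and then selects $\gamma$; your explicit treatment of the compatibility between the new lower bound on $\delta$ and the upper bound \eqref{deltatempeq} is a nice touch that the paper leaves implicit.
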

\begin{proof}
By Theorem~\ref{mainestimatesthm}, the three estimates in part i) of Theorem~\ref{joycethm} will be satisfied if and only if
\begin{align} \label{ineq1}
\mu_i \g & \geq \kappa, & -\nu'_i (1 - \g ) & \geq \kappa, & 3( 1 - \g ) + \delta \g & \geq  \kappa, \\ \label{ineq2} \mu_i \g  + \frac{7}{2} \g & \geq \frac{7}{2} + \kappa, & -\nu'_i (1 - \g ) + \frac{7}{2} \g & \geq \frac{7}{2} + \kappa, & 3( 1 - \g ) + \delta \g + \frac{7}{2} \g & \geq \frac{7}{2} + \kappa, \\ \label{ineq3} \mu_i \g  - \frac{1}{2} \g & \geq -\frac{1}{2} + \kappa, & -\nu'_i (1 - \g ) - \frac{1}{2} \g & \geq -\frac{1}{2} + \kappa, & 3( 1 - \g ) + \delta \g - \frac{1}{2} \g & \geq -\frac{1}{2} + \kappa.
\end{align}
It is trivial to verify that for $\g < 1$, the inequalities in~\eqref{ineq2} automatically imply the inequalities in~\eqref{ineq1} and~\eqref{ineq3}. That is, the $L^2$ estimate implies the $C^0$ and $L^{14}$ estimates. The three inequalities in~\eqref{ineq2} can be rearranged to yield:
\begin{equation} \label{ineq4}
\g \geq \frac{\frac{7}{2} + \kappa}{\frac{7}{2} + \mu_i}, \qquad \qquad \g \geq 1 + \frac{\kappa}{\frac{7}{2} + \nu'_i}, \qquad \qquad \g \geq \frac{1 + 2\kappa}{1 + 2 \delta}.
\end{equation}
We want to ensure that there exists a $\kappa > 0$ such that if we define $\g$ by the three expressions above, then $\g$ will be in $(0,1)$. Recall that $\mu_i > 0$, $\delta > 0$, and $\nu'_i = -4$ (from Theorem~\ref{ACasymptoticexpansionthm}.) For the first inequality in~\eqref{ineq4}, we see that $\g > 0$ for any $\kappa > 0$, and $\g < 1$ if $\kappa < \mu_i$. For the third inequality in~\eqref{ineq4}, we have $\g > 0$ for any $\kappa > 0$, and $\g < 1$ if $\kappa < \delta$. Finally, for the middle inequality in~\eqref{ineq4}, we see that $\g < 1$ for any $\kappa > 0$, and $\g > 0$ if $\kappa < -(\frac{7}{2} + \nu'_i) = \frac{1}{2}$. Therefore it suffices to choose $\kappa$ satisfying
\begin{equation*}
\kappa < \delta, \qquad \kappa < \frac{1}{2}, \qquad \text{ and } \qquad \kappa < \mu_i, \qquad \text{for all } i = 1, \ldots, n. 
\end{equation*}
The intersection of the three inequalities in~\eqref{ineq4} with the regions $0 < \g <1$ and $\kappa > 0$ is represented by the shaded region in Figure~\ref{GRAPHfig}.
\begin{figure} [ht]
\centering
\input 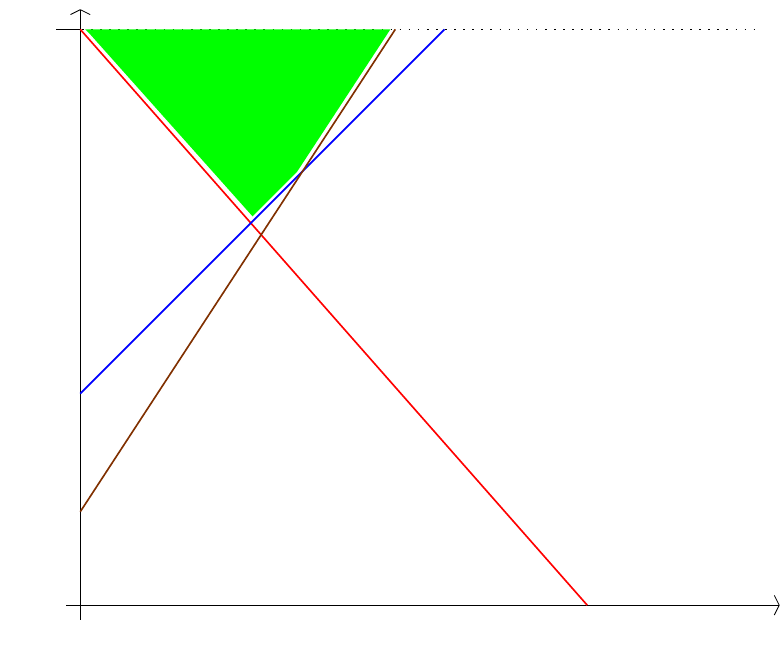_t
\caption{Intersection of the inequalities in the $\gamma$-$\kappa$ plane}
\label{GRAPHfig}
\end{figure}
\end{proof}
\begin{rmk} \label{criticalglueingratermk}
We have seen from~\eqref{ineq4} that there is a \emph{critical rate} $\nu_c = -\frac{7}{2}$ such that glueing in an AC $\G$~manifold with rate $\nu < \nu_c$ works, while for rate $\nu > \nu_c$ it \emph{does not} work. The determining factor is the size of the $L^2$ norm of the error caused by smoothing off the AC decay, which is contained in $\chi_s$. We were able to use Theorem~\ref{ACasymptoticexpansionthm} to show that for any initial AC rate $\nu \leq -3$, we can extract cohomological parts, in equations~\eqref{threeformasymptoticexpansioneq} and~\eqref{fourformasymptoticexpansioneq}, and then the left over parts have rate $\nu' = -4$.
\end{rmk}

Next we need to estimate the Riemann curvature and the injectivity radius of $g_s$.
\begin{prop} \label{RIRprop}
For $s$ sufficiently small, the metric $g_s$ induced by the closed $\G$~structure $\ph_s$ constructed in Definition~\ref{phspssdefn} satisfies parts ii) and iii) of Joyce's Theorem~\ref{joycethm}:
\begin{equation*}
\, \, \mathcal{I}(g_s) \geq D_2 s, \qquad
{|| \mathcal{R}(g_s) ||}_{C^0} \leq D_3 s^{-2}.
\end{equation*}
\end{prop}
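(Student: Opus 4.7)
The strategy is to decompose $\tilde M_s$ into the same three types of regions used in Proposition~\ref{chiprop} and estimate $\mathcal I(g_s)$ and $\mathcal R(g_s)$ separately on each, using the scaling behaviour of the AC piece and the asymptotic cone behaviour of the ICS piece, together with the uniform metric equivalence of Lemma~\ref{uniformequivlemma} and higher-order analogues of Proposition~\ref{glueestimatesprop}.

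First, on the outer region $L_i \cup h_{i,s}\bigl((sR', s^{\gamma}) \times \Sigma_i\bigr)$, we have $g_s = \gnis = s^2 \gni$ exactly. The original AC metric $\gni$ on $N_i$ is smooth and complete, so its Riemann curvature is uniformly bounded and its injectivity radius is bounded below by some $c_i > 0$ (the Riemann tensor of $\gni$ decays as $O(r^{-2})$ at infinity, hence is globally bounded on $N_i$; similarly, the link $\Sigma_i$ being compact ensures uniform lower bound on injectivity radius up to infinity on the cone end). Under the rescaling by $s^2$, the Riemann curvature scales as $s^{-2} \mathcal R(\gni)$ and the injectivity radius as $s \cdot \mathcal I(\gni)$, giving exactly the desired bounds with constants $D_2 = \min_i c_i$ and $D_3 = s^2 \sup_i \|\mathcal R(\gni)\|_{C^0}$.

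Second, on the inner region, split into the compact core $K$ and the cone-like pieces $f_i\bigl((2s^{\gamma}, \e) \times \Sigma_i\bigr)$. On $K$, the metric $\gm$ has bounded curvature and positive injectivity radius (both independent of $s$), and by Lemma~\ref{uniformequivlemma} together with the pointwise smallness of $\ph_s - \phm$ (which can be upgraded to $C^2$ smallness by the same argument used there, applied to $s^3\xi$ whose $C^2$ norm on $K$ is $O(s^3)$), the metric $g_s$ inherits the same bounds up to a harmless multiplicative constant. On the cone-like piece, $\gm$ is $C^{\infty}$-close to $\gci$ in the sense of~\eqref{CSdefneq2}, and $\gci$ has Riemann tensor of size $O(r^{-2})$ and injectivity radius bounded below by $c r$ for some $c > 0$. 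Since $r \geq 2s^{\gamma}$ and $\gamma < 1$, these give $\|\mathcal R(\gci)\|_{C^0} \leq C s^{-2\gamma} \leq C s^{-2}$ and injectivity radius $\geq C s^{\gamma} \geq C s$; the perturbation from $\gci$ to $g_s$ is controlled using the estimates $|\nabci^j(f_i^*(\ph_s) - \phci)|_{\gci}$ for $j = 0, 1, 2$, which are sufficiently small by the same scaling arguments used in Lemma~\ref{uniformequivlemma}.

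Third, on the overlap regions $[s^{\gamma}, 2s^{\gamma}] \times \Sigma_i$, the $\G$~structure $\ph_s$ differs from $\phci$ by $s^3 \xi_i + d\rho_i$, and Proposition~\ref{glueestimatesprop} (extended to second derivatives by differentiating~\eqref{rhotempeq} once more and using the higher-order bounds in~\eqref{CSinterpeq3} and~\eqref{scaledACinterpeq3}) shows that the pointwise $C^2$ norm of this difference, measured with respect to $\gci$, is bounded by positive powers of $s$ times $s^{-2\gamma}$, i.e.\ by the natural curvature scale of $\gci$. Consequently, the Riemann tensor of $g_s$ on this region is of order $s^{-2\gamma} \leq s^{-2}$ and the injectivity radius is of order $s^{\gamma} \geq s$, by standard comparison of metrics that are $C^2$-close after rescaling the coordinate $r$ by $s^{\gamma}$. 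The main technical obstacle is precisely this overlap estimate, because curvature is a second-order quantity in the metric and hence requires good control on $\nabci^2 \rho_i$ (and thus on second covariant derivatives of $\alpha_i$, $A_i$, and $\zeta_{i,s}$); once one rescales the radial coordinate by $s^{-\gamma}$ so that the overlap becomes a region of unit size in a fixed cone metric, the problem reduces to showing $C^2$ convergence of a sequence of $\G$~structures on a fixed annulus, which follows from the rate estimates at our disposal. Taking $D_2, D_3$ to be the maximum of the constants arising in each of the three regions completes the proof.
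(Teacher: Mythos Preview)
Your proposal is correct and follows essentially the same approach as the paper: decompose $\tilde M_s$ into the outer, overlap, and inner regions, use the exact scaling $g_s = s^2\gni$ on the outer region, and on the other regions compare $g_s$ to the cone metric $\gci$ (or to $\gm$) via Lemma~\ref{uniformequivlemma} to transfer the cone-metric bounds $|\mathcal R(\gci)| = O(r^{-2})$ and $\mathcal I(\gci) \geq cr$ to $g_s$. The paper's own proof is somewhat terser; in particular it invokes ``Taylor's theorem'' for the curvature comparison without explicitly remarking, as you do, that this step needs $C^2$ (not merely $C^0$) closeness of $g_s$ to $\gm$ and $\gci$, and that the required higher-derivative control follows from the same estimates underlying Lemma~\ref{uniformequivlemma} and Proposition~\ref{glueestimatesprop}. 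One small slip: in your outer-region paragraph the constant ``$D_3 = s^2 \sup_i \|\mathcal R(\gni)\|_{C^0}$'' should of course be $s$-independent; drop the stray $s^2$.
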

\begin{proof}
We begin by noting that under a conformal scaling of metric $\tilde g = s^2 g$, the Riemann curvature tensor $\mathcal{R}$ and the injectivity radius $\mathcal{I}$ change by ${|| \mathcal{R}(\tilde g) ||}_{C^0(\tilde g)} = s^{-2} {|| \mathcal{R}(g) ||}_{C^0(g)}$ and $\mathcal{I}(\tilde g) = s \, \mathcal{I}(g)$.
We will consider the three regions of $\tilde M_s$ as given in Definition~\ref{phspssdefn}.

On the inner region $K \, \cup \, \sqcup_{i=1}^n f_i \left( (2s^{\g}, \e) \times \Sigma_i \right)$, we have $| g_s - \gm|_{\gm} \leq e$, for some $e < \frac{1}{2}$ by Lemma~\ref{uniformequivlemma}. Hence by Taylor's theorem we have $|\mathcal{R}(g_s) - \mathcal{R}(\gm)|_{\gm} \leq C e$. Therefore
\begin{equation*}
|\mathcal{R}(g_s)|_{g_s} \leq C |\mathcal{R}(g_s)|_{\gm} \leq C |\mathcal{R}(\gm)|_{\gm} + C|\mathcal{R}(g_s) - \mathcal{R}(\gm)|_{\gm} \leq C,
\end{equation*}
using~\eqref{unifeq1} and the fact that $\mathcal{R}(\gm)$ is smooth and hence bounded on any compact subset of $M$ containing the inner region. On the overlap regions $[s^{\g}, 2 s^{\g}] \times \Sigma_i$ we can again use the fact that $g_s$ is uniformly close to $\gci$ by Lemma~\ref{uniformequivlemma}, so as above we can say
\begin{equation*}
|\mathcal{R}(g_s)|_{g_s} \leq C |\mathcal{R}(\gci)|_{\gci} + C|\mathcal{R}(g_s) - \mathcal{R}(\gci)|_{\gci} \leq C |\mathcal{R}(\gci)|_{\gci} + C.
\end{equation*}
But since $\gci(tr, \sigma) = t^2 \gci(t, \sigma)$, we have $|\mathcal{R}(\gci)(tr, \sigma)|_{\gci(tr, \sigma)} = t^{-2} |\mathcal{R}(\gci)(r, \sigma)|_{\gci(r, \sigma)}$ on the cone $C_i$. Therefore since $r \in [s^{\g}, 2 s^{\g}]$ on these regions, we have $|\mathcal{R}(\gci)|_{\gci} \leq C s^{-2\g}$. Finally we consider the outer regions $L_i \, \cup \, h_{i,s} \left( (sR, s^{\g}) \times \Sigma_i \right)$. Here we have $g_s = s^2 \gni$ exactly, so $|\mathcal{R}(g_s)|_{g_s} = s^{-2} |\mathcal{R}(\gni)|_{\gni}| \leq C s^{-2}$ as $\mathcal{R}(\gni)$ is smooth and thus bounded on any compact subset of $N_i$ containing that outer region. Putting all three estimates together gives iii) of Theorem~\ref{joycethm}.

The estimate on the injectivity radius in ii) of Theorem~\ref{joycethm} is proved in essentially the same way. On the inner region, we get $\mathcal{I}(g_s) \geq C$, on the overlap regions we get $\mathcal{I}(g_s) \geq C s^{\g}$, and on the outer regions we get $\mathcal{I}(g_s) \geq C s$. Thus when $s$ is sufficiently small, putting these all together gives ii) of Theorem~\ref{joycethm}.

Note that in both cases, the dominant contribution comes from the conformally scaled metrics $g_s = s^2 \gni$ on the outer regions which were glued into $M$ to obtain $\tilde M_s$.
\end{proof}

Finally we can prove our main theorem.
\begin{thm} \label{mainthm}
Let $M$ be a compact $\G$~manifold with isolated conical singularities, with singularities $x_1, \ldots, x_n$, cones $C_1, \ldots, C_n$, and rates $\mu_1, \ldots, \mu_n$, respectively. Suppose that we have asymptotically conical $\G$~manifolds $N_1, \ldots, N_n$, with the same cones $C_1, \ldots, C_n$, and rates $\nu_1, \ldots, \nu_n$, respectively, with each $\nu_i \leq -3$. If the conditions~\eqref{obsconditionseq1} and~\eqref{obsconditionseq2} of Theorem~\ref{obsthm} are satisfied, then there exists a one-parameter family $\tilde M_s$ of \emph{smooth, compact $\G$~manifolds}, for $0 < s < \kappa$, (with holonomy exactly equal to $\G$), which desingularize $M$.
\end{thm}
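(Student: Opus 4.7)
The plan is to assemble the pieces built in Sections~\ref{manifoldconstructionsec}--\ref{torsionfreesec} and hand them to Joyce's existence theorem, Theorem~\ref{joycethm}. First, for each $s > 0$ satisfying the bounds in~\eqref{sbounds}, the procedure of Section~\ref{manifoldconstructionsec} produces the smooth compact $7$-manifold $\tilde M_s$ by scaling each $N_i$ to $N_{i,s}$ and glueing across the overlap regions $U_{i,s}$. On each AC end I would invoke Theorem~\ref{ACasymptoticexpansionthm} (after gauge-fixing the diffeomorphisms $h_i$, cf.\ Remark~\ref{gaugefixBSrmk}) to obtain the asymptotic expansions~\eqref{threeformasymptoticexpansioneq}--\eqref{fourformasymptoticexpansioneq} with improved leftover rate $\nu'_i=-4$. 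The topological hypotheses~\eqref{obsconditionseq1}--\eqref{obsconditionseq2} are exactly what Theorem~\ref{obsthm} requires, so applying it supplies global correction forms $\xi\in\wtht$ and $\eta$ on $M'$ matching the $\xi_i,\eta_i$ to the needed order. Definition~\ref{phspssdefn} then produces a smooth, globally closed pair $(\ph_s,\ps_s)$ on $\tilde M_s$, and the matching computation immediately following that definition verifies that the two piecewise prescriptions agree on the overlap.

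Next, I would form $\chi_s = \ph_s - \st_{g_s}\ps_s$ from Definition~\ref{chidefn}; for $s$ small, $\ps_s$ lies in the domain of $\Theta^{-1}$ by the $C^0$ bounds of Proposition~\ref{glueestimatesprop}. Lemma~\ref{chilemma} records the gauge identity $d^*_{g_s}\chi_s = d^*_{g_s}\ph_s$ demanded by Theorem~\ref{joycethm}. The substantive step is then Corollary~\ref{mainestimatescor}, which selects $\gamma\in(0,1)$, $\kappa > 0$, and $D_1 > 0$ so that, for all sufficiently small $s$, the three inequalities on $\chi_s$ in part~i) of Theorem~\ref{joycethm} hold simultaneously. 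The injectivity radius and Riemann curvature bounds ii) and iii) are supplied by Proposition~\ref{RIRprop}, with the dominant behaviour dictated by the conformal factor $s^2\gni$ on the outer pieces.

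With every hypothesis of Theorem~\ref{joycethm} in place, that theorem delivers, for all $0 < s \leq s_0$ (relabelling the upper bound as the $\kappa$ of the theorem statement), a smooth torsion-free $\G$~structure $\tilde\ph_s$ on $\tilde M_s$ satisfying $\|\ph_s - \tilde\ph_s\|_{C^0}\leq D_4 s^{\kappa}$, so each $\tilde M_s$ is a smooth compact $\G$~manifold. For the claim that the Riemannian holonomy is exactly $\G$ and not a proper subgroup, I would argue that on the outer region $L_i\cup h_{i,s}((sR',s^\gamma)\times\Sigma_i)$ the $\G$~structure $\ph_s$ coincides with $\phnis$, whose holonomy is the full $\G$; since $\tilde\ph_s$ is $C^0$-close to $\ph_s$ and the property of having holonomy equal to a proper closed Lie subgroup of $\G$ is not stable under small perturbations of the $\G$~structure, the holonomy of $\tilde g_s$ must also be exactly $\G$ (when additional topological control is needed, one falls back on Definition~\ref{g2manifolddefn} after checking finiteness of $\pi_1(\tilde M_s)$).

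The main obstacle in the whole argument is packaged into Corollary~\ref{mainestimatescor}: the three competing error contributions, namely the CS decay rate $\mu_i$ on $M$, the AC leftover rate $\nu'_i=-4$, and the obstruction-correction tail at rate $-3+\delta$, must all be squeezed into a single feasible pair $(\gamma,\kappa)$ with $\gamma\in(0,1)$ and $\kappa>0$. This succeeds precisely because the improved rate $\nu'_i = -4 < -\frac{7}{2}$ produced by Theorem~\ref{ACasymptoticexpansionthm} leaves just enough room in the critical $L^2$ estimate; this explains the hypothesis $\nu_i\leq -3$ and the shape of the shaded region of Figure~\ref{GRAPHfig}.
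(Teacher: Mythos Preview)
Your proposal is correct and follows essentially the same route as the paper: assemble Corollary~\ref{mainestimatescor} and Proposition~\ref{RIRprop} to verify the hypotheses of Theorem~\ref{joycethm}, then invoke that theorem. The only point where you diverge is the holonomy argument: the paper simply observes that the holonomy of a glued manifold must contain the holonomy of its constituent pieces $M'$ and $N_i$, each of which already has full $\G$ holonomy. Your perturbation argument (that reduced holonomy is not stable under small $C^0$ perturbations) is not quite airtight as stated, since holonomy is not a continuous invariant; your $\pi_1$ fallback is the rigorous route but you would then need to check $\pi_1(\tilde M_s)$ is finite, which the paper avoids by the glueing observation.
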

\begin{proof}
From Corollary~\ref{mainestimatescor} and Proposition~\ref{RIRprop}, we see that the hypotheses of Theorem~\ref{joycethm} are satisfied with $\ph = \ph_s$ and $\chi = \chi_s$ on the smooth compact manifold $\tilde M_s$. Hence there exists a torsion-free $\G$~structure $\tilde \ph_s$ on $\tilde M_s$, for $0 < s < \kappa$. The Riemannian holonomy of the metric $\tilde g_s$ induced by $\tilde \ph_s$ must be exactly $\G$, since the holonomy of $M'$ and of the $N_i$'s was exactly $\G$, and the holonomy of a manifold obtained by glueing must be at least as large as the holonomy of its constituent pieces. 
\end{proof}

In fact we can say that  ``$\lim_{s\to0} \tilde M_s = M$'' in some sense, since as $s \to 0$, the asymptotically conical $\G$~manifolds $N_{i,s}$ shrink to points, corresponding to the original conical singularities $x_i$, and hence the manifold $\tilde M_s$ approaches the original $\G$~manifold $M$ with isolated conical singularities. Therefore compact $\G$~manifolds with isolated conical singularities can be thought of as possible `boundary points' in the moduli spaces of compact smooth $\G$~manifolds.

\section{Lockhart--McOwen analysis on manifolds with ends} \label{analsec}

In this section we summarize some aspects of the theory of Lockhart--McOwen analysis on non-compact manifolds. We will only need parts of the theory for weighted Sobolev spaces. It applies equally well to to weighted H\"older spaces. This theory originally appeared in Lockhart--McOwen~\cite{LM} and Lockhart~\cite{Lock}. A very detailed exposition can also be found in Marshall~\cite{M}, and a summary in the context of manifolds with ICS was first presented in Joyce~\cite{JSL1}. Note that these techniques have also been applied by Nordsr\"om~\cite{Nord} to study asymptotically \emph{cylindrical} $\G$~manifolds.

\subsection{Lockhart--McOwen analysis on manifolds with ICS} \label{lockhartCSsec}

Let $M$ be a compact $\G$~manifold with isolated conical singularities, as in Definition~\ref{CSdefn}. In order to be able to define sensible ``weighted'' Banach spaces on the non-compact smooth manifold $M'$, we need to introduce the concept of a radius function.
\begin{defn} \label{radiusfunctiondefn}
A \emph{radius function} $\varrho$ on $M'$ is a smooth function on $M'$ that satisfies the following conditions. On the compact subset $K = M' \backslash \sqcup_{i=1}^n S_i$, we define $\varrho \equiv 1$. Let $x$ be a point in the neighbourhood $f_i( (0, \frac{1}{2}\e) \times \Sigma_i)$ of the $i^{\text{th}}$ singularity $x_i$. Then $f_i^{-1} (x) = (r, \sigma)$ for some $r \in (0, \frac{1}{2}\e)$. We define $\varrho(x) = r$ for such a point. Finally, in the regions $f_i( (\frac{1}{2}\e, \e) \times \Sigma_i)$, the function $\varrho$ is defined by interpolating smoothly between its definitions near the singularities and its definition in the compact subset $K$, in an increasing fashion. It is clear that we can always construct such a function $\varrho$. Essentially, $\varrho$ is bounded from below by a positive constant $\frac{1}{2} \e$ away from the singularities, and near each singularity $\varrho$ basically measures the distance to the singularity.
\end{defn}
Let $\bl = (\lambda_1, \ldots, \lambda_n)$ be an $n$-tuple of real numbers. We can add such $n$-tuples and multiply them by real numbers using the vector space structure of $\R^n$. We also define $\bl + j = (\lambda_1 + j, \ldots, \lambda_n + j)$ for any $j \in \R$. Now define $\varrho^{\bl}$ to equal $\varrho^{\lambda_i}$ on $f_i( (0, \e) \times \Sigma_i)$ and to equal $1$ on $K$. Then $\varrho^{\bl}$ is a smooth function on $M'$ which equals
$r^{\lambda_i}$ on the neighbourhood $f_i( (0, \frac{1}{2}\e) \times \Sigma_i)$ of $x_i$. 

\begin{defn} \label{CSSobolevdefn}
Let $p > 1$, $l \geq 0$, and $\bl \in \R^n$. We define the \emph{weighted Sobolev space} $L^p_{l, \bl} (\Lambda^k (T^* M'))$ of $k$-forms on $M'$ as follows. Consider the space $C^{\infty}_{\text{cs}}(\Lambda^k(T^*M'))$ of smooth compactly supported $k$-forms on $M'$. For such forms the quantity
\begin{equation} \label{CSSobolevdefneq}
{||\omega||}_{L^p_{l,\bl}} \, = \, {\left( \sum_{j=0}^l \int_{M'} {| \varrho^{- \bl + j} \nabm^j \omega|}^p_{\gm} \varrho^{-7} \volm \right)}^{\frac{1}{p}}
\end{equation}
is clearly finite, and is a norm. We define the Banach space $L^p_{l, \bl} (\Lambda^k (T^* M'))$ to be the completion of $C^{\infty}_{\text{cs}}(\Lambda^k(T^*M'))$ with respect to this norm.
\end{defn}

\begin{rmk} \label{CSSobolevdefnrmk}
There are several observations to be made about this definition.
\begin{enumerate}[i)]
\item As a topological vector space, $L^p_{l, \bl} (\Lambda^k (T^* M'))$ is independent of the choice of radius function $\varrho$, and any two such choices lead to equivalent norms.
\item It is clear that $L^p_{l, \bl} (\Lambda^k (T^* M')) \subseteq L^p_{l, \bl'} (\Lambda^k (T^* M'))$
if $\lambda_i > \lambda_i'$ for all $i = 1, \ldots, n$.
\item The space $L^2_{l, \bl} (\Lambda^k (T^* M'))$ is a \emph{Hilbert space}, with inner product coming from the polarization of the norm in~\eqref{CSSobolevdefneq}.
\item An element $\omega$ in $L^p_{l, \bl} (\Lambda^k (T^* M'))$ can be intuitively thought of as a $k$-form which is $l$ times weakly differentiable, and such that near each $x_i$, the tensor $\nabm^j \omega$ is growing at most like $r^{\lambda_i - j}$. In fact if $|\omega|_{\gm} = O(r^{\lambda_i})$ near $x_i$, then $\omega \in L^p_{0, \bl - e} (\Lambda^k (T^* M'))$ for any $e > 0$.
\item Because of the factor of $\varrho^{-7}$ in~\eqref{CSSobolevdefneq}, for $l=0$, the space $L^p_{0, -\boldsymbol{\frac{7}{p}}} (\Lambda^k (T^* M'))$ is the usual $L^p (\Lambda^k (T^* M'))$ space,  and in particular we have
\begin{equation} \label{L2equiveq}
L^2_{0, - \boldsymbol{\frac{7}{2}}} (\Lambda^k (T^* M')) \, = \, L^2 (\Lambda^k (T^* M')).
\end{equation}
Here it is understood that $\boldsymbol{\frac{7}{p}}$ denotes the `constant' $n$-tuple $(\frac{7}{p}, \ldots, \frac{7}{p})$.
\end{enumerate}
\end{rmk}

\begin{prop} \label{dualspaceprop}
Let $q$ satisfy $\frac{1}{p} + \frac{1}{q} = 1$. There is a Banach space isomorphism
\begin{equation*}
{\left( L^p_{0, \bl} (\Lambda^k (T^* M')) \right)}^* \, \cong \, L^q_{0, -\bl - 7} (\Lambda^k (T^* M')),
\end{equation*}
given by the $L^2$ inner product pairing. That is, if $\alpha \in L^p_{0, \bl} (\Lambda^k (T^* M'))$ and $\beta \in L^q_{0, -\bl - 7} (\Lambda^k (T^* M'))$, then
\begin{equation*}
| {\langle \alpha , \beta \rangle}_{L^2}| \, \leq \, C || \alpha ||_{L^p_{0, \bl}} || \beta ||_{L^q_{0, -\bl - 7}}. 
\end{equation*}
\end{prop}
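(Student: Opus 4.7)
The plan is to reduce the weighted duality statement to the classical duality $(L^p)^* \cong L^q$ by means of two explicit weight-stripping isometries. Define
$$T_p : L^p_{0,\bl}(\Lambda^k(T^*M')) \to L^p(\Lambda^k(T^*M')), \qquad T_p(\alpha) = \varrho^{-\bl - 7/p}\,\alpha,$$
and analogously
$$T_q : L^q_{0,-\bl-7}(\Lambda^k(T^*M')) \to L^q(\Lambda^k(T^*M')), \qquad T_q(\beta) = \varrho^{\bl + 7 - 7/q}\,\beta.$$
On smooth compactly supported forms one verifies directly from Definition~\ref{CSSobolevdefn} that $\|T_p\alpha\|_{L^p}=\|\alpha\|_{L^p_{0,\bl}}$ and $\|T_q\beta\|_{L^q}=\|\beta\|_{L^q_{0,-\bl-7}}$, since in each case the factor $\varrho^{-7}$ appearing in the weighted measure is precisely absorbed by the $p$-th (respectively $q$-th) power of the stripping weight. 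By density, each map extends to an isometric isomorphism of Banach spaces; this step uses only that $\varrho$ is smooth and strictly positive on $M'$, which makes multiplication by $\varrho^s$ a measurable bijection on forms for every $s \in \R$.

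The key algebraic observation, and essentially the only non-formal ingredient, is that $\frac{1}{p}+\frac{1}{q}=1$ forces the exponents to balance: $(\bl + \tfrac{7}{p}) + (-\bl - 7 + \tfrac{7}{q}) = 0$. Consequently, at every point of $M'$ one has the pointwise identity $\langle \alpha,\beta\rangle_{\gm} = \langle T_p\alpha,\, T_q\beta\rangle_{\gm}$, since the weight factors cancel. Integrating and applying the pointwise Cauchy--Schwarz inequality on forms together with the classical scalar Hölder inequality then gives
$$|\langle \alpha,\beta\rangle_{L^2}| = \Bigl|\int_{M'} \langle T_p\alpha,\,T_q\beta\rangle_{\gm}\,\volm \Bigr| \leq \|T_p\alpha\|_{L^p}\|T_q\beta\|_{L^q} = \|\alpha\|_{L^p_{0,\bl}}\|\beta\|_{L^q_{0,-\bl-7}},$$
which is the quantitative bound asserted in the proposition and, in particular, shows that every $\beta \in L^q_{0,-\bl-7}$ induces a bounded functional on $L^p_{0,\bl}$ via the $L^2$ pairing with norm at most $\|\beta\|_{L^q_{0,-\bl-7}}$.

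For the converse direction, any continuous linear functional $\phi$ on $L^p_{0,\bl}$ pulls back via $T_p^{-1}$ to a continuous functional $\phi\circ T_p^{-1}$ on the standard space $L^p(\Lambda^k(T^*M'))$; by the classical Riesz representation theorem for $L^p$ duality this functional is represented uniquely by some $\tilde\beta \in L^q(\Lambda^k(T^*M'))$, and setting $\beta = T_q^{-1}\tilde\beta \in L^q_{0,-\bl-7}$ and running the pointwise weight identity in reverse shows $\phi(\alpha) = \langle \alpha,\beta\rangle_{L^2}$ for all $\alpha$, with matching operator and function-space norms. Since the proof is essentially a change of variables that converts the problem into classical $L^p$ duality, there is no substantive analytic obstacle; the only point that really needs care is the bookkeeping of the weight exponents, verifying that $\tfrac{7}{p}+\tfrac{7}{q}=7$ makes the product weight in the $L^2$ integrand cancel identically, so that no stray powers of $\varrho$ survive to obstruct the Hölder estimate.
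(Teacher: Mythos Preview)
Your proof is correct and is the standard weight-stripping argument. Note that the paper does not actually supply a proof of this proposition: Section~\ref{lockhartCSsec} is explicitly a summary of the Lockhart--McOwen theory, and Proposition~\ref{dualspaceprop} is stated there as a known fact without justification. Your argument---building the isometries $T_p$, $T_q$ onto the unweighted $L^p$, $L^q$ spaces, checking that the exponents balance via $\tfrac{7}{p}+\tfrac{7}{q}=7$, and then invoking classical H\"older and Riesz representation---is exactly the expected proof of such a statement, so there is nothing to compare against in the paper itself.

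One minor bookkeeping point worth making explicit: since $\bl$ is an $n$-tuple rather than a scalar, the symbol $\varrho^{-\bl-7/p}$ should be read as the product $\varrho^{-\bl}\cdot\varrho^{-7/p}$, where the first factor is the piecewise weight function from the paragraph after Definition~\ref{radiusfunctiondefn} and the second is the scalar radius function raised to the power $-7/p$. With that convention your isometry computations go through verbatim, because on the compact core $K$ one has $\varrho\equiv 1$ and on each end the weight function is literally $r^{\lambda_i}$, so the two interpretations agree everywhere and the exponent arithmetic is pointwise valid.
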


Now let $\Lambda^* (T^*M') = \sum_{k=0}^7 \Lambda^k (T^*M')$. Define $L^p_{l, \bl} (\Lambda^* (T^* M')) = \sum_{k=0}^7 L^p_{l, \bl} (\Lambda^k (T^* M'))$.
We will be interested in the following two differential operators:
\begin{equation} \label{diracmapdefneq}
(d + \dsm)^p_{l + 1, \bl} \, : \, L^p_{l + 1, \bl} (\Lambda^* (T^* M')) \, \to \, 
L^p_{l, \bl - 1} (\Lambda^* (T^* M')),
\end{equation}
and
\begin{equation} \label{lapmapdefneq}
(\lapm)^p_{l + 2, \bl} \, : \, L^p_{l + 2, \bl} (\Lambda^k (T^* M')) \, \to \, 
L^p_{l, \bl - 2} (\Lambda^k (T^* M')).
\end{equation}
They are defined by extending the operators $d + \dsm$ and $\lapm$ from smooth compactly supported forms to the Sobolev spaces.

\begin{defn} \label{criticalratesdefn}
Let $C$ be a cone. The set $\mathcal{D}_{d + \dsc}$ of \emph{critical rates} of the operator $d + \dsc$ on $\Lambda^* (T^*C)$ is defined as follows:
\begin{equation} \label{criticalratesdefneq}
\mathcal{D}_{d + \dsc} = \left\{ \begin{array}{l} \lambda \in \R; \, \, \exists \text{ a non-zero } \omega = \sum_{k=0}^7 \omega_k \in \Lambda^*(T^*C), \\ \text{ homogeneous of order $\lambda$, such that } (d + \dsc)(\omega) = 0 \end{array} \right\}.
\end{equation}
Similarly the set  $\mathcal{D}_{\lapc}$ of critical rates of $\lapc$ on $\Lambda^k (T^*C)$ is defined as
\begin{equation} \label{criticalratesdefneq2}
\mathcal{D}_{\lapc} = \left\{ \begin{array}{l} \lambda \in \R; \, \, \exists \text{ a non-zero } \omega \in \Lambda^k(T^*C), \\ \text{ homogeneous of order $\lambda$, such that } \lapc \omega = 0 \end{array} \right\}.
\end{equation}
Both $\mathcal D_{d + \dsc}$ and $\mathcal D_{\lapc}$ are countable, discrete subsets of $\R$.
\end{defn}

Our manifold $M$ with isolated conical singularities has $n$ singular points, with cones $C_1, \ldots, C_n$. Recall that a map between Banach spaces is \emph{Fredholm} if it has closed image, finite dimensional kernel, and finite dimensional cokernel. The relevance of the critical rates is that they are related to the rates $\bl$ for which the operators $(d + \dsm)^p_{l + 1, \bl}$ and $(\lapm)^p_{l+2, \bl}$ of~\eqref{diracmapdefneq} and~\eqref{lapmapdefneq} are Fredholm, by the following theorem.

\begin{thm} \label{fredholmthm}
The map $(d + \dsm)^p_{l + 1, \bl} \, : \, L^p_{l + 1, \bl} (\Lambda^* (T^* M')) \, \to \, L^p_{l, \bl - 1} (\Lambda^* (T^* M'))$ is Fredholm if and only if $\lambda_i \notin \mathcal D_{d + \dsci}$ for all $i = 1, \ldots, n$. Similarly the map $(\lapm)^p_{l + 2, \bl} \, : \, L^p_{l + 2, \bl} (\Lambda^k (T^* M')) \, \to \, L^p_{l, \bl - 2} (\Lambda^k (T^* M'))$ is Fredholm if and only if $\lambda_i \notin \mathcal D_{\lapci}$ for all $i = 1, \ldots, n$.
\end{thm}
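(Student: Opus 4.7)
The plan is to establish the Fredholm property by a parametrix construction that reduces the analysis to each model cone $C_i$. First I would verify the statement is really an instance of the standard Lockhart--McOwen theorem for asymptotically conical or conical asymptotic geometries, since $(M', \gm)$ is asymptotically modelled on $(C_i, \gci)$ near $x_i$ by Definition~\ref{CSdefn}. The operators $d+\dsm$ and $\lapm$ are uniformly elliptic on $M'$ with respect to the weighted norms, and on each cone neighbourhood $(0,\e) \times \Sigma_i$ they differ from the model operators $d+\dsci$ and $\lapci$ by lower order terms of size $O(r^{\mu_i})$, by~\eqref{CSdefneq2}. These perturbation terms produce a compact operator between the relevant weighted Sobolev spaces, so it suffices to prove the Fredholm property for the model operators on each truncated cone.

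Next I would change variables by $t = -\log r$ to convert each cone end into a half-cylinder $[\log(1/\e), \infty) \times \Sigma_i$ and simultaneously conformally rescale the forms by an appropriate power of $r$ so that the weighted Sobolev norms $L^p_{l, \bl}$ become standard (unweighted) Sobolev norms on the cylinder and the dilation-equivariant operators $d+\dsci$ and $\lapci$ become \emph{translation-invariant} operators on the cylinder. Under this transformation the weight shift by $\lambda_i$ corresponds to multiplying by $e^{\lambda_i t}$, so tuning $\bl$ becomes equivalent to sliding along the real axis in a Mellin-transform picture.

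The heart of the argument is then the Mellin transform in the $t$ variable, expanding each form in eigenmodes of $\Sigma_i$ and producing, for each mode, a finite-dimensional ODE in $t$. The translation-invariance turns these into algebraic equations parametrised by a complex spectral variable; their characteristic values are precisely the homogeneity orders $\lambda$ of homogeneous solutions of the operator on $C_i$, which is exactly the definition of $\mathcal D_{d + \dsci}$ (respectively $\mathcal D_{\lapci}$) from Definition~\ref{criticalratesdefn}. Provided $\lambda_i \notin \mathcal D_{d+\dsci}$, the inverse symbol is uniformly bounded on the line $\real(z) = \lambda_i$, which yields a bounded inverse for the model operator on each cone end in the $L^p_{l,\bl}$ norms. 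Patching this cone-end inverse with a bounded inverse modulo compacts on the compact core $K$ (using standard interior elliptic estimates) via a partition of unity gives a parametrix for $(d+\dsm)^p_{l+1,\bl}$ whose error is compact, establishing the Fredholm property. The converse (failure of Fredholmness at critical rates) comes from producing explicit approximate kernels or cokernels by concentrating near the cone end an asymptotic solution built from a critical homogeneous mode, multiplied by a cut-off that slides towards the singularity, showing the image is not closed.

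The main obstacle is bookkeeping the conversion of weighted Sobolev norms under the $r = e^{-t}$ substitution (particularly the $\varrho^{-7}$ factor in~\eqref{CSSobolevdefneq}) and justifying the Mellin inversion for $p \neq 2$, where one cannot invoke Plancherel directly and must instead rely on the Calder\'on--Zygmund-type estimates of Lockhart~\cite{Lock}. I would simply invoke those results rather than redo them, citing Lockhart--McOwen~\cite{LM}, Lockhart~\cite{Lock}, and the detailed treatment in Marshall~\cite{M}, since the theorem as stated is exactly their general Fredholm theorem specialised to our elliptic operators $d+\dsc$ and $\lapc$ of orders $1$ and $2$ respectively.
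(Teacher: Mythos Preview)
Your proposal is correct and, in fact, goes further than the paper does. The paper does not prove Theorem~\ref{fredholmthm} at all: Section~\ref{analsec} is explicitly a \emph{summary} of the Lockhart--McOwen theory, and this theorem is simply stated as a known result with references to Lockhart--McOwen~\cite{LM}, Lockhart~\cite{Lock}, and Marshall~\cite{M}. Your sketch of the parametrix-plus-Mellin-transform argument is the standard proof found in those references, and your concluding sentence---that one should just cite those sources rather than reprove them---is exactly what the paper does.
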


It is a standard fact that the operators $d + \dsm$ and $\lapm$ are \emph{elliptic}. In fact they are also `uniformly elliptic' in the sense that near each $x_i$, they approach the elliptic operators $d + \dsci$ and $\lapci$ on $C_i$, respectively. This property allows us to prove an \emph{elliptic regularity} statement for these operators, which is given by the following theorem.

\begin{thm} \label{ellipticregthm}
Suppose that $\omega$ and $\upsilon$ are both locally integrable sections of $\Lambda^* (T^* M')$, and that $\omega$ is a weak solution of the equation $(d + \dsm)(\omega) = \upsilon$. If $\omega \in 
L^p_{0, \bl} (\Lambda^* (T^* M'))$ and $\upsilon \in L^p_{l, \bl - 1} (\Lambda^* (T^* M'))$, then $\omega \in L^p_{l + 1, \bl} (\Lambda^* (T^* M'))$, and $\omega$ is a strong solution of $(d + \dsm)(\omega) = \upsilon$. Furthermore, we have
\begin{equation} \label{ellipticregeq}
{||\omega||}_{L^p_{l+1,\bl}} \, \leq \, C \left( {||(d + \dsm)(\omega)||}_{L^p_{l,\bl -1}} + {||\omega||}_{L^p_{0,\bl}} \right)
\end{equation}
for some constant $C > 0$ independent of $\omega$. That is, $\omega$ has at least one more derivative's worth of regularity than $\upsilon = (d + \dsm)(\omega)$.

Similarly suppose $\omega$ and $\upsilon$ are locally integrable sections of $\Lambda^k (T^* M')$, and $\omega$ is a weak solution of the equation $\lapm \omega = \upsilon$. If $\omega \in 
L^p_{0, \bl} (\Lambda^k (T^* M'))$ and $\upsilon \in L^p_{l, \bl - 2} (\Lambda^k (T^* M'))$, then $\omega \in L^p_{l + 2, \bl} (\Lambda^k (T^* M'))$, and $\omega$ is a strong solution of $\lapm \omega = \upsilon$. Furthermore, we have
\begin{equation} \label{ellipticregeq2}
{||\omega||}_{L^p_{l+2,\bl}} \, \leq \, C \left( {||\lapm \omega||}_{L^p_{l,\bl -2}} + {||\omega||}_{L^p_{0,\bl}} \right)
\end{equation}
for some $C > 0$ independent of $\omega$.
\end{thm}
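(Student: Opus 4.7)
\textbf{Proof proposal for Theorem~\ref{ellipticregthm}.} The plan is to combine standard interior elliptic regularity on subsets of $M'$ bounded away from the singular points with a dyadic rescaling argument on each conical end, exploiting the approximate dilation-invariance of $d+\dsm$ and $\lapm$ near the cones. I focus on the $d+\dsm$ case; the $\lapm$ case is identical after replacing $l+1$ by $l+2$ and $\bl-1$ by $\bl-2$.

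First, on any open subdomain $\Omega\subset M'$ bounded away from $\{x_1,\dots,x_n\}$, the operator $d+\dsm$ is uniformly elliptic with smooth coefficients, so standard interior elliptic regularity yields the usual local estimate $\|\omega\|_{W^{l+1,p}(\Omega')} \leq C(\|\upsilon\|_{W^{l,p}(\Omega)} + \|\omega\|_{L^p(\Omega)})$ for $\Omega'\Subset\Omega$ and shows that weak solutions are strong solutions. This handles both the weak-to-strong implication and the contribution from any fixed compact piece of $M'$.

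To handle the cone ends, near each singularity $x_i$ decompose $f_i((0,\e)\times\Sigma_i)$ into dyadic annuli $A_{i,k}=f_i([2^{-k-1}\e,2^{-k}\e]\times\Sigma_i)$, together with slightly thickened annuli $A_{i,k}^\sharp$ of bounded overlap. Pull back by the dilation $\mathbf{t}_k:(r,\sigma)\mapsto (2^{-k}r,\sigma)$ to identify each $A_{i,k}$ with a fixed reference annulus $A_{i,0}$. By~\eqref{scalemetriceq} and the asymptotic convergence~\eqref{CSdefneq2}, the rescaled metric $2^{2k}\mathbf{t}_k^*f_i^*(\gm)$ converges in $C^l$ to $\gci|_{A_{i,0}}$ with error of size $O(2^{-k\mu_i})$, so the rescaled operators $(d+\dsm)_k$ form a family of uniformly elliptic operators on $A_{i,0}^\sharp$ with uniform control on their coefficients. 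Applying the compact-piece estimate on $A_{i,0}^\sharp$ gives a uniform (in $k$) Sobolev bound for $\tilde\omega_k=\mathbf{t}_k^*\omega$ on $A_{i,0}$.

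The crucial point is that the weighted norms in~\eqref{CSSobolevdefneq} are calibrated so that pulling back by $\mathbf{t}_k$ (and absorbing an appropriate power of $2^{-k}$ into $\tilde\omega_k$) identifies $\|\omega\|_{L^p_{l,\bl}(A_{i,k})}$ with the unweighted Sobolev norm of the suitably rescaled $\tilde\omega_k$ on $A_{i,0}$, uniformly in $k$: near $x_i$ we have $\varrho=r$ and $\volm\sim r^6\,dr\wedge\vols$, so the measure $\varrho^{-7}\volm$ is dilation-invariant, while the weight $\varrho^{-\bl+j}$ exactly cancels the scaling of $\nabm^j\omega$. Translating the uniform estimate back to $A_{i,k}$, raising to the $p$-th power, summing over $k$ using the bounded overlap of the $A_{i,k}^\sharp$, and combining with the compact estimate yields~\eqref{ellipticregeq}. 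The lower-order error $(d+\dsm)_k-(d+\dsci)$ is of size $O(2^{-k\mu_i})$ by~\eqref{CSdefneq2} and can be absorbed into the $L^p_{0,\bl}$ term on the right-hand side for $k$ large, without affecting the constant.

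The main obstacle is the bookkeeping in the third paragraph: one must verify that the powers of $2^{-k}$ picked up by the pullback of $\nabm^j\omega$, the Jacobian in $\volm$, and the weight $\varrho^{-\bl+j}$ combine to produce a genuinely scale-invariant identification, or equivalently, that after passing to the conformally rescaled cylindrical coordinate $s=\log r$ on each end, the weighted estimate becomes a uniform cylindrical Sobolev estimate on $\R\times\Sigma_i$. Everything else is a routine application of the interior regularity theorem together with a dyadic partition of unity.
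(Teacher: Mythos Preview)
The paper does not actually prove Theorem~\ref{ellipticregthm}: Section~\ref{analsec} is explicitly a \emph{summary} of results from the Lockhart--McOwen theory, and this theorem is simply stated as one of the facts quoted from~\cite{LM,Lock,M}, with no argument given. So there is no ``paper's own proof'' to compare against.

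That said, your dyadic rescaling sketch is precisely the standard way such weighted elliptic estimates are established, and is essentially what underlies the references the paper cites. The key structural points you identify are all correct: interior regularity on compact pieces, dyadic decomposition of each conical end into annuli $[2^{-k-1}\e,2^{-k}\e]\times\Sigma_i$, rescaling to a fixed reference annulus where the pulled-back operators converge uniformly (with error $O(2^{-k\mu_i})$ from~\eqref{CSdefneq2}), and the observation that the weighted norm~\eqref{CSSobolevdefneq} with its measure $\varrho^{-7}\volm$ and weight $\varrho^{-\bl+j}$ is designed exactly so that this rescaling is an isometry onto unweighted Sobolev norms on the reference annulus. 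Your remark that this is equivalent to passing to the cylindrical coordinate $s=\log r$ is also the viewpoint taken in~\cite{LM}, where the analysis is carried out on asymptotically translation-invariant operators on cylinders. The only thing your sketch leaves implicit is the precise power of $2^{-k}$ needed to normalise $\tilde\omega_k$ (namely $2^{-k\lambda_i}$), but you flag this bookkeeping as the main thing to check, which is fair.
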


We also need the following very important result.
\begin{thm} \label{kernelthm}
The \emph{kernel} of $(d + \dsm)^p_{l + 1, \bl}$ is independent of $p > 1$ and independent of $l$. Hence we can denote it unambigiously as $\ker(d + \dsm)_{\bl}$. This kernel is also invariant as we change the rates $\bl$, as long as we do not hit any critical rates. That is, if $\bl = (\lambda_1, \ldots, \lambda_n)$ and $\bl' = (\lambda'_1, \ldots, \lambda'_n)$, with the interval $[\lambda_i, \lambda'_i]$ contained in $\R \backslash \mathcal D_{d+\dsci}$ for each $i$, then
\begin{equation*}
\ker ( d + \dsm)_{\bl'}  \, = \, \ker ( d + \dsm)_{\bl} .
\end{equation*}
Similarly the kernel of $(\lapm)^p_{l + 2, \bl}$ is independent of $p > 1$ and independent of $l$. Hence we can denote it unambigiously as $\ker(\lapm)_{\bl}$. This kernel is also invariant as we change the rates $\bl$, as long as we do not hit any critical rates. That is, if $\bl = (\lambda_1, \ldots, \lambda_n)$ and $\bl' = (\lambda'_1, \ldots, \lambda'_n)$, with the interval $[\lambda_i, \lambda'_i]$ contained in $\R \backslash \mathcal D_{\lapci}$ for each $i$, then
\begin{equation*}
\ker ( \lapm)_{\bl'}  \, = \, \ker ( \lapm)_{\bl} .
\end{equation*}
\end{thm}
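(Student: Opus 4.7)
The plan is to deduce all three invariance claims from two ingredients: elliptic regularity (Theorem~\ref{ellipticregthm}) and the Kondrat'ev-style asymptotic expansion near each singularity for solutions of elliptic equations on asymptotically conical ends. I would treat the case of $d + \dsm$ in detail; the argument for $\lapm$ is identical with $\mathcal D_{d + \dsci}$ replaced by $\mathcal D_{\lapci}$ and estimate~\eqref{ellipticregeq2} used in place of~\eqref{ellipticregeq}.

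Independence of $l$ is immediate from elliptic regularity. If $\omega \in \ker(d + \dsm)^p_{l+1, \bl}$, then $(d + \dsm)\omega = 0$ trivially lies in $L^p_{m, \bl - 1}$ for every $m \geq 0$, so iterating~\eqref{ellipticregeq} gives $\omega \in L^p_{m+1, \bl}$ for every $m$; in particular all kernel elements are smooth. Independence of $p$ will then follow from a standard dyadic rescaling argument: decompose each end into annuli $A_N^i = \{r \in [2^{-N-1}, 2^{-N}]\} \times \Sigma_i$, rescale each by $r \mapsto 2^N r$ to a fixed model annulus, apply standard interior elliptic $L^{p'}$-regularity on the model (the rescaled operator converges to $d + \dsci$ as $N \to \infty$ uniformly, by~\eqref{CSdefneq2}), and transfer the bound back using the scale-invariance of the weighted $L^p_{l+1, \bl}$ norms. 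Summation then yields $\omega \in L^{p'}_{l+1, \bl}$ for every $p' > 1$.

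The heart of the argument is the rate invariance. Suppose $\bl \leq \bl'$ with each $[\lambda_i, \lambda_i']$ disjoint from $\mathcal D_{d + \dsci}$. Since larger weights correspond to smaller spaces by Remark~\ref{CSSobolevdefnrmk}(ii), the inclusion $\ker(d + \dsm)_{\bl'} \subseteq \ker(d + \dsm)_\bl$ is automatic. For the reverse, take $\omega \in \ker(d + \dsm)_\bl$, cut it off to the neighborhood $f_i((0, \e) \times \Sigma_i)$ of each singularity using a smooth bump function, and transport to the cone $C_i$ via $f_i$. The resulting form $\tilde \omega_i$ satisfies $(d + \dsci)\tilde \omega_i = \rho_i$ on $C_i$, where $\rho_i$ combines the cut-off error (compactly supported away from $r = 0$) with the operator discrepancy $\left( d + \dsci - f_i^*(d + \dsm)(f_i^{-1})^* \right) \tilde \omega_i$, which by~\eqref{CSdefneq2} decays $\mu_i$ orders faster than $\tilde\omega_i$. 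A Mellin transform in $r$ on the cone then yields, by the standard contour-shift argument, an asymptotic expansion near $r = 0$ of the form
\begin{equation*}
\tilde \omega_i \, = \, \sum_{\lambda \in \mathcal D_{d + \dsci} \cap (\lambda_i, \lambda_i']} \sum_{k \geq 0} r^{\lambda} (\log r)^k \, \omega_{\lambda, k} \, + \, \tilde \omega_i^{\text{err}},
\end{equation*}
with $\tilde \omega_i^{\text{err}} \in L^p_{l+1, \bl'}$ near $r = 0$ and each $\omega_{\lambda, k}$ a smooth section on $\Sigma_i$. By hypothesis the indexing set is empty, so $\tilde \omega_i$, and hence $\omega$, lies in $L^p_{l+1, \bl'}$ near each singularity; away from the singularities the two weighted spaces agree, so $\omega \in L^p_{l+1, \bl'}$ globally.

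The main obstacle throughout is justifying the asymptotic expansion above. One writes the Mellin transform $\widehat{\tilde\omega_i}(s) = \int_0^\infty r^{-s}\tilde\omega_i(r,\cdot)\, dr/r$ as a meromorphic family of sections of $\Lambda^*(T^*\Sigma_i)$ whose poles are precisely the elements of $\mathcal D_{d + \dsci}$, with residues encoding the asymptotic coefficients; the inverse transform taken on the vertical line $\operatorname{Re}(s) = \lambda_i$ can be shifted to $\operatorname{Re}(s) = \lambda_i'$ at the cost of the sum of residues in the intervening strip, plus an error controlled by the $L^p$ norm of $\rho_i$ at the new rate. This contour-shift analysis, together with the identification of $\mathcal D_{d + \dsci}$ via Fourier decomposition on $\Sigma_i$, is the technical heart of the Lockhart--McOwen theory and may be quoted directly from~\cite{LM} or~\cite{Lock}; once it is in hand, all the bookkeeping above is routine.
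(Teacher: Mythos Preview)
Your proposal is correct and follows the standard Lockhart--McOwen route. The paper does not actually prove Theorem~\ref{kernelthm}; it is stated as part of the summary of results quoted from~\cite{LM} and~\cite{Lock}, with Remark~\ref{sobolevrmk} supplying only a brief gloss: independence of $l$ via elliptic regularity and the weighted Sobolev embedding, and independence of $p$ deferred to~\cite[\S 7, \S 8]{LM}. The rate-invariance part is not argued separately in the paper at all, though the asymptotic expansion you invoke is precisely the content of Proposition~\ref{asymptoticexpansionprop}, which the paper also cites to~\cite[\S 5]{LM}. So your sketch is more detailed than what the paper offers, but it is the same underlying argument, and indeed you yourself note at the end that the Mellin-transform contour shift may simply be quoted from the source.
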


\begin{rmk} \label{sobolevrmk}
There is a `Sobolev embedding theorem' in the context of manifolds with ICS, which (for large enough $p$ and $l$) embeds the Sobolev space $L^p_{l, \bl}$ into an appropriate \emph{H\"older space} $C^{k, \alpha}_{\bl'}$ having $k$ continuous derivatives, where $k$, $\alpha$, and $\bl'$ depend on $p$, $l$, and $\bl$. It follows from this theorem and the elliptic regularity of Theorem~\ref{ellipticregthm} that elements in the kernel of $d + \dsm$ or $\lapm$ are smooth, and the independence of the kernels on $l$ follows from this. In particular, it now follows that
\begin{equation}
\begin{aligned} \label{sobolevordereq}
& \omega \in \ker ( d + \dsm)_{\bl} \, \, \Longrightarrow \, \, | f_i^*(\omega) |_{f_i^*(\gm)} \, \leq \, C r^{\lambda_i} \quad \text{ on } \left( 0, \frac{1}{2} \e \right) \times \Sigma_i, \\
& \omega \in \ker ( \lapm)_{\bl} \, \, \Longrightarrow \, \, | f_i^*(\omega) |_{f_i^*(\gm)} \, \leq \, C r^{\lambda_i} \quad \text{ on } \left( 0, \frac{1}{2} \e \right) \times \Sigma_i.
\end{aligned}
\end{equation}
The independence of the kernels on $p > 1$ is more complicated, and can be found in Lockhart-McOwen~\cite[\S 7, \S 8]{LM}. We will not need the embedding theorem explicitly in this paper.
\end{rmk}

\begin{lemma} \label{IBPlemma}
Let $p > 1$. Suppose that $\omega \in L^p_{l+2, \bl} (\Lambda^k (T^* M'))$ and that $\lapm \omega = 0$. If $\lambda_i > -\frac{5}{2}$ for all $i = 1, \ldots, n$, then $\dsm \omega = 0$ and $d \omega = 0$.
\end{lemma}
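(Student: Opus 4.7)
The plan is a standard integration-by-parts argument, computing $\langle\lapm\omega,\omega\rangle_{L^2}$ against a family of cut-offs concentrated away from the singular points, and using the weight condition $\lambda_i > -\tfrac{5}{2}$ to kill the boundary error as the cut-off is removed. First I would upgrade $\omega$'s regularity: since $\lapm\omega=0$, Theorem~\ref{ellipticregthm} (applied iteratively) and Theorem~\ref{kernelthm} (independence of $l$) force $\omega$ to be smooth, and the Sobolev embedding recorded in Remark~\ref{sobolevrmk} gives the pointwise control $|f_i^*(\omega)|_{\gci} \leq C r^{\lambda_i}$ on $(0,\tfrac{1}{2}\e)\times\Sigma_i$. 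This pointwise bound is the only analytic input I need.

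Next I would fix a smooth $\chi:[0,\infty)\to[0,1]$ with $\chi\equiv 0$ on $[0,1]$ and $\chi\equiv 1$ on $[2,\infty)$, and set $\chi_\e(x) = \chi(\varrho(x)/\e)$ using the radius function of Definition~\ref{radiusfunctiondefn}. Then $\chi_\e^2\omega$ is compactly supported in $M'$, so the pairing $\langle\lapm\omega,\chi_\e^2\omega\rangle_{L^2}=0$ can be integrated by parts without boundary terms to give
\begin{equation*}
\|\chi_\e\, d\omega\|_{L^2}^2 + \|\chi_\e\, \dsm\omega\|_{L^2}^2 = -2\int_{M'}\chi_\e\bigl(\langle d\chi_\e\wedge\omega, d\omega\rangle + \langle d\chi_\e \,\lrcorner\,\omega,\dsm\omega\rangle\bigr)\volm.
\end{equation*}
Applying Cauchy--Schwarz to the right-hand side and absorbing the resulting $\|\chi_\e d\omega\|_{L^2}$ and $\|\chi_\e\dsm\omega\|_{L^2}$ factors into the left, one obtains $\|\chi_\e d\omega\|_{L^2} + \|\chi_\e\dsm\omega\|_{L^2} \leq C\|(d\chi_\e)\omega\|_{L^2}$. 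The key step--and the only place the hypothesis enters--is estimating this error. The support of $d\chi_\e$ lies in $\sqcup_i f_i((\e,2\e)\times\Sigma_i)$, where $|d\chi_\e|_{\gm}\leq C/\e$ and $|\omega|_{\gm}\leq C r^{\lambda_i}$; using $\volm\sim r^6\,dr\wedge\vols$ near each $x_i$ and \eqref{CSdefneq2} to pass between $\gm$ and $\gci$,
\begin{equation*}
\|(d\chi_\e)\omega\|_{L^2}^2 \leq C\sum_{i=1}^n\int_\e^{2\e}\e^{-2}\, r^{2\lambda_i}\, r^{6}\, dr \leq C\sum_{i=1}^n \e^{2\lambda_i+5}.
\end{equation*}

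This is exactly where $\lambda_i>-\tfrac{5}{2}$ is sharp: each exponent $2\lambda_i+5$ is strictly positive, so $\|(d\chi_\e)\omega\|_{L^2}\to 0$ as $\e\to 0$, and therefore $\|\chi_\e d\omega\|_{L^2},\,\|\chi_\e\dsm\omega\|_{L^2}\to 0$. Since $\chi_\e\to 1$ pointwise on $M'$, Fatou's lemma gives $\|d\omega\|_{L^2}^2 \leq \liminf_\e \|\chi_\e d\omega\|_{L^2}^2 = 0$, whence $d\omega\equiv 0$, and identically $\dsm\omega\equiv 0$. The only point requiring any care is the bookkeeping in the cut-off computation--the rest is mechanical--so the main ``obstacle'' is really just the verification that the critical exponent produced by the cone volume form and the cut-off gradient matches the stated bound on $\bl$.
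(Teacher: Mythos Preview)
Your proof is correct and is essentially the same integration-by-parts argument as the paper's, carried out more explicitly. The paper's version passes to $p=2$ via Theorem~\ref{kernelthm}, notes that $d\omega,\dsm\omega\in L^2_{l+1,\bl-1}\subseteq L^2$ since each $\lambda_i-1>-\tfrac{7}{2}$, and then writes $0=\langle\lapm\omega,\omega\rangle_{L^2}=\|\dsm\omega\|_{L^2}^2+\|d\omega\|_{L^2}^2$ directly; your cut-off computation and the estimate $\|(d\chi_\e)\omega\|_{L^2}^2\leq C\sum_i\e^{2\lambda_i+5}\to 0$ is precisely what justifies that last equality on the incomplete manifold $M'$.
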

\begin{proof}
By Theorem~\ref{kernelthm}, if $\omega \in \ker (\lapm)^p_{l+2, \bl}$, then $\omega \in \ker (\lapm)^2_{l+2, \bl}$. Now the elliptic regularity of Theorem~\ref{ellipticregthm} tells us that $\omega \in L^2_{l+2, \bl} (\Lambda^k (T^* M'))$, and hence $d \omega$ and $\dsm \omega$ are both elements of $L^2_{l+1, \bl - 1} (\Lambda^* (T^* M'))$. Since each $\lambda_i > - \frac{5}{2}$, parts ii) and v) of Remark~\ref{CSSobolevdefnrmk} show that $d\omega$ and $\dsm \omega$ are in $L^2_{l+1, -\boldsymbol{\frac{7}{2}}} (\Lambda^* (T^* M')) \subseteq L^2 (\Lambda^* (T^* M'))$. Therefore the following computation is valid:
\begin{equation*}
0 \, = \, {\langle \lapm \omega, \omega \rangle}_{L^2} =  {\langle d \dsm \omega, \omega \rangle}_{L^2} +  {\langle \dsm d \omega, \omega \rangle}_{L^2} = {| \dsm \omega|}^2_{L^2} + {| d \omega|}^2_{L^2},
\end{equation*}
and so we have $\dsm \omega = 0$ and $d\omega = 0$.
\end{proof}

\begin{cor} \label{IBPcor}
Suppose that $\omega_k \in L^2_{l + 2, \bl} (\Lambda^k (T^* M'))$, and that $\lapm \omega_k = 0$. Then we have:
\begin{align} \label{k07lapcceq}
& \text{For } k = 0, 7, \, \text{ if each } \, \lambda_i > -5, \, \text{ then } \dsm \omega = 0 \text{ and } d\omega = 0. &  \\ \label{k16lapcceq}
& \text{For } k = 1, 6, \, \text{ if each } \, \lambda_i > -4, \, \text{ then } \dsm \omega = 0 \text{ and } d\omega = 0. &  \\  \label{k25lapcceq}
& \text{For } k = 2, 5, \, \text{ if each } \, \lambda_i > -3, \, \text{ then } \dsm \omega = 0 \text{ and } d\omega = 0. &
\end{align}
\end{cor}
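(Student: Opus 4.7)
The plan is to combine Lemma~\ref{IBPlemma}, which performs the desired integration by parts under the uniform hypothesis $\lambda_i > -5/2$, with the rate-shifting machinery of Theorem~\ref{kernelthm} and the degree-dependent excluded ranges of Proposition~\ref{homolapexcludeprop}. The key observation is that the hypotheses~\eqref{k07lapcceq}--\eqref{k25lapcceq} place each $\lambda_i$ inside (or strictly above) an excluded range of $\mathcal{D}_{\lapci}$, so we can enlarge the weights until the hypothesis of Lemma~\ref{IBPlemma} is met \emph{without crossing any critical rate}.

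Concretely, fix the degree $k$, let $\omega \in L^2_{l+2,\bl}(\Lambda^k(T^*M'))$ with $\lapm \omega = 0$, and assume the relevant lower bound on each $\lambda_i$ from the statement. For each $i$ I would define a new weight $\lambda_i'$ by keeping $\lambda_i' = \lambda_i$ whenever $\lambda_i > -5/2$, and otherwise setting $\lambda_i' = -2$ in degrees $k \in \{0,1,6,7\}$ and $\lambda_i' = -9/4$ in degrees $k \in \{2,5\}$. A case-by-case check confirms that $\lambda_i' > -5/2$ in every case and that, whenever $\lambda_i' \neq \lambda_i$, the closed interval $[\lambda_i,\lambda_i']$ is entirely contained in the excluded range furnished by Proposition~\ref{homolapexcludeprop} for the degree in question, namely $(-5,0)$, $(-4,-1)$, or $(-3,-2)$ respectively. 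Hence $[\lambda_i,\lambda_i'] \cap \mathcal{D}_{\lapci} = \emptyset$ for every $i$.

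Applying Theorem~\ref{kernelthm} then yields $\ker(\lapm)_{\bl} = \ker(\lapm)_{\bl'}$, where $\bl' = (\lambda_1',\ldots,\lambda_n')$. In particular $\omega$ lies in $L^2_{l+2,\bl'}(\Lambda^k(T^*M'))$ and is harmonic there, with each $\lambda_i' > -5/2$. Lemma~\ref{IBPlemma} now applies directly and gives $d\omega = 0$ and $\dsm\omega = 0$, as required.

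There is no genuine analytic obstacle: the only point requiring care is the bookkeeping that matches the degree-specific lower bounds in~\eqref{k07lapcceq}--\eqref{k25lapcceq} against the corresponding excluded ranges in Proposition~\ref{homolapexcludeprop}, so that the chosen target weights $\lambda_i'$ simultaneously exceed $-5/2$ and leave the interval $[\lambda_i,\lambda_i']$ inside a chamber of $\R\setminus\mathcal{D}_{\lapci}$. Once this matching is in place, the result follows mechanically from the invariance of the kernel under rate changes.
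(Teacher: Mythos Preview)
Your proposal is correct and follows essentially the same approach as the paper: shift each weight upward through the degree-dependent excluded interval of Proposition~\ref{homolapexcludeprop}, invoke the kernel invariance of Theorem~\ref{kernelthm}, and then apply Lemma~\ref{IBPlemma}. The only cosmetic difference is that the paper moves every weight to the common target $-\tfrac{5}{2}+e$, whereas you pick degree-specific targets $-2$ or $-\tfrac{9}{4}$; both choices lie strictly above $-\tfrac{5}{2}$ and strictly inside the relevant excluded range, so the argument goes through identically.
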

\begin{proof}
By Proposition~\ref{homolapexcludeprop} we see that in all three cases, there are no critical rates until at least $\lambda_i = -2$. So using Theorem~\ref{kernelthm}, in all cases we can say that $\omega_k \in \ker (\lapm)_{- \boldsymbol{ \frac{5}{2}} + e}$ for some small $e > 0$. The claims now all follow from Lemma~\ref{IBPlemma}.
\end{proof}

The following proposition (in a general setting) originally appeared in Lockhart~\cite[Example 0.16]{Lock} and a version in the setting of manifolds with ICS is stated in Lotay~\cite[Theorem 6.5.2]{Lth}.
\begin{prop} \label{LMcohomologyprop}
Let $\mathcal H^k_{L^2}$ denote the subspace of $L^2 (\Lambda^k (T^* M'))$ consisting of closed and coclosed $k$-forms. Then for $0 \leq k \leq 3$, the map
\begin{align*}
[ \cdot] : & \mathcal H^k_{L^2} \to H^k(M', \R) \\ & \omega \mapsto [\omega]
\end{align*}
is an isomorphism. Here $[\omega]$ denotes the cohomology class of the closed form $\omega$.
\end{prop}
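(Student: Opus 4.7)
The plan is to prove injectivity and surjectivity separately, both via weighted Sobolev analysis near the conical singularities. Well-definedness is immediate since every $\omega \in \mathcal{H}^k_{L^2}$ is closed.

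For injectivity, suppose $\omega \in \mathcal{H}^k_{L^2}$ with $[\omega] = 0$, so $\omega = d\alpha$ globally on $M'$. First I will use elliptic regularity (Theorem~\ref{ellipticregthm}) to place $\omega \in L^2_{l, -\boldsymbol{7/2}}$ for all $l$. Then I will combine the weight-invariance of the kernel (Theorem~\ref{kernelthm}) with the excluded intervals of Corollary~\ref{homodiracexcludecor} (which, for each $k \leq 3$, contain $-7/2$ in their interior) to produce, via the standard Lockhart--McOwen asymptotic expansion at critical rates, the leading-order description
\begin{equation*}
f_i^* \omega \, = \, \beta_i + O(r^{-k + \delta'}), \qquad \delta' > 0,
\end{equation*}
near each singularity $x_i$, where $\beta_i$ is a closed-and-coclosed homogeneous $k$-form on $C_i$ of order $-k$. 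By Corollary~\ref{homocccor}, $\beta_i$ is simply a $\gs$-harmonic $k$-form on the compact link $\Sigma_i$, pulled back $r$-independently; uniqueness of harmonic representatives on $\Sigma_i$ then forces $\beta_i = 0$, because the restriction $[\beta_i] \in H^k(\Sigma_i, \R)$ equals $(f_i \circ \iota_i)^*[\omega] = 0$. Hence $|f_i^*\omega|_{\gci} = O(r^{-k+\delta'})$, so by Lemma~\ref{exactformslemma} the local primitive $\tilde \alpha_i$ satisfies $|\tilde \alpha_i|_{\gci} = O(r^{-k+1+\delta'})$, and after modifying the global $\alpha$ by exact corrections and absorbing the harmonic-model residue on each link, I may assume $|\alpha|_{\gm} = O(r^{-(k-1)})$ near each singularity. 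The final step is the standard cutoff integration by parts: with $\phi_R$ equal to $1$ where $\varrho \geq 2/R$ and $0$ where $\varrho \leq 1/R$,
\begin{equation*}
\int_{M'} \phi_R^2 \, |\omega|^2_{\gm} \, \volm \, = \, -2 \int_{M'} \phi_R \, d\phi_R \wedge \alpha \wedge \st_{\gm} \omega,
\end{equation*}
using $d \st_{\gm} \omega = 0$ and Stokes' theorem. Since $|d\phi_R| \sim R$ on a support of volume $\sim R^{-7}$ where $|\alpha|_{\gm} = O(R^{k-1})$, Cauchy--Schwarz yields $\|d\phi_R \cdot \alpha\|^2_{L^2} = O(R^{2k-7})$, which tends to zero as $R \to \infty$ precisely when $k \leq 3$; this forces $\omega = 0$.

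For surjectivity, given $B \in H^k(M', \R)$, I will first produce a smooth $L^2$ closed representative $\omega_B$ of $B$ and then apply a weighted Hodge decomposition. The construction of $\omega_B$ goes as follows: on each link $\Sigma_i$, the compact Hodge theorem gives a $\gs$-harmonic representative $\beta_i$ of $(f_i \circ \iota_i)^*B$, and its $r$-independent extension $\tilde\beta_i$ (Proposition~\ref{formsrepresentprop}) is closed and coclosed and lies in $L^2$ near $x_i$ precisely because $k \leq 3$. Starting from any smooth closed representative $\omega_0$, Lemma~\ref{exactformslemma} gives $f_i^*\omega_0 - \tilde\beta_i = d\tilde\alpha_i$ with sufficient decay; subtracting $d(\chi_i (f_i)_* \tilde\alpha_i)$ for cutoffs $\chi_i$ supported near the singularities yields $\omega_B$. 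Finally, since $-\boldsymbol{7/2}$ lies in every excluded interval of Corollary~\ref{homodiracexcludecor}, it is non-critical for $d + d^*$ in every degree, and the weighted Hodge decomposition at this weight writes $\omega_B = \omega_h + d\gamma + d^*\eta$ with $\omega_h \in \mathcal{H}^k_{L^2}$. Closedness of $\omega_B$ yields $d d^*\eta = 0$, and integration by parts in $L^2$ forces $d^*\eta = 0$, so $[\omega_h] = [\omega_B] = B$.

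The main obstacle is the sharp degree bound $k \leq 3$, which enters identically in both directions: in injectivity via the cutoff estimate $\|d\phi_R \cdot \alpha\|^2_{L^2} \sim R^{2k-7}$, which vanishes in the limit iff $k < 7/2$; and in surjectivity via the $L^2$-integrability of the homogeneous harmonic representative $\tilde\beta_i$ of order $-k$, which again requires $k < 7/2$. A subsidiary difficulty in degree $k = 3$ is that Proposition~\ref{homolapexcludeprop} supplies no excluded range for harmonic $3$-forms; my approach circumvents this by using Corollary~\ref{homodiracexcludecor} on the closed-and-coclosed condition itself, which does provide the excluded interval $(-4, -3)$ needed to carry out the weight-shift and asymptotic expansion above.
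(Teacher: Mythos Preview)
The paper does not actually prove this proposition; it only cites Lockhart and Lotay. Evaluating your argument on its own merits:

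Your injectivity argument is essentially correct. One subtlety you skate over is that Theorem~\ref{kernelthm} and Proposition~\ref{asymptoticexpansionprop} are stated for $d + \dsm$ on \emph{mixed} forms, whose critical set $\mathcal{D}_{d+\dsci}$ may contain rates supporting genuinely mixed-degree solutions not covered by Corollary~\ref{homodiracexcludecor}. But since you are expanding a \emph{pure} $k$-form, the expansion terms $\upsilon_j$ in Proposition~\ref{asymptoticexpansionprop} must themselves be pure $k$-forms by degree considerations, hence closed and coclosed on the cone, so Corollary~\ref{homodiracexcludecor} does apply and the weight-shift to order $-k+\delta'$ goes through. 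The cutoff integration-by-parts then works as you describe.

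The surjectivity argument, however, has a genuine gap at the step where you invoke ``the weighted Hodge decomposition at this weight'' to write $\omega_B = \omega_h + d\gamma + d^*\eta$. Your justification --- that $-7/2$ lies in every excluded interval of Corollary~\ref{homodiracexcludecor} and is therefore non-critical for $d + d^*$ --- does not follow: that corollary only rules out \emph{pure} closed-and-coclosed homogeneous forms, whereas $\mathcal{D}_{d+\dsc}$ also contains rates supporting mixed-degree solutions. Concretely, for an even mixed form of order $-7/2$ in $\ker(d+\dsc)$, Proposition~\ref{homolapexcludeprop} kills $\omega_0$ and $\omega_6$ but not $\omega_2$ or $\omega_4$ (note $-7/2 \notin (-3,-2)$), and working through Lemma~\ref{homofirstlemma} one finds a consistent non-zero solution $(\alpha_2,\beta_2,\alpha_4,\beta_4) = (0,\beta_2,-2\ds\beta_2,0)$ whenever $\beta_2$ is a coclosed $2$-form on $\Sigma$ with $\laps\beta_2 = \tfrac{3}{4}\beta_2$. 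So $-7/2$ can perfectly well be critical for $d + \dsc$, depending on the link. Nor can you retreat to $\lapm$ on pure $k$-forms, since for $k=2,3$ the rate $-7/2$ lies outside the excluded ranges of Proposition~\ref{homolapexcludeprop}. Without Fredholmness at the $L^2$ weight, the decomposition you want is not supplied by the paper's results; and the always-valid Kodaira decomposition on a complete manifold only gives $\omega_B - \omega_h \in \overline{d(C^\infty_{\mathrm{cs}})}$, not that $\omega_B - \omega_h = d\gamma$ for an honest $\gamma$. You need an additional argument here --- for instance a direct proof that the range of $d$ is closed in $L^2$ in these degrees, or a duality/index argument pairing $\mathcal{H}^k_{L^2} \cong \mathcal{H}^{7-k}_{L^2}$ with Poincar\'e--Lefschetz duality, as in the cited references.

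A minor separate point: in constructing $\omega_B$ you invoke Lemma~\ref{exactformslemma} to produce $\tilde\alpha_i$ ``with sufficient decay'', but a generic smooth representative $\omega_0$ has no a priori growth control near $x_i$, so the decay hypothesis of that lemma is not met. This is not fatal --- mere exactness of $f_i^*\omega_0 - \tilde\beta_i$ follows from cohomology, and after cutting off you still get $f_i^*\omega_B = \tilde\beta_i$ exactly near $x_i$ --- but Lemma~\ref{exactformslemma} is not the correct citation.
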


The rest of the results in this section also all have analogues for $\lapm$, but we will only require them for the operator $d + \dsm$. First we will need to consider the adjoint of the map
\begin{equation*}
(d + \dsm)^p_{l + 1, \bl} \, : \, L^p_{l + 1, \bl} (\Lambda^* (T^* M')) \, \to \, L^p_{l, \bl - 1} (\Lambda^* (T^* M')).
\end{equation*} 
By Proposition~\ref{dualspaceprop}, the adjoint is a map
\begin{equation} \label{adjointmapeq}
(d + \dsm)^q_{m+1, -\bl - 6} \, \, : \, \, L^q_{m+1, -\bl - 6} ( \Lambda^* (T^* M' )) \to L^q_{m , -\bl - 7} ( \Lambda^* (T^* M' )),
\end{equation}
where $\frac{1}{p} + \frac{1}{q} = 1$, and $l, m \geq 0$. Here we are being slightly sloppy, in the following sense. Technically, we really have ${(L^p_{l, \bl})}^* = L^q_{-l, - \bl - 7}$, but we would like to avoid having to consider the meaning of $L^p_{l, \bl}$ for $l < 0$. However, we will only ever be interested in the \emph{kernel} of the adjoint on spaces of the form $L^q_{m+1, \bl}$, which by Theorem~\ref{kernelthm} are independent of $m$, so it is safe to assume that $m \geq 0$.

The next result is the version of the `Fredholm Alternative' for manifolds with ICS.
\begin{thm} \label{fredholmalternativethm}
Suppose that $\lambda_i$ is not in $\mathcal D_{d + \dsci}$ for all $i = 1, \ldots, n$, so that by Theorem~\ref{fredholmthm}, the map $(d + \dsm)^p_{l + 1, \bl} \, : \, L^p_{l + 1, \bl} (\Lambda^* (T^* M')) \, \to \, L^p_{l, \bl - 1} (\Lambda^* (T^* M'))$ is Fredholm, and also uniformly elliptic. Then the \emph{image} $\im (d + \dsm)^p_{l + 1, \bl}$ of this map is given by
\begin{equation*}
\im (d + \dsm)^p_{l + 1, \bl} \, = \, \left\{ \upsilon \in L^p_{l, \bl-1} (\Lambda^* (T^* M')) \, ; \, \langle \upsilon, \omega \rangle_{L^2} = 0 \, \text{ for all } \omega \in \ker (d + \dsm)_{-\bl - 6} \right\}.
\end{equation*}
That is, the equation $(d + \dsm)(\omega) = \upsilon$ is solvable for $\omega \in  L^p_{l + 1, \bl} (\Lambda^* (T^* M'))$ if and only if the right hand side $\upsilon$ is orthogonal (with respect to the $L^2$ inner product) to every element of $\ker (d + \dsm)_{-\bl - 6}$, the kernel of the adjoint map.
\end{thm}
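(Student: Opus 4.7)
The plan is to deduce this from the abstract Banach space Fredholm Alternative applied to $T = (d + \dsm)^p_{l + 1, \bl} : X = L^p_{l + 1, \bl} \to Y = L^p_{l, \bl - 1}$, then translate the abstract statement into concrete $L^2$ language using Proposition~\ref{dualspaceprop} and Theorem~\ref{kernelthm}. Since $T$ is Fredholm by hypothesis, it has closed image, and the abstract theorem gives
\begin{equation*}
\im(T) \, = \, \{ \upsilon \in Y \, ; \, \langle \upsilon, \beta \rangle_{Y, Y^*} = 0 \text{ for all } \beta \in \ker T^* \},
\end{equation*}
where $T^* : Y^* \to X^*$ is the Banach space adjoint. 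So the entire content of the statement is the concrete identification of $\ker T^*$ with $\ker(d + \dsm)_{-\bl-6}$ and of the $(Y, Y^*)$ pairing on this kernel with the $L^2$ inner product.

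First I would use Proposition~\ref{dualspaceprop} with weight $\bl - 1$ to obtain a continuous injection $L^q_{0, -\bl-6}(\Lambda^*(T^*M')) \hookrightarrow Y^*$ realized by the $L^2$ pairing; in particular the pairing of elements of this subspace of $Y^*$ with elements of $Y$ is given by the $L^2$ inner product. Next, the identity $\langle (d + \dsm)\alpha, \omega \rangle_{L^2} = \langle \alpha, (d + \dsm)\omega \rangle_{L^2}$ for $\alpha \in L^p_{l+1, \bl}$ and $\omega \in L^q_{m+1, -\bl-6}$ follows from $d^* = \pm \st d \st$ and integration by parts for smooth compactly supported forms, extended to the weighted setting by the density of $C^\infty_{\text{cs}}$ forms from Definition~\ref{CSSobolevdefn} together with the continuity of the $L^2$ pairing guaranteed by Proposition~\ref{dualspaceprop}. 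This identifies the restriction of $T^*$ to $L^q_{m+1, -\bl-6}$ with $(d + \dsm)^q_{m+1, -\bl-6}$. For the converse inclusion, any $\beta \in \ker T^*$ is a weak solution of $(d + \dsm)\beta = 0$, and iterating an elliptic regularity argument in the spirit of Theorem~\ref{ellipticregthm} places $\beta$ in $L^q_{m+1, -\bl-6}$ for every $m \geq 0$. Combining this with Theorem~\ref{kernelthm} yields $\ker T^* = \ker(d + \dsm)_{-\bl-6}$, independent of $q > 1$ and $m \geq 0$.

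The main obstacle is the regularity step: promoting an \emph{a priori} distributional element of $Y^*$ lying in $\ker T^*$ to a classical form in $L^q_{m+1, -\bl-6}$. This requires a bootstrapping argument exploiting the uniform ellipticity of $d + \dsm$ near each conical singularity together with the duality of weighted spaces from Proposition~\ref{dualspaceprop}, and is the step that uses the full strength of Lockhart--McOwen theory rather than just abstract Banach space duality. The integration by parts limit, by contrast, is routine once the density statement is in hand. With these two points settled, substituting $\ker T^* = \ker(d + \dsm)_{-\bl-6}$ into the abstract Fredholm Alternative immediately gives the claimed characterization of $\im(T)$.
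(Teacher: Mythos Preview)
The paper does not actually prove this theorem. Section~\ref{analsec} is explicitly a summary of results from Lockhart--McOwen theory, and Theorem~\ref{fredholmalternativethm}, like Theorems~\ref{fredholmthm}, \ref{ellipticregthm}, and~\ref{kernelthm} around it, is stated without proof and attributed to the references~\cite{LM, Lock}. So there is no paper proof to compare against; you have supplied an argument where the paper simply cites one.

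Your sketch is the standard route and is essentially correct. The abstract Fredholm Alternative gives $\im(T) = (\ker T^*)^\perp$ in the $(Y, Y^*)$ pairing, and the work is to identify $\ker T^*$ concretely as $\ker(d+\dsm)_{-\bl-6}$ with the pairing realized by the $L^2$ inner product. You have correctly isolated the one nontrivial step: an element of $Y^* \cong L^q_{0,-\bl-6}$ annihilated by $T^*$ is only a distributional solution of $(d+\dsm)\beta = 0$, and one must invoke weighted elliptic regularity (the content of Theorem~\ref{ellipticregthm}, applied with the roles of $p,q$ and the weights dualized) to bootstrap it into $L^q_{m+1,-\bl-6}$ for all $m$. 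This is exactly where the uniform ellipticity near the ends and the Lockhart--McOwen machinery enter. The integration-by-parts identity and the density argument are routine, as you say. One small point worth making explicit: the formal self-adjointness of $d + d^*$ with respect to the $L^2$ pairing is what makes the adjoint operator again $d + d^*$ rather than some other first-order operator, and this is implicit in your use of $d^* = \pm \st d \st$.
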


We will need to understand how, near each singular point $x_i$, we can asymptotically expand elements of the kernel of $(d + \dsm)^p_{l + 1, \bl}$ in terms of certain special solutions of $(d + \dsci)(\omega) = 0$.
\begin{defn} \label{homowithlogsdefn}
Let $C$ be a cone. For $\lambda \in \R$, we define the space $\mathcal K (\lambda)_{d + \dsc}$ to be
\begin{equation} \label{homowithlogseq}
\mathcal K(\lambda)_{d + \dsc} \, : = \, \left\{ \begin{array}{l} \omega = \sum_{k=0}^7 \sum_{j=0}^m (\log(r))^j \omega_{k,j} ; \, \text{ such that } (d + \dsc)(\omega) = 0, \\ \text{ and each } \omega_{k,j} \text{ is a homogeneous $k$-form of order $\lambda$} \end{array} \right\}.
\end{equation}
That is, $\mathcal K(\lambda)_{d + \dsc}$ is the set mixed-degree forms on $C$ in the kernel of $d + \dsc$, which are polynomials in $\log(r)$ with coefficients being homogeneous forms of order $\lambda$.
\end{defn}

We will use $\ker_i  (d + \dsm)_{\bl}$ to denote the restriction of the kernel of $(d + \dsm)^p_{l + 1, \bl}$ to  the subset $f_i ( (0, \frac{1}{2}\e) \times \Sigma_i)$ of the $i^{\text{th}}$ end of $M'$ (recall that by Theorem~\ref{kernelthm} this kernel is independent of $p > 1$ and $l \geq 0$.) The following crucial result appears in Lockhart--McOwen~\cite[\S 5]{LM}.
\begin{prop} \label{asymptoticexpansionprop}
Let $\boldsymbol{\beta}, \boldsymbol{\beta}'$ be two $n$-tuples of rates. Fix $i$ and suppose that $\beta_i, \beta_i' \in \R \backslash \mathcal D_{d + \dsci}$, and that $\beta_i < \beta_i'$. Let $\alpha_1 < \alpha_2 < \ldots \alpha_N$ be all the critical rates in $\mathcal D_{d + \dsci}$ between $\beta_i$ and $\beta_i'$. If $\omega \in \ker_i  (d + \dsm)_{\boldsymbol{\beta}}$, then there exist $\upsilon_j \in \mathcal K(\alpha_j)_{d + \dsci}$, for $j = 1, \ldots, N$, and an $\omega'$ defined near $x_i$ with $|\omega'|_{\gci} = O(r^{\beta_i + \mu_i})$ such that
\begin{equation*}
\omega - \sum_{j=1}^N (f_i^{-1})^* (\upsilon_j) - \omega' \, \in \ker_i  (d + \dsm)_{\boldsymbol{\beta}'}.
\end{equation*}
That is, when restricted to the subset $f_i ( (0, \frac{1}{2}\e) \times \Sigma_i)$ of the $i^{\text{th}}$ end of $M'$, an element in the kernel of $d + \dsm$ with non-critical rate $\beta_i$ admits an expansion in terms of elements in $\mathcal K(\alpha_j)_{d + \dsci}$ for each critical rate $\alpha_j$ between $\beta_i$ and $\beta_i'$, plus a remainder term which is of order $O(r^{\beta_i + \mu_i})$ and another term which is in the kernel of $d + \dsm$ with non-critical rate $\beta_i' > \beta_i$.
\end{prop}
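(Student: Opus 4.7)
The plan is to work entirely on the $i^{\text{th}}$ end $f_i((0, \tfrac{1}{2}\e)\times\Sigma_i)$ and reduce Proposition~\ref{asymptoticexpansionprop} to a statement about the cone operator $d+\dsci$, for which an explicit Mellin analysis is available. Given $\omega\in\ker_i(d+\dsm)_{\boldsymbol{\beta}}$, set $\tilde\omega=f_i^*\omega$. Because $d$ is metric-independent and, by Definition~\ref{CSdefn} together with~\eqref{CSdefneq2}, the pulled-back metric satisfies $|\nabci^j(f_i^*\gm-\gci)|_{\gci}=O(r^{\mu_i-j})$, the difference $E:=(f_i^*\dsm)-\dsci$ is a first-order operator whose coefficients decay like $r^{\mu_i}$. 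Thus $(d+\dsci)\tilde\omega=-E\tilde\omega$, and by Theorem~\ref{ellipticregthm} together with the Sobolev embedding of Remark~\ref{sobolevrmk} we may assume $|\tilde\omega|_{\gci}=O(r^{\beta_i})$, giving $|(d+\dsci)\tilde\omega|_{\gci}=O(r^{\beta_i+\mu_i-1})$. Multiplying by a cutoff equal to $1$ near $r=0$ lets us work on the full cone $C_i$; the cutoff error is compactly supported in $r$ and hence lies in every weighted space, so it is harmless.

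Next I would analyze $d+\dsci$ on the cone via separation of variables and Mellin transform in $r$. Using the Hodge decomposition on $\Sigma_i$ and the explicit formulas of Lemma~\ref{homofirstlemma}, the Mellin transform $\hat f(\lambda,\sigma)=\int_0^\infty r^{-\lambda-1}f(r,\sigma)\,dr$ converts $d+\dsci$ into an analytic family $M(\lambda)$ of operators on $\Lambda^*(T^*\Sigma_i)$ depending algebraically on the Laplace-type data of the link. The key identification is that $\mathcal D_{d+\dsci}$ coincides with the discrete set of $\lambda$ where $M(\lambda)$ fails to be invertible, and that the Jordan chain structure of $M(\lambda)^{-1}$ at a pole $\alpha_j$ of order $m+1$ corresponds, under inverse Mellin transform, exactly to the polynomial-in-$\log r$ forms of degree $\le m$ that comprise $\mathcal K(\alpha_j)_{d+\dsci}$ (Definition~\ref{homowithlogsdefn}).

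With this machinery in hand, I would solve $(d+\dsci)\sigma=-E\tilde\omega$ by inverse Mellin transform with contour $\operatorname{Re}(\lambda)=\beta_i+\mu_i$, producing a particular solution $\omega'$ with $|\omega'|_{\gci}=O(r^{\beta_i+\mu_i})$. Then $\tilde\omega-\omega'$ is a $(d+\dsci)$-harmonic form of rate $\beta_i$, and to upgrade its rate to $\beta_i'$ one shifts the relevant Mellin contour from $\operatorname{Re}(\lambda)=\beta_i$ to $\operatorname{Re}(\lambda)=\beta_i'$. Each crossed critical rate $\alpha_j\in(\beta_i,\beta_i')$ contributes a residue $\upsilon_j\in\mathcal K(\alpha_j)_{d+\dsci}$ by Step~2, and the shifted contour gives a kernel element of rate $\beta_i'$. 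Pushing forward by $f_i$ and interpreting the resulting kernel element under $d+\dsm$ (using Theorem~\ref{kernelthm} to pass freely between $(d+\dsm)$- and $(d+\dsci)$-kernels once the $L^p$ weight lies in a common non-critical interval) yields the decomposition
\[
\omega - \sum_{j=1}^N (f_i^{-1})^*(\upsilon_j) - \omega' \;\in\;\ker_i(d+\dsm)_{\boldsymbol{\beta}'}
\]
claimed in the statement.

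The main obstacle is Step~2: rigorously identifying the pole set and Jordan structure of $M(\lambda)^{-1}$ with $\mathcal D_{d+\dsci}$ and $\mathcal K(\alpha_j)_{d+\dsci}$ respectively. The algebraic input is governed by the system~\eqref{diracclosedeq1}--\eqref{diracclosedeq2}, and one must show that higher-order poles occur precisely when the rank of $M(\lambda)$ drops with higher multiplicity, mirroring the phenomenon already treated by hand in Propositions~\ref{dirachomokernellogsprop} and~\ref{dirachomokernellogsprop2}. Once this cone-side analysis is established, Steps~1 and~3 amount to bookkeeping with weighted norms; the fact that the remainder term has rate $\beta_i+\mu_i$ rather than $\beta_i'$ reflects exactly the $O(r^{\mu_i})$ discrepancy between $\dsm$ and $\dsci$, which cannot be improved without iterating the argument (as is typically done in the Lockhart--McOwen framework cited in Section~\ref{analsec}).
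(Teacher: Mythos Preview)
The paper does not supply its own proof of Proposition~\ref{asymptoticexpansionprop}; it is quoted from Lockhart--McOwen~\cite[\S 5]{LM}. Your sketch via Mellin transform and contour shifting is precisely the Lockhart--McOwen method, so there is nothing to compare against in the paper itself --- you are reconstructing the cited argument, and the outline (reduce to the cone operator, identify $\mathcal D_{d+\dsci}$ with the pole set of the indicial family, read off $\mathcal K(\alpha_j)$ from the Jordan structure, shift the contour) is correct.

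One point deserves care. In your final step you invoke Theorem~\ref{kernelthm} ``to pass freely between $(d+\dsm)$- and $(d+\dsci)$-kernels''. That theorem says only that $\ker(d+\dsm)_{\bl}$ is unchanged when the rates move within a non-critical interval; it does not compare kernels of the two different operators. What your contour argument actually produces is a remainder that is $(d+\dsci)$-harmonic of rate $\beta_i'$, not $(d+\dsm)$-harmonic. To obtain the conclusion literally as stated (membership in $\ker_i(d+\dsm)_{\boldsymbol{\beta}'}$) one must absorb the discrepancy $(d+\dsm)-(d+\dsci)$ acting on the $\upsilon_j$ into the $\omega'$ term and iterate, which is exactly the point you flag in your last paragraph but do not carry out. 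In practice this does not matter for the paper: the only use made of Proposition~\ref{asymptoticexpansionprop} is Corollary~\ref{asymptoticexpansioncor}, which extracts from it nothing more than the decay estimate $|f_i^*(\omega)-\upsilon_i|_{\gci}=O(r^{\lambda+\delta})$, and your construction already delivers that.
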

\begin{rmk} \label{asymptoticexpansionproprmk}
The $O(r^{\beta_i + \mu_i})$ term arises from comparing a solution to $(d + \dsm)(\omega) = 0$ to a solution of $(d + \dsci)(\omega) = 0$, using the relation~\eqref{CSdefneq2} between the metrics $\gci$ and $\gm$ near $x_i$.
\end{rmk}

We will use Proposition~\ref{asymptoticexpansionprop} to prove the following.
\begin{cor} \label{asymptoticexpansioncor}
Let $\bl = (\lambda, \ldots, \lambda)$ be a constant $n$-tuple. Choose $\delta > 0$ small enough so that in the closed interval $[\lambda - \delta, \lambda + \delta]$, at most only $\lambda$ itself is a critical rate for each end. That is, $[\lambda - \delta, \lambda + \delta] \cap \mathcal D_{d + \dsci} \subseteq \{\lambda\}$ for each $i$. This is possible since each $\mathcal D_i$ is a discrete subset of $\R$. Further assume that $\delta > 0$ is small enough so that $\delta < \frac{1}{2}\mu_i$ for each $i$, where $\mu_i$ are the rates of the singularities of the manifold $M$ with ICS. If $\omega \in \Lambda^* (T^* M')$ is in the kernel of $(d + \dsm)^p_{l+1, \lambda - \delta}$, then
\begin{equation} \label{asymptoticexpansioncoreq}
| f_i^* ( \omega ) -  \upsilon_i |_{\gci} \, = \,  O( r^{\lambda + \delta}) \quad \text{ on } \left(0, \frac{1}{2}\e \right) \times \Sigma_i, 
\end{equation}
for some $\upsilon_i$ in $\mathcal K(\lambda)_{d + \dsci}$, which may be zero (if $\lambda \notin \mathcal D_{d + \dsci}$.)
\end{cor}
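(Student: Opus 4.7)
The plan is to apply Proposition~\ref{asymptoticexpansionprop} one end at a time, on the interval of rates $[\lambda - \delta, \lambda + \delta]$. By the hypothesis $[\lambda - \delta, \lambda + \delta] \cap \mathcal{D}_{d + \dsci} \subseteq \{\lambda\}$, neither endpoint is a critical rate for $d + \dsci$, and the only candidate interior critical rate is $\lambda$ itself. Hence, setting $\boldsymbol{\beta} = (\lambda - \delta, \ldots, \lambda - \delta)$ and $\boldsymbol{\beta}' = (\lambda + \delta, \ldots, \lambda + \delta)$, the proposition produces, on the $i^{\text{th}}$ end, at most a single summand $\upsilon_i \in \mathcal{K}(\lambda)_{d + \dsci}$ (with $\upsilon_i = 0$ if $\lambda \notin \mathcal{D}_{d + \dsci}$), a remainder $\omega'$ near $x_i$ satisfying $|\omega'|_{\gci} = O(r^{\lambda - \delta + \mu_i})$, and a further residue
\begin{equation*}
\omega_i^{\flat} \, := \, \omega - (f_i^{-1})^*(\upsilon_i) - \omega' \, \in \, \ker_i (d + \dsm)_{\boldsymbol{\beta}'}.
\end{equation*}

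The first step is to absorb $\omega'$ into the desired bound. The assumption $\delta < \frac{1}{2} \mu_i$ gives $\lambda - \delta + \mu_i > \lambda + \delta$, so immediately $|\omega'|_{\gci} = O(r^{\lambda + \delta})$ on $(0, \frac{1}{2}\e) \times \Sigma_i$.

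The second step is to estimate $\omega_i^{\flat}$ pointwise. Since $\omega_i^{\flat}$ lies in the kernel of $d + \dsm$ at weight $\boldsymbol{\beta}'$ on the $i^{\text{th}}$ end, the pointwise bound in~\eqref{sobolevordereq} of Remark~\ref{sobolevrmk} (which rests on elliptic regularity, Theorem~\ref{ellipticregthm}, and the weighted Sobolev embedding) yields $|f_i^*(\omega_i^{\flat})|_{f_i^*(\gm)} \leq C r^{\lambda + \delta}$ on $(0, \frac{1}{2}\e) \times \Sigma_i$. The relation~\eqref{CSdefneq2} shows that $f_i^*(\gm)$ and $\gci$ are uniformly equivalent in a neighbourhood of $x_i$, so we also obtain $|f_i^*(\omega_i^{\flat})|_{\gci} = O(r^{\lambda + \delta})$.

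Pulling back the decomposition $\omega = (f_i^{-1})^*(\upsilon_i) + \omega' + \omega_i^{\flat}$ by $f_i$ and combining the two estimates then delivers
\begin{equation*}
| f_i^*(\omega) - \upsilon_i |_{\gci} \, \leq \, | f_i^*(\omega') |_{\gci} + | f_i^*(\omega_i^{\flat}) |_{\gci} \, = \, O(r^{\lambda + \delta}),
\end{equation*}
which is exactly~\eqref{asymptoticexpansioncoreq}. The only genuine subtlety is arithmetic bookkeeping of rates: we need $\delta < \frac{1}{2}\mu_i$ to swallow the metric--error term $\omega'$ contributed by comparing $d + \dsm$ with its conical model $d + \dsci$ (see Remark~\ref{asymptoticexpansionproprmk}), and we need $\delta$ small enough to isolate $\lambda$ in $\mathcal{D}_{d + \dsci}$ so that the Lockhart--McOwen expansion produces only one polyhomogeneous summand. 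Both are guaranteed by the hypotheses, so no further analytical work is required.
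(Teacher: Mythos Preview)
Your proof is correct and follows essentially the same approach as the paper: apply Proposition~\ref{asymptoticexpansionprop} with $\beta_i = \lambda - \delta$, $\beta'_i = \lambda + \delta$, use~\eqref{sobolevordereq} for the residue in the kernel at rate $\lambda + \delta$, and absorb the $O(r^{\lambda - \delta + \mu_i})$ metric-error term using $\delta < \frac{1}{2}\mu_i$. You are slightly more explicit than the paper in invoking~\eqref{CSdefneq2} to pass from the $f_i^*(\gm)$-norm to the $\gci$-norm, which is a welcome clarification.
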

\begin{proof}
We apply Proposition~\ref{asymptoticexpansionprop}, with $\beta_i = \lambda - \delta$ and $\beta_i' = \lambda + \delta$. Because of the hypotheses on $\delta$, for some $\upsilon_i \in \mathcal K(\lambda)_{d + \dsci}$, on $f_i ((0, \frac{1}{2}\e) \times \Sigma_i)$ we have that $\omega - (f_i^{-1})^*(\upsilon_i) - \omega'$ lies in $\ker_i(d + \dsm)_{\lambda + \delta}$, where $|\omega'| = O(r^{\lambda - \delta + \mu_i})$. Hence by~\eqref{sobolevordereq}, we have
\begin{equation*}
|f_i^*(\omega) - \upsilon_i|_{f_i^*(\gm)} \, \leq \, C r^{\lambda + \delta} + C r^{\lambda - \delta + \mu_i}\quad \text{ on } \left(0, \frac{1}{2}\e \right) \times \Sigma_i.
\end{equation*}
Since by hypothesis we have $\delta < \frac{1}{2} \mu_i$, and $r < 1$, the second term on the right is smaller than, and can be absorbed by, the first term. 
\end{proof}

There are many more results that are known in this setting. For example, the \emph{index} of of $d + \dsm$ will stay constant (like the kernel) unless we cross a critical rate, and when we do, the change in the index can be explicitly computed using the dimensions of the spaces $\mathcal K (\lambda)_{d + \dsci}$. We will not need these results here. The interested reader is referred to Marshall~\cite{M} for a comprehensive treatment.

\subsection{Lockhart--McOwen analysis on AC manifolds} \label{lockhartACsec}

Let $N$ be an asymptotically conical $\G$~manifold, as in Definition~\ref{ACdefn}. All of the analytic results about manifolds with ICS also hold for AC manifolds. The only differences are that there is only one end this time, and since $r \to \infty$ instead of $r \to 0$ at the end, all the inequalities involving rates are reversed. We will require only a few of these results in the AC setting, which we now summarize.

\begin{defn} \label{radiusfunctiondefn2}
A \emph{radius function} $\varrho$ on $N$ is a smooth function on $N$ that satisfies the following conditions. On the compact subset $L$ of $N$, we define $\varrho \equiv 1$. Let $x$ be a point in $h( (2 R, \infty) \times \Sigma)$. Then $h^{-1} (x) = (r, \sigma)$ for some $r \in (2R, \infty)$. We define $\varrho(x) = r$ for such a point. Finally, in the region $h( (R, 2R) \times \Sigma)$, the function $\varrho$ is defined by interpolating smoothly between its definition near infinity and its definition in the compact subset $L$, in a decreasing fashion.
\end{defn}

\begin{defn} \label{ACSobolevdefn}
Let $p > 1$, $l \geq 0$, and $\lambda \in \R$. We define the \emph{weighted Sobolev space} $L^p_{l, \lambda} (\Lambda^k (T^* N))$ of $k$-forms on $N$ as follows. Consider the space $C^{\infty}_{\text{cs}}(\Lambda^k(T^*N))$ of smooth compactly supported $k$-forms on $N$. For such forms the quantity
\begin{equation} \label{ACSobolevdefneq}
{||\omega||}_{L^p_{l,\lambda}} \, = \, {\left( \sum_{j=0}^l \int_{N} {| \varrho^{- \lambda + j} \nabn^j \omega|}^p_{\gn} \varrho^{-7} \voln \right)}^{\frac{1}{p}}
\end{equation}
is a norm, and we let $L^p_{l, \lambda} (\Lambda^k (T^* N))$ be the completion of $C^{\infty}_{\text{cs}}(\Lambda^k(T^*N))$ with respect to this norm.
\end{defn}

\begin{rmk} \label{ACSobolevdefnrmk}
There  are a few differences in the AC setting.
\begin{enumerate}[i)]
\item This time we have $L^p_{l, \lambda} (\Lambda^k (T^* N)) \subseteq L^p_{l, \lambda'} (\Lambda^k (T^* N))$ if $\lambda < \lambda'$. (The inequality is in the opposite direction to the ICS case.)
\item An element $\omega$ in $L^p_{l, \lambda} (\Lambda^k (T^* N))$ can be thought of as a $k$-form which is $l$ times weakly differentiable, and such that near infinity, the tensor $\nabn^j \omega$ is growing at most like $r^{\lambda - j}$. In fact if $|\omega|_{\gn} = O(r^{\lambda})$ near infinity, then $\omega \in L^p_{0, \lambda + e} (\Lambda^k (T^* N))$ for any $e > 0$.
\item The space $L^p_{0, -\frac{7}{p}} (\Lambda^k (T^* N))$ is the usual $L^p (\Lambda^k (T^* N))$ space,  and in particular we have
\begin{equation} \label{L2equiveq2}
L^2_{0, - \frac{7}{2}} (\Lambda^k (T^* N)) \, = \, L^2 (\Lambda^k (T^* N)).
\end{equation}
\end{enumerate}
\end{rmk}

\begin{prop} \label{dualspaceprop2}
Let $q$ satisfy $\frac{1}{p} + \frac{1}{q} = 1$. There is a Banach space isomorphism
\begin{equation*}
{\left( L^p_{0, \lambda} (\Lambda^k (T^* N)) \right)}^* \, \cong \, L^q_{0, -\lambda - 7} (\Lambda^k (T^* N)),
\end{equation*}
given by the $L^2$ inner product pairing.
\end{prop}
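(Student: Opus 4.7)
The plan is to reduce this weighted duality statement to the standard $L^p$--$L^q$ duality on a measure space, then verify that the pairing implementing the isomorphism is indeed the $L^2$ inner product. This is entirely parallel to Proposition~\ref{dualspaceprop} in the ICS setting, and the only real content is bookkeeping with the weight $\varrho$.

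First, I would introduce the ``reweighted'' identifications
\begin{equation*}
T_{p,\lambda} : L^p_{0,\lambda}(\Lambda^k(T^*N)) \to L^p(N, \varrho^{-7}\voln; \Lambda^k(T^*N)), \qquad T_{p,\lambda}(\omega) = \varrho^{-\lambda}\omega,
\end{equation*}
where $L^p(N, \varrho^{-7}\voln)$ denotes the usual unweighted $L^p$-space built from the measure $\varrho^{-7}\voln$ and the pointwise norm $|\cdot|_{\gn}$. Directly from Definition~\ref{ACSobolevdefn} (with $l=0$), $T_{p,\lambda}$ is an isometric isomorphism. Since $\varrho$ is a smooth positive function, density of $C^{\infty}_{\mathrm{cs}}(\Lambda^k(T^*N))$ is preserved under this identification.

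Next I would invoke the classical duality $L^p(N,\varrho^{-7}\voln)^* \cong L^q(N,\varrho^{-7}\voln)$ implemented by the pairing $(\tilde\alpha,\tilde\beta) \mapsto \int_N \langle \tilde\alpha,\tilde\beta\rangle_{\gn} \varrho^{-7}\voln$. Composing with $T_{p,\lambda}$ and $T_{q,-\lambda-7}$ and using the identity
\begin{equation*}
\int_N \langle \varrho^{-\lambda}\alpha,\varrho^{\lambda+7}\beta\rangle_{\gn} \varrho^{-7}\voln \; = \; \int_N \langle \alpha,\beta\rangle_{\gn}\voln \; = \; \langle \alpha,\beta\rangle_{L^2},
\end{equation*}
immediately yields both the isomorphism and the fact that it is represented by the $L^2$ inner product pairing. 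The boundedness estimate follows from H\"older applied to the reweighted integrands:
\begin{equation*}
|\langle\alpha,\beta\rangle_{L^2}| \leq \|\varrho^{-\lambda}\alpha\|_{L^p(\varrho^{-7}\voln)}\|\varrho^{\lambda+7}\beta\|_{L^q(\varrho^{-7}\voln)} = \|\alpha\|_{L^p_{0,\lambda}}\|\beta\|_{L^q_{0,-\lambda-7}}.
\end{equation*}

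There is essentially no obstacle here; the only point to check carefully is that the exponents on $\varrho$ align so that the product of the two reweightings recovers the unweighted measure: $(-\lambda) + (-(-\lambda-7)) = 7$, which exactly cancels the $\varrho^{-7}$ factor in the measure, leaving $\voln$ and giving the $L^2$ inner product as claimed. Since the argument is identical in structure to Proposition~\ref{dualspaceprop}, and the asymptotically conical setting differs only in having a single end with $r \to \infty$ rather than several ends with $r \to 0$, no new analytic input is required.
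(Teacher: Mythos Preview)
Your argument is correct and is the standard way to establish this duality: the isometry $\omega \mapsto \varrho^{-\lambda}\omega$ onto the ordinary $L^p$ space with measure $\varrho^{-7}\voln$, followed by classical $L^p$--$L^q$ duality, with the weight bookkeeping showing the pairing is exactly the $L^2$ inner product. The paper itself does not give a proof of this proposition (nor of the parallel Proposition~\ref{dualspaceprop}); both are simply quoted as part of the summary of Lockhart--McOwen theory, with references to \cite{LM}, \cite{Lock}, and \cite{M}. So there is nothing to compare against, but your proof is the expected one and would be entirely acceptable.
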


For AC manifolds we will only be interested in the operator:
\begin{equation} \label{diracmapdefneq2}
(d + \dsn)^p_{l + 1, \lambda} \, : \, L^p_{l + 1, \lambda} (\Lambda^* (T^* N)) \, \to \, 
L^p_{l, \lambda - 1} (\Lambda^* (T^* N)).
\end{equation}
The AC analogues of Theorems~\ref{fredholmthm},~\ref{ellipticregthm}, and~\ref{kernelthm} are the following.
\begin{thm} \label{fredholmthm2}
The map $(d + \dsn)^p_{l + 1, \lambda} \, : \, L^p_{l + 1, \lambda} (\Lambda^* (T^* N)) \, \to \, L^p_{l, \lambda - 1} (\Lambda^* (T^* N))$ is Fredholm if and only if $\lambda \notin \mathcal D_{d + \dsc}$, where the set of critical rates $\mathcal D_{d + \dsc}$ is as given in Definition~\ref{criticalratesdefn}.
\end{thm}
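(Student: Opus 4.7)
The plan is to prove this as the AC analogue of Theorem~\ref{fredholmthm}, following the standard Lockhart--McOwen parametrix approach adapted from the ICS case. The underlying strategy is to build a parametrix for $d + \dsn$ by patching an interior parametrix on the compact core of $N$ with an explicit model parametrix on the asymptotic end, where $d + \dsn$ is a uniformly elliptic perturbation of the cone operator $d + \dsc$ under pullback by $h$.

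For the failure direction (Fredholm $\Rightarrow \lambda \notin \mathcal{D}_{d + \dsc}$), suppose $\lambda \in \mathcal{D}_{d + \dsc}$ and pick a non-zero homogeneous $\omega_0$ of order $\lambda$ with $(d + \dsc)(\omega_0) = 0$. Choose a family of cutoff functions $\chi_k$ on $N$ supported in disjoint annular regions $\{2^k R < \varrho < 2^{k+1} R\}$ near infinity and transport $\chi_k \cdot (h^{-1})^*(\omega_0)$ to $N$. Because $h^*(\gn)$ differs from $\gc$ by terms of order $O(r^\nu)$ with $\nu < 0$ by~\eqref{ACdefneq2}, the image of each such cut-off form under $d + \dsn$ lies strictly below the weight $\lambda - 1$ in $L^p_{l, \lambda - 1}$, i.e.\ its norm there can be made arbitrarily small. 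This would produce an infinite sequence of approximate kernel elements with pairwise small overlap, contradicting Fredholmness.

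For the main direction (non-critical $\Rightarrow$ Fredholm), the heart of the matter is invertibility of the model operator $d + \dsc$ on the cone $C$ in weighted Sobolev spaces of weight $\lambda$. I would use the substitution $r = e^t$ to convert the cone $(0,\infty) \times \Sigma$ into a cylinder $\R \times \Sigma$, under which $d + \dsc$ becomes (conjugate to) a translation-invariant elliptic operator. Its Fredholm theory on exponentially weighted spaces on the cylinder is controlled by the \emph{indicial family} $\lambda \mapsto P(\lambda)$, an elliptic operator-valued function on the compact link $\Sigma$; the critical rates $\mathcal{D}_{d + \dsc}$ in Definition~\ref{criticalratesdefn} are precisely the values of $\lambda$ at which $P(\lambda)$ fails to be invertible. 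Non-criticality of $\lambda$ therefore yields a continuous right inverse $Q_C$ for $d + \dsc$ on the cone modulo smoothing error, with uniform weighted bounds. Transporting $Q_C$ via $h$, cutting off near infinity, and gluing with an interior parametrix produced by standard elliptic theory on the compact part of $N$ gives an operator $P : L^p_{l, \lambda - 1} \to L^p_{l + 1, \lambda}$ with $(d + \dsn) P - I$ and $P(d + \dsn) - I$ of the form ``compactly supported commutator with a cutoff plus smoothing remainder.''

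The main obstacle is verifying that these two remainder operators are genuinely compact in the weighted Sobolev setting. Compactness follows from the fact that on any bounded region of $N$ the inclusion $L^p_{l+1, \lambda} \hookrightarrow L^p_{l, \lambda}$ is compact by the classical Rellich--Kondrachov theorem, together with the observation that the cutoff error from the model parametrix lives in a region of bounded $\varrho$ (so the weight is irrelevant there) while the smoothing remainder from $Q_C$ gains both a derivative and a factor of $\varrho^{-e}$ for some $e > 0$, rendering the corresponding inclusion into $L^p_{l, \lambda - 1}$ compact. Once compactness of the remainders is established, Atkinson's theorem yields the Fredholm property. Since all of this is standard Lockhart--McOwen material, I would carry out only the reductions specific to the operator $d + \dsc$ and otherwise refer the reader to Lockhart--McOwen~\cite{LM}, Lockhart~\cite{Lock}, and Marshall~\cite{M}, emphasizing that the argument is strictly simpler than the ICS case because there is only one end to deal with.
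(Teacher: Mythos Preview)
The paper does not actually prove Theorem~\ref{fredholmthm2}: Section~\ref{lockhartACsec} (like Section~\ref{lockhartCSsec}) is explicitly a \emph{summary} of the Lockhart--McOwen theory, and Theorems~\ref{fredholmthm} and~\ref{fredholmthm2} are simply quoted as known results with references to Lockhart--McOwen~\cite{LM}, Lockhart~\cite{Lock}, and Marshall~\cite{M}. So your proposal goes well beyond what the paper does, and your parametrix sketch for the sufficiency direction (change of variables $r = e^t$, translation-invariant operator on the cylinder, indicial family, invertibility off the critical set, gluing with an interior parametrix, Atkinson's theorem) is exactly the standard Lockhart--McOwen argument and is fine as an outline.

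There is, however, a small gap in your failure direction. You claim that for $\lambda \in \mathcal{D}_{d+\dsc}$ the image $(d+\dsn)(\chi_k (h^{-1})^*\omega_0)$ has $L^p_{l,\lambda-1}$ norm that ``can be made arbitrarily small.'' The contribution from $(\dsn - \dsc)$ is indeed at a strictly lower weight because of the $O(r^{\nu})$ decay in~\eqref{ACdefneq2}, but the commutator $[d+\dsc,\chi_k]\,\omega_0$ is supported where $\chi_k$ varies and is of size $\sim r^{\lambda-1}$ there; a short computation shows its $L^p_{l,\lambda-1}$ norm is bounded \emph{uniformly in $k$}, not tending to zero. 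The usual way to finish is not to produce a genuine Weyl sequence but rather to argue via the index: the family $\lambda \mapsto (d+\dsn)^p_{l+1,\lambda}$ varies continuously, is Fredholm for non-critical $\lambda$, and its index jumps by $\dim \mathcal{K}(\lambda)_{d+\dsc}$ as one crosses a critical rate (this is the content of the index-change formula in~\cite{LM}); local constancy of the Fredholm index then forces non-Fredholmness at the critical rate itself. Alternatively one exhibits non-closed range directly using the $\log r$ solutions in $\mathcal{K}(\lambda)_{d+\dsc}$. Either fix is standard, but your sketch as written does not quite close the loop.
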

\begin{thm} \label{ellipticregthm2}
Suppose that $\omega$ and $\upsilon$ are both locally integrable sections of $\Lambda^* (T^* N)$, and that $\omega$ is a weak solution of the equation $(d + \dsn)(\omega) = \upsilon$. If $\omega \in 
L^p_{0, \lambda} (\Lambda^* (T^* N))$ and $\upsilon \in L^p_{l, \lambda - 1} (\Lambda^* (T^* N))$, then $\omega \in L^p_{l + 1, \lambda} (\Lambda^* (T^* N))$, and $\omega$ is a strong solution of $(d + \dsn)(\omega) = \upsilon$. Furthermore, we have
\begin{equation} \label{ellipticregeq3}
{||\omega||}_{L^p_{l+1,\lambda}} \, \leq \, C \left( {||(d + \dsn)(\omega)||}_{L^p_{l,\lambda -1}} + {||\omega||}_{L^p_{0,\lambda}} \right)
\end{equation}
for some constant $C > 0$ independent of $\omega$.
\end{thm}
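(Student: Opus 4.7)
The plan is to reduce the weighted estimate to the standard (unweighted) interior elliptic estimate for the first-order elliptic operator $d+\dsn$, using a rescaling argument to handle the non-compact end. The analogous result in the ICS case (Theorem~\ref{ellipticregthm}) is proved by precisely the same machinery, so one can also simply invoke the general theory of Lockhart--McOwen~\cite{LM}; I will sketch the steps in the proof tailored to the AC setting.

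First I would cover $N$ by two open sets: a relatively compact neighbourhood $\Omega_0$ of the compact core $L$, and the end $\Omega_\infty = h((R',\infty)\times \Sigma)$ for some $R' > R$. On $\Omega_0$, the operator $d+\dsn$ is a smooth first-order elliptic operator on a relatively compact piece with bounded geometry, and the weight $\varrho$ is bounded above and below by positive constants. Hence the standard interior elliptic regularity theorem (see e.g.\ Morrey or the operator-theoretic formulation in Lockhart--McOwen~\cite[\S 6]{LM}) upgrades $\omega|_{\Omega_0} \in L^p_0$ with $(d+\dsn)\omega|_{\Omega_0}\in L^p_l$ to $\omega|_{\Omega_0}\in L^p_{l+1}$, together with the usual estimate $\|\omega\|_{L^p_{l+1}(\Omega_0)} \leq C( \|(d+\dsn)\omega\|_{L^p_l(\Omega_0)} + \|\omega\|_{L^p_0(\Omega_0)})$.

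The main step is the end $\Omega_\infty$, and this is where the weighted structure matters. I would decompose $\Omega_\infty$ into the countable collection of annuli $A_s = h((s,2s)\times\Sigma)$ for $s=2^kR'$, $k\geq 0$, and for each $s$ apply the dilation $\mathbf{s}:A_1\to A_s$ from~\eqref{dilationeq} via $h$. Under this dilation the cone metric satisfies $\mathbf{s}^*\gc = s^2\gc|_{A_1}$, and by~\eqref{ACdefneq2} the pulled-back metric $s^{-2}\mathbf{s}^*h^*\gn$ converges to $\gc|_{A_1}$ in $C^\infty$ as $s\to\infty$, uniformly on $A_1$ with rate $O(s^\nu)$. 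Define the rescaled form $\tilde\omega_s = s^{-k}\mathbf{s}^*\omega$ on $A_1$ (for a $k$-form component); then the scaling identity~\eqref{scalemetriceq} together with the definition~\eqref{ACSobolevdefneq} of the weighted norms translates the weighted $L^p_{l,\lambda}$ norm of $\omega$ on $A_s$ into $s^{-\lambda}$ times the ordinary $L^p_l$ norm of $\tilde\omega_s$ on the fixed annulus $A_1$ (taking into account the weight factor $\varrho^{-7}\voln$, which on $A_s$ becomes $s^{-7}$ times the corresponding quantity on $A_1$ up to a factor coming from the volume form; the powers all match precisely). Crucially, since $d$ is scale-invariant on differential forms and $\dsn$ scales with a single factor of $s^{-1}$ against the rescaled metric, the operator $d+\dsn$ pulls back to a family of uniformly elliptic first-order operators on $A_1$, converging in $C^\infty$ to the model operator $d+\dsc$.

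On the fixed compact annulus $A_1$ the standard interior elliptic estimate applies uniformly to this family:
\begin{equation*}
\|\tilde\omega_s\|_{L^p_{l+1}(A_1)} \;\leq\; C\bigl(\|(d+\tfrac{1}{s}\widetilde{d^*}_s)\tilde\omega_s\|_{L^p_l(A_1)} + \|\tilde\omega_s\|_{L^p_0(A_1)}\bigr),
\end{equation*}
with $C$ independent of $s$ provided $s$ is large. Rescaling back via the dictionary above, each such estimate becomes an estimate on $A_s$ of the form $\|\omega\|_{L^p_{l+1,\lambda}(A_s)} \leq C(\|(d+\dsn)\omega\|_{L^p_{l,\lambda-1}(A_s)} + \|\omega\|_{L^p_{0,\lambda}(A_s)})$. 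Summing these over the disjoint cover $\{A_s\}$ (which requires only that annular overlaps be absorbed by enlarging the support slightly, a standard step) yields the corresponding estimate on $\Omega_\infty$. A smooth partition of unity subordinate to $\{\Omega_0,\Omega_\infty\}$ combines the two estimates into the global bound~\eqref{ellipticregeq3}, with the derivatives of the cutoff absorbed into the $L^p_{0,\lambda}$ term.

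The hard part is verifying that the constant $C$ in the rescaled interior estimate is independent of $s$ for large $s$; this is where one uses that $s^{-2}\mathbf{s}^*h^*\gn \to \gc$ in $C^\infty$ on $A_1$ and that the rescaled operators converge to $d+\dsc$ in $C^\infty$, so uniform ellipticity and coefficient bounds hold past some $s_0$. The weak-to-strong promotion of $\omega$ follows from the interior elliptic regularity applied locally (via mollification), as once we know $\omega\in L^p_{l+1}$ locally, the weak equation $(d+\dsn)\omega=\upsilon$ holds strongly almost everywhere. For the full Banach-space argument and the treatment of all $p>1$, I would cite Lockhart--McOwen~\cite[\S 3--6]{LM} directly.
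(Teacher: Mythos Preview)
The paper does not prove Theorem~\ref{ellipticregthm2}: it is stated as part of the summary of Lockhart--McOwen theory in Section~\ref{lockhartACsec}, with the reference to \cite{LM} and \cite{Lock} given at the start of Section~\ref{analsec}. Your sketch of the rescaling argument on dyadic annuli, reducing to uniform interior elliptic estimates on a fixed annulus via the asymptotically conical structure, is exactly the standard mechanism underlying the Lockhart--McOwen results, so your proposal is consistent with (and more detailed than) what the paper actually does.
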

\begin{thm} \label{kernelthm2}
The \emph{kernel} of $(d + \dsn)^p_{l + 1, \lambda}$ is independent of $p > 1$ and independent of $l$. Hence we can denote it unambigiously as $\ker(d + \dsn)_{\lambda}$. This kernel is also invariant as we change the rate $\lambda$, as long as we do not hit any critical rates. That is, if the interval $[\lambda, \lambda']$ is contained in $\R \backslash \mathcal D_{d+\dsc}$, then
\begin{equation*}
\ker ( d + \dsn)_{\lambda'}  \, = \, \ker ( d + \dsn)_{\lambda} .
\end{equation*}
\end{thm}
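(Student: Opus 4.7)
The plan is to mirror the proof of Theorem~\ref{kernelthm} from the ICS setting, with the standard reversal of inclusions going from ICS to AC (where smaller $\lambda$ now means faster decay). The statement decomposes into three claims: independence of $l$, rate invariance on any interval $[\lambda,\lambda'] \subset \R \setminus \mathcal D_{d+\dsc}$, and independence of $p>1$. I would establish them in that order because the third uses the second.

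First, independence of $l$ is essentially immediate from elliptic regularity. If $\omega \in \ker(d+\dsn)^p_{l+1,\lambda}$ then $(d+\dsn)\omega \equiv 0$ lies in $L^p_{m,\lambda-1}$ for every $m\geq 0$, and since $\omega \in L^p_{0,\lambda}$ by inclusion, Theorem~\ref{ellipticregthm2} applied iteratively puts $\omega$ in $L^p_{m+1,\lambda}$ for all $m$. Hence the kernel is the same set of smooth forms regardless of the value of $l$.

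Second, for rate invariance, one direction is automatic: since $\lambda \leq \lambda'$ gives $L^p_{l+1,\lambda} \subseteq L^p_{l+1,\lambda'}$ by part (i) of Remark~\ref{ACSobolevdefnrmk}, we have $\ker(d+\dsn)_\lambda \subseteq \ker(d+\dsn)_{\lambda'}$. The nontrivial direction requires the AC analogue of Proposition~\ref{asymptoticexpansionprop}: on the end $h((2R,\infty)\times\Sigma)$, any element of $\ker(d+\dsn)_{\lambda'}$ decomposes into a finite sum of pullbacks of elements of $\mathcal K(\alpha_j)_{d+\dsc}$, one for each critical rate $\alpha_j \in (\lambda,\lambda')$, plus a remainder lying in $\ker(d+\dsn)_\lambda$ (the extra $O(r^{\lambda'+\nu})$ error coming, as in Remark~\ref{asymptoticexpansionproprmk}, from comparing $\gn$ with $\gc$ via~\eqref{ACdefneq2}). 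Under the hypothesis $[\lambda,\lambda'] \cap \mathcal D_{d+\dsc} = \emptyset$, this sum is empty, giving the reverse inclusion. The main obstacle is this expansion result; I would take it from Lockhart--McOwen~\cite{LM} (separation of variables on the exact cone, Fourier decomposition in the link direction, and a perturbation argument treating $d+\dsn$ as a bounded lower-order perturbation of the model operator $d+\dsc$).

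Third, independence of $p$ combines steps one and two with the weighted Sobolev embedding theorem alluded to in Remark~\ref{sobolevrmk} (whose AC version is standard in this framework). Given $\omega \in \ker(d+\dsn)^p_{l+1,\lambda}$, step one puts $\omega$ in $L^p_{m+1,\lambda}$ for every $m$, and the embedding then provides pointwise control $|\nabn^j\omega|_{\gn}(x) \leq C\,\varrho(x)^{\lambda-j}$ near infinity. By part (ii) of Remark~\ref{ACSobolevdefnrmk}, these pointwise bounds imply $\omega \in L^{p'}_{0,\lambda+e}$ for every $p'>1$ and every $e>0$, and elliptic regularity upgrades this to $\omega \in L^{p'}_{m+1,\lambda+e}$. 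Since $\mathcal D_{d+\dsc}$ is discrete and $\lambda$ itself is non-critical (as assumed throughout), we may choose $e$ so small that $[\lambda,\lambda+e]$ contains no critical rate; rate invariance then pulls $\omega$ back into $\ker(d+\dsn)^{p'}_{l+1,\lambda}$, completing the proof. I expect the real work to be not in this bootstrap step but in justifying the AC asymptotic expansion used in rate invariance, since everything else is a routine application of tools already set up in the paper.
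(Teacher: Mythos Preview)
The paper does not actually prove Theorem~\ref{kernelthm2}; like its ICS counterpart Theorem~\ref{kernelthm}, it is stated as a black-box result imported from Lockhart--McOwen~\cite{LM}, with Remark~\ref{sobolevrmk} sketching only the $l$-independence (via weighted Sobolev embedding plus elliptic regularity) and referring to \cite[\S 7, \S 8]{LM} for the $p$-independence. Your outline is therefore considerably more detailed than anything the paper provides, and its overall shape is correct and in the spirit of how these results are actually established.

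Two small comments. First, in your rate-invariance argument the correction term $\omega'$ from Proposition~\ref{asymptoticexpansionprop2} is only $O(r^{\lambda' + \nu})$, and $\lambda' + \nu$ need not be $\leq \lambda$ if the interval $[\lambda, \lambda']$ is long relative to $|\nu|$; one handles this by iterating the expansion (each application improves the decay rate by $|\nu|$), and you should say so explicitly. Second, your argument for $p$-independence tacitly assumes $\lambda$ is non-critical, so that a gap $[\lambda, \lambda+e]$ free of critical rates exists and rate invariance applies; the theorem as stated makes no such assumption. In practice the paper only ever invokes this theorem at non-critical rates, so the restriction is harmless for the applications here, but a proof valid at all rates requires the more delicate arguments of \cite[\S 7--8]{LM} rather than your bootstrap via rate invariance.
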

\begin{rmk} \label{sobolevrmk2}
As in Remark~\ref{sobolevrmk}, it follows that
\begin{equation} \label{sobolevordereq2}
\omega \in \ker ( d + \dsn)_{\lambda} \, \, \Longrightarrow \, \, | h^*(\omega) |_{h^*(\gn)} \, \leq \, C r^{\lambda} \quad \text{ on } \left( 2R, \infty \right) \times \Sigma.
\end{equation}
\end{rmk}

We will need to consider the adjoint of the map
\begin{equation*}
(d + \dsn)^p_{l + 1, \lambda} \, : \, L^p_{l + 1, \lambda} (\Lambda^* (T^* N)) \, \to \, L^p_{l, \lambda - 1} (\Lambda^* (T^* N)).
\end{equation*} 
By Proposition~\ref{dualspaceprop2}, the adjoint is a map
\begin{equation} \label{adjointmapeq2}
(d + \dsn)^q_{m+1, -\lambda - 6} \, \, : \, \, L^q_{m+1, -\lambda - 6} ( \Lambda^* (T^* N)) \to L^q_{m , -\lambda - 7} ( \Lambda^* (T^* N )),
\end{equation}
where $\frac{1}{p} + \frac{1}{q} = 1$, and $l, m \geq 0$. Here we make the same comments about $l, m \geq 0$ as we did following equation~\eqref{adjointmapeq}.

Next we have the `Fredholm Alternative' for AC manifolds.
\begin{thm} \label{fredholmalternativethm2}
Suppose that $\lambda$ is not in $\mathcal D_{d + \dsc}$, so that by Theorem~\ref{fredholmthm2}, the map $(d + \dsn)^p_{l + 1, \lambda} \, : \, L^p_{l + 1, \lambda} (\Lambda^* (T^* N)) \, \to \, L^p_{l, \lambda - 1} (\Lambda^* (T^* N))$ is Fredholm, and also uniformly elliptic. Then the \emph{image} $\im (d + \dsn)^p_{l + 1, \lambda}$ of this map is given by
\begin{equation*}
\im (d + \dsn)^p_{l + 1, \lambda} \, = \, \left\{ \upsilon \in L^p_{l, \lambda-1} (\Lambda^* (T^* N)) \, ; \, \langle \upsilon, \omega \rangle_{L^2} = 0 \, \text{ for all } \omega \in \ker (d + \dsn)_{-\lambda - 6} \right\}.
\end{equation*}
\end{thm}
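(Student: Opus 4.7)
The plan is to derive the statement from three standard ingredients: (i) a Fredholm operator has closed image equal to the annihilator of the kernel of its Banach-space adjoint; (ii) formal self-adjointness of $d+\dsn$ under the $L^2$ pairing; and (iii) the identification of the adjoint as again an operator of the form $(d+\dsn)$ between appropriate weighted Sobolev spaces.

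\medskip

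\textbf{Step 1 (closed range and abstract duality).} Since $\lambda \notin \mathcal D_{d+\dsc}$, Theorem~\ref{fredholmthm2} gives that $T:=(d+\dsn)^p_{l+1,\lambda}$ is Fredholm. In particular $\im T$ is closed, so the Hahn--Banach/closed range theorem identifies
\begin{equation*}
\im T \;=\; \bigl\{\upsilon\in L^p_{l,\lambda-1}(\Lambda^*(T^*N)) : \langle \Phi, \upsilon\rangle = 0 \text{ for every } \Phi\in \ker T^*\bigr\},
\end{equation*}
where $T^* : (L^p_{l,\lambda-1})^* \to (L^p_{l+1,\lambda})^*$ is the Banach-space adjoint and $\langle\,\cdot\,,\,\cdot\,\rangle$ denotes the duality pairing.

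\medskip

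\textbf{Step 2 (identifying the dual spaces).} Using Proposition~\ref{dualspaceprop2} with the conjugate exponent $q$, the $L^2$ pairing realises
\begin{equation*}
(L^p_{l,\lambda-1}(\Lambda^*))^{\!*} \;\cong\; L^q_{0,-\lambda-6}(\Lambda^*), \qquad (L^p_{l+1,\lambda}(\Lambda^*))^{\!*} \;\cong\; L^q_{0,-\lambda-7}(\Lambda^*),
\end{equation*}
(one checks the weight arithmetic $-(\lambda-1)-7 = -\lambda-6$ and $-\lambda-7$). So $T^*$ becomes a continuous linear operator
\begin{equation*}
T^* : L^q_{0,-\lambda-6}(\Lambda^*(T^*N)) \;\longrightarrow\; L^q_{0,-\lambda-7}(\Lambda^*(T^*N)).
\end{equation*}

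\medskip

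\textbf{Step 3 (computing $T^*$ as $d+\dsn$).} For $\omega \in C^\infty_{\mathrm{cs}}(\Lambda^*(T^*N))$ and $\Phi \in C^\infty_{\mathrm{cs}}(\Lambda^*(T^*N))$, integration by parts gives
\begin{equation*}
\langle (d+\dsn)\omega, \Phi\rangle_{L^2} \;=\; \langle \omega, (d+\dsn)\Phi\rangle_{L^2}
\end{equation*}
with no boundary contribution, since $d^*_{\gn} = \pm \stn d\stn$ makes $d$ and $\dsn$ formal adjoints of each other. By density of $C^\infty_{\mathrm{cs}}$ in the weighted Sobolev spaces (which is how they were defined in Definition~\ref{ACSobolevdefn}) and continuity of the pairings in Step~2, this extends to show that $T^*$ coincides, on smooth compactly supported test forms and hence as an operator on the weighted spaces, with the differential operator $(d+\dsn)$ acting between $L^q_{0,-\lambda-6}$ and $L^q_{0,-\lambda-7}$.

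\medskip

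\textbf{Step 4 (regularity and independence of the kernel).} Any $\Phi \in \ker T^*$ lies in $L^q_{0,-\lambda-6}$ and satisfies $(d+\dsn)\Phi = 0$ weakly. By the elliptic regularity of Theorem~\ref{ellipticregthm2}, $\Phi$ in fact lies in $L^q_{m+1,-\lambda-6}$ for every $m\geq 0$, so $\ker T^* \subseteq \ker (d+\dsn)^q_{m+1,-\lambda-6}$, and conversely any such element plainly annihilates $\im T$ via the $L^2$ pairing. Finally, Theorem~\ref{kernelthm2} shows that this kernel is independent of $q>1$ and of $m\geq 0$, so we may unambiguously denote it $\ker(d+\dsn)_{-\lambda-6}$. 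Combining Steps~1--4 yields the stated description of $\im T$.

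\medskip

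The only non-routine point is Step~3: one must verify that the formal $L^2$ adjoint computed on compactly supported forms really represents the Banach adjoint $T^*$ on the weighted dual. This is why I would work through the density and pairing identifications carefully there; all remaining ingredients are direct citations of previously recorded results.
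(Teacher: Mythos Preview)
Your argument is correct and is the standard route to the weighted Fredholm alternative. Note, however, that the paper does not actually prove Theorem~\ref{fredholmalternativethm2}: it is stated without proof in Section~\ref{lockhartACsec} as part of the summary of Lockhart--McOwen theory, with the original references being~\cite{LM} and~\cite{Lock}. So there is no ``paper's own proof'' to compare against; you have supplied the argument that the paper simply quotes.

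One small point worth tightening in your Step~2: Proposition~\ref{dualspaceprop2} as stated only identifies $(L^p_{0,\lambda})^*$, not $(L^p_{l,\lambda})^*$ for $l>0$. The paper itself flags this issue in the paragraph after~\eqref{adjointmapeq} (and refers back to it after~\eqref{adjointmapeq2}): strictly speaking $(L^p_{l,\lambda})^* \cong L^q_{-l,-\lambda-7}$, which would involve negative-order Sobolev spaces. Your Step~4 is exactly the right fix --- elliptic regularity (Theorem~\ref{ellipticregthm2}) promotes any element of $\ker T^*$ from $L^q_{0,-\lambda-6}$ to $L^q_{m+1,-\lambda-6}$ for all $m$, and Theorem~\ref{kernelthm2} then makes the kernel independent of $m$ and $q$. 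You might just make explicit that you are first taking the dual at the level $l=0$ (where Proposition~\ref{dualspaceprop2} applies directly) and then invoking regularity, rather than appearing to dualise $L^p_{l,\lambda-1}$ for general $l$.
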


Let $\mathcal K_{d + \dsc}$ be as given in Definition~\ref{homowithlogsdefn}. We will use $\ker_{\infty}  (d + \dsn)_{\lambda}$ to denote the restriction of the kernel of $(d + \dsn)^p_{l + 1, \lambda}$ to the subset $h ( (2R, \infty) \times \Sigma)$ of the asymptotic end of $N$. The following is the AC analogue of Proposition~\ref{asymptoticexpansionprop}.
\begin{prop} \label{asymptoticexpansionprop2}
Suppose that $\beta, \beta' \in \R \backslash \mathcal D_{d + \dsc}$, and that $\beta > \beta'$. Let $\alpha_1 < \alpha_2 < \ldots \alpha_N$ be all the critical rates in $\mathcal D_{d + \dsc}$ between $\beta$ and $\beta'$. If $\omega \in \ker_{\infty}  (d + \dsn)_{\beta}$, then there exist $\upsilon_j \in \mathcal K(\alpha_j)_{d + \dsc}$, for $j = 1, \ldots, N$, and an $\omega'$ defined near infinity with $|\omega'|_{\gc} = O(r^{\beta + \nu})$ such that
\begin{equation*}
\omega - \sum_{j=1}^N (h^{-1})^* (\upsilon_j) - \omega' \, \in \ker_{\infty}  (d + \dsn)_{\beta'}.
\end{equation*}
That is, when restricted to the the subset $h ( (2R, \infty) \times \Sigma)$ of the asymptotic end of $N$, an element in the kernel of $d + \dsn$ with non-critical rate $\beta$ admits an expansion in terms of elements in $\mathcal K(\alpha_j)_{d + \dsc}$ for each critical rate $\alpha_j$ between $\beta$ and $\beta'$, plus a remainder term which is of order $O(r^{\beta + \nu})$ and another term which is in the kernel of $d + \dsn$ with non-critical rate $\beta' < \beta$.
\end{prop}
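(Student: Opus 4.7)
The proof follows the same lines as that of Proposition~\ref{asymptoticexpansionprop}, with inequalities on rates reversed because we work at $r \to \infty$ rather than $r \to 0$. The strategy is to transfer the problem to the asymptotic cone via $h^*$, exploit the explicit ODE structure of $d + \dsc$ on the cone, and pull back.

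Set $\tilde\omega = h^*\omega$ on $(2R,\infty) \times \Sigma$. Using the AC estimates~\eqref{ACdefneq2}, which give $|h^*\gn - \gc|_{\gc} = O(r^\nu)$, together with elliptic regularity (Theorem~\ref{ellipticregthm2}) to control $|\nabc \tilde\omega|_{\gc}$, the equation $(d + \dsn)\omega = 0$ rewrites as
\[
(d + \dsc)\tilde\omega = F, \qquad |F|_{\gc} = O(r^{\beta + \nu - 1}).
\]
Thus $\tilde\omega$ almost satisfies the cone equation, with an error decaying by an extra factor of $r^\nu$ relative to $\tilde\omega$ itself.

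The key analytic input, which is the AC analogue of the tool underlying Proposition~\ref{asymptoticexpansionprop} and is established in Lockhart--McOwen~\cite{LM} (see also Marshall~\cite{M}), is a two-part inversion lemma for $d + \dsc$ on the cone end $(R,\infty) \times \Sigma$. First, for any $\gamma \notin \mathcal{D}_{d+\dsc}$ and any form $G$ on the end with $|G|_{\gc} = O(r^{\gamma - 1})$, there exists a particular solution $u$ with $(d + \dsc)u = G$ and $|u|_{\gc} = O(r^\gamma)$. Second, any element of $\ker(d + \dsc)$ on the end with rate $\beta$ decomposes as $\sum_{j=1}^N \upsilon_j + \tilde\rho$, where $\upsilon_j \in \mathcal{K}(\alpha_j)_{d + \dsc}$ for each critical rate $\alpha_j \in (\beta', \beta)$ and $\tilde\rho$ lies in $\ker(d + \dsc)$ at rate $\beta'$. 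Both statements are proved by separation of variables along $\Sigma$, which reduces the cone equation to a countable family of Euler-type ODE systems in $r$ whose indicial roots are exactly the critical rates, and whose homogeneous solutions assemble into the spaces $\mathcal{K}(\alpha)_{d + \dsc}$ of Definition~\ref{homowithlogsdefn}.

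Applying the lemma, I would first invert to obtain $u$ with $(d + \dsc)u = F$ and $|u|_{\gc} = O(r^{\beta + \nu})$, and set $\omega' = (h^{-1})^* u$, which already supplies the remainder term of the statement. Then $\tilde\omega - u$ lies in $\ker(d + \dsc)$ on the end and still has rate $\beta$, so the second part of the lemma yields the decomposition $\tilde\omega - u = \sum_{j=1}^N \upsilon_j + \tilde\rho$ with $|\tilde\rho|_{\gc} = O(r^{\beta'})$. Pulling back via $h^{-1}$ and using Remark~\ref{sobolevrmk2} to identify pointwise rate $\beta'$ of a $(d+\dsn)$-closed form on the end with membership in $\ker_\infty(d + \dsn)_{\beta'}$, we conclude. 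The main obstacle is the inversion lemma itself, whose proof is a technically involved Mellin-transform and ODE analysis; since this material is classical and fully developed in Lockhart--McOwen~\cite{LM} and Marshall~\cite{M}, I would simply invoke it, emphasizing that the AC version differs from the ICS version of Proposition~\ref{asymptoticexpansionprop} only in the direction of the relevant inequalities on $r$.
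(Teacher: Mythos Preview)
The paper does not prove this proposition; like its ICS counterpart Proposition~\ref{asymptoticexpansionprop}, it is simply stated as a known result from Lockhart--McOwen~\cite[\S 5]{LM}. So your sketch already goes further than the paper does, and the overall strategy you outline --- transfer to the cone via $h^*$, exploit the Euler-type ODE structure of $d+\dsc$, and pull back --- is indeed the right picture.

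There is, however, a genuine gap in your final step. Your remainder is $(h^{-1})^*\tilde\rho$, where $\tilde\rho$ satisfies $(d+\dsc)\tilde\rho = 0$ on the cone end. You then assert that this lies in $\ker_\infty(d+\dsn)_{\beta'}$, invoking Remark~\ref{sobolevrmk2} for ``a $(d+\dsn)$-closed form''. But $(h^{-1})^*\tilde\rho$ is \emph{not} $(d+\dsn)$-closed: pulling back by $h$, one has $(d+\dshn)\tilde\rho = (\dshn - \dsc)\tilde\rho$, which by~\eqref{ACdefneq2} is only $O(r^{\beta'+\nu-1})$, not zero. Equivalently, since $(d+\dsn)\omega = 0$ but $(d+\dsn)\bigl(\sum_j (h^{-1})^*\upsilon_j + \omega'\bigr) \neq 0$, the difference cannot be annihilated by $d+\dsn$. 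So you have produced a form of pointwise rate $\beta'$, but not an element of $\ker_\infty(d+\dsn)_{\beta'}$ as the proposition demands.

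The actual Lockhart--McOwen argument handles this by working more carefully: one either iterates (crossing one critical rate at a time and reabsorbing the $O(r^{\nu})$ metric errors at each stage), or constructs $\omega'$ directly on $N$ so that $(d+\dsn)\omega' = -(d+\dsn)\sum_j(h^{-1})^*\upsilon_j$, which then forces the remainder to be genuinely $(d+\dsn)$-closed. Either route requires more than a single transfer-and-pullback, and this is precisely the ``technically involved'' part you defer to the references --- but your sketch as written claims to have already closed the loop, which it has not.
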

\begin{rmk} \label{asymptoticexpansionproprmk2}
The $O(r^{\beta + \nu})$ term arises from comparing a solution to $(d + \dsn)(\omega) = 0$ to a solution of $(d + \dsc)(\omega) = 0$, using the relation~\eqref{ACdefneq2} between the metrics $\gc$ and $\gn$ near infinity.
\end{rmk}

From Proposition~\ref{asymptoticexpansionprop2} we can prove the following, exactly as in the proof of Corollary~\ref{asymptoticexpansioncor}.
\begin{cor} \label{asymptoticexpansioncor2}
Suppose that $\lambda_1 < \lambda_2$. Choose $\delta > 0$ small enough so that in the closed interval $[\lambda_1 - \delta, \lambda_2 + \delta]$, the only critical rates for the asymptotic end lie in the closed interval $[\lambda_1, \lambda_2]$. That is, $[\lambda_1 - \delta, \lambda_2 + \delta] \cap \mathcal D_{d + \dsc} \subseteq [\lambda_1, \lambda_2]$. Further assume that $\delta > 0$ is small enough so that $\delta < \frac{1}{2}(\lambda_1 - \lambda_2 - \nu)$, where $\nu$ is the rate of the AC manifold $N$. (This will only be possible when $\nu < \lambda_1 - \lambda_2$.) If $\omega \in \Lambda^* (T^* N)$ is in $\ker_{\infty}(d + \dsn)_{\lambda_2 + \delta}$, then
\begin{equation} \label{asymptoticexpansioncoreq2}
| h^* ( \omega ) -  \sum_{j=1}^N \upsilon_j |_{\gc} \, = \,  O( r^{\lambda_1 - \delta}) \quad \text{ on } \left(2R, \infty \right) \times \Sigma, 
\end{equation}
for some $\upsilon_j$ in $\mathcal K(\alpha_j)_{d + \dsc}$, where $\alpha_1, \ldots, \alpha_N$ are all the critical rates for $d + \dsc$ in $[\lambda_1, \lambda_2]$.
\end{cor}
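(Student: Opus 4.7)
The proof will be a direct adaptation of the ICS argument used in Corollary~\ref{asymptoticexpansioncor}, with the inequalities reversed to reflect the fact that ``large $r$'' replaces ``small $r$'' in the AC setting. The basic plan is to apply Proposition~\ref{asymptoticexpansionprop2} and then bound the remainder terms using the weighted Sobolev embedding statement of Remark~\ref{sobolevrmk2}.

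First I would apply Proposition~\ref{asymptoticexpansionprop2} with the choices $\beta = \lambda_2 + \delta$ and $\beta' = \lambda_1 - \delta$. Note that $\beta > \beta'$, as required, and that by the first hypothesis on $\delta$ both endpoints avoid $\mathcal D_{d + \dsc}$, while the critical rates of $d + \dsc$ strictly between them are exactly $\alpha_1 < \cdots < \alpha_N$, all of which lie in $[\lambda_1, \lambda_2]$. Since $\omega \in \ker_{\infty}(d + \dsn)_{\lambda_2 + \delta}$, the proposition gives forms $\upsilon_j \in \mathcal K(\alpha_j)_{d + \dsc}$ and a remainder $\omega'$ defined near infinity with $|\omega'|_{\gc} = O(r^{\lambda_2 + \delta + \nu})$ such that
\begin{equation*}
\omega - \sum_{j=1}^N (h^{-1})^* (\upsilon_j) - \omega' \, \in \, \ker_{\infty}(d + \dsn)_{\lambda_1 - \delta}.
\end{equation*}

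Next I would pull back to $(2R, \infty) \times \Sigma$ via $h$ and estimate the two leftover pieces separately. By the elliptic regularity consequence~\eqref{sobolevordereq2}, the element lying in $\ker_{\infty}(d + \dsn)_{\lambda_1 - \delta}$ has pointwise $\gc$-norm $O(r^{\lambda_1 - \delta})$, while the remainder contributes an $O(r^{\lambda_2 + \delta + \nu})$ term. Adding these gives
\begin{equation*}
| h^*(\omega) - \sum_{j=1}^N \upsilon_j |_{\gc} \, \leq \, C r^{\lambda_1 - \delta} + C r^{\lambda_2 + \delta + \nu}.
\end{equation*}
The second hypothesis on $\delta$, namely $\delta < \tfrac{1}{2}(\lambda_1 - \lambda_2 - \nu)$, is precisely what is needed to ensure that $\lambda_2 + \delta + \nu < \lambda_1 - \delta$, so that as $r \to \infty$ the second term is dominated by the first and can be absorbed into it, yielding the desired estimate~\eqref{asymptoticexpansioncoreq2}.

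There is no real obstacle here; the work has already been done in Proposition~\ref{asymptoticexpansionprop2} and the weighted Sobolev embedding of Remark~\ref{sobolevrmk2}. The only point where care is required is in comparing the two decay rates at infinity. In the ICS case of Corollary~\ref{asymptoticexpansioncor} one used $r < 1$ to absorb a term of higher order in $r$; in the AC case one uses $r \to \infty$ to absorb a term with more negative exponent. The requirement $\nu < \lambda_1 - \lambda_2$ (which the user must be able to satisfy) is exactly what makes the remainder $O(r^{\lambda_2 + \delta + \nu})$ smaller than $O(r^{\lambda_1 - \delta})$ after choosing $\delta$ as in the hypothesis, and this is the only quantitative step to verify.
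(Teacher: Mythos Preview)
Your proposal is correct and follows exactly the approach indicated in the paper, which simply states that the result is proved ``exactly as in the proof of Corollary~\ref{asymptoticexpansioncor}'' from Proposition~\ref{asymptoticexpansionprop2}. You have carried out precisely that adaptation: applying Proposition~\ref{asymptoticexpansionprop2} with $\beta = \lambda_2 + \delta$, $\beta' = \lambda_1 - \delta$, bounding the kernel piece via~\eqref{sobolevordereq2}, and using $\delta < \tfrac{1}{2}(\lambda_1 - \lambda_2 - \nu)$ to absorb the $O(r^{\lambda_2 + \delta + \nu})$ remainder into the $O(r^{\lambda_1 - \delta})$ term as $r \to \infty$.
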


\section{Solution of the obstruction problem} \label{obstructionsec}

In this section we prove Theorem~\ref{obsthm}, which gives the existence of $\eta$ and $\xi$ on $M'$ with the required properties to perform the desingularization, and which was used in Section~\ref{formsconstructionsec}. We restate the theorem here for the convenience of the reader.

\noindent {\bf Theorem~\ref{obsthm}}
\emph{
Let $M$ be a compact $\G$~manifold with isolated conical singularities. For each $i$, let $\xi_i$ and $\eta_i$ be $3$-forms and $4$-forms, homogeneous of orders $-3$ and $-4$, respectively, and closed and coclosed on each cone $C_i$, with respect to $\gci$. Suppose that
\begin{align} \label{obsconditionseq1again}
& \oplus_{i=1}^n [\xi_i] \in \oplus_{i=1}^n H^3(\Sigma_i, \R) \text{ lies in the image of } \Upsilon^3 : H^3(M', \R) \to \oplus_{i=1}^n H^3(\Sigma_i, \R), \\  \label{obsconditionseq2again} & \oplus_{i=1}^n [\eta_i] \in \oplus_{i=1}^n H^4(\Sigma_i, \R) \text{ lies in the image of } \Upsilon^4 : H^4(M', \R) \to \oplus_{i=1}^n H^4(\Sigma_i, \R).
\end{align}
where the maps $\Upsilon^k$ are given in Definition~\ref{topdefn}. Then for $\delta > 0$ sufficiently small, there exists a smooth $3$-form $\xi$ and a smooth $4$-form $\eta$ on $M'$ such that
\begin{align} \nonumber
& d \xi = 0, \qquad d^*_{\gm} \xi = 0, \qquad d \eta = 0, \\
\label{obseq1again} & | \nabci^j \! ( f_i^* (\xi) - \xi_i ) |_{\gci} = \, \, O (r^{-3 + \delta - j})
\qquad \forall j \geq 0, \\ \label{obseq2again} & | \nabci^j \! ( f_i^* (\eta) - \eta_i ) |_{\gci} = \, \, O (r^{-4 + \delta - j}) \qquad \forall j \geq 0.
\end{align}
Notice that we do not say that $\eta$ is coclosed. Furthermore, the $3$-form $\xi$ is in $\Lambda^3_{27}$ with respect to the $\G$~structure $\phm$.}

Notice that if~\eqref{obseq1again} and~\eqref{obseq2again} hold for $j = 0$, then since $\xi$, $\eta$, $\xi_i$, and $\eta_i$ are all closed and $\delta >0$, Lemma~\ref{exactformslemma} shows that the cohomology classes $[f_i^* (\xi) - \xi_i]$ and $[f_i^* (\eta) - \eta_i]$ vanish in $H^3(\Sigma_i, \R)$ and $H^4(\Sigma_i, \R)$, respectively. But by the definition of the maps $\Upsilon^k$ in Definition~\ref{topdefn}, this precisely says that $\Upsilon^3([\xi]) = \oplus_{i=1}^n [\xi_i]$ and $\Upsilon^4([\eta]) = \oplus_{i=1}^n [\eta_i]$. Hence the conditions in~\eqref{obsconditionseq1again} and~\eqref{obsconditionseq2again} are \emph{necessary} for closed forms $\xi$ and $\eta$ to exist satisfying~\eqref{obseq1again} and~\eqref{obseq2again}.  These are therefore \emph{global topological conditions} for the desingularization to be possible, which relate the different singular points. We will see in Sections~\ref{obssubsection} and~\ref{obssubsection2} that the conditions in~\eqref{obsconditionseq1again} and~\eqref{obsconditionseq2again} are also \emph{sufficient} to construct a $\xi$ and $\eta$ satisfying all the conclusions of Theorem~\ref{obsthm}.

\subsection{Construction of the $3$-form correction $\xi$} \label{obssubsection}

Let $\xi'$ be any smooth $3$-form on $M'$ satisfying
\begin{equation} \label{xiprimeeq}
| \nabci^j \! ( f_i^* (\xi') - \xi_i ) |_{\gci} = \, \, O (r^{-3 + \delta - j}) \qquad \forall j \geq 0.
\end{equation}
This can clearly be done by defining $f_i^*(\xi')$ to be equal to $\xi_i$ in a neighbourhood $(0, \frac{1}{2}\e) \times \Sigma_i$ of $x_i$, defining $\xi'$ to be zero in the compact core $K$ of $M'$, and by smoothly interpolating between these definitions on the annuli $[\frac{1}{2}\e, \e] \times \Sigma_i$. 
The goal of this section is to modify $\xi'$ to produce a $3$-form $\xi$ with the properties given in Theorem~\ref{obsthm}.

Let $\Lambda^{\text{odd}} (T^* M') )$ and $\Lambda^{\text{even}} (T^* M') )$ denote the space of odd and even degree forms on $M'$, respectively. Since the operator $d + \dsm$ interchanges the odd and even forms, all the results about $d + \dsm$ from Section~\ref{lockhartCSsec} remain true when we consider $d + \dsm$ as mapping just from odd to even forms or conversely. We need to consider the map
\begin{equation} \label{threeformdiracmapeq}
P : = (d + \dsm)^{\text{odd}, p}_{l+1, -\boldsymbol{3} + \delta} \, \, : \, \, L^p_{l+1, -\boldsymbol{3} + \delta} ( \Lambda^{\text{odd}} (T^* M') ) \to L^p_{l , -\boldsymbol{4} + \delta} ( \Lambda^{\text{even}} (T^* M') ).
\end{equation}
By~\eqref{adjointmapeq}, the adjoint $P^*$ of $P$ is the map
\begin{equation} \label{threeformdiracmapdualeq}
P^* : = (d + \dsm)^{\text{even}, q}_{m+1, -\boldsymbol{3} - \delta} \, \, : \, \, L^q_{m+1, -\boldsymbol{3} - \delta} ( \Lambda^{\text{even}} (T^* M') ) \to L^q_{m , -\boldsymbol{4} - \delta} ( \Lambda^{\text{odd}} (T^* M') ).
\end{equation}
First, we have to describe the kernel of $P^*$ explicitly.
\begin{prop} \label{adjointkernelprop}
If $\upsilon = \sum_{k=0}^3 \upsilon_{2k} \in L^q_{m+1, -\boldsymbol{3} - \delta} ( \Lambda^{\text{even}} (T^* M') )$ satisfies $P^* (\upsilon) = 0$, then on the neighbourhood $(0, \frac{1}{2} \e) \times \Sigma_i$ of $x_i$, we have
\begin{equation} \label{adjointkerneleq}
f_i^*(\upsilon) = dr \wedge \alpha_{4,i} + O(r^{-3 + \delta}),
\end{equation}
where each $\alpha_{4,i}$ is a harmonic $3$-form on the link $\Sigma_i$, respectively. Furthermore, each component $\upsilon_{2k}$ of $\upsilon$ is closed and coclosed with respect to the metric $\gm$ on $M'$.
\end{prop}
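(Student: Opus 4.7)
The plan is to treat the two assertions separately: first deducing the asymptotic form~\eqref{adjointkerneleq} via Lockhart--McOwen expansion combined with the classification of homogeneous solutions of $d + \dsc$ from Section~\ref{conesformssec}, and then exploiting the resulting sharper decay to establish componentwise closedness and coclosedness through the Laplacian.

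For the first part, I would apply Corollary~\ref{asymptoticexpansioncor} to $\upsilon \in \ker(d + \dsm)_{-\boldsymbol{3} - \delta}$ with $\lambda = -3$. Choosing $\delta > 0$ small enough that $-3$ is the only critical rate of $d + \dsci$ in $[-3 - \delta, -3 + \delta]$ for each $i$ (possible since each $\mathcal D_{d + \dsci}$ is discrete) and that $\delta < \frac{1}{2}\min_i \mu_i$, the corollary produces $\upsilon_i \in \mathcal K(-3)_{d + \dsci}$ with
\begin{equation*}
|f_i^*(\upsilon) - \upsilon_i|_{\gci} \, = \, O(r^{-3 + \delta}).
\end{equation*}
Since $\upsilon$ is purely of even degree, the error estimate forces each $\upsilon_i$ to be purely of even degree as well, as any nonzero odd-degree homogeneous component of order $-3$ would contribute a term of size exactly $r^{-3}$ and violate the bound. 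Proposition~\ref{dirachomokernellogsprop} then rules out $\log r$ terms, and Proposition~\ref{dirachomokernelprop} forces $\upsilon_i = dr \wedge \alpha_{4,i}$ for some harmonic $3$-form $\alpha_{4,i}$ on $\Sigma_i$. This yields~\eqref{adjointkerneleq}.

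For the second part, the expansion just obtained shows that $\upsilon_0$, $\upsilon_2$, and $\upsilon_6$ all decay as $O(r^{-3 + \delta})$ near each singularity, with only $\upsilon_4$ carrying the leading $r^{-3}$ term. Since $(d + \dsm)\upsilon = 0$, we have $\lapm \upsilon = (d + \dsm)^2 \upsilon = 0$, and because $\lapm$ preserves degree, each $\lapm \upsilon_{2k} = 0$. Combining the improved pointwise decay with Theorems~\ref{ellipticregthm} and~\ref{kernelthm} places $\upsilon_0$, $\upsilon_2$, $\upsilon_6$ in $L^2_{l + 2, \bl}$ with weight of the form $-3 + \delta - e$ for small $e > 0$, which strictly exceeds each of the thresholds $-5$, $-3$, $-4$ required by Corollary~\ref{IBPcor} for degrees $0$, $2$, $6$ respectively. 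That corollary then yields $d \upsilon_0 = 0$, $d \upsilon_2 = \dsm \upsilon_2 = 0$, and $d \upsilon_6 = \dsm \upsilon_6 = 0$ (while $\dsm \upsilon_0 = 0$ is automatic). Finally, decomposing $(d + \dsm)\upsilon = 0$ by odd degree gives $d \upsilon_{l-1} + \dsm \upsilon_{l+1} = 0$ for each odd $l$; the cases $l = 3$ and $l = 5$ then force $\dsm \upsilon_4 = - d \upsilon_2 = 0$ and $d \upsilon_4 = -\dsm \upsilon_6 = 0$, completing the proof.

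The main obstacle is that Proposition~\ref{homolapexcludeprop}, and hence Corollary~\ref{IBPcor}, gives no information for $3$- or $4$-forms, so the Laplacian-based argument cannot address $\upsilon_4$ directly. The cascade step using the componential form of $(d + \dsm)\upsilon = 0$ is precisely designed to transfer closedness and coclosedness from the other three components onto $\upsilon_4$, and it succeeds only because the asymptotic expansion step has already isolated all of the leading $r^{-3}$ behavior into $\upsilon_4$.
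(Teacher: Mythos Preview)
Your proof is correct. The treatment of part~1 matches the paper's exactly: apply Corollary~\ref{asymptoticexpansioncor} at $\lambda = -3$, note the leading term must be even-degree, and invoke Propositions~\ref{dirachomokernellogsprop} and~\ref{dirachomokernelprop}.

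For part~2 your route diverges from the paper's at the handling of $\upsilon_2$. The paper applies Corollary~\ref{IBPcor} to $\upsilon_0$ and $\upsilon_6$ at the \emph{original} rate $-3-\delta$ (which already exceeds the thresholds $-5$ and $-4$), then simplifies the component equations to $\dsm\upsilon_2 = 0$, $d\upsilon_2 + \dsm\upsilon_4 = 0$, $d\upsilon_4 = 0$, observes that $d\upsilon_2$ is a closed and coclosed $3$-form, and invokes the $L^2$ Hodge isomorphism of Proposition~\ref{LMcohomologyprop} together with $[d\upsilon_2] = 0$ to force $d\upsilon_2 = 0$. You instead feed the improved decay $O(r^{-3+\delta})$ from part~1 back into the problem, so that $\upsilon_2$ now lives at a rate strictly above $-3$ and Corollary~\ref{IBPcor} applies to it directly; the closedness and coclosedness of $\upsilon_4$ then drop out of the component equations exactly as you say. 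Your approach trades the Hodge-theoretic input of Proposition~\ref{LMcohomologyprop} for an explicit dependence of part~2 on part~1, and is arguably the more elementary route; the paper's argument keeps the two parts nominally independent but needs an extra ingredient to close the case of $\upsilon_2$.
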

\begin{proof}
The fact that $f_i^*(\upsilon)$ takes the form shown in~\eqref{adjointkerneleq} follows from Corollary~\ref{asymptoticexpansioncor} (with $\lambda = -3$), and Proposition~\ref{dirachomokernellogsprop}.
It remains to show that each $\upsilon_{2k}$ is closed and coclosed.  The equation $(d + \dsm)(\upsilon) = 0$ breaks up into
\begin{equation} \label{adjointkernelproptempeq}
d \upsilon_0 + \dsm \upsilon_2 = 0, \qquad d \upsilon_2 + \dsm \upsilon_4 = 0, \qquad d \upsilon_4 + \dsm \upsilon_6 = 0, \qquad d \upsilon_6  = 0.
\end{equation}
Also, $(d + \dsm)(\upsilon) = 0$ implies that $\lapm \upsilon_{2k} = 0$ for all $k = 0, \ldots, 3$. Since $- 3 - \delta > -4$, we see that $\upsilon_0$ and $\upsilon_6$ are both closed and coclosed, from~\eqref{k07lapcceq} and~\eqref{k16lapcceq}, respectively. Therefore~\eqref{adjointkernelproptempeq} simplifies to
\begin{equation} \label{adjointkernelproptempeq2}
\dsm \upsilon_2 = 0, \qquad d \upsilon_2 + \dsm\upsilon_4 = 0, \qquad d \upsilon_4 = 0.
\end{equation}
Thus we will be done if we can show that $d \upsilon_2 = 0$. But $d \upsilon_2$ is a closed $3$-form, and by the middle equation in~\eqref{adjointkernelproptempeq2}, it is also coclosed. Now by Proposition~\ref{LMcohomologyprop} for $k=3$, the map $d \upsilon_2 \mapsto [d \upsilon_2] \in H^3(M', \R)$ is injective. But $[d \upsilon_2] = 0$, so $d \upsilon_2 = 0$ and the proof is complete.
\end{proof}

\begin{thm} \label{solvabilitythm}
Let $\xi'$ be a smooth $3$-form on $M'$, satisfying~\eqref{xiprimeeq}, and further suppose that the condition~\eqref{obsconditionseq1again} is satisfied. Then there exists $\omega = \sum_{k=0}^3 \omega_{2k + 1} \in  L^p_{l+1, -\boldsymbol{3} + \delta} (\Lambda^{\text{odd}} (T^* M' ))$ satisfying the equation
\begin{equation} \label{threeformsolvabilityeq}
(d + \dsm)(\omega) = - \dsm \xi' - d \xi'.
\end{equation}
Furthermore, using this solution, if we define $\tilde \xi = \xi' + \omega_3$, then $\tilde \xi \in L^p_{l+1, -\boldsymbol{3}}(\Lambda^3 (T^* M'))$ and satisfies $d\tilde \xi = 0$ and $\dsm \tilde \xi = 0$.
\end{thm}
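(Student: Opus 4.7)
The plan is to apply the Fredholm alternative (Theorem~\ref{fredholmalternativethm}) to the map
$$P = (d+\dsm)^{\text{odd},p}_{l+1,-\boldsymbol{3}+\delta}\colon L^p_{l+1,-\boldsymbol{3}+\delta}(\Lambda^{\text{odd}}(T^*M')) \to L^p_{l,-\boldsymbol{4}+\delta}(\Lambda^{\text{even}}(T^*M'))$$
with right-hand side $-\dsm\xi'-d\xi'$. First I check that this right-hand side lies in the codomain. Near each $x_i$, since $\xi_i$ is closed and coclosed on the cone, $f_i^*(d\xi') = d(f_i^*\xi' - \xi_i)$ is bounded by $C r^{-4+\delta}$ using~\eqref{xiprimeeq}, and a parallel estimate via Lemma~\ref{starcomparelemma} (comparing $\dsm$ with $\dsci$) yields $|f_i^*(\dsm\xi')|_{\gci} = O(r^{-4+\delta'})$ for some $\delta'>0$. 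After shrinking $\delta$ if necessary, $-\dsm\xi'-d\xi'$ is an even-degree form in $L^p_{l,-\boldsymbol{4}+\delta}(\Lambda^{\text{even}}(T^*M'))$.

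The heart of the argument is to verify that $\langle -\dsm\xi' - d\xi', \upsilon\rangle_{L^2} = 0$ for every $\upsilon = \sum_{k=0}^3 \upsilon_{2k} \in \ker P^*$. By Proposition~\ref{adjointkernelprop}, each $\upsilon_{2k}$ is closed and coclosed, and $f_i^*\upsilon = dr\wedge\alpha_{4,i} + O(r^{-3+\delta})$ near $x_i$, where $\alpha_{4,i}$ is a harmonic $3$-form on $\Sigma_i$. Since $\xi'$ has degree $3$, only $\upsilon_2$ and $\upsilon_4$ contribute, and integration by parts on $M'\setminus\bigcup_i B_{r_0}(x_i)$ reduces each pairing to a boundary integral over $\bigcup_i(\{r_0\}\times\Sigma_i)$ by $d\upsilon_2 = 0$ and $\dsm\upsilon_4 = 0$. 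The term involving $\upsilon_2$ is controlled by $|\xi'|_{\gci}|\upsilon_2|_{\gci}\cdot r_0^6 = O(r_0^{\delta})$ and vanishes as $r_0\to 0$.

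The main obstacle is the $\upsilon_4$-boundary term $\sum_i\int_{\{r_0\}\times\Sigma_i}\xi'\wedge\stm\upsilon_4$, whose leading behaviour is $|\xi'|_{\gci}|\upsilon_4|_{\gci}\cdot r_0^6 = O(1)$, so it need not vanish without input from~\eqref{obsconditionseq1again}. To exploit that hypothesis, since $\oplus_i[\xi_i]$ lies in the image of $\Upsilon^3$ there exists a closed $3$-form $\chi_0$ on $M'$ with $(f_i\circ\iota_i)^*[\chi_0] = [\xi_i]$; on $(0,\e)\times\Sigma_i$ the difference $\xi_i - f_i^*\chi_0$ is an exact form $d\gamma_i$, so cutting off $\gamma_i$ near $r=0$ produces a closed $3$-form $\chi$ on $M'$ such that $f_i^*\chi = \xi_i$ on a neighbourhood of each $x_i$. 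Stokes's theorem on $M'\setminus\bigcup_i B_{r_0}(x_i)$, applied to $\chi\wedge\stm\upsilon_4$, then gives $\sum_i\int_{\{r_0\}\times\Sigma_i}\chi\wedge\stm\upsilon_4 = 0$ for every $r_0$ (using $d\chi = 0$ and $d\stm\upsilon_4 = 0$), while the remainder $\sum_i\int_{\{r_0\}\times\Sigma_i}(\xi'-\chi)\wedge\stm\upsilon_4 = O(r_0^{\delta})$ because $|\xi'-\chi|_{\gci} = O(r^{-3+\delta})$ near each singularity. Hence the total boundary limit is zero.

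The Fredholm alternative now produces $\omega = \omega_1+\omega_3+\omega_5+\omega_7 \in L^p_{l+1,-\boldsymbol{3}+\delta}(\Lambda^{\text{odd}}(T^*M'))$ satisfying $(d+\dsm)\omega = -\dsm\xi' - d\xi'$. Applying $(d+\dsm)$ once more gives $\lapm\tilde\xi = 0$ and $\lapm\omega_1 = \lapm\omega_5 = \lapm\omega_7 = 0$, where $\tilde\xi = \xi'+\omega_3$. By Proposition~\ref{homolapexcludeprop} there are no critical rates of $\lapm$ for $k=1$ in $(-4,-1)$ or for $k=5$ in $(-3,-2)$, so by Theorem~\ref{kernelthm} the kernels containing $\omega_1$ and $\omega_5$ persist at weight $-\boldsymbol{5/2}+e$ for small $e>0$; Lemma~\ref{IBPlemma} then forces $d\omega_1 = \dsm\omega_1 = 0$ and $d\omega_5 = \dsm\omega_5 = 0$. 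Reading off the degree-$2$ and degree-$4$ components of $(d+\dsm)\omega = -\dsm\xi' - d\xi'$ yields $\dsm\omega_3 = -\dsm\xi'$ and $d\omega_3 = -d\xi'$, so $\dsm\tilde\xi = 0$ and $d\tilde\xi = 0$, with $\tilde\xi$ inheriting the required weight from $|\xi'|_{\gm} = O(r^{-3})$ and $\omega_3 \in L^p_{l+1,-\boldsymbol{3}+\delta}$.
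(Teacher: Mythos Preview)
Your proof is correct and follows essentially the same strategy as the paper: apply the Fredholm alternative to the operator $P$, verify orthogonality to $\ker P^*$ by reducing to boundary integrals via Stokes and Proposition~\ref{adjointkernelprop}, and then read off $d\tilde\xi = \dsm\tilde\xi = 0$ after showing $\omega_1,\omega_5$ are closed and coclosed via Corollary~\ref{IBPcor}. Your treatment of the $\upsilon_4$ boundary term is a mild streamlining of the paper's: rather than working with the closed representative $\xi''$ and the exact correction terms $dV_i$, $dW_i$ separately (as in equations~\eqref{solvabilityconditiontempeq2}--\eqref{solvabilityconditiontempeq6}), you modify $\chi_0$ by exact pieces to obtain a closed $\chi$ with $f_i^*\chi = \xi_i$ near each $x_i$, so that the Stokes computation and the remainder estimate $|\xi'-\chi|_{\gci}=O(r^{-3+\delta})$ are immediate.
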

\begin{proof}
By Theorem~\ref{fredholmalternativethm}, we need to check that the right hand side of the equation is orthogonal (with respect to the $L^2$ inner product) to the kernel of the adjoint map. Let $\upsilon = \sum_{k=0}^3 \upsilon_{2k}$ be in the kernel of $P^*$. Then
\begin{equation} \label{solvabilitythmtempeq2}
{\langle - \dsm \xi' - d \xi', \upsilon_0 + \upsilon_2 + \upsilon_4 + \upsilon_6 \rangle}_{L^2} \, = \, \int_{M'} {\langle - \dsm \xi', \upsilon_2 \rangle}_{\gm} \volm +  \int_{M'} {\langle - d \xi', \upsilon_4 \rangle}_{\gm} \volm.
\end{equation}
On $3$-forms we have $\dsm = - \stm d \stm$, and $\stm$ is an isometry with respect to $\gm$, so therefore we can write ${\langle - \dsm \xi', \upsilon_2 \rangle}_{\gm} = {\langle d \stm \xi', \stm \upsilon_2 \rangle}_{\gm}$. Hence equation~\eqref{solvabilitythmtempeq2} simplifies to
\begin{equation} \label{solvabilitythmtempeq3}
\begin{aligned} 
{\langle - \dsm \xi' - d \xi', \upsilon \rangle}_{L^2} \,& = \, \int_{M'} ( d \stm \xi' \wedge \upsilon_2 ) - \int_{M'} (d \xi' \wedge \stm \upsilon_4) \\  & = \, \int_{M'} d (\stm \xi' \wedge \upsilon_2) - \int_{M'} d (\xi' \wedge \stm \upsilon_4),
\end{aligned}
\end{equation}
where we have used the fact that $d \upsilon_2 = 0$ and $d \stm \upsilon_4 = 0$ from Proposition~\ref{adjointkernelprop}. Let us define $M'_r = \{ x \in M'; \varrho(x) \geq r \}$, where $\varrho$ is the radius function on $M'$ from Definition~\ref{radiusfunctiondefn}. Then $M'_r$ is a manifold with boundary, and $\partial M'_r = \sqcup_{i=1}^n f_i (\{r\} \times \Sigma_i)$. Then using Stokes' Theorem, equation~\eqref{solvabilitythmtempeq3} becomes
\begin{equation}  \label{solvabilitythmtempeq4}
\begin{aligned}
{\langle - \dsm \xi' - d \xi', \upsilon \rangle}_{L^2} \, = & \, \lim_{r \to 0} \, \, \sum_{i=1}^n \int_{\{r\} \times \Sigma_i} f_i^*(\stm \xi') \wedge f_i^*(\upsilon_2) \\ & \, \, {} - \lim_{r \to 0} \, \,\sum_{i = 1}^n \int_{\{r\} \times \Sigma_i} f_i^*(\xi') \wedge f_i^*(\stm \upsilon_4).
\end{aligned}
\end{equation}
By~\eqref{xiprimeeq} and the fact that $| \xi_i |_{\gci} = O(r^{-3})$, we have $|f_i^*(\xi')|_{\gci} = O(r^{-3})$.
The proof of~\eqref{Etempeq} also works for $\xi'$, since only the property~\eqref{xiprimeeq} is used. Therefore we also have $|f_i^*(\stm \xi')|_{\gci} = O(r^{-3})$. Proposition~\ref{adjointkernelprop} tells us that $|f_i^* (\upsilon_2)|_{\gci} = O(r^{-3 + \delta})$, and $|f_i^* (\upsilon_4) - dr \wedge \alpha_{4,i}|_{\gci} = O(r^{-3 + \delta})$ on $\{r\} \times \Sigma_i$, where $\alpha_{4,i}$ is a harmonic $3$-form on $\Sigma_i$. Now we can mimic the proof of~\eqref{Etempeq} using $\upsilon_4$ and $dr \wedge \alpha_{4,i}$ in place of $\xi$ and $\xi_i$, respectively, to obtain that $|f_i^* (\stm \upsilon_4) - \stci( dr \wedge \alpha_{4,i})|_{\gci} = O(r^{-3 + \delta})$. Using~\eqref{conestareq} we see that $\stci (dr \wedge \alpha_{4,i}) = \stsi \alpha_{4,i}$. In summary, we have:
\begin{equation} \label{solvabilityestimateseq}
\begin{aligned}
|f_i^*(\stm \xi')|_{\gci} & = O(r^{-3}), \qquad \qquad &  |f_i^* (\upsilon_2)|_{\gci} & = O(r^{-3 + \delta}), & \\  |f_i^*(\xi') - \xi_i|_{\gci} & = O(r^{-3+ \delta}), \qquad \qquad & |f_i^* (\stm \upsilon_4) - \stsi \alpha_{4,i} |_{\gci} & = O(r^{-3 + \delta}), & \\  |f_i^*(\xi')|_{\gci} & = O(r^{-3}), \qquad \qquad & |\stsi \alpha_{4,i} |_{\gci} & = O(r^{-3}).
\end{aligned}
\end{equation}
If we write $f_i^*(\xi') = \xi_i + ( f_i^*(\xi') - \xi_i)$ and $f_i^*(\stm \upsilon_4) = \stsi \alpha_{4,i} + ( f_i^*(\stm \upsilon_4) - \stsi \alpha_{4,i} )$, we can then apply the estimates in~\eqref{solvabilityestimateseq} to equation~\eqref{solvabilitythmtempeq4}, to obtain
\begin{align} \nonumber
|{\langle - \dsm \xi' - d \xi', \upsilon \rangle}_{L^2}| \, & \leq \, \lim_{r \to 0} \, \, \sum_{i=1}^n \int_{\{r\} \times \Sigma_i} C r^{-6 + \delta} \volsi + \left| \lim_{r \to 0} \, \,\sum_{i = 1}^n \int_{\{r\} \times \Sigma_i} \xi_i \wedge \stsi \alpha_{4,i} \right|. \\  \label{solvabilitythmtempeq5} & \leq \, \lim_{r \to 0} \, \,C r^{\delta} + \left| \lim_{r \to 0} \, \,\sum_{i = 1}^n \int_{\{r\} \times \Sigma_i} \xi_i \wedge \stsi \alpha_{4,i} \right|.
\end{align}
using the fact that the volume of $\{r\} \times \Sigma_i$ is equal to $C r^6$, where $C$ is the volume of $\{1\} \times \Sigma_i$. The first limit in~\eqref{solvabilitythmtempeq5} is zero, so to conclude the solvability of equation~\eqref{threeformsolvabilityeq} it remains to prove that
\begin{equation} \label{solvabilityconditiontempeq}
\lim_{r \to 0} \, \,\sum_{i = 1}^n \int_{\{r\} \times \Sigma_i} \xi_i \wedge \stsi \alpha_{4,i} \, \, = \, \, 0.
\end{equation}
This is where we will need to use the hypothesis that the condition~\eqref{obsconditionseq1again} is satisfied. This condition says that there exists a smooth \emph{closed} $3$-form $\xi''$ on $M'$ such that $f_i^* (\xi'')$ differs from $\xi_i$ on $(0, \e) \times \Sigma_i$ by an exact piece. That is,
\begin{equation} \label{solvabilityconditiontempeq2}
\xi_i = f_i^*(\xi'') + d V_i \qquad \text{ on } (0, \e) \times \Sigma_i
\end{equation}
for some smooth $2$-forms $V_i$ defined on $(0, \e) \times \Sigma_i$. We also know that $\stsi \alpha_{4,i}$ is closed (since $\alpha_{4,i}$ is harmonic on $\Sigma_i$), and also that $f_i^*(\upsilon_4)$ is closed by Proposition~\ref{adjointkernelprop}. Therefore the fourth equation in~\eqref{solvabilityestimateseq} and Lemma~\ref{exactformslemma} together imply that $f_i^* (\stm \upsilon_4)$ differs from $\stsi \alpha_{4,i}$ on $(0, \e) \times \Sigma_i$ by an exact piece. That is,
\begin{equation} \label{solvabilityconditiontempeq3}
\stsi \alpha_{4,i} = f_i^*(\stm \upsilon_4) + d W_i \qquad \text{ on } (0, \e) \times \Sigma_i
\end{equation}
for some smooth $2$-forms $W_i$ defined on $(0, \e) \times \Sigma_i$. Using~\eqref{solvabilityconditiontempeq2} and~\eqref{solvabilityconditiontempeq3}, the left hand side of equation~\eqref{solvabilityconditiontempeq} now becomes
\begin{equation*}
\lim_{r \to 0} \left( \, \,\sum_{i = 1}^n \int_{\{r\} \times \Sigma_i} f_i^*(\xi'') \wedge f_i^* (\stm \upsilon_4) 
+ \sum_{i = 1}^n \int_{\{r\} \times \Sigma_i} (dV_i \wedge \stsi \alpha_{4,i} + \xi_i \wedge dW_i + dV_i \wedge dW_i ) \right).
\end{equation*}
Since $\xi_i$ and $\stsi \alpha_{4,i}$ are closed, the integrands in the second sum of integrals above are all \emph{exact}, and hence these integrals all vanish by Stokes' Theorem, because $\{r\} \times \Sigma_i$ is compact and without boundary. Therefore the left hand side of equation~\eqref{solvabilityconditiontempeq} has been simplified to
\begin{equation} \label{solvabilityconditiontempeq6}
\begin{aligned}
\lim_{r \to 0} \, \, \sum_{i = 1}^n \int_{f_i(\{r\} \times \Sigma_i)} \xi'' \wedge \stm \upsilon_4 \, & = \, \lim_{r \to 0} \, \, \int_{\partial M'_r} \xi'' \wedge \stm \upsilon_4 \\ & = \, \lim_{r \to 0} \, \, \int_{M'_r} d(\xi'' \wedge \stm \upsilon_4) \, = \, \int_{M'} d(\xi'' \wedge \stm \upsilon_4),
\end{aligned}
\end{equation}
using Stokes' Theorem again. But this integral vanishes since both $\xi''$ and $\stm \upsilon_4$ are closed, by Proposition~\ref{adjointkernelprop} and the hypothesis~\eqref{obsconditionseq1again} which ensures the existence of such a closed $\xi''$. Hence the right hand side of~\eqref{threeformsolvabilityeq} is $L^2$-orthogonal to the kernel of $P^*$, and therefore there exists a solution $\omega = \sum_{k=0}^3 \omega_{2k+1}$ to equation~\eqref{threeformsolvabilityeq}.
It still remains to show that $\tilde \xi = \xi' + \omega_3$ is closed and coclosed.
The equation $(d + \dsm)(\omega) = -\dsm \xi' - d\xi'$ breaks up into
\begin{equation} \label{solvabilitythmtempeq}
\dsm \omega_1 = 0, \qquad d \omega_1 + \dsm \omega_3 = - \dsm \xi', \qquad d \omega_3 + \dsm \omega_5 = - d \xi', \qquad d \omega_5 + \dsm \omega_7  = 0.
\end{equation}
Now $(d + \dsm)(\omega) = 0$ implies that $\lapm \omega_{2k+1} = 0$ for all $k = 0, \ldots, 3$. Since $- 3 + \delta > -3$, we see that $\omega_1$, $\omega_5$, and $\omega_7$ are all closed and coclosed, from~\eqref{k16lapcceq},~\eqref{k25lapcceq}, and~\eqref{k07lapcceq}, respectively. Therefore~\eqref{solvabilitythmtempeq} simplifies to
\begin{equation*}
\dsm( \omega_3 + \xi') = 0, \qquad d( \omega_3 + \xi') = 0.
\end{equation*}
Also, $\xi' = \xi_i$ near $x_i$, and $|\xi_i|_{\gci} = O(r^{-3})$. Therefore we have
\begin{equation} \label{xitildeeq}
| \nabci^j \! ( f_i^* (\tilde \xi) - \xi_i ) |_{\gci} = \, \, O (r^{-3 + \delta - j}) \qquad \forall j \geq 0.
\end{equation}
and hence $\tilde \xi = \omega_3 + \xi'$ is in $L^p_{l+1, -\boldsymbol{3}}(\Lambda^3 (T^* M'))$ and is closed and coclosed.
\end{proof}

We have constructed a smooth $3$-form $\tilde \xi$ on $M'$ which is closed and coclosed, and satisfies~\eqref{obseq1again}. The final step is to modify $\tilde \xi$ to a $3$-form $\xi$ which lies in $\wtht$ with respect to $\phm$.
\begin{prop} \label{projectionprop}
Let $\tilde \xi$ be as given in Theorem~\ref{solvabilitythm}. Denote by $\tilde \xi = \tilde \xi_1 + \tilde \xi_7 + \tilde \xi_{27}$ the decomposition of $\tilde \xi$ into components of the subspaces $\Omega^3 = \wtho \oplus \wths \oplus \wtht$ determined by $\phm$. Then the $3$-form $\xi = \tilde \xi_{27}$ is closed and coclosed and satisfies~\eqref{obseq1again}.
\end{prop}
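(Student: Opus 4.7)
The plan is to show that $\tilde\xi_1$ and $\tilde\xi_7$ are themselves closed and coclosed; then $\xi = \tilde\xi_{27} = \tilde\xi - \tilde\xi_1 - \tilde\xi_7$ will inherit these properties as a difference of closed-and-coclosed forms, and only the asymptotic condition will remain to be verified.

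First I would use that $\phm$ is parallel, so the pointwise decomposition $\Omega^3 = \wtho \oplus \wths \oplus \wtht$ commutes with $\nabm$ and hence with $\lapm$ (as reflected in the identities~\eqref{temp27eq1} and~\eqref{temp27eq2}). Since $\tilde\xi$ is harmonic, each of $\tilde\xi_1$, $\tilde\xi_7$, $\tilde\xi_{27}$ is itself harmonic. Writing $\tilde\xi_1 = f\phm$ and $\tilde\xi_7 = \stm(\omega \wedge \phm)$ for a smooth function $f$ and a smooth $1$-form $\omega$ on $M'$, those same identities translate harmonicity into the scalar equations $\lapm f = 0$ and $\lapm \omega = 0$. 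The crude pointwise bound $|\tilde\xi|_{\gm} = O(r^{-3})$ near each $x_i$, which follows from~\eqref{xitildeeq} combined with $|\xi_i|_{\gci} = O(r^{-3})$, then gives $|f|_{\gm} = O(r^{-3})$ and $|\omega|_{\gm} = O(r^{-3})$.

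The heart of the argument is a Lockhart--McOwen weight improvement. By Proposition~\ref{homolapexcludeprop}, the interval $(-5, 0)$ contains no critical rates of $\lapci$ on functions and $(-4, -1)$ contains no critical rates on $1$-forms. Combining the kernel stability of Theorem~\ref{kernelthm} with the pointwise implication~\eqref{sobolevordereq}, one can shift weights freely within these gap intervals to conclude $|f|_{\gm} = O(r^{-\epsilon})$ and $|\omega|_{\gm} = O(r^{-1-\epsilon})$ for any small $\epsilon > 0$. Since $-\epsilon > -5$ and $-1-\epsilon > -4$, Corollary~\ref{IBPcor} then implies $df = 0$ (so $f$ is a constant $c$ on the connected manifold $M'$) along with $d\omega = 0$ and $\dsm \omega = 0$. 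Consequently $\tilde\xi_1 = c\phm$ is closed and coclosed because $\phm$ is; and since the improved bound places $\tilde\xi_7$ in $L^p_{l+2, -1-\epsilon-e}$ with weight exceeding $-5/2$, Lemma~\ref{IBPlemma} applied to $\tilde\xi_7$ gives $d\tilde\xi_7 = 0$ and $\dsm \tilde\xi_7 = 0$. Therefore $\xi$ is closed and coclosed.

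For the asymptotic condition~\eqref{obseq1again}, I would decompose $f_i^*\xi - \xi_i = (f_i^*\tilde\xi - \xi_i) - f_i^*\tilde\xi_1 - f_i^*\tilde\xi_7$. The first term is $O(r^{-3+\delta})$ by~\eqref{xitildeeq}, the second is $O(1)$ since $c f_i^*\phm$ is bounded, and the third is $O(r^{-1-\epsilon})$; all three are absorbed by $O(r^{-3+\delta})$ once $\delta$ is chosen small enough. The corresponding covariant-derivative bounds follow by the same reasoning together with standard elliptic regularity in the weighted Sobolev scale, using that $\nabci\phci = 0$ and that $\nabm$ commutes with the projections. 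The principal subtlety I foresee is the weight-improvement step: Proposition~\ref{homolapexcludeprop} provides no gap interval for $3$-forms, so the trick cannot be performed directly on $\tilde\xi_{27}$, and it is essential to pass through the scalar and $1$-form potentials $f$ and $\omega$, where the required gap intervals are available.
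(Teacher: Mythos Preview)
Your proof is correct and follows the paper's argument closely up through the weight-improvement step for the $1$-form $\omega$. The genuine divergence comes afterwards, in two places.

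For $\tilde\xi_7$, the paper pushes further: after improving $\alpha$ (your $\omega$) to weight $-1-e$, it observes that $\nabm\alpha$ then lies in $L^2$, applies the Bochner--Weitzenb\"ock identity $\lapm = \nabm^{\!*}\nabm$ on $1$-forms (valid since $\gm$ is Ricci-flat), and integrates by parts to conclude $\nabm\alpha = 0$; since a manifold with holonomy exactly $\G$ admits no nonzero parallel $1$-forms, this forces $\alpha = 0$ and hence $\tilde\xi_7 = 0$. Your route instead transfers the improved weight from $\omega$ to $\tilde\xi_7$ itself (legitimate since $\phm$ is parallel and $\stm$ is an isometry) and then invokes Lemma~\ref{IBPlemma} directly on the $3$-form. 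This is more elementary---it avoids both Bochner and the holonomy fact---but yields only that $\tilde\xi_7$ is closed and coclosed rather than zero, which is nonetheless sufficient for the proposition.

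For the asymptotic estimate~\eqref{obseq1again}, the paper proceeds differently again: it writes $\xi = \pi_{27}^M \tilde\xi$, compares the projections $\pi_{27}^M$ and $\pi_{27}^{C_i}$ (which differ by $O(r^{\mu_i})$ since the underlying $\G$~structures do), and uses Proposition~\ref{cones27prop} to say $\pi_{27}^{C_i}\xi_i = \xi_i$. Your decomposition $f_i^*\xi - \xi_i = (f_i^*\tilde\xi - \xi_i) - f_i^*\tilde\xi_1 - f_i^*\tilde\xi_7$ together with the separate decay bounds on each piece is an equally valid alternative; it trades the projection comparison and Proposition~\ref{cones27prop} for direct control of the individual components. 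The paper's approach is slightly cleaner for the derivative estimates, while yours depends on the improved decay of $\tilde\xi_7$ already established.
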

\begin{proof}
The decomposition of $\Omega^3$ is orthogonal with respect to $\gm$ and preserved by the covariant derivative (since $\phm$ is torsion-free), so each $\tilde \xi_1$, $\tilde \xi_7$, and $\tilde \xi_{27}$ is in $L^p_{l + 2, -\mathbf{3} + \delta} (\Lambda^3 (T^* M'))$. Since $d \tilde \xi = 0$ and $\dsm \tilde \xi = 0$, we have $\lapm \tilde \xi = 0$. Again because $\phm$ is torsion-free, the Laplacian commutes with the projections onto the $\G$~invariant subspaces, so $\lapm \tilde \xi_1 = 0$. But $\tilde \xi_1 = f \phm$ for some function $f$ in $L^p_{l + 2, -\mathbf{3} + \delta}( \Lambda^0 (T^* M'))$, and the first equation in~\eqref{temp27eq1} says that $\lapm f = 0$. Hence by~\eqref{k07lapcceq}
we see that $f$ is closed (hence constant), and therefore $\tilde \xi_1 = f \phm$ is closed and coclosed.

Similarly $\lapm \tilde \xi_7 = 0$, and since $\tilde \xi_7 = \stm (\alpha \wedge \phm)$ for some $1$-form $\alpha$ in $L^p_{l+2, -\mathbf{3} + \delta}( \Lambda^1 (T^* M'))$, the first equation in~\eqref{temp27eq2} says that $\lapm \alpha = 0$. Proposition~\ref{homolapexcludeprop} says there are no critical rates for the Laplacian on $1$-forms in the interval $(-4, -1)$, so by Theorem~\ref{kernelthm}, we can say that $\alpha$ is in $\ker (\lapm)_{-\mathbf{1} - e}$ for some small $e > 0$, and then Theorem~\ref{ellipticregthm} tells us that $\alpha \in L^p_{l+2, - \mathbf{1} - e} (\Lambda^1 (T^* M'))$. Therefore $\nabm{} \alpha$ lies in $L^p_{l+1, - \mathbf{2} - e} ( T^*M' \otimes T^* M' )$, which is in $L^2$ because $- 2 - e > - \frac{7}{2}$. (Here we are using an $L^p_{l, \bl}$ norm on sections of the tensor bundle $T^* M' \otimes T^* M'$, and the analogous statement to v) from Remark~\ref{CSSobolevdefnrmk}.) On a manifold with vanishing Ricci curvature, the usual Bochner--Weitzenb\"ock formula reduces to $\Delta = \nab{}^* \nab{}$ on $1$-forms. Therefore we can compute that:
\begin{equation*}
0 \, = \, {\langle \lapm \alpha, \alpha \rangle}_{L^2} =  {\langle \nabm{}^{\! \! \! *} \, \nabm{} \alpha, \alpha \rangle}_{L^2} = {| \nabm{} \alpha|}^2_{L^2},
\end{equation*}
and thus we see that $\nabm{} \alpha = 0$. But a manifold with holonomy exactly equal to $\G$ has no non-zero parallel $1$-forms (Bryant--Salamon~\cite[Theorem 2]{BS}), so $\alpha = 0$, and hence $\tilde \xi_7 = 0$.

Thus we have $\tilde \xi_{27} = \tilde \xi - \tilde \xi_1$, and since $\tilde \xi$ and $\tilde \xi_1$ are both closed and coclosed, we see that $\xi = \tilde \xi_{27}$ is also closed and coclosed. We still need to show that $\xi$ satisfies~\eqref{obseq1again}. Let $\pi_{27}^M$ denote the projection onto the $\wtht$ subspace of $\Omega^3 (T^* M')$ with respect to $\phm$, and $\pi_{27}^{C_i}$ denote the projection onto the $\wtht$ subspace of $\Omega^3 (T^* C_i)$ with respect to $\phci$. Because of~\eqref{CSdefneq}, the $\G$~structures $f_i^*(\phm)$ and $\phci$ on $(0, \e) \times \Sigma_i$ agree up to order $O(r^{\mu_i})$, and thus we have
\begin{equation*}
| f_i^*( \pi_{27}^M \omega) - \pi_{27}^{C_i} f_i^*(\omega) |_{\gci} \, = \, | f_i^*(\omega) |_{\gci} O(r^{\mu_i})
\end{equation*}
for any $3$-form $\omega$ on $M'$. Since $\xi = \pi_{27}^M \tilde \xi$, the above equation becomes
\begin{equation*}
| f_i^*(\xi) - \pi_{27}^{C_i} f_i^*(\tilde \xi) |_{\gci} \, = \, | f_i^*(\tilde \xi) |_{\gci} O(r^{\mu_i}) \, = \, O(r^{-3 + \mu_i}).
\end{equation*}
But then we have
\begin{align*}
| f_i^*(\xi) - \xi_i |_{\gci} \, & \leq \, | f_i^*(\xi) - \pi_{27}^{C_i} f_i^*(\tilde \xi) |_{\gci}  + | \pi_{27}^{C_i} f_i^*(\tilde \xi)  - \xi_i|_{\gci} \\ & = \, O(r^{-3 + \mu_i}) +  | \pi_{27}^{C_i} (f_i^*(\tilde \xi)  - \xi_i )|_{\gci} \\ & \leq \, 
O(r^{-3 + \mu_i}) + O(r^{-3 + \delta}) = O(r^{-3 + \delta}),
\end{align*}
where we have used the fact that $ \pi_{27}^{C_i} \xi_i = \xi_i$ from Proposition~\ref{cones27prop}, equation~\eqref{xitildeeq}, and the fact that $\delta < \mu_i$. Similarly we can show that
\begin{equation*}
| \nabci^j \! ( f_i^* (\xi) - \xi_i ) |_{\gci} = \, \, O (r^{-3 + \delta - j}) \qquad \forall j \geq 0,
\end{equation*}
and the proof is complete.
\end{proof}

We have succeeded in constructing the $3$-form $\xi$ of Theorem~\ref{obsthm} needed to correct both $\phm$ and $\psm$ for the glueing of Section~\ref{torsionfreesec} to work. We still need to construct the $4$-form correction $\eta$.

\subsection{Construction of the $4$-form correction $\eta$} \label{obssubsection2}

We can try to follow the same procedure as in Section~\ref{obssubsection} to construct the $4$-form correction $\eta.$ However, in this case one encounters some difficulties. The main problem is that the analogue of Proposition~\ref{dirachomokernellogsprop} fails. The relevant rate in this case is $-2$, and it turns out that there \emph{can} be $\log(r)$ terms in the kernel of $d + \dc$ with order $-2$ on odd forms. In fact it turns out that the $\log$ terms arise only for the $3$-form components of the kernel. Because of this, the proof of an exact $4$-form analogue to Theorem~\ref{solvabilitythm} would break down, but one can carry out this procedure partially to produce a $4$-form $\eta$ which is merely closed, but {\em not} coclosed. The arguments become much more involved, however. And then the analgous argument to Proposition~\ref{projectionprop} for projection onto the $\wfot$ component would also break down.

However, we saw in Section~\ref{torsionfreesec} that it was unnecessary for $\eta$ to be coclosed or to be in $\wfot$, just that it be closed. Since that and condition~\eqref{obseq2again} are our only requirements on $\eta$, there is a much simpler argument that avoids all the technical Lockhart--McOwen analysis of Section~\ref{obssubsection}.

\begin{prop} \label{etaprop}
Suppose that condition~\eqref{obsconditionseq2again} is satisfied. Then there exists a smooth \emph{closed} $4$-form $\eta$ on $M'$ satisfying
\begin{equation*}
 | \nabci^j \! ( f_i^* (\eta) - \eta_i ) |_{\gci} = \, \, O (r^{-4 + \delta - j}) \qquad \forall j \geq 0,
\end{equation*}
on $(0, \e) \times \Sigma_i$, for any $\delta > 0$.
\end{prop}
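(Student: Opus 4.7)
The plan is to exploit the fact that we only need $\eta$ closed (not coclosed, and not subject to any $\G$-type projection), which makes the construction essentially topological: first produce a closed representative $\eta''$ on $M'$ whose cohomology at infinity matches $\oplus_i [\eta_i]$, then modify it by an exact form, supported in a neighborhood of the singular set, so that it equals $\eta_i$ exactly near each $x_i$. No Lockhart--McOwen machinery is needed.

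First, I would invoke hypothesis~\eqref{obsconditionseq2again} directly: since $\oplus_{i=1}^n [\eta_i] \in \oplus_i H^4(\Sigma_i, \R)$ lies in the image of $\Upsilon^4$, there is a smooth closed $4$-form $\eta''$ on $M'$ with $\Upsilon^4([\eta'']) = \oplus_i [\eta_i]$. Because $(0,\e) \times \Sigma_i$ retracts onto $\{r_0\} \times \Sigma_i$, the restriction $\iota_i^*$ is an isomorphism on $H^4$, and it follows that $f_i^*(\eta'')$ and $\eta_i$ represent the same class in $H^4\bigl((0,\e) \times \Sigma_i, \R\bigr)$. Both forms are closed, so the difference $f_i^*(\eta'') - \eta_i$ is exact on $(0,\e) \times \Sigma_i$; pick a smooth primitive $\sigma_i$, so that
\begin{equation*}
f_i^*(\eta'') - \eta_i \, = \, d\sigma_i \qquad \text{on } (0,\e) \times \Sigma_i.
\end{equation*}

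Next, choose a smooth cutoff $\chi_i : (0,\infty) \to [0,1]$ with $\chi_i \equiv 1$ on $(0, \e/2]$ and $\chi_i \equiv 0$ on $[3\e/4, \infty)$. Then $\chi_i \sigma_i$ is a smooth $3$-form on $(0,\e) \times \Sigma_i$, supported in $(0, 3\e/4] \times \Sigma_i$; under $f_i$, it transports to a smooth $3$-form on an open neighborhood of $x_i$ in $M'$ which extends by zero to a globally smooth compactly-supported $3$-form $\tilde\sigma_i$ on $M'$. I then define
\begin{equation*}
\eta \, := \, \eta'' - \sum_{i=1}^n d\tilde\sigma_i,
\end{equation*}
which is closed (a closed form minus an exact one) and smooth on $M'$. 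On $f_i\bigl((0,\e/2) \times \Sigma_i\bigr)$, one has $\chi_i \equiv 1$ while $\tilde\sigma_j \equiv 0$ for all $j \neq i$, hence $f_i^*(\eta) = f_i^*(\eta'') - d\sigma_i = \eta_i$ identically. Thus $f_i^*(\eta) - \eta_i \equiv 0$ on $(0, \e/2) \times \Sigma_i$, which trivially satisfies $|\nabci^j(f_i^*(\eta) - \eta_i)|_{\gci} = O(r^{-4+\delta-j})$ for every $j \geq 0$ and every $\delta > 0$.

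There is no real obstacle in this argument; the entire difficulty was pushed into the hypothesis~\eqref{obsconditionseq2again}, which precisely guarantees the existence of the required closed representative $\eta''$. The contrast with the $3$-form case in Section~\ref{obssubsection} is that here we do not need $\eta$ to be coclosed, so we are free to modify $\eta''$ by any exact form we like—in particular by a compactly-supported one obtained from a cutoff—without worrying about preserving any coclosedness, projection onto $\wfot$, or decay properties of a primitive on an unbounded region. The only small point to verify carefully is that the form $d\tilde\sigma_i$ really extends smoothly across the boundary of the support of the cutoff, but this is immediate because $d(\chi_i \sigma_i) = d\chi_i \wedge \sigma_i + \chi_i\, d\sigma_i$ is a smooth form on $(0,\e) \times \Sigma_i$ supported in $(0, 3\e/4] \times \Sigma_i$.
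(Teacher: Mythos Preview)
Your proof is correct and essentially identical to the paper's: both use the hypothesis to obtain a closed $\eta''$, take a primitive for $f_i^*(\eta'') - \eta_i$, cut it off, and subtract the resulting exact form so that $\eta$ equals $\eta_i$ exactly near each $x_i$. One minor slip: $\tilde\sigma_i$ is not compactly supported in $M'$ (its support extends all the way to the singular end near $x_i$), but all you actually need---and use---is that it extends smoothly by zero outside $f_i\bigl((0, 3\e/4) \times \Sigma_i\bigr)$, which is correct.
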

\begin{proof}
As in the discussion before equation~\eqref{solvabilityconditiontempeq2}, the fact that condition~\eqref{obsconditionseq2again} is satisfied says that there exists a smooth \emph{closed} $4$-form $\eta''$ on $M'$ such that $f_i^* (\eta'')$ differs from $\eta_i$ on $(0, \e) \times \Sigma_i$ by an exact piece. That is,
\begin{equation} \label{etaproptempeq}
\eta_i = f_i^*(\eta'') + d U_i \qquad \text{ on } (0, \e) \times \Sigma_i
\end{equation}
for some smooth $3$-forms $U_i$ defined on $(0, \e) \times \Sigma_i$. Let $w: (0, \infty) \to \R$ be any smooth decreasing function such that
\begin{equation*}
w(r) = \begin{cases} 1 & \text{for } 0 < r \leq \frac{4}{6}\e, \\0 & \text{for } \frac{5}{6}\e \leq r < \infty. \end{cases}
\end{equation*}
Now we define the smooth $4$-form $\eta$ on $M'$ as follows:
\begin{equation} \label{etapropdefneq}
\eta = \begin{cases} \eta'' & \text{on the compact core $K$ of $M'$}, \\ \eta'' + (f_i^{-1})^*(d ( wU_i)) & \text{on } f_i ( [\frac{1}{2}\e, \e] \times \Sigma_i), \\ (f_i^{-1})^* (\eta_i) & \text{on } f_i( (0, \frac{1}{2}\e] \times \Sigma_i). \end{cases}
\end{equation}
It is clear that $\eta$ is closed, and by equation~\eqref{etaproptempeq} and the definition of $w$ it is also smooth and well-defined. Also, on $f_i( (0, \frac{1}{2} \e) \times \Sigma_i)$, we have $f_i^*(\eta) = \eta_i$, so
\begin{equation*}
| \nabci^j \! ( f_i^* (\eta) - \eta_i ) |_{\gci} = \, \, O (r^{-4 + \delta - j}) \qquad \forall j \geq 0,
\end{equation*}
for any $\delta > 0$, since it is identically equal to zero for $r < \frac{1}{2} \e$.
\end{proof}

We have succeeded in constructing the $4$-form $\eta$ of Theorem~\ref{obsthm} needed to correct $\psm$ for the glueing of Section~\ref{torsionfreesec} to work. Thus the proof of Theorem~\ref{obsthm} is now complete.

\section{Asymptotic expansion of $\G$~structures on AC $\G$~manifolds} \label{ACgaugefixsec}

In this section we prove Theorem~\ref{ACasymptoticexpansionthm}, which says that under an appropriate \emph{gauge-fixing condition} we can obtain a nice asymptotic expansion of the $\G$~structure $(\phn, \psn)$ on an AC $\G$~manifold $N$. This theorem was used in Section~\ref{formsconstructionsec} for the glueing construction. We restate the theorem here for the convenience of the reader, and drop the $i$ subscripts to simplify notation.

\noindent {\bf Theorem~\ref{ACasymptoticexpansionthm}}
\emph{
Suppose that $N$ is an asymptotically conical $\G$~manifold with rate $\nu \leq -3$, and that $h$ satisfies the gauge-fixing condition given in Definition~\ref{gaugefixdefn}. Then on the subset $(2R, \infty) \times \Sigma$ of the cone $C$ we can write
\begin{align} \label{threeformasymptoticexpansioneqagain}
h^*(\phn) \, & = \, \phc + \xi + d\zeta,
\\ \label{fourformasymptoticexpansioneqagain}
h^*(\psn) \, & = \, \psc + \eta - \stc \xi + d\theta.
\end{align}
where $\xi$ is a harmonic $3$-form, homogeneous of order $-3$, and in $\wtht$ with respect to $\phc$, $\eta$ is a harmonic $4$-form, homogeneous of order $-4$, and $\zeta$ and $\theta$ are $2$-forms and $3$-forms on $(2R, \infty) \times \Sigma$, respectively, satisfying
\begin{equation} \label{ACinterpeq3newagain}
| \nabc^j \zeta |_{\gc} = \, \, O(r^{\nu' + 1 - j}), \qquad \quad | \nabc^j \theta |_{\gc} = \, \, O(r^{\nu' + 1 - j}), \qquad \forall j \geq 0,
\end{equation}
where $\nu' = - 4$. Furthermore, $[\xi] = \Phi(N)$ and $[\eta] = \Psi(N)$, where $\Phi(N)$ and $\Psi(N)$ are the cohomological invariants of the AC $\G$~manifold $N$ from Definition~\ref{ACinvariantsdefn}.
}

We will begin by showing that the gauge-fixing condition leads to an elliptic equation. Let $N$ be an asymptotically conical $\G$~manifold with rate $\nu \leq -3$. Then we have that
\begin{equation*}
|h^*(\phn) - \phc|_{\gc} \, = \, O (r^{\nu}),
\end{equation*}
by equation~\eqref{ACdefneq}. Since $\nu$ can equal $-3$, the invariant $\Phi(N)$ in $H^3(\Sigma, \R)$ of Definition~\ref{ACinvariantsdefn} can be non-zero. By Proposition~\ref{formsrepresentprop}, we can represent the class $\Phi(N)$ by a $3$-form $\xi$ on $(R, \infty) \times \Sigma$ which is homogeneous of order $-3$ and harmonic with respect to the cone metric. Furthermore, Proposition~\ref{cones27prop} says that $\xi$ is in $\Lambda^3_{27}$ with respect to $\phc$. Now $h^*(\phn) - \phc - \xi$ is exact, so we can write
\begin{equation*}
h^*(\phn) - \phc = \xi + d\zeta
\end{equation*}
for some $2$-form $\zeta$ on $(R, \infty) \times \Sigma$, with $|d\zeta|_{\gc} = O(r^{-3})$.

\begin{lemma} \label{ACgaugefixlemma}
Suppose that $N$ is an AC $\G$~manifold with rate $\nu \leq -3$, and that $h$ satisfies the gauge-fixing condition of Definition~\ref{gaugefixdefn}. Then we have
\begin{equation} \label{ACgaugefixlemmaeq}
|(d + \dsc) (h^*(\psn) - \psc)|_{\gc} \, = \, O(r^{-7}) \qquad \text{ on $(R, \infty) \times \Sigma$}.
\end{equation}
\end{lemma}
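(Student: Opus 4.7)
The plan is to exploit the gauge-fixing condition to replace $h^*(\phn) - \phc$ by a controlled expansion, then apply the nonlinear expansion of $\Theta$ from Lemma~\ref{quadlemma} to extract the leading part of $h^*(\psn) - \psc$ explicitly. First I would use Proposition~\ref{formsrepresentprop} to pick a homogeneous harmonic $3$-form $\xi$ of order $-3$ representing $\Phi(N) \in H^3(\Sigma, \R)$. Since $\xi$ is the pullback of a harmonic $3$-form on $\Sigma$, formulas~\eqref{conedeq}--\eqref{conedleq} give that $\xi$ is both $d$-closed and $\dsc$-closed, and Proposition~\ref{cones27prop} tells us that $\xi$ lies in $\wtht$ with respect to $\phc$. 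Since $\xi$ and $h^*(\phn) - \phc$ are both closed and represent the same class $\Phi(N)$ in $H^3((R,\infty) \times \Sigma, \R)$, Lemma~\ref{exactformslemma} yields a $2$-form $\zeta$ with $h^*(\phn) - \phc = \xi + d\zeta$, and the gauge-fixing condition then forces $d\zeta \in \wtht$ as well.

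Next I would apply Lemma~\ref{quadlemma} to the $\G$~structure $\phc$ with perturbation $\xi + d\zeta$. Because $\xi + d\zeta$ lies entirely in $\wtht$, the linear piece $\tfrac{4}{3}\pi_1 + \pi_7 - \pi_{27}$ collapses to $-(\xi + d\zeta)$, giving
\begin{equation*}
h^*(\psn) - \psc \,=\, -\stc \xi \,-\, \stc d\zeta \,+\, \Fp[\phc](\xi + d\zeta),
\end{equation*}
where I use $\Fp[\phc]$ for the error term. Since both $h^*(\psn)$ and $\psc$ are closed, $d(h^*(\psn) - \psc) = 0$ automatically; there is nothing to estimate on the $d$-side. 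For $\dsc$, I would compute directly using $\dsc = -\stc d \stc$ on $4$-forms: $\dsc(\stc \xi) = -\stc d \xi = 0$ because $\xi$ is closed, and $\dsc(\stc d\zeta) = -\stc d (d\zeta) = 0$ because $d^2 = 0$. Hence the linear piece of $h^*(\psn) - \psc$ contributes nothing to $(d + \dsc)(h^*(\psn) - \psc)$, and the entire object reduces to $\dsc \Fp[\phc](\xi + d\zeta)$.

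The remaining step is a pointwise estimate. By~\eqref{dstesteq} I have $|\dsc \Fp[\phc](\xi + d\zeta)|_{\gc} \leq C |\nabc \Fp[\phc](\xi + d\zeta)|_{\gc}$, and from~\eqref{quadeq2}, using that $\phc$ is parallel so the $|\nabc \phc|$ term drops out,
\begin{equation*}
|\nabc \Fp[\phc](\xi + d\zeta)|_{\gc} \,\leq\, C \, |\xi + d\zeta|_{\gc} \, |\nabc (\xi + d\zeta)|_{\gc}.
\end{equation*}
The AC condition~\eqref{ACdefneq} combined with $\nu \leq -3$ gives $|\xi + d\zeta|_{\gc} = O(r^{\nu}) = O(r^{-3})$ and $|\nabc(\xi + d\zeta)|_{\gc} = O(r^{\nu - 1}) = O(r^{-4})$, so the product is $O(r^{-7})$, as required.

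I do not anticipate a serious obstacle: the proof is essentially an organized bookkeeping of how the gauge-fixing condition forces the linear order of $h^*(\psn) - \psc$ to lie in $-\stc(\wtht)$, which is annihilated by $d + \dsc$ for the reasons above, leaving only the quadratically small remainder. The one place to be careful is verifying that the harmonic representative $\xi$ chosen via Proposition~\ref{formsrepresentprop} is simultaneously closed and coclosed on the cone (so that both $d$ and $\dsc$ pass through $\stc$ cleanly); this was already handled in Corollary~\ref{homocccor} and the discussion around Proposition~\ref{cones27prop}, so no new work is needed.
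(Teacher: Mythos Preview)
Your proposal is correct and follows essentially the same route as the paper: expand $h^*(\psn)$ via $\Theta(\phc + \xi + d\zeta)$ using Lemma~\ref{quadlemma}, observe that the gauge-fixing forces the linear piece into $-\stc(\wtht)$ which is killed by $d+\dsc$, and then bound the quadratic remainder $\Fp[\phc](\xi+d\zeta)$ by $O(r^{-7})$. The only cosmetic difference is that the paper rewrites $\stc d\zeta = \dsc(\stc\zeta)$ and uses $\dsc^2=0$, whereas you compute $\dsc(\stc d\zeta) = \pm\stc d(d\zeta)=0$ directly; these are equivalent. One small wording point: the existence of $\zeta$ follows from $[\xi] = [h^*(\phn)-\phc]$ in cohomology rather than from Lemma~\ref{exactformslemma} per se (the rate is not strictly below $-3$ when $\nu=-3$), but since you never use growth control on $\zeta$ itself this is harmless.
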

\begin{proof}
We have $h^*(\phn) = \phc + \xi + d\zeta$, where by hypothesis $\xi + d\zeta \in \wtht$ with respect to $\phc$. By Lemma~\ref{quadlemma}, we see that
\begin{align*}
h^*(\psn) = \Theta(h^*(\phn)) \, & = \, \psc - \stc (\xi + d\zeta) + F_{\phc} (\xi + d\zeta) \\ & = \, \psc - \stc \xi - \dsc (\stc \zeta) + F_{\phc} (\xi + d\zeta),
\end{align*}
where we have used the fact that $\stc d = \dsc \stc$ on $2$-forms. From equation~\eqref{conedleq}, we know that $\xi$ is coclosed, since $\xi$ is a closed and coclosed form on the link $\Sigma$. Therefore if we take $\dsc$ of both sides of the above equation, we get $\dsc (h^*(\psn)) = \dsc (F_{\phc} (\xi + d\zeta))$, and from~\eqref{quadeq2} and the fact that $|\xi + d\zeta|_{\gc} = O(r^{-3})$, we see that
\begin{equation*}
| \dsc (h^*(\psn)) |_{\gc} \, = \, |  \dsc (F_{\phc} (\xi + d\zeta)) |_{\gc} \, \leq \, C r^{-7}.
\end{equation*}
Equation~\eqref{ACgaugefixlemmaeq} now follows from the fact that $(d + \dsc) (h^*(\psn) - \psc) = \dsc(h^*(\psn))$.
\end{proof}
\begin{cor} \label{ACgaugefixcor}
Suppose that $N$ is an AC $\G$~manifold with rate $\nu \leq -3$, and that $h$ satisfies the gauge-fixing condition. Then $\psn - (h^{-1})^*(\psc)$ satisfies the equation
\begin{equation} \label{gaugefixedtosolveeq}
(d + \dsn) (\psn - (h^{-1})^*(\psc)) \, = \, \dsn \upsilon \qquad \text{ on $h((R, \infty) \times \Sigma)$},
\end{equation}
for some \emph{exact} $4$-form $\upsilon$ defined on the subset $h((R, \infty) \times \Sigma)$ of $N$, with $|\dsn \upsilon|_{\gn} = O(r^{-7})$.
\end{cor}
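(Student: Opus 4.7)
The plan is to exploit the fact that both $\psn$ and $(h^{-1})^*\psc$ are closed to reduce the left-hand side to a pure coderivative, to import the cone-side bound from Lemma~\ref{ACgaugefixlemma} to $N$, and to produce the exact $4$-form $\upsilon$ by subtracting a closed-and-coclosed representative of the cohomology class $\Psi(N)$.

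First I would note that $d\psn = 0$ because $\phn$ is torsion-free, while $(h^{-1})^*\psc$ is closed since $\psc$ is exact on the cone by Proposition~\ref{coneformsexactprop}. Hence $d(\psn - (h^{-1})^*\psc) = 0$, so $(d + \dsn)(\psn - (h^{-1})^*\psc) = \dsn(\psn - (h^{-1})^*\psc)$. Pulling back by $h$, the right-hand side becomes the coderivative of $h^*\psn - \psc$ with respect to the pulled-back metric $h^*\gn$, which differs from $\dsc$ by a zeroth-order operator built from $h^*\gn - \gc$ and its first derivative. Bounding that operator pointwise by $|h^*\gn - \gc|_{\gc} |\nabc \cdot|_{\gc} + |\nabc(h^*\gn - \gc)|_{\gc} |\cdot|_{\gc}$ and applying it to $h^*\psn - \psc$, whose size is $O(r^{\nu})$ and whose first derivative is $O(r^{\nu-1})$ by~\eqref{ACdefneq} and~\eqref{ACdefneq2}, produces a correction of order $O(r^{2\nu - 1}) = O(r^{-7})$ for $\nu \leq -3$. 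Combined with Lemma~\ref{ACgaugefixlemma}, this delivers $|\dsn(\psn - (h^{-1})^*\psc)|_{\gn} = O(r^{-7})$.

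It remains to arrange an exact $\upsilon$ with $\dsn \upsilon = \dsn(\psn - (h^{-1})^*\psc)$. The closed form $\psn - (h^{-1})^*\psc$ defines the class $\Psi(N) \in H^4(h((R, \infty) \times \Sigma), \R) \cong H^4(\Sigma, \R)$. By Proposition~\ref{formsrepresentprop} and Corollary~\ref{homocccor}, this class is represented on the cone by a $4$-form $\omega$, homogeneous of order $-4$ and closed and coclosed with respect to $\gc$. I would pull $\omega$ back to $N$ and correct $(h^{-1})^*\omega$ by subtracting an exact form $d\chi$ so as to make the result coclosed with respect to $\gn$; this amounts to solving $\dsn d \chi = \dsn((h^{-1})^*\omega)$ on the asymptotic end, whose right-hand side decays like $r^{\nu - 5} = O(r^{-8})$ and therefore sits well inside the Lockhart--McOwen admissible range from Section~\ref{lockhartACsec}. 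Setting $\mu := (h^{-1})^*\omega - d\chi$, a genuinely closed-and-coclosed representative of $\Psi(N)$, and $\upsilon := \psn - (h^{-1})^*\psc - \mu$, the form $\upsilon$ is exact (its cohomology class is zero) and satisfies $\dsn \upsilon = \dsn(\psn - (h^{-1})^*\psc)$, with the already-established $O(r^{-7})$ bound.

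The main obstacle is the last step, the construction of the coclosed representative $\mu$: it relies on the Lockhart--McOwen elliptic solvability theory on the AC manifold, rather than on any purely formal manipulation. Everything else is essentially the torsion-free condition on $\psn$ together with careful bookkeeping of the difference between $\dsc$ and the pulled-back $\dsn$.
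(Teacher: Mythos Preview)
Your argument for the $O(r^{-7})$ bound is essentially identical to the paper's: reduce to a coderivative, transfer to the cone, invoke Lemma~\ref{ACgaugefixlemma}, and control the discrepancy between $\dsc$ and the coderivative for $h^*(\gn)$ using~\eqref{ACdefneq2}.

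For the construction of the exact $\upsilon$, however, you have worked much harder than necessary. The paper observes that since $\phn$ is torsion-free, not only is $d\psn = 0$ but also $\dsn\psn = 0$ (indeed $\dsn\psn = \pm\stn d\stn\stn\phn = \pm\stn d\phn = 0$). Hence $\dsn(\psn - (h^{-1})^*\psc) = -\dsn((h^{-1})^*\psc)$, and one may simply take $\upsilon = -(h^{-1})^*\psc$: it is exact because $\psc$ is exact on the cone by Proposition~\ref{coneformsexactprop}. No Lockhart--McOwen machinery is needed at this stage.

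Your route---subtracting a $\gn$-coclosed representative of $\Psi(N)$ to kill the cohomology class---would also work in principle, but the step ``solve $\dsn d\chi = \dsn((h^{-1})^*\omega)$ on the asymptotic end'' is not innocent: the equation is posed only on an open subset of $N$, so one must first extend the right-hand side by a cutoff to all of $N$ and then verify the Fredholm-alternative orthogonality condition of Theorem~\ref{fredholmalternativethm2} before invoking solvability. That is essentially the content of the later Lemma~\ref{ACgaugefixlastpreliminarylemma}, so your sketch effectively front-loads that work. The paper's one-line choice of $\upsilon$ avoids all of it.
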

\begin{proof}
We begin by observing that
\begin{equation*}
| (d + \dsn) (\psn - (h^{-1})^*(\psc)) |_{\gn} \, = \, | (d + \dshn) (h^*(\psn) - \psc) |_{h^*(\gn)}
\end{equation*}
where $\dshn$ is the coderivative with respect to the metric $h^*(\gn)$ on $(R, \infty) \times \Sigma$. Now by~\eqref{ACdefneq2}, the tensor $h^*(\gn) - \gc$ is uniformly bounded with respect to the $\gc$ metric, on $(R, \infty) \times \Sigma$. Thus $| \omega |_{h^*(\gn)} \leq C | \omega |_{\gc}$ for any tensor $\omega$, and therefore
\begin{equation*}
| (d + \dsn) (\psn - (h^{-1})^*(\psc)) |_{\gn} \, \leq \, C| (d + \dshn) (h^*(\psn) - \psc) |_{\gc}.
\end{equation*}
But we also have
\begin{equation*}
| (d + \dshn) (h^*(\psn) - \psc) |_{\gc} \, \leq \, | (d + \dsc) (h^*(\psn) \! - \psc) |_{\gc} + | (\dshn - \dsc) (h^*(\psn) - \psc) |_{\gc}.
\end{equation*}
The first term on the right hand side is $O(r^{-7})$ by Lemma~\ref{ACgaugefixlemma}. For the second term, we use~\eqref{ACdefneq2} again to see that
\begin{equation*}
 | (\dshn - \dsc) (h^*(\psn) - \psc) |_{\gc} \, \leq \, C r^{\nu - 1} | h^*(\psn) - \psc |_{\gc} \, = \, O(r^{-7}),
\end{equation*}
since $| h^*(\psn) - \psc |_{\gc} = O(r^{-3})$ and $\nu \leq -3$. Putting these all together gives
\begin{equation*}
| (d + \dsn) (\psn - (h^{-1})^*(\psc)) |_{\gn} \, \leq \, C r^{-7}.
\end{equation*}
It is clear that $(d + \dsn) (\psn - (h^{-1})^*(\psc)) = \dsn \upsilon$ with $\upsilon = (h^{-1})^*(\psc)$, and we know that $\upsilon$ is exact since $\psc$ is exact by Proposition~\ref{coneformsexactprop}.
\end{proof}

In order to obtain a nice asymptotic expansion of $h^*(\phn)$ and $h^*(\psn)$, we will use Corollary~\ref{asymptoticexpansioncor2}, but first we require one more preliminary result.
\begin{lemma} \label{ACgaugefixlastpreliminarylemma}
There exists a closed $4$-form $\omega_4$ in $L^p_{1, -5}(\Lambda^4(T^* N))$ such that
\begin{equation} \label{ACgaugefixlastpreliminarylemmaeq}
(d + \dsn) (\psn - (h^{-1})^*(\psc) - \omega_4) \, = \, 0
\end{equation}
on the subset $h( (2R, \infty) \times \Sigma)$ of $N$.
\end{lemma}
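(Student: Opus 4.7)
The strategy is to apply the Lockhart--McOwen Fredholm theory for the operator $d+\dsn$ from Section~\ref{lockhartACsec} to solve an elliptic equation globally on $N$, and then to extract a closed $4$-form $\omega_4$ from the resulting mixed-degree solution. First I would choose a smooth cutoff function $\chi:N\to[0,1]$ with $\chi\equiv 1$ on $h((2R,\infty)\times\Sigma)$ and $\operatorname{supp}\chi\subset h((R,\infty)\times\Sigma)$, and define
\[
\bar\psc \;:=\; \chi\cdot(h^{-1})^*(\psc),\qquad F \;:=\; -(d+\dsn)\bar\psc.
\]
On the end, $d\bar\psc=0$ because $\psc$ is closed on $C$, and Corollary~\ref{ACgaugefixcor} gives $|\dsn\bar\psc|_{\gn}=|\dsn\upsilon|_{\gn}=O(r^{-7})$; off the end, $F$ is smooth and compactly supported. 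Hence $F\in L^p_{l,-6+\epsilon}(\Lambda^{\mathrm{odd}}(T^*N))$ for any $l\geq 0$, $p>1$, $\epsilon>0$. Choosing $\epsilon>0$ small enough that $-5+\epsilon\notin\mathcal D_{d+\dsc}$ (possible by discreteness of the critical set), Theorem~\ref{fredholmthm2} makes
\[
(d+\dsn)^p_{l+1,-5+\epsilon}:L^p_{l+1,-5+\epsilon}(\Lambda^{\mathrm{even}}(T^*N))\to L^p_{l,-6+\epsilon}(\Lambda^{\mathrm{odd}}(T^*N))
\]
Fredholm, and Theorem~\ref{fredholmalternativethm2} reduces solvability of $(d+\dsn)\omega=F$ to the orthogonality $\langle F,\upsilon\rangle_{L^2}=0$ for all $\upsilon$ in the adjoint kernel $\ker(d+\dsn)_{-1-\epsilon}$.

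Granted a solution $\omega=\omega_0+\omega_2+\omega_4+\omega_6$, the form $\tilde\omega:=\omega+\bar\psc$ lies in $\ker(d+\dsn)$ globally on $N$, so $\lapn\tilde\omega=0$ and componentwise $\lapn\omega_k=0$. Because the weight $-5+\epsilon$ is strictly more negative than $-7/2$, the AC analogue of Lemma~\ref{IBPlemma} forces $\omega_0,\omega_2,\omega_6$ each to be closed and coclosed (the step for $k=2$ additionally invoking the absence of nontrivial parallel $1$-forms on an irreducible $\G$-manifold, as in Proposition~\ref{projectionprop}). Decomposing $(d+\dsn)\tilde\omega=0$ degree by degree then gives $\dsn(\omega_4+\bar\psc)=-d\omega_2=0$, so on the end $\dsn\omega_4=-\dsn\bar\psc=-\dsn\upsilon$; and $d(\omega_4+\bar\psc)=-\dsn\omega_6=0$, so $d\omega_4=-d\bar\psc$ globally. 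Since $d\bar\psc=d\chi\wedge(h^{-1})^*(\psc)$ vanishes identically off the transition annulus $\operatorname{supp}(d\chi)$, this already gives $d\omega_4=0$ on the end. A final adjustment by a compactly supported $4$-form $\lambda$ with $d\lambda=-d\bar\psc$ (obtained by integrating the cutoff identity against the explicit primitive $-\tfrac{r^4}{4}\imag(\Omega)$ for $\psc$ from Proposition~\ref{coneformsexactprop}, restricted to the transition annulus) yields $\omega_4':=\omega_4+\lambda\in L^p_{1,-5+\epsilon}\subset L^p_{1,-5}$, closed globally on $N$ and still satisfying $\dsn\omega_4'=-\dsn\upsilon$ on $h((2R,\infty)\times\Sigma)$, which is the required conclusion.

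The main obstacle is the orthogonality check. For $\upsilon\in\ker(d+\dsn)_{-1-\epsilon}$, integration by parts on the exhausting sublevel sets $N_r=\{\varrho\leq r\}$ kills the interior contribution (because $(d+\dsn)\upsilon=0$), leaving only boundary integrals at $\{\varrho=r\}$ of schematic form $\int_{\{r\}\times\Sigma}\bar\psc\wedge\stn\upsilon_5-\upsilon_3\wedge\stn\bar\psc$. Naive size estimates give $|\bar\psc|_{\gn}\cdot|\upsilon|_{\gn}\cdot O(r^6)=O(r^{5-\epsilon})$, which diverges; genuine cancellation is therefore required. My plan is to apply Proposition~\ref{asymptoticexpansionprop2}, iterated if necessary, to expand $\upsilon$ on the end as a finite sum of homogeneous pieces $\upsilon^{(j)}\in\mathcal K(\alpha_j)_{d+\dsc}$ at the critical rates $\alpha_j$ lying in an interval below $-1-\epsilon$, plus a remainder of rate sufficiently negative that its boundary contribution vanishes in the limit. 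For each homogeneous summand, the explicit primitive $\psc=d(-\tfrac{r^4}{4}\imag(\Omega))$ from Proposition~\ref{coneformsexactprop}, together with the structure equations~\eqref{diracclosedeq1}--\eqref{diracclosedeq2} satisfied by the components of $\upsilon^{(j)}$ and the closed/coclosed relations of Corollary~\ref{homocccor}, rewrites the boundary integrand on $\{r\}\times\Sigma$ as an exact form on the closed link $\Sigma$; Stokes' theorem then forces each such integral to vanish, completing the verification.
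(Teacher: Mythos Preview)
Your strategy broadly matches the paper's: extend to all of $N$ via a cutoff, solve $(d+\dsn)\omega=F$ globally using Theorem~\ref{fredholmalternativethm2}, and extract the $4$-form piece. The crucial difference is \emph{what you cut off}. You cut off $(h^{-1})^*\psc$ itself, so $F=-(d+\dsn)\bar\psc$ has both a $3$-form component (which is $O(r^{-7})$ on the end by Corollary~\ref{ACgaugefixcor}) and a $5$-form component $-d\bar\psc$ supported in the transition annulus. The paper instead uses that $\upsilon=(h^{-1})^*\psc$ is \emph{exact} (Proposition~\ref{coneformsexactprop}): writing $\upsilon=d\alpha$ and cutting off the \emph{primitive} $\alpha$ gives a right-hand side $\dsn d(\vartheta\alpha)$ that is a pure $3$-form, globally of the form $\dsn(\text{exact})$. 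Orthogonality to any $\beta=\sum_k\beta_{2k+1}\in\ker(d+\dsn)_{-\lambda-6}$ is then the single formal chain
\[
\langle\beta_3,\dsn d(\vartheta\alpha)\rangle_{L^2}=\langle d\beta_3,d(\vartheta\alpha)\rangle_{L^2}=-\langle\dsn\beta_5,d(\vartheta\alpha)\rangle_{L^2}=-\langle\beta_5,d^2(\vartheta\alpha)\rangle_{L^2}=0,
\]
using the kernel relation $d\beta_3+\dsn\beta_5=0$. Moreover, because the right-hand side is a pure $3$-form, the degree-$5$ component of $(d+\dsn)\omega=\dsn d(\vartheta\alpha)$ reads $d\omega_4+\dsn\omega_6=0$, and the $L^2$ integration-by-parts at weight $\lambda<-\tfrac{5}{2}$ then forces $d\omega_4=0$ \emph{globally on $N$}: your compactly supported correction $\lambda$ is unnecessary.

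Your orthogonality check is where the proposal has a genuine gap. The naive boundary estimate diverges like $r^{5-\epsilon}$, as you correctly observe, and your proposed fix --- expand the adjoint-kernel element via Proposition~\ref{asymptoticexpansionprop2} into homogeneous pieces at the (possibly many) critical rates in an interval of length exceeding $4$, and then show for each that the boundary integrand on $\{r\}\times\Sigma$ becomes an exact form on the closed link --- is only asserted, not carried out. Completing it would require a case-by-case analysis using the structure equations~\eqref{diracclosedeq1}--\eqref{diracclosedeq2} at each critical order together with the nearly-K\"ahler relations~\eqref{weaksu3eq} on $\Sigma$, and it is not at all clear that the required cancellations occur uniformly across all critical rates. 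The paper's exactness trick sidesteps this entire difficulty.
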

\begin{proof}
Let $\vartheta$ be a smooth bounded function on $N$ such that
\begin{equation} \label{varthetaeq}
\vartheta(x) = \begin{cases} 0 & \text{for } x \in L, \\ 1 & \text{for } x \in h( (2R, \infty) \times \Sigma).
\end{cases}
\end{equation}
By Corollary~\ref{ACgaugefixcor}, we know that $\upsilon = d \alpha$ for some $3$-form $\alpha$ on $h( (R, \infty) \times \Sigma)$. Now the form $\dsn d (\vartheta \alpha)$ is a well defined $3$-form on all of $N$ which is coexact. It is identically equal to zero on the compact subset $L$ of $N$, and equals $\dsn \upsilon = \dsn d \alpha$ on the subset $h( (2R, \infty) \times \Sigma)$ of $N$. By item ii) of Remark~\ref{ACSobolevdefnrmk}, we know that $\dsn(\vartheta \upsilon)$ is in $L^p_{0, - 7 + e}(\Lambda^{\text{odd}}(T^* N))$ for any $e > 0$. Consider the operator
\begin{equation} \label{ACgaugefixlastpreliminarylemmatempeq}
(d + \dsn)^{\text{even},p}_{1, \lambda} \, : \, L^p_{1, \lambda} (\Lambda^{\text{even}} (T^* N)) \, \to \, L^p_{0, \lambda - 1} (\Lambda^{\text{odd}} (T^* N)).
\end{equation}
We claim that for any $\lambda > -6 + e$ which is \emph{not} a critical rate, we can solve the equation
\begin{equation} \label{gaugefixmustsolveeq}
(d + \dsn)^p_{1, \lambda} (\omega) = \dsn d(\vartheta \alpha),
\end{equation}
where $\omega = \sum_{k=0}^3 \omega_{2k}$ is an even-degree mixed form. Note that this makes sense because since $\dsn d(\vartheta \alpha)$ is in $L^p_{0, - 7 + e}(\Lambda^{\text{odd}}(T^* N))$, it follows from item i) of Remark~\ref{ACSobolevdefnrmk} that $\dsn d(\vartheta \alpha)$ is in $L^p_{0, \lambda -1}(\Lambda^{\text{odd}}(T^* N))$ for any $\lambda > -6 + e$. To know that~\eqref{gaugefixmustsolveeq} is solvable for such $\lambda$, we need to use Theorem~\ref{fredholmalternativethm2}. It will be solvable if and only if $\lambda$ is not critical and $\dsn d(\vartheta \alpha)$ is $L^2$-orthogonal to the kernel of the adjoint of~\eqref{ACgaugefixlastpreliminarylemmatempeq}, which by~\eqref{adjointmapeq2} is
\begin{equation*}
(d + \dsn)^{\text{odd},q}_{m+1, -\lambda - 6} \, \, : \, \, L^q_{m+1, -\lambda - 6} ( \Lambda^{\text{odd}} (T^* N)) \to L^q_{m , -\lambda - 7} ( \Lambda^{\text{even}} (T^* N )).
\end{equation*}
Suppose $\beta = \sum_{k=0}^3 \beta_{2k+1}$ is in the kernel of $(d + \dsn)^q_{m+1, -\lambda - 6}$. Then in particular $d \beta_3 + \dsn \beta_5 = 0$. We have
\begin{align*}
{\langle \beta , \dsn d (\vartheta \alpha) \rangle}_{L^2} \, & = \, {\langle \beta_3 , \dsn d (\vartheta \alpha) \rangle}_{L^2} \, = \, {\langle d \beta_3 , d (\vartheta \alpha) \rangle}_{L^2} \\ & = \, - {\langle \dsn \beta_5 , d (\vartheta \alpha) \rangle}_{L^2} \, = \, - {\langle \beta_5 , d d (\vartheta \alpha) \rangle}_{L^2} \, = \, 0,
\end{align*}
and thus equation~\eqref{gaugefixmustsolveeq} has a solution $\omega = \sum_{k=0}^3 \omega_{2k}$ in 
$L^p_{1, \lambda} (\Lambda^{\text{even}} (T^* N))$. This $\omega$ satisfies
\begin{equation} \label{ACgaugefixlastpreliminarylemmatempeq2}
d \omega_0 + \dsn \omega_2 = 0, \qquad d \omega_2 + \dsn \omega_4 = \dsn d (\vartheta \alpha), \qquad d \omega_4 + \dsn  \omega_6 = 0, \qquad d \omega_6  = 0.
\end{equation}
Every term in~\eqref{ACgaugefixlastpreliminarylemmatempeq2} lies in $L^p_{0, \lambda -1}(\Lambda^{\text{odd}}(T^* N))$. Therefore if $\lambda - 1 < - \frac{7}{2}$, we can use~\eqref{L2equiveq2} to integrate by parts and conclude that
\begin{equation*}
d \omega_0 = \dsn \omega_2 = 0, \qquad d \omega_2 = \dsn \omega_4 - \dsn d (\vartheta \alpha) = 0, \qquad d \omega_4 = \dsn  \omega_6 = 0, \qquad d \omega_6  = 0.
\end{equation*}
Hence in particular $\omega_4$ is closed and $(d + \dsn)(\omega_4) = \dsn d (\vartheta \alpha)$ if $\lambda \in (- 6 + e, 1 - \frac{7}{2})$. If we choose $\lambda$ to be any non-critical rate less than $-5$, we have $\omega_4 \in L^p_{1, -5}(\Lambda^4(T^* N))$. Then by~\eqref{gaugefixedtosolveeq} and~\eqref{varthetaeq} we see that
\begin{equation*}
(d + \dsn) (\psn - (h^{-1})^*(\psc) - \omega_4) \, = \, 0
\end{equation*}
on the region $h( (2R, \infty) \times \Sigma)$.
\end{proof}
\begin{rmk} \label{omega4smoothrmk}
The elliptic regularity result in Theorem~\ref{ellipticregthm2}, combined with the smoothness of $\psc$ and $\psn$, now tell us that $\omega_4$ is smooth. Therefore we have that
\begin{equation} \label{omega4smoothrmkeq}
|\nabn^j \omega_4|_{\gn} \, = \, O(r^{-5 - j}) \qquad \text{for all $j \geq 0$.}
\end{equation}
\end{rmk}

We can finally prove Theorem~\ref{ACasymptoticexpansionthm}.

\begin{proof}[Proof of Theorem~\ref{ACasymptoticexpansionthm}.]
From Lemma~\ref{ACgaugefixlastpreliminarylemma} we know that $(\psn - (h^{-1})^*(\psc) - \omega_4)$ is in $\ker_{\infty}(d + \dsn)_{-3 + \delta}$ for any $\delta > 0$. Let $\lambda_1 = -4$ and $\lambda_2 = -3$. Since $\nu \leq -3$, we have $\nu <  \lambda_1 - \lambda_2 = -1$, and thus we can apply Corollary~\ref{asymptoticexpansioncor2} to obtain
\begin{equation} \label{ACasymptoticexpansionthmtempeq1}
h^*(\psn) - \psc - h^*(\omega_4) \, = \, \sum_{j=1}^N \upsilon_j + \upsilon',
\end{equation}
where $\upsilon_j$ is in $\mathcal K(\alpha_j)_{d + \dsc}$ for $\alpha_j$ a critical rate in the interval $[-4, -3]$, and $| \upsilon'|_{\gc} = O(r^{-4 - \delta})$. Now \emph{a priori}, the $\upsilon_j$'s are even-degree mixed forms. However, since the left hand side of~\eqref{ACasymptoticexpansionthmtempeq1} consists of only $4$-forms, we know that the $\upsilon_j$'s and $\upsilon'$ are also pure $4$-forms. Thus each $\upsilon_j$ is of the form $\upsilon_j = \sum_{i=1}^m (\log(r))^i \upsilon_{j,i}$ where $\upsilon_{j,i}$ is a homogeneous $4$-form of order $\alpha_j$ on the cone. From the calculation in the proof of Proposition~\ref{dirachomokernellogsprop}, it follows that the leading coefficient $\upsilon_{j,m}$ is a $4$-form, homogeneous of order $-4$, and closed and coclosed. But then Corollary~\ref{homodiracexcludecor} says that $\upsilon_{j,m} = 0$ (and hence $\upsilon_j=0$) for any $\alpha_j \in (-4,-3)$. So the only critical rates which can occur in~\eqref{ACasymptoticexpansionthmtempeq1} are $\alpha_1 = -4$ and $\alpha_2 = -3$. Proposition~\ref{dirachomokernellogsprop2} and Corollary~\ref{homocccor} tell us that $\alpha_1 = \eta$, a homogeneous $4$-form of order $-4$, harmonic with respect to the cone metric. Also, Propositions~\ref{dirachomokernellogsprop} and~\ref{dirachomokernelprop} tell us that $\alpha_2 = dr \wedge (\sts \xi)$ for some homogeneous $3$-form $\sts \xi$ of order $-3$, harmonic with respect to the cone metric. (We choose to write is at $\sts \xi$ for convenience.) Equation~\eqref{conestareq} says we can write $dr \wedge (\sts \xi) = - \stc \xi$. Now $\xi$ is homogeneous of order $-3$, and harmonic, so $\xi$ lies in $\wtht$ with respect to $\phc$, by Proposition~\ref{cones27prop}. Therefore we have shown that~\eqref{ACasymptoticexpansionthmtempeq1} becomes
\begin{equation*}
h^*(\psn) \, = \, \psc + \eta - \stc \xi + \upsilon' + h^*(\omega_4),
\end{equation*}
where $\upsilon' + h^*(\omega_4)$ is closed. We also know that $|\nabc^j(\upsilon' + h^*(\omega_4))|_{\gc} = O(r^{-4 - \delta - j})$ as $r \to \infty$, since $|\nabc^j \upsilon'|_{\gc} = O(r^{-4-\delta - j})$ and $|\nabc^j h^*(\omega_4)|_{\gc} = O(r^{-5-j})$, which follows immediately from~\eqref{omega4smoothrmkeq} and~\eqref{ACdefneq2}. Hence we can apply Lemma~\ref{exactformslemma} to say that $\upsilon' + h^*(\omega_4) = d\theta$, for some $3$-form $\theta$ on $(2R, \infty) \times \Sigma$ satisfying $|\nabc^j \theta |_{\gc} = \, \, O(r^{\nu' + 1 - j})$, for any $\nu' \geq -4 - \delta$. This proves equation~\eqref{fourformasymptoticexpansioneqagain} and half of~\eqref{ACinterpeq3newagain}.

Now we apply the operator $\Theta^{-1}$ to equation~\eqref{fourformasymptoticexpansioneqagain}. We obtain
\begin{align*}
h^*(\phn) \, = \, \Theta^{-1}(h^*(\psn)) \, & = \, \Theta^{-1}(\psc - \stc \xi + \eta + d\theta) \\ & = \, \phc + \xi + J_{\phc}(\eta + d\theta) + G_{\phc}(\eta + d\theta)
\end{align*}
using equations~\eqref{quadeq3} and~\eqref{Jdefneq} and the fact that $\xi$ is in $\wtht$ with respect to $\phc$. Now the remainder term $J_{\phc}(\eta + d\theta) + G_{\phc}(\eta + d\theta) = h^*(\phn) - \phc - \xi$ is clearly smooth and closed. Hence~\eqref{Jesteq} and~\eqref{quadeq4}, along with the estimates on $\eta$ and $d\theta$ above tell us that
\begin{equation*}
|\nabc^j(J_{\phc}(\eta + d\theta) + G_{\phc}(\eta + d\theta))|_{\gc} \, =\, O(r^{-4 - j}) \qquad \forall j \geq 0,
\end{equation*}
and therefore we can again apply Lemma~\ref{exactformslemma} to conclude that $h^*(\phn) - \phc - \xi = d\zeta$ for some $2$-form $\zeta$ on $(2R, \infty) \times \Sigma$ satisfying $|\nabc^j \zeta |_{\gc} = \, \, O(r^{\nu' + 1 - j})$, for any $\nu' \geq -4$. This proves equation~\eqref{threeformasymptoticexpansioneqagain} and the other half of~\eqref{ACinterpeq3newagain}. It is clear that $[\xi] = [h^*(\phn)] = \Phi(N)$, and $[\eta] = [h^*(\psn)] = \Psi(N)$ follows from the fact that $-\stc \xi = dr \wedge \sts \xi = d( r \sts \xi)$ is exact.
\end{proof}

\begin{rmk} \label{perversermk}
To prove Theorem~\ref{ACasymptoticexpansionthm}, which is really just a result about $\G$~cones, we transferred the problem to an analytic question on the AC manifold to use the machinery of Section~\ref{lockhartACsec}, and then transferred back to the cone. In principle, we could have avoided that by working directly on the cone itself, but then we would have needed to introduce Lockhart--McOwen type results for non-compact manifolds which had asymptotic ends as well as isolated conical singularities (which can be done), but we preferred to avoid doing so.
\end{rmk}

\begin{rmk} \label{nogaugefixrmk}
If we did not impose the gauge-fixing condition, the $O(r^{-7})$ expression in~\eqref{ACgaugefixlemmaeq} would only be $O(r^{-4})$. Then the $4$-form $\omega_4$ from Lemma~\ref{ACgaugefixlastpreliminarylemma} would be \emph{at best} $O(r^{-3 + e})$, which would be too large to ignore and the proof of Theorem~\ref{ACasymptoticexpansionthm} would break down. Therefore the gauge-fixing condition is \emph{necessary} to be able to prove results like~\eqref{threeformasymptoticexpansioneqagain} and~\eqref{fourformasymptoticexpansioneqagain}.
\end{rmk}


\begin{thebibliography}{99}

\bibitem{AW}M. Atiyah and E. Witten, ``$M$-theory dynamics on a manifold of $\G$~holonomy'', {\em Adv. Theor. Math. Phys.} {\bf 6} (2002), 1--106.

\bibitem{Bar}C. B\"ar, ``Real Killing spinors and holonomy'', {\em Comm. Math. Phys.} {\bf 154} (1993), 509--521.

\bibitem{Besse}A.L. Besse, {\em Einstein Manifolds}, Springer-Verlag, New York, 1987.

\bibitem{Br1}R.L. Bryant, ``Some remarks on \Gs s'', {\em Proceedings of G\"okova Geometry-Topology Conference 2005}, 75--109.

\bibitem{BS}R.L. Bryant and S. Salamon, ``On the construction of some complete metrics with exceptional holonomy'', {\em Duke Math. J.} {\bf 58} (1989), 829--850.

\bibitem{But}J.-B. Butruille, ``Classification des carie\'et\'es approximativement k\"ahleriennes homog\`enes'', {\em Ann. Global Anal. Geom.} {\bf 27} (2005), 201--225.

\bibitem{YMC1}Y.-M. Chan, ``Desingularizations of Calabi-Yau 3-folds with a conical singularity'', {\em Q. J. Math.} {\bf 57} (2006), 151--181.

\bibitem{YMC2}Y.-M. Chan, ``Simultaneous desingularizations of Calabi-Yau and special Lagrangian 3-folds with conical singularities'', {\em arxiv:math/0606399v1}.

\bibitem{YMC3}Y.-M. Chan, ``Desingularizations of Calabi-Yau 3-folds with conical singularities. II. The obstructed case'', {\em Q. J. Math.}, to appear.

\bibitem{FG}M. Fern\'andez and A. Gray, ``Riemannian manifolds with structure group $\G$'', {\em Ann. Mat. Pura Appl (IV)} {\bf 32} (1982), 19--45.

\bibitem{Gr1}A. Gray, ``Riemannian manifolds with geodesic symmetries of order $3$'', {\em J. Diff. Geom.} {\bf 7} (1972), 343--369.

\bibitem{Gr2}A. Gray, ``The structure of nearly K\"ahler manifolds'', {\em Math. Ann.} {\bf 223} (1976), 233--248.

\bibitem{HL}R. Harvey and H.B. Lawson, ``Calibrated geometries'', {\em Acta Math.} {\bf 148} (1982), 47--157. 

\bibitem{J1}D. Joyce, ``Compact Riemannian 7-manifolds with holonomy $\G$. I'', {\em J. Diff. Geom.} {\bf 43} (1996), 291--328.

\bibitem{J2}D. Joyce, ``Compact Riemannian 7-manifolds with holonomy $\G$. II'', {\em J. Diff. Geom.} {\bf 43} (1996), 329--375.

\bibitem{J4}D. Joyce, {\em Compact Manifolds with Special Holonomy}, Oxford University Press, 2000.

\bibitem{JSL1}D. Joyce, ``Special Lagrangian submanifolds with isolated conical singularities. I. Regularity'', {\em Ann. Global Anal. Geom.} {\bf 25} (2004), 201--251.

\bibitem{JSL2}D. Joyce, ``Special Lagrangian submanifolds with isolated conical singularities. II. Moduli Spaces'', {\em Ann. Global Anal. Geom.} {\bf 25} (2004), 301--352.

\bibitem{JSL3}D. Joyce, ``Special Lagrangian submanifolds with isolated conical singularities. III. Desingularization, the unobstructed case'', {\em Ann. Global Anal. Geom.} {\bf 26} (2004), 1--58.

\bibitem{JSL4}D. Joyce, ``Special Lagrangian submanifolds with isolated conical singularities. IV. Desingularization, obstructions and families'', {\em Ann. Global Anal. Geom.} {\bf 26} (2004), 117--174.

\bibitem{JSL5}D. Joyce, ``Special Lagrangian submanifolds with isolated conical singularities. V. Survey and Applications'', {\em J. Diff. Geom.} {\bf 63} (2003), 279--347.

\bibitem{K1}S. Karigiannis, ``Deformations of $\G$~and $\SP$~structures on manifolds'', {\em Canad. J. Math.} {\bf 57} (2005), 1012--1055.

\bibitem{K2}S. Karigiannis, ``Some notes on $\G$~and $\SP$~geometry'', {\em arXiv: math.DG/0608618}.

\bibitem{KCSmoduli}S. Karigiannis, ``Moduli spaces of $\G$~manifolds with isolated conical
singularities'', {\em in preparation}.

\bibitem{Kov}A. Kovalev, ``Twisted connected sums and special Riemannian holonomy'', {\em J. Reine Angew. Math.} {\bf 565} (2003), 125--160.

\bibitem{LM}R. B. Lockhart and R. C. McOwen, ``Elliptic differential operators on noncompact manifolds'', {\em Ann. Scuola Norm. Sup. Pisa Cl. Sci. (4)} {\bf 12} (1985), 409--447.

\bibitem{Lock}R. Lockhart, ``Fredholm, Hodge and Liouville theorems on noncompact manifolds'', {\em Trans. Amer. Math. Soc.} {\bf 301} (1987), 1--35.

\bibitem{Lth}J. Lotay, ``Calibrated submanifolds and the exceptional geometries'', {\em Oxford DPhil. Thesis}, 2005. Available at: \url{http://people.maths.ox.ac.uk/~joyce/theses/theses.html}

\bibitem{L1}J. Lotay, ``Coassociative 4-folds with conical singularities'', {\em Comm. Anal. Geom.} {\bf 15} (2007), 891--946.

\bibitem{L2}J. Lotay, ``Desingularization of coassociative 4-folds with conical singularities'', {\em Geom. Funct. Anal.}, to appear.

\bibitem{M}S. Marshall, ``Deformations of special Lagrangian submanifolds'', {\em Oxford DPhil. Thesis}, 2002. Available at: \url{http://people.maths.ox.ac.uk/~joyce/theses/theses.html}

\bibitem{Nord}J. Nordstr\"om, ``Deformations of asymptotically cylindrical $\G$~manifolds'', {\em arXiv: 0705.4444}.

\bibitem{Pacini}T. Pacini, ``Deformations of asymptotically conical special Lagrangian submanifolds'', {\em Pacific J. Math.} {\bf 215} (2004), 151--181.
 
\bibitem{CaS}R. Reyes-Carri\'on and S. Salamon, ``A survey of nearly KŠhler manifolds'', (Spanish) {\em Gac. R. Soc. Mat. Esp.} {\bf 2} (1999), 40--49.

\end{thebibliography}
\end{document}